\newcommand{\url}{}
\newtheorem{lemma}{Lemma}
\newtheorem{remark}[lemma]{Remark}
\newtheorem{proposition}[lemma]{Proposition}
\newtheorem{theorem}[lemma]{Theorem}
\newtheorem{example}{Example}
\newtheorem{corollary}[lemma]{Corollary}
\newtheorem*{Proposition*}{Proposition}
\newcommand{\RR}{{\mathbb{R}}}
\newcommand{\VAR}{{\mathrm{Var}}}
\newcommand{\tr}{{\rm tr}}
\newcommand{\dE}{\mathbb {E}}
\newcommand{\dP}{\mathbb{P}}
\newcommand{\dN}{\mathbb {N}}
\newcommand{\dR}{\mathbb {R}}
\newcommand{\dC}{\mathbb {C}}
\newcommand{\cF}{\mathcal {F}}
\newcommand{\cE}{\mathcal {E}}
\newcommand{\cG}{\mathcal {G}}
\newcommand{\cX}{\mathcal {X}}
\newcommand{\cW}{\mathcal {W}}
\newcommand{\cT}{\mathcal {T}}
\newcommand{\cY}{\mathcal {Y}}
\newcommand{\Bin}{ \mathrm{Bin}}
\newcommand{\Poi}{ \mathrm{Poi}}
\newcommand{\DTV}{{\mathrm{d_{TV}}}}
\newcommand{\vol}{V}
\newcommand{\sign}{ \mathrm{sign}}
\newcommand{\diag}{ \mathrm{diag}}
\newcommand{\ABS}[1]{{{\left| #1 \right|}}} 
\newcommand{\BRA}[1]{{{\left\{#1\right\}}}} 
\newcommand{\NRM}[1]{{{\left\| #1\right\|}}} 
\newcommand{\PAR}[1]{{{\left(#1\right)}}} 
\newcommand{\1}{1\!\!{\sf I}}\newcommand{\IND}{\1}
\newcommand{\veps}{\varepsilon}
\newcommand{\si}{\sigma}
\newcommand{\whp}{{w.h.p.~}}
\newcommand{\BEAS}{\begin{eqnarray*}}
\newcommand{\EEAS}{\end{eqnarray*}}
\newcommand{\BEA}{\begin{eqnarray}}
\newcommand{\EEA}{\end{eqnarray}}
\newcommand{\BEQ}{\begin{equation}}
\newcommand{\EEQ}{\end{equation}}
\newcommand{\BIT}{\begin{itemize}}
\newcommand{\EIT}{\end{itemize}}
\newcommand{\BNUM}{\begin{enumerate}}
\newcommand{\ENUM}{\end{enumerate}}
\begin{document}

\title{Non-backtracking spectrum of random graphs: \\
community detection and non-regular Ramanujan graphs}
\author{Charles Bordenave, Marc Lelarge, Laurent Massouli\'e}
\maketitle

\abstract{
A non-backtracking walk on a graph is a directed path such that no edge is the inverse of its preceding edge. The non-backtracking matrix of a graph is indexed by its directed edges and can be used to count non-backtracking walks of a given length. It has been used recently in the context of community detection and has appeared previously in connection with the Ihara zeta function and in some generalizations of Ramanujan graphs. In this work, we study the largest eigenvalues of the non-backtracking matrix of the Erd\H{o}s-R\'enyi random graph and of the Stochastic Block Model in the regime where the number of edges is proportional to the number of vertices. Our results confirm the "spectral redemption conjecture" that community detection can be made on the basis of the leading eigenvectors  above the feasibility threshold. 
}

\section{Introduction}
Given a finite (simple, non-oriented) graph $G=(V,E)$, several matrices of interest can be associated to $G$, besides its adjacency matrix $A=(\IND_{\{u,v\}\in E})_{u,v\in V}$. In this work we are interested in the so-called {\em non-backtracking} matrix of $G$, denoted by $B$. It is indexed by the set $\vec E = \{ (u,v) : \{u,v\} \in E \}$ of {\em oriented} edges in $E$ and defined by
$$
B_{e f } = \IND (  e_2 = f_1 ) \IND ( e_1 \ne f_2 ) =  \IND (  e_2 = f_1 ) \IND ( e \ne f^{-1} ),
$$
where for any $e = (u,v ) \in \vec E$, we set $e_1 = u$,  $e_2 = v$,
$e^{-1} = (v,u)$. This matrix was introduced by Hashimoto
\cite{MR1040609}. 
A non-backtracking walk is a directed path of directed edges of $G$
such that no edge is the inverse of its preceding edge. 
It is easily seen that for any $k\ge 1$, $B^k_{ef}$ counts the number of non-backtracking walks of $k+1$ edges  on $G$ starting with $e$ and ending with $f$.

Our focus is the spectrum of $B$, referred to in the sequel as the non-backtracking spectrum of $G$, when $G$ is a sparse random graph. Specifically we shall characterize the asymptotic behavior of the leading eigenvalues and associated eigenvectors in the non-backtracking spectrum of sparse random graphs in the limit $n\to\infty$ where $n=|V|$. 

The random graphs we consider are drawn according to the Stochastic Block Model, a generalization of Erd\H{o}s-R\'enyi graphs due to Holland et al. \cite{Holland83}. In this model nodes $v\in V$ are partitioned into $r$ groups. An edge between two nodes $u,v$ is drawn with probability $W(\sigma(u),\sigma(v))/n$, where $\sigma(u)\in[r]$ denotes the group node $u$ belongs to. Thus when the  $r\times r$ matrix $W$ is fixed the expected node degrees remain of order 1 as $n\to \infty$. We focus moreover on instances where the fraction of nodes in group $i$ converges to a limit $\pi(i)$ as $n\to\infty$.

An informal statement of our results for eigenvalues is as
follows. Let $G$ be drawn according to a Stochastic Block Model with
fixed number $r$ of node groups such that all nodes have same fixed
expected degree $\alpha>1$. Let $\mu_1,\ldots,\mu_r$ denote
the leading eigenvalues of the expected adjacency matrix
$\bar{A}:=\dE(A)$, ordered so that $\mu_1 = \alpha\ge |\mu_2|\ge \ldots\ge |\mu_r|$. Let $r_0\le r$ be such that $ |\mu_{r_0+1}|\le \sqrt{\alpha}< |\mu_{r_0}|$. Then the $r_0$ leading eigenvalues of $B$ are asymptotic to $\mu_1$,\ldots,$\mu_{r_0}$, the remaining eigenvalues $\lambda$ satisfying $|\lambda|\le (1+o(1))\sqrt{\alpha}$. 

\subsection*{Community detection}
Our primary motivation stems from the problem of {\em community detection}, namely: how to estimate a clustering of graph nodes into groups close to the underlying blocks, based on the observation of such a random graph $G$? Decelle et al.~\cite{Decelle11} conjectured a phase transition phenomenon on detectability, namely: the underlying block structure could be detected if and only if $|\mu_2|>\sqrt{\alpha}$. 

In the case of two communities with roughly equal sizes ($\pi(1)= \pi(2)=1/2$) and symmetric matrix $W$, the negative part (impossibility of detection when $|\mu_2|\le\sqrt{\alpha}$) was proven by Mossel et al~\cite{Mossel12}. As for the positive part (feasibility of detection when $|\mu_2|>\sqrt{\alpha}$), it was conjectured in~\cite{Decelle11} that the so-called belief propagation algorithm would succeed. Krzakala et al.~\cite{spec_red} then made their so-called ``spectral redemption conjecture'' according to which detection based on the second eigenvector of the non-backtracking matrix $B$ would succeed.

Recently a variant of the spectral redemption conjecture was proven by
Massouli\'e~\cite{LM13}: the spectrum of a matrix counting {\em
  self-avoiding paths} in $G$ allows to detect communities through
thresholding of the second eigenvector. More recently and independently of ~\cite{LM13}, an alternative proof of the positive part of the conjecture in~\cite{Decelle11} was given by Mossel et al.~\cite{MNS}, based on an elaborate construction involving countings of non-backtracking paths in $G$. 

The two approaches of ~\cite{LM13} and~\cite{MNS}  to proving the
positive part of the conjecture in~\cite{Decelle11}, while both
relying ultimately on properties of specific path counts, differ
however in the following respects. The method in~\cite{LM13} relies on
a clear spectral separation property but its implementation is
computationally expensive, as the counts of self-avoiding walks it
relies upon take super-linear (though polynomial) time. The method
in~\cite{MNS} is computationally efficient as it runs in quasi-linear
time, but the proof does not establish a spectral separation
property. The other two methods conjectured to  achieve successful
reconstruction, namely belief propagation and analysis of
non-backtracking spectrum, are computationally efficient and they are
motivated by a clear intuition as described in the spectral redemption
conjecture. 



Our present work proves the spectral redemption conjecture. More generally by characterizing all the leading eigen-elements it determines the limits of community detection based on the non-backtracking spectrum in the presence of an arbitrary number of communities.

\subsection*{Ihara zeta function}

Hashimoto introduced the matrix $B$ in the context of the Ihara zeta
function \cite{MR1040609}. We have the  identity
$$
\det( I - z B) = \frac{1}{\zeta_G(z)},
$$
where $\zeta_G$ is the Ihara zeta function of the graph,  refer to \cite{MR961485,MR1749978,horton-stark-terras-2006,MR2768284}. It follows that the poles of the Ihara zeta function are the reciprocal of the eigenvalues of $B$. Our main results have thus consequences on the localization of the poles of the zeta function of random graphs. 

\subsection*{Weak Ramanujan property}
Our result also has an interpretation from the standpoint of Ramanujan
graphs, introduced by Lubotzky et al.~\cite{lubotzky} (see
Murty~\cite{murty} for a recent survey). These are by definition
$k$-regular graphs such that the second largest modulus of its
eigenvalues is at most $2\sqrt{k-1}$. By a result of Alon and Boppana
(see \cite{MR1124768}) for fixed $k$, $k$-regular graphs on $n$ nodes
must have their second largest eigenvalue at least $2\sqrt{k-1}-o(1)$
as $n\to\infty$. Hence Ramanujan graphs are regular graphs with
maximal spectral gap between the first and second eigenvalue moduli. A
celebrated result of Friedman \cite{MR2437174} states that random
$k$-regular graphs achieve this lower bound with high probability as
their number of nodes $n$ goes to infinity.

Lubotzky~\cite{lubotzky95} has proposed an extension of the definition of Ramanujan graphs to the non-regular case. Specifically, $G$ is Ramanujan according to this definition if and only if
$$
\max\{|\lambda|:\lambda\in\hbox{spectrum}(A),\;|\lambda|\ne \rho\}\le \sigma,
$$ 
where $A$ is the adjacency matrix of $G$, $\rho$ its spectral radius, and $\sigma$ the spectral radius of the \emph{ adjacency operator on the universal covering tree of $G$}.

Using the analogy between the Ihara zeta function and the Riemann zeta function, Stark and Terras (see~\cite{horton-stark-terras-2006}) have defined a graph to satisfy the {\em graph Riemann hypothesis} if its non-backtracking matrix $B$ has no eigenvalues $\lambda$ such that $|\lambda|\in (\sqrt{\rho_B},\rho_B)$, where $\rho_B$ is the Perron-Frobenius eigenvalue of $B$. Interestingly, a regular graph $G$ is Ramanujan if and only if it satisfies the graph Riemann hypothesis (see ~\cite{murty} and ~\cite{horton-stark-terras-2006}). Thus the graph Riemann hypothesis can also be seen as a generalization of the notion of Ramanujan graphs to the non-regular case, phrased in terms of non-backtracking spectra rather than on spectra of universal covers as in the definition of Lubotzky~\cite{lubotzky95}. 

Our results imply that for fixed $\alpha>1$, Erd\H{o}s-R\'enyi graphs $\cG(n,\alpha/n)$ have an associated non-backtracking matrix $B$ such that $\rho_B\sim \alpha$ and  all its other eigenvalues $\lambda$ verify $|\lambda|\le \sqrt{\alpha}+o(1)$ with high probability as $n\to\infty$. In this sense,  Erd\H{o}s-R\'enyi graphs asymptotically satisfy the graph Riemann hypothesis, itself is a plausible extension of the notion of Ramanujan graphs to the non-regular case. This may be seen as an analogue of Friedman's Theorem \cite{MR2437174} in the context of Erd\H{o}s-R\'enyi graphs. Similarly, for the Stochastic Block Model, our main result is analogue to recent results on the eigenvalues of random $n$-lifts of base graphs, see \cite{FrKo,B_Fr}. Interestingly, in \cite{B_Fr}, the methods developed in the present paper are adapted to lead to a new and simpler proof of  Friedman's Theorem and its extensions to random $n$-lifts. The random graphs studied here will require a more delicate analysis.

\subsection*{Organization}
The paper is organized as follows. We start  in Section~\ref{sec:preliminaries} with preliminaries on non-backtracking matrices. We  state our main results in Section~\ref{sec:main_results}, namely Theorems \ref{th:ER} and \ref{th:main} characterizing properties of non-backtracking spectra of Erd\H{o}s-R\'enyi graphs and Stochastic Block Models respectively, and Theorem~\ref{th:vecteurs} establishing their consequence for community detection. 

In Section~\ref{subsec:approx}, we provide the algebraic ingredients we shall need. Specifically we establish general bounds on the perturbation of eigenvalues and eigenvectors of not necessarily symmetric matrices that elaborate on the Bauer-Fike Theorem. 

In Section~\ref{sec:arg_er_v2} we give the proof architecture for  Theorem \ref{th:ER} on the non-backtracking spectrum of Erd\H{o}s-R\'enyi graphs, and detail a central argument of combinatorial nature, namely a representation of the non-backtracking matrix $B$ raised to some power $\ell$ as a sum of products of matrices, elaborating on a technique introduced in \cite{LM13}. 

In Section \ref{sec:path_counts} we detail another combinatorial argument needed in our proof, namely we establish bounds on the norms of the various matrices involved in the previous matrix expansion. The latter norm bounds are established by the trace method, adapting arguments due to F\H{u}redi and Koml\'os~\cite{MR637828}.

Section \ref{sec:proof_sbm_v2} gives the proof strategy for Theorem \ref{th:main} on non-backtracking spectra of Stochastic Block Models.

In Section~\ref{subsec:MTGW} we establish convergence results on
multitype branching processes that extend results of Kesten and Stigum
\cite{MR0198552,MR0200979}. These are then used in Section~\ref{sec:local}
to characterize the local structure of the random graphs under
study. Specifically we relate the local statistics of Stochastic Block
Models to branching processes via coupling, and then establish weak
laws of large numbers on these local statistics. These probabilistic
arguments together with the previous algebraic and combinatorial
arguments allow us to conclude the proofs of Theorems \ref{th:ER} and
\ref{th:main}. Section \ref{sec:vecteurs} contains the proof of
Theorem \ref{th:vecteurs} on community detection.

\section{Preliminaries on non-backtracking matrices}
\label{sec:preliminaries}
In this section, we explain how the singular value decomposition of $B^\ell$ for $\ell$ large can be used to study the eigenvalues and eigenvectors of $B$. We also comment on analogs of some classical inequalities known for adjacency or Laplacian matrices of regular graphs. 

We set $m = | \vec E|$. 
A priori, $B$ is not a normal matrix. We are interested in its eigenvalues $\lambda_1(B), \ldots, \lambda_m ( B)$ ordered non-increasingly, $| \lambda_1 ( B) |  \geq \ldots \geq | \lambda_m (B)|$.  The Perron-Frobenius Theorem implies notably that $\lambda_1 (B)$ is a non-negative real.   If $G$ is connected, $\lambda_1 (B)$ is equal to the growth rate of the universal cover of $G$ (see Angel, Friedman and Hoory \cite{AFH}).
\subsection{Oriented path symmetry}
An important remark is that despite $B$ not being a normal matrix, it contains some symmetry.  More precisely, we observe that 
$
(B^* )_{ef} = B_{fe } = B_{e^{-1} f^{-1}}
$. Introduce for all $x \in \dR^{\vec E}$ the notation 
\begin{equation}
\check x_e=x_{e^{-1}},\; e\in \vec E.
\end{equation} 
It is then easy to check that for $x,y \in \dR^{\vec E}$ and integer $k \geq 0$, 
\begin{equation}
\label{eq:check}
\langle   y , B^k x \rangle =  \langle B^{k}  \check y ,  \check x \rangle.
\end{equation}
In other words, if $P$ denotes the involution on $\dR^{\vec E}$, $P x  = \check x $, we have for any integer $k \geq 0$, 
$$
B^k P = P B^{*k}.
$$ 
Hence $B^k P$ is a symmetric matrix (in mathematical physics, this type of symmetry is called PT-invariance, PT standing for parity-time). If $(\sigma_{j,k})$, $1 \leq j \leq m$, are the eigenvalues of $B^kP$ and $(x_{j,k})$, $1 \leq j \leq m$,  is an orthonormal basis of eigenvectors, we deduce that
\begin{equation}\label{eq:BkSVD}
B^k = \sum_{j=1}^m \sigma_{j,k} x_{j,k} \check x_{j,k}^* .
\end{equation}
We order the eigenvalues, 
$$
\sigma_{1,k} \geq |\sigma_{2,k}| \geq \ldots  \geq |\sigma_{m,k}|.  
$$
From Perron-Frobenius theorem, $x_{1,k}$ can be chosen to have non-negative entries. Since $P$ is an orthogonal matrix, $(\check x_{j,k})$, $1 \leq j \leq m$, is also  an orthonormal basis of $\dR^{\vec E}$. In particular, \eqref{eq:BkSVD} gives the singular value decomposition of $B^k$. Indeed, if $s_{j,k} = \ABS{\sigma_{j,k}}$ and $y_{j,k} =\sign(\sigma_{j,k}) \check x_{j,k}$, we get  
\begin{equation}\label{eq:BkSVD2}
B^k = \sum_{j=1}^m s_{j,k} x_{j,k} y_{j,k}^* .
\end{equation}
This is precisely the singular value decomposition of $B^k$. 

For example, for $k=1$, it is a simple exercise to compute $(\sigma_{j,1})_{1\leq j  \leq m}$. We find that the eigenvalues of $BP$  are $(\deg(v) - 1)$, $1 \leq v \leq n$, and $-1$ with multiplicity $m - n$. In particular, the singular values of $B$ contain only information on the degree sequence of the underlying graph $G$.

For large $k$ however, we may expect that the decomposition \eqref{eq:BkSVD} carries more structural information on the graph (this will be further discussed in Subsection \ref{subsec:ccab} below). This will be the underlying principle in the proof of our main results. For the moment, we simply note the following. Assume that $B$ is irreducible. From Perron-Frobenius theorem, if $\xi$ is the Perron eigenvector of $B$, $\|\xi\|= 1$, then for any $n$ fixed,
\begin{equation}\label{eq:PR1}
\lambda_1 (B) = \lim_{k \to \infty} \sigma_{1,k}^{1/k}\quad  \hbox{ and } \quad \lim_{k \to \infty} \| x_{1,k} - \xi \|  = 0. 
\end{equation}
A quantitative version of the above limits will be given in the forthcoming Proposition \ref{prop:sing2eig}. Another consequence of \eqref{eq:BkSVD} is that, for $i \ne j$, $x_{i,k}$ and $\check x_{j,k}$ should be nearly orthogonal if these vectors converge as $k \to \infty$. Indeed, a heuristic computation gives
$$
\langle x_{i,k} , \check x_{j,k} \rangle  = \frac{ \langle B^k \check x_{i,k} ,  B^{* k} x_{j,k} \rangle }{ \sigma_{i,k} \sigma_{j,k} }  = \frac{ \langle B^{2k} \check x_{i,k} ,   x_{j,k} \rangle }{ \sigma_{i,k} \sigma_{j,k} }  
\simeq \frac{ \langle B^{2k} \check x_{i,2k} ,   x_{j,2k} \rangle }{ \sigma_{i,k} \sigma_{j,k} }   = \frac{ \si_{i,2k}  \langle  x_{i,2k} ,   x_{j,2k} \rangle }{ \sigma_{i,k} \sigma_{j,k} }  = 0. 
$$ 
We will exploit this general phenomenon in the proof of our main results.

\subsection{Chung, Cheeger and Alon-Boppana inequalities for non-backtracking matrices}

\label{subsec:ccab}

The aim of this subsection is to advocate the use of non-backtracking matrices. Here, we discuss briefly candidate counterparts for irregular graphs of inequalities that are classical in the context of regular graphs.  This subsection will not be used in the proof of our main results, it may be skipped.

The diameter bound of Chung~\cite{MR965008} gives an upper bound on the diameter of a regular graph in terms of its spectral gap. The following lemma, expressed in terms of the decomposition \eqref{eq:BkSVD2} of $B^k$, is an analogue: 

\begin{lemma}
Let $e = (u,u')$, $f = (v,v') \in \vec E$ be such that  
$$
x_{1,k} (e) x_{1,k} (f) > s_{2,k} / s_{1,k}.
$$
Then, the graph distance between $u$ and $v$ is at most $k+1$. 
\end{lemma}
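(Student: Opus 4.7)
The plan is to use the PT-symmetric decomposition \eqref{eq:BkSVD} directly on a well-chosen entry of $B^k$. Since $B^k_{ef}$ counts non-backtracking walks of $k+1$ edges from $e$ to $f$, and since such a walk visits vertices $e_1, e_2, \ldots, f_1, f_2$ in order, it suffices to show that there exists some entry of $B^k$ of the form $B^k_{e, g}$ with $g_2 = v$ that is nonzero: this produces a walk of $k+1$ edges from $u$ to $v$ in $G$ and hence $\mathrm{dist}(u,v) \le k+1$. I would aim for $g = f^{-1}$, since then $g_2 = f_1 = v$.

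Starting from \eqref{eq:BkSVD} and using $\sigma_{1,k} = s_{1,k} > 0$ together with $\check{x}_{j,k}(f^{-1}) = x_{j,k}(f)$, I would expand
\begin{equation*}
B^k_{e, f^{-1}} = \sum_{j=1}^m \sigma_{j,k}\, x_{j,k}(e)\, x_{j,k}(f)
= s_{1,k}\, x_{1,k}(e)\, x_{1,k}(f) + \sum_{j\geq 2} \sigma_{j,k}\, x_{j,k}(e)\, x_{j,k}(f).
\end{equation*}
The tail is then controlled by $|\sigma_{j,k}| \leq s_{2,k}$ for $j\geq 2$, combined with Cauchy--Schwarz and the fact that $(x_{j,k})_{1\le j\le m}$ is an orthonormal basis, so that $\sum_j x_{j,k}(e)^2 = \|\delta_e\|^2 = 1$ and likewise for $f$:
\begin{equation*}
\Bigl| \sum_{j \geq 2} \sigma_{j,k}\, x_{j,k}(e)\, x_{j,k}(f) \Bigr|
\leq s_{2,k} \sum_{j\ge 2} |x_{j,k}(e)|\,|x_{j,k}(f)| \leq s_{2,k}.
\end{equation*}

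Combining these two estimates gives $B^k_{e, f^{-1}} \geq s_{1,k}\, x_{1,k}(e)\, x_{1,k}(f) - s_{2,k}$. Under the hypothesis $x_{1,k}(e)\, x_{1,k}(f) > s_{2,k}/s_{1,k}$ this is strictly positive, and since $B^k_{e,f^{-1}}$ is a non-negative integer we conclude $B^k_{e, f^{-1}} \geq 1$. Hence there is a non-backtracking walk on $G$ of $k+1$ edges starting with $e = (u,u')$ and ending with $f^{-1} = (v',v)$; its vertex sequence exhibits a walk of length $k+1$ from $u$ to $v$, yielding $\mathrm{dist}(u,v)\le k+1$.

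I do not expect any real obstacle. The only subtlety is the swap $f\mapsto f^{-1}$: this is forced by $y_{1,k} = \mathrm{sign}(\sigma_{1,k})\,\check{x}_{1,k} = \check{x}_{1,k}$, so in order to make $x_{1,k}(e)\,x_{1,k}(f)$ appear in the leading term one must evaluate $B^k$ at the entry $(e,f^{-1})$ rather than $(e,f)$.
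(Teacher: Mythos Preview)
Your proof is correct and follows essentially the same approach as the paper: evaluate the decomposition \eqref{eq:BkSVD} (equivalently \eqref{eq:BkSVD2}) at the entry $(e,f^{-1})$, isolate the leading term $s_{1,k}x_{1,k}(e)x_{1,k}(f)$, and bound the remainder by $s_{2,k}$ via Cauchy--Schwarz and orthonormality of the $x_{j,k}$. Your write-up is in fact slightly more explicit than the paper's about the role of the swap $f\mapsto f^{-1}$ and the use of $\check x_{j,k}(f^{-1})=x_{j,k}(f)$.
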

\begin{proof}
Observe that if $B^{k}_{e f^{-1}} > 0$ then $u$ and $v$ are at most at distance $k+1$. On  the other hand from \eqref{eq:BkSVD2}, 
$$
B^{k}_{e f^{-1}} - s_{1,k} x_{1,k} (e) x_{1,k} (f)  = \sum_{j=2}^m  s_{j,k} x_{j,k} (e) x_{j,k} (f) 
$$
has absolute value at most $s_{2,k}$ from Cauchy-Schwartz inequality and the orthonormality of the $x_{j,k}$, $1 \leq j \leq m$. Thus finally, 
$
B^{k}_{e f^{-1}} \geq  s_{1,k} x_{1,k} (e) x_{1,k} (f) -s_2.
$
\end{proof}

Cheeger-type inequalities connect the expansion ratio (isoperimetry) of the graph and its spectral gap, for a survey see \cite{MR2247919}. For a subset $X \subset \vec E$ of edges, we measure its volume by
$$
\vol_k (X) = \sum_{e \in X} x^2_{1,k} (e).
$$
By construction, our volume is normalized with $\vol_k (\vec E) = 1$. We say that $X \subset \vec E$ is edge-symmetric if $\check X = X$. For example the set of edges adjacent to a given subset of vertices is edge-symmetric. If $X,Y$ are edge-symmetric, we define
$$
E_k ( X, Y) =  \sum_{e \in X, f \in Y} x_{1,k} (e) x_{1,k} (f) B^{k}_{ef} .
$$
Since $B^{k}_{ef}$ is the number of non-backtracking walks of length $k+1$ starting with $e$ and ending with $f$, $E_k(X,Y)$ measures a kind of conductance between $X$ and $Y$ with a proximity range of radius $k+1$. If $X^c = X \backslash \vec E$, the scalar
$$
\Sigma_k (X) = \Sigma_k (  X^c  ) = E_k (X, X^c).  
$$
can be thought of as the outer surface of a set $X$. The $k$-th expansion ratio of $G$ is then defined as 
$$
h_k  = \min_{X\subset \vec E, \check X=X} \frac{\Sigma_k (X)}{\vol_k (X) \wedge \vol_k (X^c)}.
$$

In \eqref{eq:BkSVD}, after reordering the eigenvalues of $B^k P$ as $\sigma_{1,k} \geq \sigma_{2,k} \geq \ldots  \geq \sigma_{m,k}$, $  \sigma_{1,k} - \sigma_{2,k}$ plays the role of the spectral gap in the classical Cheeger inequality. With this new convention, the following lemma is the analog of the easy part of Cheeger's inequality for graphs. 

\begin{lemma}
$$
\si_{1,k} - \si_{2,k} \leq 2 h_k. 
$$ 
\end{lemma}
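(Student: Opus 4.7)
The plan is to apply the easy half of the Cheeger inequality to the symmetric non-negative matrix $M := B^k P$, which has top eigenvalue $\sigma_{1,k}$ with Perron eigenvector $x_{1,k}$. The Rayleigh--Ritz characterization
$$\sigma_{2,k} = \max_{w \perp x_{1,k},\, \|w\|=1}\langle w, Mw\rangle$$
reduces the claim to constructing, for an arbitrary edge-symmetric $X \subseteq \vec E$ with $\vol_k(X) \leq 1/2$, a unit vector $w \perp x_{1,k}$ whose Rayleigh quotient is close to $\sigma_{1,k}$.

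Concretely, I would take $u := x_{1,k}\cdot \IND_X$ (pointwise product) and $w := u - \vol_k(X)\,x_{1,k}$. Orthogonality follows from $\langle u, x_{1,k}\rangle = \vol_k(X)$, and $\|w\|^2 = \vol_k(X) - \vol_k(X)^2 = \vol_k(X)\vol_k(X^c)$ since $\vol_k(\vec E) = 1$. Using symmetry of $M$ and $Mx_{1,k} = \sigma_{1,k}x_{1,k}$, the cross terms simplify to
$\langle w, Mw\rangle = \langle u, Mu\rangle - \sigma_{1,k}\vol_k(X)^2$. Expanding
$\langle u, Mu\rangle = \sum_{e,g\in X}x_{1,k}(e)\,x_{1,k}(g)\,B^k_{e,g^{-1}}$, performing the substitution $g \leftrightarrow g^{-1}$ (valid because $X = \check X$), and invoking the row identity $\sum_f B^k_{ef}\,\check x_{1,k}(f) = \sigma_{1,k}\,x_{1,k}(e)$ which comes directly from $Mx_{1,k}=\sigma_{1,k}x_{1,k}$, the inner sum splits across $X$ and $X^c$ to yield $\langle u, Mu\rangle = \sigma_{1,k}\vol_k(X) - \Sigma_k(X)$. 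Combining,
$$\langle w, Mw\rangle = \sigma_{1,k}\vol_k(X)\vol_k(X^c) - \Sigma_k(X).$$
Dividing by $\|w\|^2$ and using Rayleigh--Ritz,
$$\sigma_{1,k} - \sigma_{2,k} \leq \frac{\Sigma_k(X)}{\vol_k(X)\vol_k(X^c)} \leq \frac{2\,\Sigma_k(X)}{\vol_k(X)\wedge\vol_k(X^c)},$$
where the last step uses $\vol_k(X)+\vol_k(X^c) = 1$, so $\max(\vol_k(X),\vol_k(X^c)) \geq 1/2$ forces $\vol_k(X)\vol_k(X^c) \geq \tfrac{1}{2}[\vol_k(X)\wedge\vol_k(X^c)]$. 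Taking the infimum over edge-symmetric $X$ yields $\sigma_{1,k} - \sigma_{2,k} \leq 2h_k$.

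The main obstacle is the precise bookkeeping that identifies $\langle u, Mu\rangle$ with $\sigma_{1,k}\vol_k(X) - \Sigma_k(X)$: one has to carefully combine the $g \leftrightarrow g^{-1}$ renaming (legitimate precisely because $X$ is edge-symmetric) with the Perron-type row identity for $B^kP$ in order to collapse the $X$-diagonal piece of the inner sum and to recognize the $X^c$-complementary piece as the boundary quantity $\Sigma_k(X) = E_k(X,X^c)$. Everything else is the two-line Cheeger manipulation together with the elementary geometric-vs.-minimum estimate $\vol_k(X)\vol_k(X^c) \geq \tfrac{1}{2}[\vol_k(X)\wedge\vol_k(X^c)]$.
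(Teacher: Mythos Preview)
Your proof is correct and essentially identical to the paper's. Your test vector $w = x_{1,k}\cdot\IND_X - \vol_k(X)\,x_{1,k}$ equals $x_{1,k}(e)\bigl[\vol_k(X^c)\IND_X(e) - \vol_k(X)\IND_{X^c}(e)\bigr]$, which is a scalar multiple of the paper's choice $x(e) = x_{1,k}(e)\bigl[\vol_k(X)^{-1}\IND_X(e) - \vol_k(X^c)^{-1}\IND_{X^c}(e)\bigr]$; consequently both arguments arrive at the same intermediate bound $\sigma_{1,k} - \sigma_{2,k} \le \Sigma_k(X)/(\vol_k(X)\vol_k(X^c))$ (note $1/\vol_k(X) + 1/\vol_k(X^c) = 1/(\vol_k(X)\vol_k(X^c))$ since the volumes sum to $1$), and the key identity $\sum_{e,f\in X} B^k_{ef}\,x_{1,k}(e)\,\check x_{1,k}(f) = \sigma_{1,k}\vol_k(X) - \Sigma_k(X)$ is used in the same way.
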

\begin{proof}
The argument is standard. For simplicity, we drop the index $k$. From Courant-Fisher min-max Theorem, we have
$$
\sigma_{2} = \max_{ \langle  x  , x_ 1 \rangle  = 0 }\frac{ \langle x , B^k P x  \rangle }{ \|x \|^2} =  \max_{ \langle  x  , x_ 1 \rangle  = 0} \frac{ \langle \check x , B^k x  \rangle }{ \|x \|^2} .
$$
Let $X\subset \vec E$ be edge-symmetric. We set $$x (e) = \frac{ x_1 (e)} {  \vol(X) }\IND ( e \in X) - \frac{ x_1 (e)}{  \vol(X^c)} \IND ( e \in X^c) .$$
By construction $\langle x , x_1 \rangle = 0$ and $\|x \|^2  = 1 / \vol (X) + 1 / \vol(X^c)$. Hence, 
$$
\PAR{ \frac{1}{\vol (X)}+ \frac{1}{ \vol(X^c)}} \si_2  \geq  \langle \check x , B^k x \rangle =  \sum_{e ,f} B^k_{ef} x(e) \check x(f). 
$$
However, using the edge-symmetry of $X$ and $X^c$,
\begin{align*}
\langle \check x , B^k x \rangle  & =   \frac 1 {\vol(X)^2} \sum_{e , f \in X } B^k_{ef} x_1(e) \check x_1(f) + \frac 1 {\vol(X^c)^2} \sum_{e , f \in X^c } B^k_{ef} x_1(e) \check x_1(f)  -  \frac {2 \Sigma(X)} {\vol(X) \vol(X^c)}.
\end{align*}
Also,  using the singular value equation $B^k \check x_1 = \si_1 x_1$, 
$$
 \sum_{e ,f \in X} B^k_{ef} x_1(e) \check x_1(f)  =  \sum_{e  \in X , f  \in \vec E} B^k_{ef} x_1(e) \check x_1(f)  - \Sigma(X) = \si_ 1 \vol(X) - \Sigma(X),
$$
and similarly for $X^c$. So finally, 
$$
\PAR{ \frac{1}{\vol (X)}+ \frac{1}{ \vol(X^c)}} \si_2 \geq \PAR{ \frac{1}{\vol (X)}+ \frac{1}{ \vol(X^c)}} \si_1   -   \PAR{ \frac{1}{\vol (X)}+ \frac{1}{ \vol(X^c)}}^2 \Sigma(X).
$$
Since, for $x,x' >0$, $1 / x + 1 / x ' \leq  2 / (x \wedge x')$, it concludes the proof.
\end{proof}

The Alon-Boppana theorem gives a lower bound on the second largest eigenvalue of the adjacency matrix of a regular graph (see  \cite{MR1124768,MR2247919}). 
We conclude this paragraph with an elementary bound of this type. We introduce for $e \in \vec E$,
$$
S_k (e)  = \langle \delta_e, B^k \chi \rangle= \| B^k P \delta_{e}\|_1. 
$$
In words, $S_k (e)$ is the number of non-backtracking walks of length $k+1$ starting with $e$. As already pointed, if $B$ is irreducible, the Perron eigenvalue is the growth rate of the universal cover of the graph: for any $e \in \vec E$,
$$
\lambda_1 (B)  = \lim_{k \to \infty}  s_{1,k}^{1/k} = \lim_{k\to \infty} S_k (e)^{1/k}. 
$$
We observe that 
$$
s_{1,k} ^2 + (m-1) s^2_{2,k} \geq \tr ( B^k B^{*k})  \geq \sum_{e \in \vec E} S_{k} (e). 
$$
Hence, we find, 
\begin{equation}\label{eq:weakAB}
s^2_{2,k} \geq  \frac{1}{m} \sum_{e \in \vec E} S_{k} (e)  - \frac {s_{1,k} ^2} m .
\end{equation}
This last crude inequality gives a lower bound on the second largest singular value of $B^k$.

\section{Main results}\label{sec:main_results}
We now state our results on the non-backtracking spectra of Erd\H{o}s-R\'enyi graphs first, and Stochastic Block Models next.
\subsection{Erd\H{o}s-R\'enyi graphs}
Let the vector $\chi$ on $\dR^{\vec E}$ be defined as
$$
\chi (e) = 1,\; e \in \vec E.
$$
The Euclidiean norm of a vector $x \in \dR^d$ will be denoted by $\|x\|$. We have the following theorem.
\begin{theorem}\label{th:ER}
Let $G$ be an Erd\H{o}s-R\'enyi graph with parameters $(n, \alpha/n)$ for some fixed parameter $\alpha >1$. Then, with probability tending to $1$ as $n\to\infty$, the eigenvalues $\lambda_i(B)$ of its non-backtracking matrix $B$ satisfy
$$
 \lambda_1 (B)  =  \alpha  + o(1) \; \hbox{ and }  \;  |\lambda_2 (B) | \leq \sqrt \alpha + o(1).  
$$
Moreover the normalized Perron-Frobenius eigenvector associated to $\lambda_1 (B)$ is asymptotically aligned with 
$$
 \frac{  B^{\ell} B^{* \ell} \chi }{\|  B^{\ell}   B^{* \ell}  \chi\|},
$$
 where  $\ell \sim  \kappa \log_\alpha n $ for any $0 < \kappa < 1/6$.  
\end{theorem}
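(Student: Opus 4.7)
\medskip

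\noindent\textbf{Proof plan for Theorem~\ref{th:ER}.}

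The plan is to work not with $B$ directly but with its power $B^\ell$ for $\ell \sim \kappa \log_\alpha n$, and exploit the PT-symmetry from Section~\ref{sec:preliminaries}: since $B^\ell P$ is symmetric, \eqref{eq:BkSVD2} gives access to a clean singular value decomposition. By the quantitative version of \eqref{eq:PR1} announced as Proposition \ref{prop:sing2eig}, it will suffice to prove that the leading two singular values of $B^\ell$ satisfy $s_{1,\ell} = \alpha^\ell (1+o(1))$ and $s_{2,\ell} \leq \alpha^{\ell/2} \mathrm{poly}(\ell)$ with high probability; raising to the $1/\ell$ power and using that $\mathrm{poly}(\ell)^{1/\ell}\to 1$ when $\ell \asymp \log n$ will yield $\lambda_1(B)=\alpha+o(1)$ and $|\lambda_2(B)|\leq \sqrt{\alpha}+o(1)$.

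To control $B^\ell$, I would use the combinatorial identity developed in Section~\ref{sec:arg_er_v2}, which expresses $B^\ell$ as a main ``tree-like'' term plus a sum of products of matrices indexed by shorter walks and by tangle-free/tangled structures. On the tree-like part, the action of $B^\ell$ on $\chi$ is essentially multiplication by $\alpha^\ell$: in the sparse regime, the $\ell$-neighborhood of a typical edge is whp a Galton--Watson tree with Poisson$(\alpha)$ offspring, so the number of non-backtracking walks of length $\ell+1$ emanating from $e$ concentrates around $\alpha^\ell$. This gives $\langle \chi, B^\ell \chi\rangle = (1+o(1))\alpha^\ell \|\chi\|^2$ and thus $s_{1,\ell}\geq (1+o(1))\alpha^\ell$ by the variational characterization applied to the trial vector $\chi$ (and symmetrically $\check\chi=\chi$).

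The heart of the argument is then bounding the spectral norm of the remainder $R$, where $B^\ell\chi = \alpha^\ell \chi + \text{lower order} + R\chi$, and more generally the norm of each combinatorial block in the decomposition of $B^\ell$. I would use the trace method: estimate $\EE\,\Tr((RR^*)^{k})$ for a growing integer $k$, reducing to counting closed non-backtracking walks of length $2k\ell$ in the union of the relevant edge multigraphs. Following F\H{u}redi--Koml\'os~\cite{MR637828}, walks that traverse each edge at least twice and satisfy the tangle-free restriction contribute $O(\alpha^{k\ell}\mathrm{poly}(k\ell))$, and an optimization over $k \asymp \log n / \log\log n$ gives $\|R\|\leq \alpha^{\ell/2}\ell^{c}$ whp. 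Once this is in place, the approximate singular value decomposition of $B^\ell$ together with the Bauer--Fike-type perturbation lemmas of Section~\ref{subsec:approx} yields $s_{2,\ell} \leq \alpha^{\ell/2}\mathrm{poly}(\ell)$; the condition $\kappa<1/6$ arises from matching the combinatorial error exponent (which is of order $\ell^{3}$ per tangle, typical in this type of argument) against $\alpha^{\ell/2}$ so that, after taking $\ell$-th roots, the bound $\sqrt\alpha + o(1)$ survives.

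For the eigenvector statement, I would write
\[
B^\ell B^{*\ell}\chi = \sum_{j=1}^m s_{j,\ell}^2 \langle \chi, x_{j,\ell}\rangle\, x_{j,\ell}.
\]
Because $x_{1,\ell}\to \xi$ (the Perron eigenvector of $B$, which has positive entries) by \eqref{eq:PR1}, the inner product $\langle \chi, x_{1,\ell}\rangle$ stays bounded away from $0$. The contribution of $j\geq 2$ is at most $s_{2,\ell}^2\|\chi\| = o(s_{1,\ell}^2)$ by the singular value gap established above, so the normalized vector $B^\ell B^{*\ell}\chi/\|B^\ell B^{*\ell}\chi\|$ is asymptotically aligned with $x_{1,\ell}$, hence with $\xi$. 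The main obstacle in executing this plan is the trace method bound on the remainder: one must set up the matrix expansion in Section~\ref{sec:arg_er_v2} so that every summand is amenable to tangle-free path counting, and then carry out the F\H{u}redi--Koml\'os-style analysis with enough care that the resulting polynomial-in-$\ell$ factors fit under the $\kappa < 1/6$ threshold.
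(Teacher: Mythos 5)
Your overall plan is the paper's: pass to $B^\ell$ with $\ell\asymp\log_\alpha n$, split $B^\ell$ into a dominant rank-one piece plus a remainder via the tangle-free matrix expansion (Proposition~\ref{prop:expansion}), bound the remainder terms by a F\H{u}redi--Koml\'os-style trace method (Proposition~\ref{prop:norm_bounds}), pin down the dominant piece by coupling $\ell$-neighborhoods to a Poisson$(\alpha)$ Galton--Watson tree, and conclude via the Bauer--Fike-type perturbation result (Proposition~\ref{prop:sing2eig}). The choice $m\asymp\log n/\log\log n$ in the trace method and the telescoping decomposition of $B^{(\ell)}$ into $\Delta^{(\ell)}$, the $R^{(\ell)}_t$, the $S^{(\ell)}_t$, etc., are exactly what makes each combinatorial block amenable to tangle-free path counting, as you anticipate.

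There is, however, a gap in your eigenvector argument. You want $\langle\chi,x_{1,\ell}\rangle$ ``bounded away from $0$'' on the strength of \eqref{eq:PR1}, but \eqref{eq:PR1} is a qualitative limit $k\to\infty$ at \emph{fixed} $n$ with no rate; it cannot be invoked along a coupled sequence $\ell=\ell(n)\to\infty$, $n\to\infty$. And even granting $x_{1,\ell}\approx\xi$, positivity of $\xi$ does not by itself give a lower bound on $\langle\chi,\xi\rangle/\|\chi\|$ --- that $\xi$ is not concentrated on a vanishing fraction of edges is precisely what needs proof. The paper avoids both issues: it works with the \emph{explicit} vectors $\varphi=B^\ell\chi/\|B^\ell\chi\|$ and $\zeta=B^\ell B^{*\ell}\chi/\|B^\ell B^{*\ell}\chi\|$, proves the needed overlap $\langle\zeta,\check\varphi\rangle\ge c_0$ together with $c_0\alpha^\ell\le\theta\le c_1\alpha^\ell$ by the local (GW coupling) analysis (Proposition~\ref{prop:Bellpsi}), and then reads off the eigenvector alignment from the quantitative eigenvector estimate built into Proposition~\ref{prop:sing2eig}. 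Note also that Proposition~\ref{prop:sing2eig} is not consumed by bounds on $s_{1,\ell},s_{2,\ell}$ alone: its hypotheses require a rank-one decomposition $B^\ell=\theta\,\zeta\,\check\varphi^*+R$ with an explicit lower bound on $\langle\check\varphi,\zeta\rangle$ --- the very overlap whose justification your sketch elides --- so the local analysis is load-bearing twice: once for $s_{1,\ell}\asymp\alpha^\ell$, and once for that correlation. With these two points repaired (replace \eqref{eq:PR1} by Propositions~\ref{prop:Bellpsi} and \ref{prop:sing2eig}), your plan coincides with the paper's proof.
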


Theorem \ref{th:ER} is illustrated by Figure \ref{fig:simu}. We conjecture that the lower bound $|\lambda_2 (B)| \geq \sqrt \alpha - o(1)$ holds, it is reasonable in view of Figure \ref{fig:simu}. We shall prove a weaker lower bound, see forthcoming Remark \ref{re:wRama}. It is also an interesting open problem to study the convergence of the empirical distribution of the eigenvalues of $B$.

\begin{figure}[htb]\label{fig:simu}
\begin{center}\includegraphics[width = 8cm]{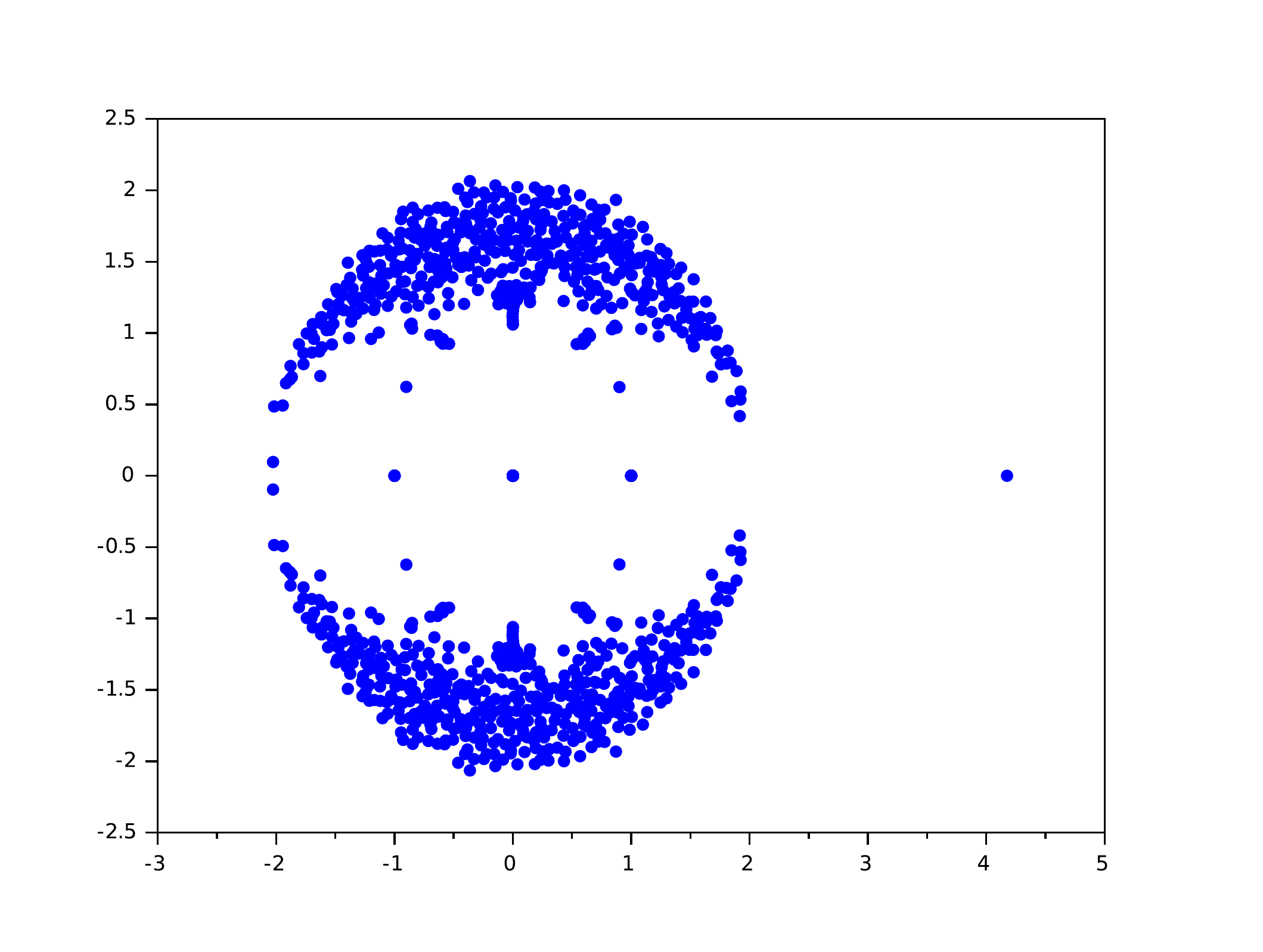} \quad  \includegraphics[width = 8cm]{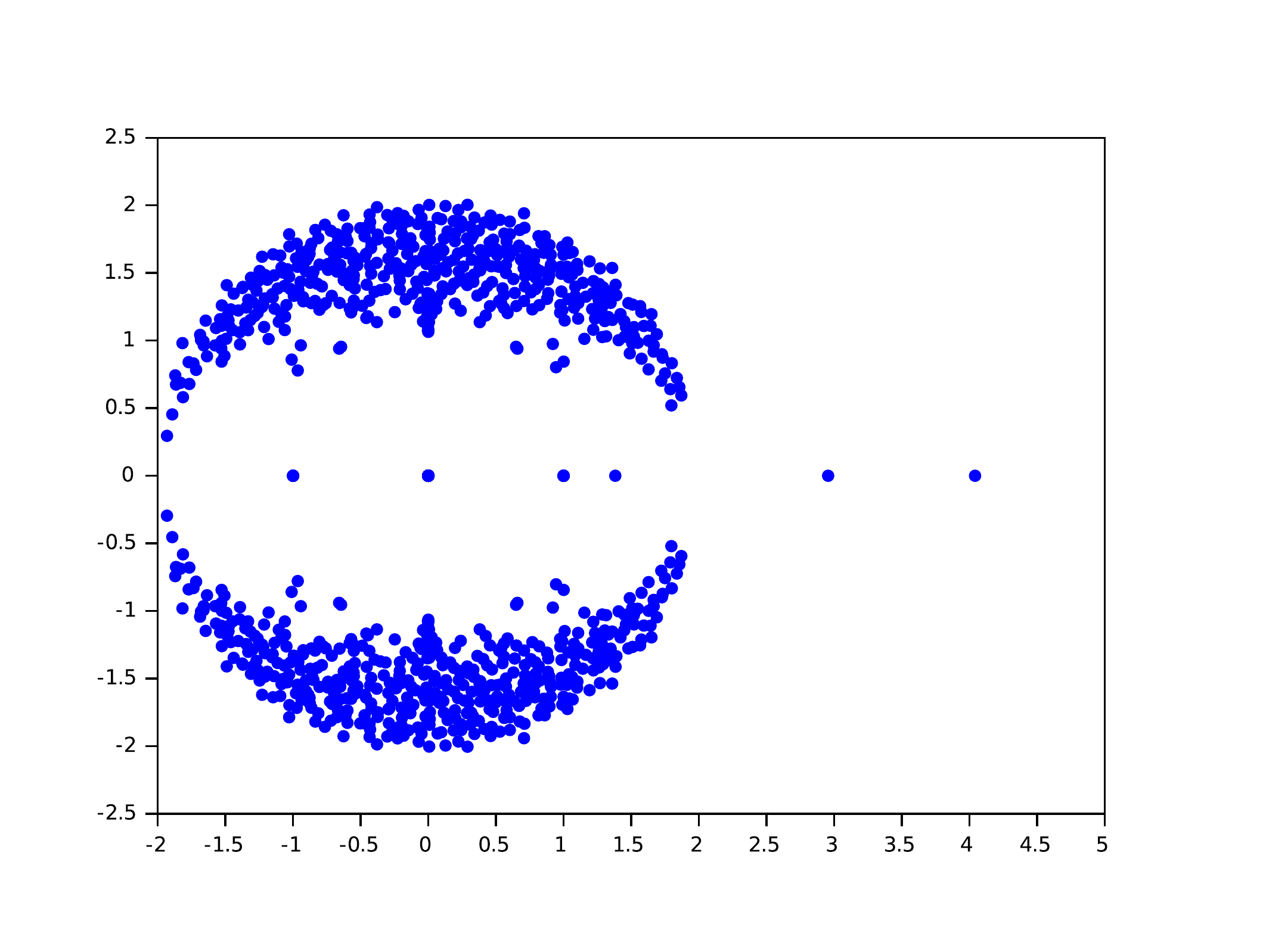}
\caption{Left : eigenvalues of $B$ for a realization of an Erd\H{o}s-R\'enyi graph with parameters  $(n, \alpha/n)$ with $n = 500$, $\alpha =4$. Right : eigenvalues of $B$ for Example \ref{ex:ex2} with $n = 500$, $r =2$, $a = 7$, $b=1$.}
\end{center}\end{figure}

\subsection{Stochastic Block Model}\label{sec:main_sbm}

For integer $k \geq 1$, we set $[k] = \{1, \ldots, k\}$. 
We consider a random graph $G = ( V,E)$ on the vertex set $V = [n]$ defined as follows.   Each vertex $v \in [n]$ is given a {\em  type} $\sigma_n(v)$ from the set $[r]$ where the number of types $r$ is assumed fixed and the map $\sigma_n : [n] \to [r]$ is such that, for all $i \in [r]$,
\begin{equation}\label{eq:convMET}
\pi_n(i) := \frac 1 n \sum_{v=1} ^n \IND ( \sigma_n (v) = i) = \pi(i) + o \PAR{ 1},
\end{equation} 
for some probability vector $\pi = (\pi(1), \cdots, \pi(r))$. For ease of notation, we often write $\si$ in place of $\si_n$. 

Given a symmetric matrix  $W \in M_{r} (\dR_+)$ we assume that there is an edge between vertices $u$ and $v$ independently with probability
$$\frac{W(\sigma(u),\sigma(v))}{n}\wedge 1.$$
We set $\Pi = \diag ( \pi(1), \ldots, \pi(r))$ and introduce the mean
progeny matrix $M = \Pi W$ (the branching process terminology will be
clear in Section \ref{subsec:MTGW}). Note that the eigenvalues of $M$ are the same as the ones of the
symmetric matrix $S = \Pi^{1/2}W\Pi^{1/2}$ and in particular are
real-valued. They are also the same as the ones of the expected adjacency matrix
$\bar{A}:=\dE(A)$.
We denote them by $\mu_k$ and order them by their absolute value,
\BEAS
| \mu_r | \leq \dots \leq | \mu_2 | \leq  \mu_1 ,
\EEAS
We shall make the following assumptions:
\begin{equation}\label{hyp1}
\mu_1>1\hbox{ and }M\hbox{ is positively regular,}
\end{equation}
i.e. for some integer $k \geq 1$, $M^k$ has positive coefficients. In particular, $\mu_1> \max_{k\geq 2}|\mu_k|$ is the Perron-Frobenius eigenvalue. It implies notably that for all $i \in [r]$, $\pi(i) >0$. We define $r_0$ by  
$$
\mu_k^2 > \mu_1 \quad \hbox{ for all } k \in [r_0] \quad  \hbox{ and } \quad \mu^2_{r_0 +1} \leq \mu_1,
$$
 (with $\mu_{r+1} = 0$). Since $M = \Pi^{1/2} S \Pi^{-1/2}$, the matrix $M$ is diagonalizable. Let $\{u_i\}_{i\in [r]}$ be an orthonormal basis of eigenvectors of $S$ such that $S u_i = \mu_i u_i$. Then $\phi_i := \Pi^{-1/2} u_i$ and  $\psi_i = \Pi^{1/2} u_i$ are the left and right-eigenvectors associated to eigenvalue $\mu_i$, $\phi_i ^* M = \mu_i \phi^* _i$, $ M \psi_i = \mu_i \psi_i$. We get 
\begin{eqnarray}
\langle \phi_i , \psi_j \rangle  =  \delta_{ij}, &\mbox{and, }&
\langle \phi_i , \phi_j \rangle_{\pi} =  \delta_{ij} \label{eq:phiONpi},
\end{eqnarray}
where  $\langle x , y \rangle_{\pi} = \sum_{k} \pi(k) x_k y_k$ denotes the usual inner product in $\ell^2 (\pi)$. The following spectral decompositions will also be useful
\begin{eqnarray}
M = \sum_{k =1} ^{r} \mu_k  \psi_k   \phi_k ^*\quad  \hbox{ and } \quad W = \sum_{k =1} ^{r} \mu_k  \phi_k  \phi_k ^* \label{eq:Wspec},
\end{eqnarray}
where the second identity comes from $\psi_k = \Pi \phi_k$ and $W =\Pi^{-1} M$.

We will make the further assumption that each vertex type has the same asymptotic average degree $\alpha > 1$, i.e., 
\BEA
\label{cond:deg} \alpha =\sum_{i=1}^r \pi(i) W_{ij} = \sum_{i=1}^r M_{ij} \quad  \; \hbox{ for all } j\in [r].
\EEA
This entails that $M^*/ \alpha$ is a stochastic matrix and we then have
\begin{equation}\label{eq:stoch}
\mu_1=\alpha >1 , \quad \phi_1 = \IND \quad  \hbox{ and } \; \psi_1 =  \pi^*.
\end{equation} 
We will also assume that a quantitative version of \eqref{eq:convMET} holds, namely that for some $\gamma \in (0,1]$,
\begin{equation}\label{eq:defgamma}
\| \pi - \pi_n \|_{\infty} = \max_{i  \in [r]} | \pi(i) - \pi_n(i) | = O ( n ^{-\gamma}). 
\end{equation}

The random graph $G$ is usually called the stochastic block model (SBM for short) or inhomogeneous random graph, see Bollob{\'a}s, Janson and Riordan \cite{MR2337396} and Holland, Laskey and Leinhardt~\cite{Holland83}. A popular case is when the map $\sigma$ is itself random and $\sigma(v)$ are i.i.d. with distribution  $(\pi(1), \cdots , \pi(r))$. In this case, with probability one, condition \eqref{eq:defgamma} is met for any $\gamma < 1/2$.

\begin{example}
If $r=2$, then we have $\pi(1)=1-\pi(2)$. Under condition
(\ref{cond:deg}), we have $W_{22} =  (\pi(1)W_{11}+(1-2\pi(1))W_{12}) / (1-\pi(1))$
so that
$\mu_1= \alpha =\pi(1)W_{11}+(1-\pi(1))W_{12}$ and $\mu_2 =\pi(1) \left(W_{11}-W_{12}\right)$.
\end{example}

\begin{example}\label{ex:ex2}
If $r\geq 2$, $\pi(i)= 1 / r$ and $W_{ii}=a\neq b = W_{ij}$ for all
$i\neq j$ so that condition (\ref{cond:deg}) is satisfied. We have
$\mu_1=\alpha =  (a+(r-1)b  ) / r$ and $\mu_2= \ldots = \mu_r = (a-b )/ r$.
\end{example}

For $k \in [r]$, we  introduce the vector on $\dR^{\vec E}$,
\begin{equation}\label{eq:def_chi}
\chi_k (e) = \phi_k ( \sigma (e_2) )  \quad \hbox{ for all $e \in \vec E$}.
\end{equation}
In particular, $\chi_1 = \chi$.  
Our main result is the following generalization of Theorem \ref{th:ER}.

\begin{theorem}\label{th:main}
Let $G$ be an SBM as above such that hypotheses~(\ref{hyp1},\ref{cond:deg},\ref{eq:defgamma}) hold. Then 
with probability tending to $1$ as $n\to\infty$,  
\begin{align*}
&  \lambda_k (B)  =  \mu_k  + o(1) \; \hbox{ for } k\in [r_0], \; \hbox{ and for $k > r_0$, }  \;  |\lambda_{k} (B) | \leq \sqrt \alpha+ o(1).
\end{align*}
Moreover,  if $\mu_k$ is a simple eigenvalue of $M$ for some $k\in[r_0]$, then a normalized  eigenvector, say $\xi_k$, of $\lambda_k (B)$ is asymptotically aligned with 
\begin{equation}\label{eq:defpseudvec}
 \frac{  B^{\ell}   B^{*\ell} \check \chi_k }{\|  B^{\ell} B^{*\ell} \check \chi_k\|},
\end{equation}
 where  $\ell \sim  \kappa \log_\alpha n $ for any $0 < \kappa < \gamma /6$.  Finally, the vectors $\xi_k$ of these simple eigenvalues are  asymptotically orthogonal.
\end{theorem}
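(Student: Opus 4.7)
The strategy mirrors the proof of Theorem \ref{th:ER}, but with two new ingredients: the type-indexed ``pseudo-eigenvectors'' $\chi_k$ replacing the single vector $\chi$, and multi-type Galton--Watson trees replacing the single-type Poisson branching process. The core algebraic fact driving everything is that the subspace $\mathrm{span}(\chi_1,\ldots,\chi_{r_0})$ should be approximately $B$-invariant, with $B$ acting on it (up to $o(1)$ errors) as $\mathrm{diag}(\mu_1,\ldots,\mu_{r_0})$. Concretely, the plan is to show that for $\ell = \kappa \log_\alpha n$ with $\kappa < \gamma/6$,
\begin{equation*}
B^\ell \check\chi_k = \mu_k^\ell \chi_k + \text{error}, \qquad k \in [r_0],
\end{equation*}
with the error sub-dominant compared to the $r_0$ signal directions and with the residual action of $B^\ell$ on the orthogonal complement bounded in norm by $(\sqrt{\alpha})^\ell n^{o(1)}$.

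First I would combinatorially expand $B^\ell$ along non-backtracking walks, following the decomposition of Section \ref{sec:arg_er_v2} (elaborating on \cite{LM13}). I would split the sum into the dominant \emph{tangle-free} walks (those whose underlying subgraph is nearly a tree) and a residual \emph{tangled} part. The tangle-free part is further decomposed into a \emph{main} term whose matrix entries depend only on the type map $\sigma$, and a \emph{fluctuation} term encoding the deviations of the edge indicators $\IND_{\{u,v\}\in E}$ from their means $W(\sigma(u),\sigma(v))/n$. This decomposition refines the one used for Erd\H{o}s--R\'enyi so as to retain the type information along each step of the walk.

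Next I would analyze the main term via the local coupling of the SBM neighborhoods with multi-type Galton--Watson trees (Section \ref{sec:local}) and the Kesten--Stigum-type convergence established in Section \ref{subsec:MTGW}: iterating the mean-progeny matrix $M$ along the tree and using $M \phi_k = \mu_k \phi_k$ transfers the identity $M^\ell \phi_k = \mu_k^\ell \phi_k$ into the approximate identity $B^\ell \check\chi_k \approx \mu_k^\ell \chi_k$ in $\ell^2$-norm, with polynomially small error. The fluctuation and tangled parts are controlled by the trace-method / F\H{u}redi--Koml\'os estimates of Section \ref{sec:path_counts}, which give operator-norm bounds of order $(\sqrt\alpha)^\ell n^{o(1)}$ with high probability. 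The constraint $\kappa < \gamma/6$ comes from balancing three error sources: the local tree approximation, the type-fraction error $O(n^{-\gamma})$ from \eqref{eq:defgamma}, and the $n^{o(1)}$ slack in the trace-method bound.

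Combining these inputs, the non-normal perturbation result of Section \ref{subsec:approx} (of Bauer--Fike type) yields the eigenvalue conclusions: the $r_0$ leading eigenvalues of $B$ are asymptotic to $\mu_1,\ldots,\mu_{r_0}$, while the remaining eigenvalues lie within $\sqrt\alpha + o(1)$ in modulus. For the eigenvector statement, I would invoke the quantitative SVD-to-eigenvector bound (the forthcoming Proposition \ref{prop:sing2eig}) applied to the decomposition \eqref{eq:BkSVD2} of $B^\ell$: the top $r_0$ right singular vectors concentrate on $\mathrm{span}(\chi_1,\ldots,\chi_{r_0})$, and applying $B^\ell B^{*\ell}$ to $\check\chi_k$ projects onto the $\lambda_k(B)$-eigendirection, giving \eqref{eq:defpseudvec} after normalization when $\mu_k$ is simple. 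Asymptotic orthogonality of the $\xi_k$ follows from the near-orthogonality $\langle \chi_i, \chi_j\rangle = n\langle \phi_i, \phi_j\rangle_\pi + o(n) = n\delta_{ij} + o(n)$ (via \eqref{eq:phiONpi} and \eqref{eq:defgamma}), together with the PT-symmetry heuristic following \eqref{eq:PR1} made rigorous through \eqref{eq:check}.

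The main obstacle is the transfer in Step 2: turning the deterministic multi-type eigen-identity $M^\ell \phi_k = \mu_k^\ell \phi_k$ into a concentration statement for the random matrix $B^\ell$ acting on $\check\chi_k$. In the regime where $\mu_k^2$ is only slightly larger than $\mu_1 = \alpha$, the signal-to-noise gap $\mu_k^\ell / \alpha^{\ell/2}$ is merely polynomial in $n$, so every error term must be controlled with sub-polynomial slack. This is precisely why the expansion must be carried out against a tangle-free truncation rather than a naive Neumann-type expansion, and why the local branching-process analysis must be quantified (not merely stated in distribution) with errors compatible with $\ell = \Theta(\log n)$.
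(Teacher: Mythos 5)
Your high-level architecture (tangle-free expansion of $B^\ell$, local coupling with multitype Galton--Watson trees, trace-method norm bounds, Bauer--Fike) matches the paper's. But there is a genuine gap in the central approximation you propose to drive the argument, namely the claim that
$$
B^\ell \chi_k \;=\; \mu_k^\ell \chi_k \;+\; \text{error}
$$
with the error sub-dominant. This is false. By the Kesten--Stigum convergence of Theorem~\ref{th:kestenstigum}, the quantity $B^\ell\chi_k(e)/\mu_k^\ell$ (which, when the $\ell$-neighborhood of $e_2$ is a tree, equals $\langle\phi_k,Y_\ell(e)\rangle/\mu_k^\ell$) converges to $\chi_k(e) + X_k(e)$ where $X_k$ is the centered martingale limit with variance of order one. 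The random correction $X_k(e)$ is of the same magnitude as the ``signal'' $\chi_k(e)$, not sub-dominant, so $\|B^\ell\chi_k - \mu_k^\ell\chi_k\| = \Theta(\mu_k^\ell\sqrt n)$, i.e.\ the same order as $\|B^\ell\chi_k\|$ itself. Consequently $\mathrm{span}(\chi_1,\dots,\chi_{r_0})$ is \emph{not} approximately $B$-invariant, the deterministic vectors $\chi_k$ are not close to eigenvectors, and the decomposition $B^\ell \approx \sum_k \mu_k^\ell \chi_k\chi_k^*/\|\chi_k\|^2 + R$ with $\|R\| = O(\alpha^{\ell/2}n^{o(1)})$ does not hold. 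This is not a matter of tightening constants: it is exactly the random martingale fluctuation that makes the problem hard.

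The paper instead uses the \emph{random} vectors $\varphi_k = B^\ell\chi_k/\|B^\ell\chi_k\|$ and $\zeta_k = B^\ell B^{*\ell}\check\chi_k/\|B^\ell B^{*\ell}\check\chi_k\|$ as the approximate left/right singular directions, after Gram--Schmidt orthonormalization (Lemma~\ref{le:GS}); the rank-$r_0$ approximation is $D_0 = \sum_k\theta_k\bar\zeta_k\bar\varphi_k^*$, with residual controlled by Proposition~\ref{prop:Bellx2} on $H^\perp$ where $H=\mathrm{span}(\check\varphi_k)$ (not $\mathrm{span}(\chi_k)$), and the eigen-conclusions come from the rank-$r_0$ version Proposition~\ref{prop:sing2eig2}, not the rank-one Proposition~\ref{prop:sing2eig}. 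The inputs needed are not ``$B^\ell\chi_k \approx \mu_k^\ell\chi_k$'' but rather laws of large numbers for the random inner products $\langle\varphi_j,\varphi_k\rangle$, $\langle\zeta_j,\check\varphi_k\rangle$, $\langle\zeta_j,\zeta_k\rangle$, etc.\ (Proposition~\ref{prop:Bellphi2}), proved via the local analysis of Section~\ref{sec:local}. For the same reason your argument for asymptotic orthogonality of the $\xi_k$ is insufficient: since $\xi_k$ is not close to $\chi_k/\|\chi_k\|$, near-orthogonality of $\chi_j,\chi_k$ does not transfer. The paper needs the decorrelation Theorem~\ref{le:ouff2} for the cross-generation functionals $Q_{j,\ell},Q_{k,\ell}$, which in turn rests on the Markov random field structure of the type process along the tree (Lemma~\ref{mkv_field}) together with the $\pi$-orthogonality \eqref{eq:phiONpi}. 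This probabilistic decorrelation step has no counterpart in your outline.
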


It follows from this result that a non-trivial estimation of the node types $\sigma(v)$ is feasible on the basis of the eigenvectors $\{\xi_k\}_{2\le k\le r_0}$ provided $r_0>1$.  More precisely, for vertex type estimators $\hat{\sigma}(v):[n]\to[r]$ based on the observed random graph $G$, following Decelle et al.~\cite{Decelle11}, define the overlap $\hbox{ov}(\hat{\sigma},\sigma)$ as the minimum over permutations $p:[r]\to[r]$ of the quantity
\begin{equation}\label{eq:defoverlap}
\frac{1}{n}\sum_{v=1}^n\IND_{\hat{\sigma}(v)=p\circ\sigma(v)}-\max_{k\in[r]}\pi(k).
\end{equation}
We shall say that $\hat \si$ has asymptotic overlap $\delta$ if $\hbox{ov}(\hat{\sigma},\sigma)$ converges in probability to $\delta$ as $n$ grows. It has asymptotic positive  overlap if for some $\delta >0$, $\hbox{ov}(\hat{\sigma},\sigma) > \delta$ with probability tending to $1$ as $n$ grows.  Note that an asymptotic overlap of zero is always achievable by assigning to each vertex the type $k^*$ that maximizes $\pi(k)$. In the case where all communities have asymptotically the same size, i.e. $\pi(i)\equiv 1/r$, zero overlap is also achieved by assigning types at random.

As conjectured in~\cite{Decelle11} and proven in~\cite{MNS}, in the setup of Example 2 with $r=2$, the best possible overlap is $o(1)$ with high probability when $r_0=1$, i.e. when $\mu_2\le \sqrt{\mu_1}$. Conversely, adapting the argument in~\cite{LM13}, when $r_0>1$, we have the following

\begin{theorem}\label{th:vecteurs}
Let $G$ be an SBM as above such that hypotheses~(\ref{hyp1},\ref{cond:deg},\ref{eq:defgamma}) hold. Assume further that $\pi(i)\equiv 1/r$, that $r_0>1$ and that for some $k\in\{2,\ldots,r_0\}$, $\mu_k$ is a simple eigenvalue of $M$. Let $\xi_k\in \RR^{\vec E}$ be a normalized  eigenvector of $B$ associated with $\lambda_k (B) $.

Then, there exists a deterministic threshold $\tau \in\dR$,  a partition $(I^+,I^-)$ of $[r]$ and a random signing $\omega \in \{-1,1\}^V$ dependent of $\xi_k$ such that the following estimation procedure yields asymptotically positive overlap: assign to each vertex $v$ a label $\hat \sigma(v)$ picked uniformly at random from $I^+$ if $\omega (v) \sum_{e : e_2 = v} \xi_k(e)>  \tau/\sqrt{n}$ and from $I^-$ otherwise.
\end{theorem}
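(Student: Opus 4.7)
The plan is to combine Theorem~\ref{th:main}'s explicit pseudo-eigenvector representation with a per-vertex branching-process analysis, then threshold, adapting the strategy of~\cite{LM13}. By Theorem~\ref{th:main}, up to a global sign and $o(1)$ error in Euclidean norm, $\xi_k \approx v_k := B^\ell B^{*\ell}\check\chi_k / \|B^\ell B^{*\ell}\check\chi_k\|$ for $\ell\sim\kappa\log_\alpha n$ with $0<\kappa<\gamma/6$. Using $B^{*\ell} = PB^\ell P$ and $\check\chi_k = P\chi_k$, rewrite $B^\ell B^{*\ell}\check\chi_k = B^\ell P (B^\ell \chi_k)$. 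The inner vector $y_k(e) := (B^\ell\chi_k)(e) = \sum_f B^\ell_{ef}\phi_k(\sigma(f_2))$ is a weighted count of non-backtracking walks of length $\ell+1$ from $e$; coupling the local neighborhood of $e_2$ to a multi-type Galton--Watson tree (Section~\ref{sec:local}) and invoking the Kesten--Stigum convergence of Section~\ref{subsec:MTGW}---applicable since $k\leq r_0$ forces $\mu_k^2>\mu_1$---one gets $y_k(e)\approx \mu_k^\ell\phi_k(\sigma(e_2))$, with fluctuations of strictly smaller order.

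Iterating, $(B^\ell P y_k)(e)=\sum_f B^\ell_{ef}\, y_k(f^{-1})$ is a second-level walk count with weights $y_k(f^{-1})\approx \mu_k^\ell\phi_k(\sigma(f_1))$. A second branching-process computation, using that row sums of $M^*$ equal $\alpha$ by~\eqref{cond:deg} and that $\sum_s\mu_s^{\ell-1}\phi_s(\sigma(v))\langle\phi_k,\psi_s\rangle=\mu_k^{\ell-1}\phi_k(\sigma(v))$ by biorthogonality, gives
\[
 (B^\ell B^{*\ell}\check\chi_k)(e)\;\approx\;\alpha\,\mu_k^{2\ell-1}\,\phi_k(\sigma(e_2)).
\]
Combined with $\|B^\ell B^{*\ell}\check\chi_k\|^2\approx \alpha^3 n\,\mu_k^{4\ell-2}$ (obtained from $\sum_e\phi_k(\sigma(e_2))^2\approx \alpha n \langle\phi_k,\phi_k\rangle_\pi=\alpha n$), normalization yields $v_k(e)\approx \phi_k(\sigma(e_2))/\sqrt{\alpha n}$ for a typical edge. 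Summing over $e$ with $e_2=v$ and using the concentration $\deg(v)\to\alpha$, we conclude that on a high-probability subset of vertices
\[
 \sqrt{n}\,S_v \;=\; \sqrt{n}\sum_{e:\,e_2=v}\xi_k(e) \;\approx\; \omega(v)\,\sqrt{\alpha}\,\phi_k(\sigma(v)),
\]
where $\omega(v)\in\{\pm1\}$ absorbs both the global sign of $\xi_k$ and any local sign flips arising from the Euclidean-norm (rather than entrywise) nature of the approximation in Theorem~\ref{th:main}.

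Because $\phi_k$ is $\pi$-orthogonal to $\phi_1=\mathbf{1}$, it is not constant on $[r]$. Pick $\tau$ strictly between two distinct values of $\sqrt{\alpha}\phi_k(i)$ and set $I^+=\{i\in[r]:\sqrt{\alpha}\phi_k(i)>\tau\}$, $I^-=[r]\setminus I^+$; both are non-empty. The assignment rule then places $\hat\sigma(v)$ uniformly in the block containing $\sigma(v)$ for a positive fraction of vertices; since $\pi(i)\equiv 1/r$, the resulting confusion matrix differs from the baseline $\max_k\pi(k)=1/r$ by a positive constant, yielding $\hbox{ov}(\hat\sigma,\sigma)\geq \delta>0$ in probability.

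The main obstacle is obtaining per-entry (rather than merely $\|\cdot\|$) concentration of $(B^\ell B^{*\ell}\check\chi_k)(e)$: the composite operator $B^\ell P B^\ell$ does not correspond to a single non-backtracking walk, so controlling the fluctuations around $\alpha\mu_k^{2\ell-1}\phi_k(\sigma(e_2))$ demands a careful two-stage second-moment estimate that crucially exploits $\mu_k^2>\mu_1$---the same Kesten--Stigum threshold already central to Theorem~\ref{th:main}. A secondary difficulty is transferring the Euclidean-norm approximation of Theorem~\ref{th:main} to per-vertex statements about $S_v$, for which the freedom of a per-vertex signing $\omega$ is precisely tailored.
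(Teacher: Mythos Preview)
There is a genuine gap: your per-entry concentration claim is false, and this invalidates the thresholding argument. You assert that $y_k(e)=(B^\ell\chi_k)(e)\approx \mu_k^\ell\phi_k(\sigma(e_2))$ ``with fluctuations of strictly smaller order'', and similarly that $(B^\ell B^{*\ell}\check\chi_k)(e)\approx\alpha\mu_k^{2\ell-1}\phi_k(\sigma(e_2))$. But in the sparse regime the local neighborhood of a typical edge is a bounded-degree tree, and the Kesten--Stigum theorem (Theorem~\ref{th:kestenstigum}) only gives that $\langle\phi_k,Z_\ell\rangle/\mu_k^\ell$ converges to a random variable $X_k(\infty)+\phi_k(\sigma(e_2))$ with \emph{positive variance}; there is no law of large numbers at the level of a single edge. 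Likewise the paper's Theorem~\ref{th:growthQkl} shows $Q_{k,\ell}/\mu_k^{2\ell}$ converges in $L^2$ to a nondegenerate limit with mean $\mu_k\phi_k(\sigma(o))/(\mu_k^2/\alpha-1)$. Thus $\sqrt{n}\,S_v$ is not approximately $\sqrt{\alpha}\,\phi_k(\sigma(v))$ for most $v$; instead it is distributed like a random variable $X_{\sigma(v)}$ whose mean depends on $\sigma(v)$ but whose fluctuations are of the same order as that mean. Your thresholding between two values of $\sqrt{\alpha}\,\phi_k(i)$ therefore does not separate classes.

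What the paper actually does (Section~\ref{sec:vecteurs}) is to work at the distributional level: it proves a weak law of large numbers over vertices, namely $\tfrac1n\sum_v\IND_{\sigma(v)=i}\IND_{I_k(v)>t}\to\pi(i)\,\dP(X_i>t)$ (Lemmas~\ref{le:locmartv}--\ref{le:locmartvv}), and then shows via an integrability argument that because the $X_i$ have distinct means there exists $t_0$ with $\dP(X_+>t_0)\ne\dP(X_->t_0)$. Positive overlap then follows from Lemma~\ref{le:ncov}. Your use of the per-vertex signing $\omega(v)$ is also problematic: you invoke it to ``absorb local sign flips from the Euclidean-norm approximation'', but such a signing cannot be constructed from $\xi_k$ alone without knowing $\sigma(v)$. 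In the paper, $\omega$ is either a single global sign or i.i.d.\ randomness (Case~3), and the casework on whether the global sign is identifiable is precisely where the difficulty lies.
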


The reason for the existence of the signing $\omega \in \{-1,1\}^V$ in the above statement is that we do not know a priori whether the vector $\xi_k$ or $-\xi_k$ is asymptotically close to \eqref{eq:defpseudvec}. In the simplest case, we will be able to estimate this sign and the vector $\omega$ will be equal to $-\IND$ or $\IND$ and $I^+ = \{ i \in [r] : \phi_k (i) > 0 \}$, $I^- = [r] \backslash I^+$. 


\subsection{Notation}

We say that a sequence of events $E_n$ holds {\em with high probability}, abbreviated \whp, if $
\lim_{n \to \infty} \dP (E_n )   = 1. 
$
The operator norm of a square matrix $C$ is denoted by $$\| C\| =
\sup_{ x  \ne 0}\frac{ \|C x \|}{ \|x\|}.$$
We denote by $C^*$ the transpose of $C$.

Given a (non-oriented) graph $G=(V,E)$, we denote by $\gamma=(\gamma_0,\dots,
\gamma_k)$ a walk of length $k$ where each $\gamma_i\in V$ and
$\{\gamma_i, \gamma_{i+1}\}\in E$ for all $i\in\{0,\dot k-1\}$. We
also denote the concatenation of two walks $\gamma$ and $\gamma'$ by
$(\gamma,\gamma')$.
A walk is non-backtracking if for all $i\in \{0,\dots, k-2\}$,
$\gamma_i\neq \gamma_{i+2}$. A walk contains a cycle if there exists
$i\neq j$ with $\gamma_i=\gamma_j$.

\section{Algebraic tools: Perturbation of Eigenvalues and Eigenvectors}

\label{subsec:approx}

One main tool in our analysis is the Bauer-Fike Theorem. The form given below elaborates on the usual statement of the Theorem which in general  omits the second half.
\begin{theorem}\label{bauer-fike} (Bauer-Fike Theorem; see \cite[Theorem VI.5.1]{MR1477662}). 
Let $D$ be a diagonalizable matrix such that for some invertible matrix $V$ and diagonal matrix $\Lambda$ one has $D=V^{-1}\Lambda V$. Let $E$ be a perturbation matrix.

Then any eigenvalue $\mu$ of $D+E$ verifies
\begin{equation}\label{B-F-1}
\min_{i}|\mu-\lambda_i|\le \|E\| \cdot \|V\| \cdot \|V^{-1}\|,
\end{equation}
where $\lambda_i$ is the $i$-th diagonal entry of $\Lambda$. 

Denote by $R$ the right-hand side of~(\ref{B-F-1}) and $C_i:={\mathcal{B}}(\lambda_i,R)$ the ball centered at $\lambda_i$ with radius $R$. Let ${\mathcal{I}}$ be a set of indices such that
\begin{equation}
\left(\cup_{i\in {\mathcal{I}}} C_i\right)\bigcap \left(\cup_{i\notin {\mathcal{I}}} C_i\right)=\emptyset.
\end{equation}
Then the number of eigenvalues of $D+E$ in $\cup_{i\in {\mathcal{I}}} C_i$ is exactly $| {\mathcal{I}}|$. 
\end{theorem}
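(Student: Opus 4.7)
For the first half of the theorem, the plan is to run the standard Bauer-Fike argument. Suppose $\mu$ is an eigenvalue of $D+E$ not already equal to any $\lambda_i$, otherwise the bound is trivial. Then $\Lambda-\mu I$ is invertible, and since $D+E-\mu I$ is singular, conjugating by $V$ shows that $\Lambda-\mu I + VEV^{-1}$ is also singular; factoring the diagonal part out on the left, the matrix $I + (\Lambda-\mu I)^{-1} VEV^{-1}$ must be singular, so $\|(\Lambda-\mu I)^{-1} VEV^{-1}\|\ge 1$. Submultiplicativity together with the identity $\|(\Lambda-\mu I)^{-1}\| = 1/\min_i |\mu-\lambda_i|$, which holds because $\Lambda-\mu I$ is diagonal, delivers the bound \eqref{B-F-1}.

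For the second half, I would use a homotopy argument. Consider $M(t) := D+tE$ for $t\in[0,1]$. Applying the first half to the perturbation $tE$ shows that every eigenvalue of $M(t)$ lies within distance $tR\le R$ of some $\lambda_i$, so the spectrum of $M(t)$ stays inside $\cup_i C_i$ for every $t$. Since by assumption $\cup_{i\in\mathcal{I}} C_i$ and $\cup_{i\notin\mathcal{I}} C_i$ are compact and disjoint, I can choose a simple closed contour $\Gamma$ that encloses exactly the first union and lies at positive distance from $\cup_i C_i$. By the argument principle, the integer
$$
N(t) = \frac{1}{2\pi i}\oint_\Gamma \tr\bigl((\mu I - M(t))^{-1}\bigr)\, d\mu
$$
counts the eigenvalues of $M(t)$ lying inside $\Gamma$. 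Since $\Gamma$ is disjoint from the spectrum of $M(t)$ uniformly in $t$, the resolvent is jointly continuous on $\Gamma\times[0,1]$, so $N(t)$ depends continuously on $t$; being integer-valued it is constant, and comparison with $t=0$, where exactly the eigenvalues $\{\lambda_i\}_{i\in\mathcal{I}}$ are enclosed, yields $N(1)=|\mathcal{I}|$.

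The one genuine obstacle is making the contour argument rigorous, i.e.\ ensuring that $\Gamma$ can be chosen so that no eigenvalue of $M(t)$ ever touches it for $t\in[0,1]$. This is exactly what the first half provides: it confines the spectrum of $M(t)$ to the closed union $\cup_i \bar{C}_i$ uniformly in $t$, and the disjointness hypothesis on $\mathcal{I}$ leaves a strip around $\cup_{i\in\mathcal{I}} C_i$ through which $\Gamma$ can be threaded. Once this separation is arranged, the rest is routine: Part~1 is a one-line singular-matrix computation, and Part~2 reduces to the standard complex-analytic continuity of the eigenvalue count under a smooth deformation of the matrix.
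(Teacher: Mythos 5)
The paper does not prove this theorem itself---it cites it from Bhatia's \emph{Matrix Analysis} (Theorem VI.5.1), so there is no in-paper argument to compare against. That said, your proof is the standard one and it is essentially correct.

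Your first half is exactly the classical Bauer--Fike argument and is fully rigorous: passing to $\Lambda - \mu I + VEV^{-1}$, factoring out $\Lambda - \mu I$, observing that $I + (\Lambda - \mu I)^{-1}VEV^{-1}$ singular forces the norm of the second summand to be at least $1$, and then using submultiplicativity plus $\|(\Lambda - \mu I)^{-1}\| = 1/\min_i|\mu - \lambda_i|$.

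Your second half is also correct in spirit, and the homotopy $M(t) = D + tE$ is the right device: Part 1 applied to $tE$ confines $\mathrm{spec}(M(t))$ to $\cup_i C_i$ uniformly in $t$, and then the eigenvalue count inside the region is constant by a continuity argument. The one place to be careful is the phrase ``a simple closed contour $\Gamma$ that encloses exactly the first union.'' A single Jordan curve may not exist: $\cup_{i\in\mathcal{I}}C_i$ can have several connected components, and even when connected it can fail to be simply connected, with some $C_j$, $j\notin\mathcal{I}$, sitting in a ``hole.'' The remedy is standard and cheap: since $\cup_{i\in\mathcal{I}}C_i$ and $\cup_{i\notin\mathcal{I}}C_i$ are disjoint compact sets, there exists a \emph{cycle} (a finite disjoint union of rectifiable Jordan curves) $\Gamma$ contained in the complement of $\cup_i C_i$ whose winding number is $1$ around every point of $\cup_{i\in\mathcal{I}}C_i$ and $0$ around every point of $\cup_{i\notin\mathcal{I}}C_i$. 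The argument principle and the continuity of the resolvent on $\Gamma \times [0,1]$ then go through exactly as you wrote. With ``simple closed contour'' replaced by ``cycle'' the proof is complete.
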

The following proposition on perturbation of rank one matrices will be  a basic ingredient to deduce from expressions like \eqref{eq:BkSVD} quantitative versions of \eqref{eq:PR1}. It relies on the stability of eigenvalues and eigenvectors of matrices which are not too far from being normal ($A$ is normal if and only if $A^*A=A A^*$; see \cite{MR1091716}), and with a well separated spectrum.

\begin{proposition}\label{prop:sing2eig}
Let $A \in M_n ( \dR)$, $\ell' , \ell \geq 1$ be mutually prime integers, $\theta \in \dR\setminus\{0\} $, and $c_0,\; c_1 > 0$ such that for any $ k \in \{ \ell, \ell' \}$, for some $x_k,y_k \in \dR^n$, $R_k \in M_n(\dR)$,
$$
A^k = \theta^k x_k y_k^* + R_k,
$$
with $\langle y_k , x_k \rangle \geq c_0$, $\|x_k \| \|y_k \| \leq c_1$ and 
$$\| R_k \| <   \frac{c_0^2 }{ 2(\ell\vee \ell') c_1 }  | \theta|^k.$$ Let $(\lambda_i)$, $1 \leq i \leq n$ be the eigenvalues of $A$, with $|\lambda_n| \leq \ldots \leq | \lambda_1|$. Then, $\lambda_1$ has multiplicity one and we have
$$
| \lambda_1 - \theta | \leq   C |\theta | /  \ell \quad \hbox{ and, for $i \geq 2$,} \quad \ABS{ \lambda_i}\leq \PAR{ \frac{2 c_1}{c_0} } ^{1 /\ell} \| R_\ell \|^{1/\ell},
$$
with $C=\pi/2+2\sqrt{c_1\vee 1}\log(2(c_1\vee c_0^{-1}))$. Moreover, there exists a unit eigenvector $\psi$ of $A$ with eigenvalue $\lambda_1$ such that 
$$
\NRM{ \psi - \frac{x_\ell}{\|x_\ell\|} } \leq 8 c_0^{-1} \|R_\ell\| |\theta|^ {-\ell}.
$$
\end{proposition}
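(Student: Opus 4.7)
The plan is to view each matrix $A^k$, for $k\in\{\ell,\ell'\}$, as a rank-one perturbation $A^k=D_k+R_k$ with $D_k:=\theta^k x_k y_k^*$, to apply the Bauer--Fike theorem (Theorem~\ref{bauer-fike}) to locate the eigenvalues of $A^k$, and to combine the conclusions for $k=\ell$ and $k=\ell'$ by an arithmetic argument based on $\gcd(\ell,\ell')=1$.

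\emph{Step 1 (Bauer--Fike for a rank-one perturbation).} The rank-one matrix $D_k$ is diagonalizable with spectrum $\{\nu_k,0,\ldots,0\}$, where $\nu_k:=\theta^k\langle y_k,x_k\rangle$ is attained on $\mathrm{span}(x_k)$ and $0$ on $y_k^\perp$. A diagonalizer $V_k$ adapted to the splitting $\dR^n=\mathrm{span}(x_k)\oplus y_k^\perp$ has condition number $\|V_k\|\|V_k^{-1}\|$ controlled by the condition number $\|x_k\|\|y_k\|/|\langle y_k,x_k\rangle|\le c_1/c_0$ of the associated spectral projection $x_ky_k^*/\langle y_k,x_k\rangle$. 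With $R:=\|V_k\|\|V_k^{-1}\|\|R_k\|$, the hypothesis on $\|R_k\|$ makes $\mathcal{B}(\nu_k,R)$ and $\mathcal{B}(0,R)$ disjoint. Both parts of Bauer--Fike then yield that $A^k$ has exactly one eigenvalue $\mu_k$ in $\mathcal{B}(\nu_k,R)$ and $n-1$ eigenvalues in $\mathcal{B}(0,R)$; in particular $\mu_k$ is simple. As the eigenvalues of $A^k$ are $k$-th powers of those of $A$, the unique large one must be $\mu_k=\lambda_1(A)^k$.

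\emph{Step 2 (Reality and the coprimality argument).} Simplicity of $\mu_k$ forces $\lambda_1(A)$ to be real: otherwise $\bar\lambda_1(A)$ would be a second eigenvalue of $A$ of the same modulus, and $\bar\lambda_1(A)^k$ would be an eigenvalue of $A^k$ distinct from $\mu_k$ but of equal modulus, contradicting uniqueness in $\mathcal{B}(\nu_k,R)$. Hence $\lambda_1(A)$ is a real $k$-th root of $\mu_k\approx\theta^k\langle y_k,x_k\rangle$. For $k$ odd there is a unique real $k$-th root (with sign $\mathrm{sgn}(\mu_k)=\mathrm{sgn}(\theta)$); for $k$ even there are two (of opposite signs). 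Since $\gcd(\ell,\ell')=1$, at least one of $\ell,\ell'$ is odd, so $\lambda_1(A)$ inherits the sign of $\theta$, and for both parities one can write $\lambda_1(A)=\theta\,\alpha_\ell^{1/\ell}$ with $\alpha_\ell:=\langle y_\ell,x_\ell\rangle+(\mu_\ell-\nu_\ell)\theta^{-\ell}$ real positive, close to $\langle y_\ell,x_\ell\rangle\in[c_0,c_1]$. The bound $|\lambda_1(A)-\theta|=|\theta|\,|\alpha_\ell^{1/\ell}-1|$ then follows; using $\alpha_\ell^{1/\ell}=\exp((\log\alpha_\ell)/\ell)$ together with $|e^w-1|\le|w|e^{|w|}$ and $|\log\alpha_\ell|\le\log(2(c_1\vee c_0^{-1}))$ yields the stated form of $C$.

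\emph{Step 3 (Other eigenvalues and the eigenvector).} For $i\ge 2$, Step 1 gives $|\lambda_i(A)|^\ell\le R$, and a sharpening of the diagonalizer bound to $\|V_\ell\|\|V_\ell^{-1}\|\le 2c_1/c_0$ (which is admissible because $c_0\le c_1$ by Cauchy--Schwarz) yields $R\le (2c_1/c_0)\|R_\ell\|$, hence $|\lambda_i(A)|\le(2c_1/c_0)^{1/\ell}\|R_\ell\|^{1/\ell}$ upon taking $\ell$-th roots. For the eigenvector: simplicity of $\mu_\ell$ implies that the one-dimensional eigenspace of $A^\ell$ at $\mu_\ell$ coincides with the eigenspace of $A$ at $\lambda_1(A)$. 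Rearranging $A^\ell\psi=\mu_\ell\psi$ yields $\psi\propto(\mu_\ell I-R_\ell)^{-1}x_\ell$; expanding the Neumann series $(\mu_\ell I-R_\ell)^{-1}=\mu_\ell^{-1}\sum_{j\ge 0}(R_\ell/\mu_\ell)^j$ and using $|\mu_\ell|\ge c_0|\theta|^\ell/2$ produces, after normalization, the bound $\NRM{\psi-x_\ell/\|x_\ell\|}\le 8c_0^{-1}\|R_\ell\||\theta|^{-\ell}$.

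The most delicate part is the quantitative step inside Step 2: turning the qualitative coprimality argument into the explicit constant $C=\pi/2+2\sqrt{c_1\vee 1}\log(2(c_1\vee c_0^{-1}))$ requires careful tracking of the error through the $\ell$-th root map (whose logarithmic amplification explains the $\log(2(c_1\vee c_0^{-1}))$), and the uniform bookkeeping of the factor $e^{|w|}$ in the exponential expansion together with the angular slack entering the sign selection is what produces the additional $\pi/2$ and $\sqrt{c_1\vee 1}$ terms.
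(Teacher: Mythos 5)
Your framework matches the paper's---Bauer--Fike applied to the rank-one perturbation $A^k=D_k+R_k$ for $k\in\{\ell,\ell'\}$---but Steps~2 and~3 take genuinely different routes, and Step~2 as written has a gap. The paper does not deduce that $\lambda_1$ is real; it keeps $\lambda_1$ a priori complex, controls $|\lambda_1|$ via $|e^x-1|\le|x|e^{x_+}$, and then controls $\arg(\lambda_1)=\omega$ quantitatively: writing $k\omega=2q\pi+\veps\pi$ for both $k=\ell$ and $k=\ell'$ with small $|\veps|$, it uses $\gcd(\ell,\ell')=1$ to force $q\ell'=q'\ell$ and conclude $\omega=\veps\pi/\ell$ modulo $2\pi$. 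That angular bound is what produces the $\pi/2$ in $C$. You instead invoke realness of $A$ to deduce $\lambda_1\in\dR$ from the simplicity of $\mu_k$; this is a legitimate and shorter route (and a weaker use of coprimality: you only need that one of $\ell,\ell'$ is odd, for the sign), but your justification is incomplete. You argue that if $\lambda_1\notin\dR$ then $\bar\lambda_1^k$ would be ``distinct from $\mu_k$ but of equal modulus''; however $\bar\lambda_1^k=\lambda_1^k$ whenever $2k\arg\lambda_1\equiv 0\pmod{2\pi}$, so the two need not be distinct. The contradiction in that case must come from multiplicity: the distinct eigenvalues $\lambda_1\ne\bar\lambda_1$ of $A$ each contribute at least one to the multiplicity of $\lambda_1^k$ in $A^k$, so this multiplicity is $\ge 2$, contradicting Bauer--Fike. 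With that fix the reality argument is sound. But then your closing remark---that the $\pi/2$ in $C$ comes from ``angular slack entering the sign selection''---is internally inconsistent with your own approach: once $\lambda_1$ is shown real with the correct sign there is no angular term, and you would obtain a smaller constant without the $\pi/2$; the $\pi/2$ only appears in the paper's complex-argument-tracking route.

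Two smaller points. In Step~1 you claim $\|V_k\|\|V_k^{-1}\|\le c_1/c_0$ and then in Step~3 call the bound $2c_1/c_0$ a ``sharpening''; that is backwards. The paper's explicit $2\times2$ reduction gives $\kappa(U)=\sqrt{(1+|b|)/(1-|b|)}$ with $|b|=\sqrt{1-|\langle\tilde x,\tilde y\rangle|^2}$, and the bound one actually gets is $\kappa(U)\le 2c_1/c_0$, not $c_1/c_0$; the extra factor of $2$ is needed and the estimates downstream should use $2c_1/c_0$ throughout. Finally, your Step~3 route for the eigenvector---writing $\psi\propto(\mu_\ell I-R_\ell)^{-1}x_\ell$ and expanding the Neumann series about $\mu_\ell^{-1}I$---is a genuinely different and arguably cleaner argument than the paper's orthogonal decomposition of $\psi$ across $\mathrm{span}(\tilde x_\ell,\tilde y_\ell)$ and its orthogonal complement, and (using $|\mu_\ell|\ge c_0|\theta|^\ell/2$ together with \eqref{eq:distS1}) it does land on the stated bound $8c_0^{-1}\|R_\ell\||\theta|^{-\ell}$.
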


\begin{proof}
We can assume without loss of generality that $\theta = 1$. 
We fix $k \in \{ \ell, \ell'\}$ and let $ \tilde x = x_k / \|x_k\|$, $\tilde y = y_k / \| y_k \|$, $\sigma =    \|x_k \| \|y_k\|$, $\nu = \langle y_k , x_k \rangle$.  We have 
$$A^{k} =  \sigma \tilde x \tilde y^* + R_k = S  + R_k.$$
Our objective is to apply Bauer-Fike Theorem.  To this end, we write $S = U DU^{-1}$, where $D = \diag( \nu, 0, \ldots , 0)$, $U = ( \tilde x, f_2, \ldots, f_n)$ with $f_1 = \tilde y $ and $(f_i)_{i \geq 2}$ is an orthogonal basis of $\tilde y^\perp$ (we will soon check that $U$ is indeed invertible).  We can also assume that $\tilde x \in \mathrm{span}  ( f_1, f_2)$. Obviously, the eigenvalues of $A^k$ coincide with the eigenvalues of $D + U^{-1} R_k U$. We have to compute the condition number of $U$ : $$\kappa (U) = \| U \| \| U^{-1} \|.$$ We consider the unitary matrix $V = (f_1, \ldots, f_n)$.  Let $a,b\in\dC$ be such that  $\tilde x = a f_1 + b f_2$. We find 
$$
V^* U  = 
\begin{pmatrix} 
W  &  0 \\
 0 &  I_{n-2} \\
\end{pmatrix}
\quad \hbox{ with } \quad W = \begin{pmatrix} 
a  &  0 \\
 b &   1 \\
\end{pmatrix}.$$
In particular, 
$$
\kappa ( U ) = \kappa ( V^* U ) = \kappa (W).
$$
As $|a|^2+|b|^2=1$ we have 
$$W^* W  = \begin{pmatrix} 
1  &  \bar b  \\
 b &   1 \\
\end{pmatrix}.
$$
The eigenvalues of $WW^*$ are $1 \pm |b|$. We deduce that 
$$
\kappa (U ) = \sqrt{\frac{ 1 + |b|}{  1 - |b|}}.
$$
Now, by assumption, $|b| =  \sqrt{1 - |a|^2} = \sqrt{ 1 - \ABS{\langle \tilde x , \tilde y \rangle}^2} \leq \sqrt{1 - c_0^2 c_1^{-2}} \leq 1 - c_0^2c_1^{-2} /2 $. We obtain that
$$
\kappa (U ) \leq \kappa = 2 c_1 c_0^{-1}.
$$
Notice that by assumptionn $\ell \vee \ell' \geq 2$ and $2 \kappa \| R_k \| < c_0 \leq| \nu|$.
An application of the Theorem~\ref{bauer-fike} to $D + U^{-1} R_k U$ then implies that there is a unique eigenvalue of $A^k$ in the ball $\{ z \in \dC : |  \nu - z | \leq \kappa \| R_k \| \}$ and all the other eigenvalues lie in the disjoint domain $\{ z \in \dC : | z | \leq \kappa \|R_k\| \}$. Consequently,
\begin{equation}\label{eq:lambdakA}
| \lambda_1^k  - \nu | \leq \kappa \| R_k \|  \quad \hbox{ and, for $i \geq 2$,} \quad \ABS{ \lambda _i}^k\leq \kappa \| R_k \| .
\end{equation}
In particular, the eigenvalue $\lambda_1^k$ has multiplicity one in $A^k$ and thus $\lambda_1$ has multiplicity one in $A$. 
We now bound the difference between $\lambda_1$ and $\theta = 1$. First, by assumption, 
\begin{equation}\label{eq:mukA}
c_0 \leq \nu \leq c_1.
\end{equation}
From \eqref{eq:lambdakA}, we deduce that $||\lambda_1^k|-\nu|\le c_0/2$ and hence
$c_0/ 2 \leq  | \lambda_1^{k}|  \leq 2c_1.$
Since for all $x\in\dR$, $|e^x - 1| \leq |x|e^{x_+} $, we get
$$
|  | \lambda_1|  - 1| \leq \ABS{ ( 2 c_1 )^{1/k} - 1 } \vee \ABS{ ( c_0 /2 )^{1/k} - 1 } \leq {c_2 / k}, 
$$
with $c_2 = \sqrt{(2c_1) \vee 1} \log ( 2 (c_1\vee c_0^{-1} ))$. 

We now control the argument $\omega \in (-\pi,\pi]$ of $\lambda_1 =  | \lambda_1|  \exp ( i \omega) $. By assumption, $\nu> c_0\ge 0$, hence by \eqref{eq:lambdakA} the real part of $ \lambda_1^{k} $ is positive. There thus exist  an integer $q\in\mathbb{Z}$ and some $\epsilon\in(-1/2,1/2)$ such that
$$
k\omega=2q\pi +\veps \pi.
$$
Since $|x| \leq (\pi/2) |\sin (x)|$ for $x \in [-\pi/2,\pi/2]$, we obtain from \eqref{eq:lambdakA} that
$$
\kappa \| R_k\|\ge |\lambda_1|^k |\sin(\veps\pi)|\ge |\lambda_1|^k 2|\veps|\ge c_0 |\veps|,
$$
so that $|\veps| \leq (\kappa /c_0)  \| R_k \| = c_1 c_0^{-1}  \| R_k \|$. The above holds for $k \in \{ \ell, \ell' \}$. Hence, with the notation $\veps  = \veps(\ell)$, $\veps' = \veps (\ell')$, $q = q(\ell)$, $q' = q(\ell')$ we have  
$$
\omega = \frac{2 q \pi }{  \ell}  + \frac{\veps \pi }{ \ell} =  \frac{2 q' \pi }{  \ell'}  + \frac{\veps' \pi }{ \ell'},
$$
so that 
$$
q \ell' - q' \ell = \frac{\veps' \ell -\veps \ell' }{2}. 
$$
Using the assumption $\|R_k\|< c_0^2/(2c_1 (\ell\vee \ell'))$, we find that both $\veps' \ell$ and $\veps\ell'$ have modulus strictly less than 1 and hence so does the right hand side of the last display. It follows that $q \ell' = q' \ell$. Since $\ell$ and $\ell'$ are mutually prime, we deduce that $\ell$  divides $q$ and $\ell'$ divides $q'$, so that modulo $2\pi$, $\omega=\veps\pi/\ell=\veps'\pi/\ell'$. Thus for $k \in \{ \ell, \ell'\}$,
$$
| \lambda_1 - 1 | \leq | \lambda_1 | | e^{i \pi \veps  / k }- 1 | + ||\lambda_1| - 1 | \leq \PAR{ 1 + \frac {c_2} k } \frac{\pi | \veps | }{k}+ \frac {c_2} k . 
$$
As $\epsilon<1/k$, the right-hand side of the above is no larger than $k^{-1}(2c_2+\pi/2)$ if $k\ge 2$, which must hold for some $k\in\{\ell,\ell'\}$. This concludes the proof of the claim of Proposition \ref{prop:sing2eig} on eigenvalues. 

Consider now a normed eigenvector $z$ of $A$ associated with $\lambda_1$, which for fixed $k\in\{\ell,\ell'\}$ admits the orthogonal decomposition $z=z_0 +z^{\perp}$, 
where $z_0 \in\hbox{span}(\tilde y_k,\tilde x_k)$. Applying $A^k$, we obtain
$$
\lambda_1^k z= \sigma \tilde x_k \tilde{y}_k^* z_0 +R_k z.
$$
Projecting onto $\{\tilde y_k,\tilde x_k\}^{\perp}$ yields $|\lambda_1|^k \|z^{\perp}\|\le \|R_k\|$. Using the bound $|\lambda_1|^k\ge c_0/2$ gives
$$
\|z^{\perp}\|\le \frac{2}{c_0}\|R_k\|.
$$
Projecting onto $\hbox{span}(\tilde y_k,\tilde x_k)$ yields
$$
\| \lambda_1^k z_0 -\sigma \tilde x_k \tilde{y}_k^* z_0 \|\le \|R_k\|.
$$
This entails
$$
\NRM{ \frac{z_0}{\|z_0\|} -c\tilde x_k  } \le \frac{1}{|\lambda_1|^k \|z_0\|}\|R_k\|\le \frac{2\|R_k\|}{c_0\|z_0\|},
$$
where $c$ is some scalar and we have used $|\lambda_1|^k\ge c_0/2$. We now use the following general inequality
\begin{equation}\label{eq:distS1}
 \hbox{if $\|u\| = \|v  \| = 1$ and for some $t \geq 0 $, $\| u - t v\| \leq \veps$ then $\|u - v \| \leq 2 \veps$.}
\end{equation}
We deduce that
$$
\NRM{
\frac{z_0}{\|z_0\|} -\tilde x_k  } \le \frac{4\|R_k\|}{c_0\|z_0\|}.
$$
From the trianle inequality, $\|z_0\|\ge 1-\|z^{\perp}\|$. We then have
$$
\|z-\tilde x_k\|\le \|z^{\perp}\|+\NRM{z_0-\|z_0\|\tilde x_k}+1-\|z_0\|\le \frac{8\|R_k\|}{c_0}.
$$
Finally, since $\lambda = \lambda^k _1$ has multiplicity one in $A^k$, the eigenspace of $\lambda_1$ for $A$ coincides with the eigenspace of $\lambda_1^k$ for $A^k$. This  concludes the proof of Proposition \ref{prop:sing2eig}. \end{proof}

We now provide an extension of Proposition \ref{prop:sing2eig} to arbitrary rank tailored to our future needs. For $x = ( x_1, \ldots, x_n) \in \dC^n$, the multiplicity of $z \in \dC$ in $x$ is defined as $\sum_i \IND ( x_i = z)$, the number of coordinates of $x$ equal to $z$.

\begin{proposition}\label{prop:sing2eig2}
Let $A \in M_n ( \dR)$,  $\ell' < \ell < 2 \ell'$ be mutually prime odd integers, $\theta = (\theta_1, \ldots, \theta_r) \in (\dR\setminus\{0\})^r $  such that for any $ k \in \{ \ell, \ell' \}$, for some  vectors $x_{k,1},y_{k,1}, \ldots , x_{k,r},y_{k,r} \in \dR^n$ and some matrix $R_k \in M_n(\dR)$,
$$
A^k = \sum_{j=1}^r  \theta_j^k x_{k,j} y_{k,j}^* + R_k.
$$
Assume there exist $c_0, c_1 > 0$ such that for all $i \ne j \in [r]$, $\langle y_{k,j} , x_{k,j} \rangle \geq c_0$, $\|x_{k,j} \| \|y_{k,j} \| \leq c_1$, $\langle x_{k,j} , y_{k,i} \rangle = \langle x_{k,j} , x_{k,i} \rangle = \langle y_{k,j} , y_{k,i} \rangle = 0$ and 
$$\| R_k \| <  \PAR{ \frac{c_0 (c_0 \gamma^k -c_1)_+}{4 c_1} \wedge \frac{c_0^2 }{2(\ell\vee \ell') c_1 } } \vartheta^k,$$
where  $\vartheta = \min_{i} |\theta_i| $, $\gamma = \min \{ \theta_i / \theta_j : \theta_i > \theta_j >0 \hbox{ or } \theta_i < \theta_j < 0 \}$ (the minimum over the empty set being $+\infty$). Let $(\lambda_i)$, $1 \leq i \leq n$, be the eigenvalues of $A$ with $|\lambda_n| \leq \ldots \leq | \lambda_1|$. Then, there exists a permutation $\sigma \in S_r$ such that for all $i\in[r]$,
$$
| \lambda_i - \theta_{\sigma(i)} | \leq  \frac{C | \theta_{\sigma(i)}| }{ \ell} \quad \hbox{ and, for $i \geq r+1$,} \quad \ABS{ \lambda_i}\leq \PAR{ \frac{2 c_1}{c_0} } ^{1 /\ell} \| R_\ell \|^{1/\ell},
$$
with $C=\pi/2+2\sqrt{c_1\vee 1}\log(2(c_1\vee c_0^{-1}))$.  Moreover, if   $\theta_{\sigma(i)}$ has multiplicity one in $\theta$, $\lambda_i$ is a simple eigenvalue and there exists a unit eigenvector $\psi_i$ of $A$ with eigenvalue $\lambda_{i}$ such that 
$$
\NRM{ \psi_i - \frac{x_{\ell,\si(i)}}{\|x_{\ell,\si(i)}\|} } \leq  C'  \|R_\ell \| |\vartheta |^{-\ell} .
$$
with $C' = 24 c_1 c_0^{-1}  / (1 \wedge (c_0 \gamma^k -c_1)_+ \wedge c_0)$. 
\end{proposition}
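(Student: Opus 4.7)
The plan is to run the Bauer--Fike / $k$-th root scheme of Proposition~\ref{prop:sing2eig} with $r$ rank-one terms handled simultaneously; the mutual orthogonalities of the $x_{k,j}$'s and the $y_{k,j}$'s allow the unperturbed part to be diagonalized explicitly and make the relevant condition number independent of $n$ and $r$ -- in fact equal to the same constant $2c_1/c_0$ as in the rank-one case.

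Fix $k\in\{\ell,\ell'\}$ and set $S_k := \sum_{j=1}^r \theta_j^k x_{k,j} y_{k,j}^*$, $\nu_{k,j} := \langle y_{k,j},x_{k,j}\rangle$, $\tilde x_{k,j} := x_{k,j}/\|x_{k,j}\|$, $\tilde y_{k,j} := y_{k,j}/\|y_{k,j}\|$, and $a_j := \nu_{k,j}/(\|x_{k,j}\|\|y_{k,j}\|)$. A direct computation using the orthogonalities gives $S_k \tilde x_{k,j} = \theta_j^k \nu_{k,j}\,\tilde x_{k,j}$, so the nonzero spectrum of $S_k$ is $\{\theta_j^k\nu_{k,j}\}_{j=1}^r$ and $S_k = UDU^{-1}$ with $D = \diag(\theta_1^k\nu_{k,1},\ldots,\theta_r^k\nu_{k,r},0,\ldots,0)$ and $U = (\tilde x_{k,1},\ldots,\tilde x_{k,r},f_{r+1},\ldots,f_n)$, where $\{f_i\}_{i>r}$ is an orthonormal basis of $\mathrm{span}\{y_{k,j}\}^\perp$. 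Comparing $U$ to the unitary $V = (\tilde y_{k,1},\ldots,\tilde y_{k,r},f_{r+1},\ldots,f_n)$, the orthogonalities yield $V^*U = \bigl(\begin{smallmatrix} D_a & 0 \\ B & I \end{smallmatrix}\bigr)$ with $D_a = \diag(a_j)$ and $B^*B = \diag(1-a_j^2)$. The singular values of $V^*U$ are therefore $\sqrt{1 \pm \sqrt{1-a_j^2}}$ for $j\in[r]$ together with $1$, whence $\kappa(U) = \kappa(V^*U) \le 2c_1/c_0$.

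Applying Bauer--Fike to $A^k = S_k + R_k$ forces each eigenvalue of $A^k$ to lie within $\kappa(U)\|R_k\|$ of a diagonal entry of $D$. The gap condition on $\|R_k\|$ is calibrated exactly so that the $r$ balls around the nonzero $\theta_j^k\nu_{k,j}$'s and the ball around $0$ are pairwise disjoint: for $|\theta_i|\ne|\theta_j|$ the gap between centers is at least $\vartheta^k(c_0\gamma^k - c_1)$, while $\theta_i=-\theta_j$ is handled by the odd parity of $k$. The second half of Bauer--Fike then yields exactly one eigenvalue of $A^k$ in each nonzero ball, producing a bijection $\pi_k:[r]\to[r]$. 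The extraction of $\lambda_i$ from the pair $(\lambda_i^\ell,\lambda_i^{\ell'})$ then proceeds exactly as in Proposition~\ref{prop:sing2eig}: the modulus $|\lambda_i|$ is pinned down to $O(1/k)$ and the argument is pinned down modulo $2\pi$ by the mutual primality of $\ell,\ell'$, which also forces $\pi_\ell = \pi_{\ell'} =: \sigma$ and yields $|\lambda_i - \theta_{\sigma(i)}| \le C|\theta_{\sigma(i)}|/\ell$; the remaining bound $|\lambda_i| \le (2c_1/c_0)^{1/\ell}\|R_\ell\|^{1/\ell}$ for $i \ge r+1$ is then immediate.

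For the eigenvector claim when $\theta_{\sigma(i)}$ is simple in $\theta$, the gaps above isolate the Bauer--Fike ball around $\theta_{\sigma(i)}^\ell\nu_{\ell,\sigma(i)}$, so $\lambda_i$ is a simple eigenvalue of $A$. Given a unit eigenvector $\psi_i$ of $A$ (hence of $A^\ell$ for $\lambda_i^\ell$), I would decompose $\psi_i = \psi_i^{\mathrm{rel}} + \psi_i^{\mathrm{irr}}$, where $\psi_i^{\mathrm{rel}}$ is the orthogonal projection onto $\mathrm{span}(\tilde x_{\ell,\sigma(i)},\tilde y_{\ell,\sigma(i)})$. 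The orthogonality of both families across different indices places each pair $(\tilde x_{\ell,j},\tilde y_{\ell,j})$ for $j\ne\sigma(i)$ inside the complement, so projecting the eigenvalue equation onto the two subspaces the same 2D reduction as in Proposition~\ref{prop:sing2eig} goes through verbatim, with the isolation gap replacing the single-eigenvalue one, and yields $\|\psi_i - \tilde x_{\ell,\sigma(i)}\| \le C'\|R_\ell\|/|\vartheta|^\ell$. The main obstacle I anticipate is the case of distinct $\theta_j$'s sharing the same modulus: this is exactly where the odd-parity assumption on $\ell,\ell'$ is essential, both to separate $\theta_i^k$ from $\theta_j^k=(-\theta_i)^k$ inside Bauer--Fike and to reconcile $\pi_\ell$ and $\pi_{\ell'}$ into a single permutation $\sigma$.
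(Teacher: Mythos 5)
Your $V^*U$ block computation is a clean variant of the paper's (the paper interleaves the basis as $(\tilde x_1, f_2, \tilde x_2, f_4,\dots)$ so that $V^*U$ is block \emph{diagonal} with $2\times 2$ blocks $W_j$, whereas you sort all $\tilde x_j$'s to the front and get a $2\times 2$ block \emph{upper-left / lower-left} structure; the singular value analyses coincide and both give $\kappa(U)\le 2c_1/c_0$), and your handling of the Bauer--Fike gap is correct. The genuine gap in the proposal is the step where you claim ``the mutual primality of $\ell,\ell'$\dots also forces $\pi_\ell=\pi_{\ell'}=:\sigma$.'' This attribution is wrong, and the reconciliation of the two Bauer--Fike assignments is a nontrivial step that you have not actually proved. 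Mutual primality enters only \emph{after} one already knows that $\lambda_i^\ell$ and $\lambda_i^{\ell'}$ are being compared to $\nu$'s built from the \emph{same} $\theta$-value; it controls $\arg(\lambda_i)$ given that anchor and cannot, by itself, tell you which $\theta_j^k\nu_{k,j}$ a given eigenvalue of $A^k$ has been assigned to. What the paper actually proves is that $\theta_{s_\ell(i)}=\theta_{s_{\ell'}(i)}$ (equality of $\theta$-\emph{values}, not of permutations, which need not agree when some $\theta_j$ repeat), and the proof has two ingredients you must supply: (i) for $\theta_j$'s of a common sign, order them by modulus and show inductively that the $m_1$ eigenvalues of $A^k$ assigned to the largest $\theta$-value are exactly the $m_1$ of largest modulus, an assignment that is $k$-independent; (ii) to rule out the assignment flipping between a positive and a negative $\theta$-value when going from $\ell'$ to $\ell$, bound $|\arg(\lambda_i^{\ell'})|$ via the proximity to a positive $\nu_j$ and then propagate this to $|\arg(\lambda_i^\ell)|\le \pi\veps\ell/(2c_0\ell')\le \pi/2$ --- and this last inequality is precisely where the hypothesis $\ell<2\ell'$ is used, not oddness or coprimality. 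Your write-up lumps all three structural hypotheses together as ``the odd-parity assumption,'' and misses the distinct role of $\ell<2\ell'$.

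The eigenvector sketch is also under-specified. Projecting onto $\mathrm{span}(\tilde x_{\ell,\sigma(i)},\tilde y_{\ell,\sigma(i)})^\perp$ does \emph{not} kill the perturbation as it does in the rank-one case: the other $r-1$ rank-one terms live in that complement, so the ``2D reduction'' does not go through verbatim. What you actually need is the paper's scalar reduction: apply $A^\ell$ to the eigenvector equation twice, eliminate the quadratic term, and obtain for every $j$ the identity $(\lambda_i^\ell-\nu_j)\,\tilde y_j^*z=\tilde y_j^* R_\ell z$. Then for $j\ne\sigma(i)$ the separation $|\lambda_i^\ell-\nu_j|\ge\tfrac12\bigl((c_0\gamma^\ell-c_1)_+\wedge c_0\bigr)\vartheta^\ell$ (not merely $|\lambda_i^\ell|$ being large) is what controls $|\tilde y_j^*z|$; that is a genuinely different estimate from the rank-one case, and it is the source of the extra factor $(1\wedge(c_0\gamma^k-c_1)_+\wedge c_0)^{-1}$ in $C'$. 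You do flag this (``with the isolation gap replacing the single-eigenvalue one''), but without the two-step application of $A^\ell$ you do not actually obtain the scalar equations needed to exploit that gap.
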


\begin{proof} We may assume that $\vartheta = 1$.  Fix $k \in \{ \ell, \ell'\}$ and let $ \tilde x_j = x_{k,j} / \|x_{k,j}\|$, $\tilde y_j = y_{k,j} / \| y_{k,j} \|$, $\sigma_j = \theta_j^k    \|x_{k,j} \| \|y_{k,j}\|$, $\nu_j = \theta_j^k \langle y_{k,j} , x_{k,j} \rangle = \si_j  \langle \tilde y_{k,j} , \tilde x_{k,j} \rangle$.  Let  $H_j = \mathrm{span} (x_{k,j}, y_{k,j})$. By assumption, the vector spaces $H_j$, $1 \leq j \leq n$, are orthogonal. For ease of notation, let us assume for all $j \in [r]$, $H_j$ has dimension $2$ (the case where $x_{k,j}$ and $y_{k,j}$ are colinear is identical). We consider an orthonormal basis $(f_1, \cdots, f_n)$ of $\dC^n$, such that $\mathrm{span} (f_{2j-1}, f_{2j}) = H_j$, $f_{2j-1} = \tilde y_j$. We have 
$$A^{k} =  \sum_{j=1}^r \sigma_j \tilde x_j \tilde y_j^* + R_k = U D U ^{-1}  + R_k,$$
where $D = \diag( \nu_1, 0, \nu_2, 0, \ldots, \nu_r, 0, \ldots , 0)$, $U = ( \tilde x_1, f_2, \tilde x_2, f_4, \ldots, \tilde x_r, f_{2r} , \ldots, f_n)$ (provided that $U$ is indeed invertible).  Arguing as in the proof of Proposition \ref{prop:sing2eig}, denote by $V$ the unitary matrix $V = (f_1, \ldots, f_n)$ and  decompose $\tilde x_j$ as $\tilde x_j = a_j f_{2j-1} + b_j f_{2j}$. Then
$
V^* U $ has a block diagonal structure with blocks $W_j$, $1 \leq j \leq r$, and $I_{n - 2 r}$, where  
$$W_j =  \begin{pmatrix} 
a_j  &  0 \\
 b_j &   1 \\
\end{pmatrix}.$$
We find, as in Proposition \ref{prop:sing2eig}, 
$$
\kappa (U) = \max_j \kappa (W_j) \leq  \kappa = 2 c_1 c_0^{-1}.
$$
Now, by assumption, $2 \kappa \|R_k \| < c_0 \wedge (c_0\gamma^k - c_1)_+$ is less than the minimal distance between the distinct eigenvalues of $D$. We deduce from Theorem~\ref{bauer-fike}  applied to $D + U^{-1} R_k U$ that there is a permutation $s \in S_r$ such that
\begin{equation}\label{eq:lambdakA2}
| \lambda_i^k  - \nu_{s(i)} | \leq \kappa \| R_k \|  \quad \hbox{ and, for $i \geq r+1$,} \quad \ABS{ \lambda _i}^k\leq \kappa \| R_k \| .
\end{equation}

Importantly, we claim that the permutation $s = s_k \in S_r$ is such that for $1 \leq i \leq r$, $\theta_{s_{\ell}(i)} = \theta_{s_{\ell'} (i)}$. Indeed, we first observe that the assumptions $\gamma^k c_0 > c_1$ and $k$ odd imply that $\nu_i = \nu_j$ is equivalent to $\theta_i = \theta_j$. Assume first for simplicity that all $\theta_i$ are positive and let $m_1$ be the multiplicity in $\theta$ of $t_1 = \max_{i} \theta_i$. Then the $m_1$ eigenvalues such that $|\lambda_i ^{k} - \nu_j | \leq \kappa \|R_k \|$ for some $j$ such that $\theta_j = t_1$ are precisely the $m_1$ largest eigenvalues of $A$. If $m_1 < r$, we may then repeat the same argument for the second largest value of the set $\{ \theta_1, \cdots, \theta_r \}$. By iteration, we deduce the claimed statement when all $\theta_i$ have the same sign. In the general case, we notice that if $| \lambda_i^{\ell'}  - \nu_{j} | \leq \veps = \kappa \| R_{\ell'} \|$ with $\theta_{j} > 0$, then $\nu_{j} > c_0$,  $|\sin ( \lambda_i^{\ell'} ) | \leq \veps / c_0$, $| \arg( \lambda_i^{\ell'} )  | \leq \pi  \veps /(2 c_0)$ and $|\arg(\lambda_i^{\ell})| \leq \pi  \veps \ell  / (2 c_0 \ell') \leq  \pi/2$ (we use here the assumption $\ell < 2 \ell'$). It follows that if $|\lambda_i^{\ell'} - \nu_j| \leq \kappa \| R_{\ell'} \|$ with $\theta_j >0$ then we cannot have   $|\lambda_i^{\ell} - \nu_{j'}| \leq \kappa \| R_{\ell} \|$ with $\theta_{j'} < 0$. We may thus repeat the previous argument by considering the largest eigenvalues of $A$ with positive real part and the largest eigenvalues of $A$ with negative real part separately. 

We now bound the difference between $\lambda_i$ and $\theta_{s(i)}$. By assumption, 
$
c_0 | \theta_j |  \leq | \nu_j |\leq c_1 |\theta_j|.
$
Hence, arguing as in the proof of Proposition \ref{prop:sing2eig}, 
$$
\ABS{  | \lambda_i|  - |\theta_{s(i)}|  } \leq  c_2  |\theta_{s(i)}| / k, 
$$
with $c_2 = \sqrt{c_1 \vee 1} \log ( 2 (c_1\vee c_0^{-1} ))$. We next control the argument $\omega_i\in (-\pi,\pi]$ of $\lambda_i =  | \lambda_i| \sign(\theta_{\si(i)}) e^{ i \omega} $. Arguing as in the proof  of Proposition \ref{prop:sing2eig}, we get for $p \in \mathbb{Z}$,
$
|\omega - 2 p \pi | \leq   \pi |\veps| / k 
$
and we may conclude the proof of the claim of Proposition \ref{prop:sing2eig2} on eigenvalues as in Proposition \ref{prop:sing2eig}.

It now remains to control the eigenvector of $\lambda_i$ such that $\theta_{\si(i)}$ has multiplicity one. First from \eqref{eq:lambdakA2}, $\lambda_i$ is a simple eigenvalue of $A$. Let $z$ be a corresponding normed eigenvector of $A$. Applying $A^k$ yields
\begin{equation}\label{decomp_1}
\lambda_i^k z=\sum_{j\in[r]}\sigma_j (\tilde y_j^*z)\tilde x_j +R_k z.
\end{equation}
Applying $A^k$ once more to (\ref{decomp_1}) yields
$$
\lambda_i^{2k}z=\lambda_i^kR_kz +\sum_{j\in[r]}\sigma_j[\sigma_j (\tilde y_j^* z)(\tilde y_j^* \tilde x_j)+\tilde y_j^* R_kz]\tilde x_j.
$$
Multiplying (\ref{decomp_1}) by $\lambda_i ^k$ and subracting it to the previous display yields
$$
\sum_{j\in[r]}\sigma_j[(\lambda_i^k-\nu_j)\tilde y_j^* z-\tilde y_j^* R_kz]\tilde x_j=0.
$$
Thus for all $j\in[r]$,
\begin{equation}\label{decomp_2}
(\lambda_i^k-\nu_j)\tilde y_j^* z-\tilde y_j^* R_kz=0.
\end{equation}

Now for $j \ne s(i)$, from \eqref{eq:lambdakA2}, we have 
$$\ABS{\lambda_i^k - \nu_j} \geq  \ABS{ \nu_{s(i)}  - \nu_j} - \ABS{\lambda_i^k - \nu_{s(i)}} \geq \frac 1 2  \PAR {  (c_0 \gamma^k -c_1)_+ \wedge c_0}.
$$
It follows that 
$$
 \PAR{ \sum_{j \ne s(i)}|\tilde{y}_j^*z | ^2 }^{1/2} \leq \frac{2 \|R_k \| }{(c_0 \gamma^k -c_1)_+ \wedge c_0}. 
$$
Moreover, this implies upon dividing~(\ref{decomp_2}) by $\lambda_i^k$:
$$
\PAR{ \sum_{j \ne s(i)}\ABS{\frac{\sigma_j}{\lambda_i^k}\tilde{y}_j^*z }^2 }^{1/2} 
\leq 
\frac{c_1}{c_0}\PAR{ \sum_{j \ne s(i)}\ABS{\frac{\nu_j}{\lambda_i^k}\tilde{y}_j^*z } ^2 }^{1/2} 
\leq 
\frac{c_1}{c_0}\PAR{
\frac{2 \|R_k \| }{(c_0 \gamma^k -c_1)_+ \wedge c_0}+\frac{2}{c_0}\|R_k\|}, 
$$
where we have used the fact that $|\lambda_i^k|\ge c_0/2$. It then follows from (\ref{decomp_1}) that for some constant $c = \si_{s(i)} ( \tilde y_{s(i)} ^* z) / \lambda_{i}^k$,
$$
\|z-c \tilde x_{s(i)}\|=\|\sum_{j \ne s(i)}\frac{\sigma_j}{\lambda_i^k}( \tilde{y}_j^*z )  \tilde x_j +\lambda_i^{-k}R_kz\|\le 
\frac{c_1}{c_0}\PAR{\frac{2 \|R_k \| }{(c_0 \gamma^k -c_1)_+ \wedge c_0}+\frac{4}{c_0}\|R_k\|}.
$$
We then obtain the announced bound on $\|z- \tilde x_{s(i)}\|$ by appealing to \eqref{eq:distS1}.
\end{proof}


We conclude this paragraph with an elementary lemma on Gram-Schmidt orthonormalization process. It will be used to obtain vectors which are exactly orthogonal as in the assumptions of Proposition  \ref{prop:sing2eig2}. 

\begin{lemma}\label{le:GS}
Let $u_1, \cdots, u_k$ be  vectors in $\dC^n$ with unit norms such that $\ABS{ \langle u_i , u_j \rangle } \leq \delta$ for all $i \ne j$. If $\delta < k^{-k}$ then $(u_1, \cdots, u_k)$ are linearly independent and, if $(\bar u_1, \cdots, \bar  u_k)$ is the Gram-Schmidt orthonormalization process of $(u_1, \cdots, u_k)$, we have for all $j \in [k]$,
$$
\| u_j -  \bar  u_j \| \leq \delta j^j.
$$
\end{lemma}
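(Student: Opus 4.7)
The plan is a single induction on $j\in[k]$ that simultaneously establishes linear independence of $u_1,\dots,u_j$ and the bound $\|u_j - \bar u_j\|\le \delta j^j$. The case $j=1$ is trivial since $\bar u_1=u_1$. For the inductive step, I would assume $\bar u_1,\dots,\bar u_{j-1}$ are well-defined and orthonormal, with $\eta_i := \|u_i-\bar u_i\|\le \delta i^i$ for $i<j$, and then analyze the tentative Gram--Schmidt iterate
$$\tilde u_j = u_j - \sum_{i<j}\langle u_j,\bar u_i\rangle \bar u_i.$$

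The key deterministic estimate is
$$|\langle u_j,\bar u_i\rangle| \le |\langle u_j,u_i\rangle| + \|u_j\|\cdot \|\bar u_i - u_i\| \le \delta + \eta_i,$$
combining the hypothesis on cross-inner products with the previous induction step. Since $\bar u_1,\dots,\bar u_{j-1}$ are orthonormal, Pythagoras yields $\|u_j-\tilde u_j\|^2 = \sum_{i<j}|\langle u_j,\bar u_i\rangle|^2$, so $\alpha_j := \|u_j - \tilde u_j\| \le \sum_{i<j}(\delta+\eta_i)$ and $\|\tilde u_j\|=\sqrt{1-\alpha_j^2}$. Once I check that $\alpha_j<1$ (this is where the hypothesis $\delta<k^{-k}$ is used: it controls the worst case of the recursion below for all $j\le k$), the vector $\tilde u_j$ is nonzero, $\bar u_j = \tilde u_j/\|\tilde u_j\|$ is defined, and $u_j$ is linearly independent from $u_1,\dots,u_{j-1}$. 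The elementary inequality $1-\sqrt{1-x^2}\le x^2 \le x$ for $x\in[0,1]$ then gives
$$\eta_j = \|u_j-\bar u_j\| \le \alpha_j + (1 - \|\tilde u_j\|) \le 2\alpha_j \le 2\sum_{i<j}(\delta+\eta_i).$$

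It remains to solve this linear recurrence. Setting $T_j=\sum_{i\le j}(\delta+\eta_i)$ one obtains $\eta_{j+1}\le 2T_j$ and $T_{j+1}\le 3T_j+\delta$ with $T_1=\delta$, so $T_j\le \delta(3^j-1)/2$ and consequently $\eta_j\le \delta(3^{j-1}-1)\le \delta\, 3^{j-1}$. A trivial comparison $3^{j-1}\le j^j$ (true for $j=1,2$ by inspection and for $j\ge 3$ since $j\ge 3$) yields the announced bound $\eta_j\le \delta j^j$. The same control of $\alpha_j$ via $\alpha_j\le (3^{j-1}-1)\delta/2$ combined with $\delta<k^{-k}$ ensures $\alpha_j<1$ for every $j\le k$, closing the induction.

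The routine bookkeeping is the only mildly delicate point: one must be sure to propagate both statements (linear independence and the quantitative bound) together, since the Gram--Schmidt formula for $\bar u_j$ is only meaningful once $\tilde u_j\ne 0$ has been secured. No step involves anything beyond the triangle inequality, Pythagoras, and the elementary bound $1-\sqrt{1-x^2}\le x^2$, so I do not anticipate a genuine obstacle.
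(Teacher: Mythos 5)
Your argument is correct and follows essentially the same route as the paper: induct on $j$, project $u_j$ onto $\mathrm{span}(\bar u_1,\ldots,\bar u_{j-1})$, bound each $|\langle u_j,\bar u_i\rangle|$ via the hypothesis plus the induction hypothesis on $\|\bar u_i-u_i\|$, and convert the projection bound to $\|u_j-\bar u_j\|\le 2\alpha_j$ (your $1-\sqrt{1-x^2}\le x$ step reproduces the paper's generic inequality~\eqref{eq:distS1}). The only difference is cosmetic bookkeeping: you close the induction by solving the linear recurrence $T_{j+1}\le 3T_j+\delta$ and comparing $3^{j-1}\le j^j$, whereas the paper verifies the inductive inequality $\sqrt{2k(1+k^{2k})}\le 2^{-1}(k+1)^{k+1}$ directly.
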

\begin{proof}
We prove the statement by induction. For $k=1$, $\bar u_1 = u_1$. For $k \geq 1$, we denote by $v_{k+1}$ the orthonormal projection of $u_{k+1}$ on the span of $( u_1, \cdots,  u_k)$. We have 
$$
\| v_{k+1} \|^2  =   \sum_{j=1} ^k \ABS{ \langle u_{k+1} , \bar  u_j \rangle}^ 2 .
$$
Now, from the induction hypothesis,
$$
\sum_{j=1} ^k \ABS{ \langle u_{k+1} , \bar  u_j \rangle}^ 2 \leq 2 \sum_{j=1} ^k \PAR{ \ABS{ \langle u_{k+1} ,   u_j \rangle}^ 2 +  \| \bar  u_j -  u_j \|^ 2} \leq 2 k \delta^2 ( 1   +  k^{2k} ).
$$
It is easy to check that $\sqrt{ 2 k  ( 1+ k^k) } \leq 2^{-1} (k+1)^{k+1}$ for all $k \geq 1$. In particular, if $\delta < (k+1)^{-(k+1)}$, $v_{k+1} \ne u_{k+1}$ and  then from \eqref{eq:distS1}, $\| u_{k+1} - \bar u_{k+1} \| \leq 2  \| v_{k+1} \|  \leq \delta (k+1) ^{k+1} $. 
\end{proof}

\section{Erd\H{o}s-R\'enyi graph: proof strategy for Theorem \ref{th:ER}}
\label{sec:arg_er_v2}

In what follows, we consider a sequence $\ell = \ell(n)\sim \kappa \log_{\alpha}n$ for
some $\kappa\in(0,1/6)$ as in Theorem \ref{th:ER}. 

\subsection{Proof of Theorem \ref{th:ER}}

Let
$$
\varphi = \frac{ B^{\ell} \chi }{ \| B^{\ell} \chi \|}\,  , \quad   \theta = \| B^{\ell} \check \varphi  \|,
$$
and
$$
\zeta=   \frac{ B^{\ell} \check \varphi }{ \theta} =  \frac{ B^{\ell} B^{*\ell} \chi  }{ \| B^{\ell}B^{*\ell} \chi \|} , 
$$
(if $\theta = 0$, we set $\zeta= 0$).  The proof relies on the following two propositions.

\begin{proposition} \label{prop:Bellpsi} For some $c_1, c_0>0$, \whp
\begin{equation*}
\langle \zeta, \check \varphi \rangle  \geq  c_0 \quad  \hbox{ and } \quad c_0 \alpha^{\ell} \leq \theta  \leq c_1 \alpha^{\ell}.  
\end{equation*}
\end{proposition}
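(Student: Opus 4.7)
The plan is to reduce everything to the three ``building block'' quantities $\|B^\ell \chi\|^2$, $\|B^\ell B^{*\ell}\chi\|^2$ and $\langle B^\ell B^{*\ell}\chi, B^{*\ell}\chi\rangle$, from which the conclusions follow via the identities $\check\varphi = B^{*\ell}\chi/\|B^\ell\chi\|$ and $B^\ell \check\varphi = B^\ell B^{*\ell}\chi/\|B^\ell\chi\|$:
\[
\theta^2 = \frac{\|B^\ell B^{*\ell}\chi\|^2}{\|B^\ell\chi\|^2}, \qquad \langle \zeta, \check\varphi\rangle = \frac{\langle B^\ell B^{*\ell}\chi, B^{*\ell}\chi\rangle}{\theta\,\|B^\ell\chi\|^2}.
\]

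I would then exploit the local tree structure of $\cG(n,\alpha/n)$. Since $\ell \sim \kappa \log_\alpha n$ with $\kappa < 1/6$, we have $2\ell \ll \log_\alpha n$, so the $\ell$-neighborhood of a fixed edge is w.h.p.\ a tree and can be coupled with a Galton-Watson branching process with Poisson$(\alpha)$ offspring. Denoting by $Z_\ell^{(e)}$ the size of its $\ell$-th generation, $B^\ell \chi(e) = Z_\ell^{(e)}$, and by Kesten-Stigum (applicable since $\alpha > 1$ and Poisson has all moments) $Z_\ell^{(e)}/\alpha^\ell \to W_e$ in $L^2$, where $W_e$ is a nondegenerate random variable with $\dE[W]=1$ and $\dE[W^2]<\infty$. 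Likewise $B^{*\ell}\chi(e) = B^\ell\chi(e^{-1}) \approx \alpha^\ell W_{e^{-1}}$; the forward and backward subtrees rooted at an edge are disjoint in the tree regime, so $W_e$ and $W_{e^{-1}}$ are independent in the local limit. For the iterated quantity,
\[
B^\ell B^{*\ell}\chi(e) \;=\; \sum_f B^\ell_{ef}\, B^\ell\chi(f^{-1}) \;\approx\; \alpha^\ell W_e \cdot \alpha^\ell \dE[W] \;=\; \alpha^{2\ell} W_e,
\]
where the averaging of $B^\ell\chi(f^{-1})/\alpha^\ell$ over the $\approx \alpha^\ell W_e$ terminal edges $f$ is justified by an LLN since $\alpha^\ell \to \infty$.

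Summing over the $|\vec E| \approx n\alpha$ edges and concentrating the resulting quadratic forms then gives, w.h.p.,
\begin{align*}
\|B^\ell\chi\|^2 &= (1+o(1))\, n\alpha\cdot \alpha^{2\ell}\dE[W^2], \\
\|B^\ell B^{*\ell}\chi\|^2 &= (1+o(1))\, n\alpha\cdot \alpha^{4\ell}\dE[W^2], \\
\langle B^\ell B^{*\ell}\chi, B^{*\ell}\chi\rangle &= (1+o(1))\, n\alpha\cdot \alpha^{3\ell},
\end{align*}
the last line crucially exploiting $\dE[W_e W_{e^{-1}}] = \dE[W]^2 = 1$. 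Plugging these into the identities above yields $\theta = (1+o(1))\alpha^\ell$ (hence both the upper and lower bounds on $\theta$) and $\langle \zeta, \check\varphi\rangle \to 1/\dE[W^2] > 0$, which establishes the positive lower bound with $c_0 = 1/(2\dE[W^2])$, say.

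The main obstacle is the concentration step. The $W_e$'s across different edges are only weakly dependent (their local neighborhoods typically do not overlap), but turning the per-edge tree heuristic into quantitative concentration of global quadratic forms requires carefully bounding the contribution of pairs of edges whose $\ell$-neighborhoods collide or whose tree approximation fails due to short cycles. This is where the matrix expansion of $B^\ell$ as a sum of products, introduced later in Section \ref{sec:arg_er_v2}, combined with the norm bounds established via the trace method in Section \ref{sec:path_counts}, is expected to enter; the stringent restriction $\kappa < 1/6$ is presumably dictated by the balance between main and error terms in that expansion.
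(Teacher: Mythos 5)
Your reduction of the claim to estimates on $\|B^\ell\chi\|^2$, $\|B^\ell B^{*\ell}\chi\|^2$ and $\langle B^\ell B^{*\ell}\chi,B^{*\ell}\chi\rangle$, and the role of the Kesten--Stigum limit $W_e$ for $B^\ell\chi(e)$, match the paper's strategy (Propositions~\ref{prop:Bellphi2}, \ref{th:locmart}, \ref{th:locmart2}). But the central heuristic $B^\ell B^{*\ell}\chi(e)=\sum_f B^\ell_{ef}\,B^\ell\chi(f^{-1})\approx\alpha^\ell W_e\cdot\alpha^\ell\dE W$ is not a valid law of large numbers: the quantities $B^\ell\chi(f^{-1})$ over the $\approx\alpha^\ell W_e$ boundary edges $f$ are strongly positively correlated, since all those backward walks climb into the same subtree hanging off $e$ and share edges. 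What the paper actually shows (Theorem~\ref{th:growthQkl}, via the cross-generation functional $Q_{k,\ell}$ and the reorganization \eqref{eq:defQkl} of the paths by the level at which they branch off) is that $B^\ell B^{*\ell}\chi(e)/\alpha^{2\ell}\to\frac{\alpha}{\alpha-1}W_e$ in $L^2$, not $W_e$; the extra factor $\alpha/(\alpha-1)=\dE W^2$ comes from resumming a geometric series over branching levels, and an i.i.d.-style averaging argument cannot produce it. In your final ratio for $\langle\zeta,\check\varphi\rangle$ this constant cancels (so you luckily still get $1/\dE W^2$), but your value of $\theta$ is off by $\alpha/(\alpha-1)$, and more importantly the mechanism you propose for justifying the per-edge limit does not work.

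Second, you attribute the concentration of the global sums over $e\in\vec E$ to the matrix expansion of Section~\ref{sec:arg_er_v2} and the trace method of Section~\ref{sec:path_counts}. That machinery is used only for the complementary bound, Proposition~\ref{prop:Bellx}, on $\sup_{x\perp\check\varphi}\|B^\ell x\|$. For Proposition~\ref{prop:Bellpsi} the paper's concentration argument is entirely different: a variance bound via the Efron--Stein inequality for $\ell$-local functionals (Proposition~\ref{prop:varloc}), combined with the coupling of local neighborhoods to the multi-type branching process (Proposition~\ref{prop:couplingYZ}) and the resulting weak law of large numbers (Proposition~\ref{prop:locmart0}). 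This is precisely how the paper controls the colliding-neighborhood pairs you correctly identify as the main obstacle, and the constraint on $\kappa$ in those results ($\kappa<\gamma/5$ in Proposition~\ref{th:locmart2}) is what drives the exponent, not the trace-method balance.
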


\begin{proposition} \label{prop:Bellx}For some $c >0$, \whp
\begin{equation*}
\sup_{ x :  \langle x ,  \check \varphi \rangle = 0, \|x\| = 1} \| B^{\ell} x \| \leq (\log n)^c \alpha^{\ell/2}. 
\end{equation*}
\end{proposition}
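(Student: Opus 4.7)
The plan is to identify $\check\varphi$ as (approximately) the leading right-singular direction of $B^\ell$ and then estimate the second singular value using the matrix expansion of Section~\ref{sec:arg_er_v2} combined with the trace-method bounds of Section~\ref{sec:path_counts}. Using the identity $B^\ell P = P B^{*\ell}$ together with $P\chi = \chi$, one sees that $\check\varphi = B^{*\ell}\chi / \|B^{*\ell}\chi\|$, so $\check\varphi$ is the normalized image of $\chi$ under $B^{*\ell}$, and is moreover asymptotically close to $\chi / \sqrt{m}$ in the Erd\H{o}s-R\'enyi regime because of the near-regularity of the expected non-backtracking walk counts. Since
$$
\sup_{x \perp \check\varphi,\, \|x\|=1}\|B^\ell x\| \;=\; \|B^\ell (I - \check\varphi\check\varphi^*)\|,
$$
the claim amounts to a bound on the second singular value of $B^\ell$.

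The second step is to invoke the decomposition of $B^\ell$ from Section~\ref{sec:arg_er_v2}, which writes $B^\ell$ as a sum of products of matrices. The dominant term isolates the \emph{tangle-free} non-backtracking walks (walks whose trace contains no short cycles), while each correction term consists of a product of tangle-free blocks joined by defect matrices encoding the presence of a short cycle in the walk. The dominant term further splits into its mean and its centered part: the mean is asymptotically rank-one with range close to $\chi/\sqrt{m}$ and singular value $\sim \alpha^\ell$, so multiplying on the right by $I - \check\varphi\check\varphi^*$ leaves only a negligible residue (using the asymptotic alignment $\check\varphi \approx \chi/\sqrt{m}$). The centered piece and each of the polynomially many correction products are then bounded in operator norm by $(\log n)^c \alpha^{\ell/2}$ with high probability, using the trace-method estimates of Section~\ref{sec:path_counts}. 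Summing over the expansion and absorbing the number of summands into $c$ yields the stated bound.

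The main obstacle is the trace-method step. For each matrix $M$ arising in the expansion one computes $\dE[\tr((MM^*)^q)]$ with $q$ a small power of $\log n$ via a F\H{u}redi--Koml\'os enumeration of doubled closed non-backtracking walks, and converts this into a high-probability operator-norm bound through Markov's inequality. The tangle-free restriction is crucial here: it controls the number of excess edges in a walk's trace and pins the exponential growth of $\dE[\tr((MM^*)^q)]$ at the desired scale $\alpha^{\ell q}$ rather than a larger quantity. The hypothesis $\kappa < 1/6$ on the exponent in $\ell \sim \kappa \log_\alpha n$ enters precisely here, ensuring that the sub-leading polynomial-in-$n$ factors from the combinatorial enumeration remain dominated by the target bound once the $(2q)$-th root is taken to extract the spectral norm.
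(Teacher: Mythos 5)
Your overall strategy — expand $B^\ell$ by the telescoping identity of Proposition~\ref{prop:expansion} and control the pieces by the F\H{u}redi--Koml\'os trace method — is the right skeleton and matches the paper. But there is a genuine gap in how you handle the only dangerous term in the expansion, and the substitute you propose does not work.

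The dangerous term comes from writing $K^2 = L + \chi\chi^*$, which produces in~\eqref{eq:decompBkx} the contribution
$\frac{\alpha}{n}\sum_{t}\|\Delta^{(t-1)}\chi\|\,|\langle\chi, B^{(\ell-t-1)}x\rangle|$.
The crude bound $|\langle\chi, B^{(\ell-t-1)}x\rangle|\le\|\chi\|\,\|B^{(\ell-t-1)}\| = O(\sqrt{n}\,\alpha^{\ell-t-1}(\log n)^{c})$ is too weak by a factor $\alpha^{(\ell-t-1)/2}$ and would give an $\alpha^\ell$-sized bound instead of $\alpha^{\ell/2}$. The paper kills this term by Proposition~\ref{cor:bls2}, whose content is the quasi-geometric growth of the local statistics: for most $e$, the quantity $\langle B^t\chi,\delta_e\rangle$ is approximately $\alpha^{t-\ell}\langle B^\ell\chi,\delta_e\rangle$ (a consequence of the Kesten--Stigum martingale structure of the local branching process coupling). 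Combined with $\langle B^\ell\chi, x\rangle=0$ — which is exactly what $\langle\check\varphi,x\rangle=0$ means, since $\check\varphi \propto B^{*\ell}\chi$ — this yields the much better estimate $|\langle B^t\chi,x\rangle|\lesssim(\log n)^5 n^{1/2}\alpha^{t/2}$. This local-structure input is indispensable and is entirely absent from your proposal.

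Your substitute — project out the ``mean'' of $B^\ell$ using the asymptotic alignment $\check\varphi\approx\chi/\sqrt{m}$ — is not correct. The vector $B^{*\ell}\chi$, after normalization, is \emph{not} $\ell^2$-close to $\chi/\sqrt{m}$: its entry at $e$ is governed by the number of non-backtracking walks of length $\ell+1$ into $e$, which, normalized by $\alpha^\ell$, converges to a nondegenerate martingale limit of the Galton--Watson process. Thus $\|\check\varphi-\chi/\sqrt{m}\|$ is $\Theta(1)$, not $o(1)$, and multiplying a rank-one piece of size $\sim\alpha^\ell$ by $(I-\check\varphi\check\varphi^*)$ on the right does not annihilate it to a residue of size $\alpha^{\ell/2}$. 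What does hold (and what the paper actually uses, via Proposition~\ref{prop:Bellpsi}) is only that the overlap $\langle\zeta,\check\varphi\rangle$ is bounded below by a constant, which is much weaker than $\ell^2$-proximity to $\chi/\sqrt{m}$ and does not support the projection argument you sketch.

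To complete the proof along the lines of the paper you would need, in addition to the norm bounds of Proposition~\ref{prop:norm_bounds}: (a) tangle-freeness whp so that $B^\ell = B^{(\ell)}$ (Lemma~\ref{le:tls}); (b) the translation of $\langle\check\varphi,x\rangle=0$ into $\langle B^\ell\chi,x\rangle=0$ via the PT-symmetry $B^kP = PB^{*k}$; and crucially (c) Proposition~\ref{cor:bls2}, proved by local analysis (coupling each edge neighborhood to a Poisson Galton--Watson tree and applying the quantitative Kesten--Stigum estimates of Theorem~\ref{th:growthZBP}), which supplies the $\sqrt{\alpha^{\ell-t}}$-improvement that the rest of your argument silently assumes.
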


Let us check that the last two propositions \ref{prop:Bellpsi} and \ref{prop:Bellx} imply Theorem~\ref{th:ER} . Let $R = B^{\ell} - \theta \zeta \check \varphi^{*}$ and $y \in \dR^{\vec E}$ with $\| y \| = 1$. We write $ y = s \check \varphi + x$ with $x \in \check \varphi^{\perp}$ and $s \in \dR$. We find
$$
\|R  y \| =  \| B^{\ell} x + s(  B^{\ell}   \check \varphi - \theta \zeta)   \| \leq    \sup_{ x :  \langle x ,  \check \varphi \rangle = 0, \|x\| = 1} \| B^{\ell} x \| .
$$
Hence, Proposition \ref{prop:Bellx} implies that \whp
\begin{equation}\label{eq:boundnormC}
\| R \| \leq (\log n)^ c \alpha^{\ell /2}.
\end{equation}

We may now apply Proposition \ref{prop:sing2eig}. If $\lambda_i = \lambda_i (B)$, we find that \whp
$$
| \lambda_1  - \alpha | = O ( 1 / \ell ), \quad \quad |\lambda_2 | \leq   \PAR{C (\log n)^c \alpha^ {\ell/2} }^{1/\ell} = \sqrt \alpha   + O \PAR{ \frac{\log \log n }{ \log n}},
$$
and the normalized Perron eigenvector $\xi$ of $B$ satisfies \whp 
$$
\| \xi - \zeta \|   = O ( (\log n)^c \alpha^ {- \ell/2}). 
$$
This concludes the proof of Theorem \ref{th:ER}. 

\begin{remark}
\label{re:wRama}
We note that from \cite[Theorem 3.3.16]{MR1091716},  we  get that in \eqref{eq:BkSVD2}, 
$$
| s_{1,\ell} -  \theta | \leq  \| R \| \quad \hbox{ and } \quad s_{2,\ell} \leq \|R\|. 
$$
Hence, from (\ref{eq:boundnormC}) \whp 
$$
s_{1,\ell}=  O (  \alpha^\ell)  \quad \hbox{ and } \quad s_{2,\ell}=  O (  (\log n )^ c \alpha^{\ell/2})
$$
On the other hand, \eqref{eq:weakAB} implies that  the above upper bound on $s_{2,\ell}$ is also a lower bound up to the logarithmic factors, more precisely, \whp, $s_{2,\ell} \geq c_0 \alpha^{\ell /2}$ for some $c_0 >0$ (it follows from the proof of the forthcoming Theorem \ref{th:locmart}). Therefore, the naive lower bound on $s_{2,\ell}$ in \eqref{eq:weakAB} is asymptotically tight and Propositions \ref{prop:Bellpsi}-\ref{prop:Bellx} may be interpreted as a weak Ramanujan property for Erd\H{o}s-R\'enyi random graphs. 
\end{remark}

Proposition \ref{prop:Bellpsi} will follow from a {\em local analysis}. Namely the statistics of node neighborhoods up to distance $\ell$ in the original random graph will be related by coupling to a Galton-Watson branching process; relevant properties of the corresponding Galton-Watson process will be established; finally we shall deduce weak laws of large numbers for the $\ell$-neighborhoods of the random graph from the estimations performed on the branching process combined with some asymptotic decorrelation property between distinct node neighborhoods. This is done in Section~\ref{sec:local} where Proposition \ref{prop:Bellphi2}, which contains Proposition \ref{prop:Bellpsi}, is proven. 

The proof of Proposition \ref{prop:Bellx} relies crucially on a matrix expansion given in Proposition~\ref{prop:expansion}, which extends the argument introduced in \cite{LM13} for matrices counting self-avoiding walks to the present setup where non-backtracking walks instead are considered. We now introduce some notation to state it.


\subsection{Matrix expansion for $ B^{\ell} $}
\label{subsec:decomp}
For convenience we extend matrix $B$ and vector $\chi$ to $\dR^{\vec E(V)}$ where $\vec E(V) = \{ ( u ,v ) : u \ne v   \in V\}$ is the set of directed edges of the {\em complete graph}. We set for all $e , f\in \dR^{\vec E(V)}$, $\chi (e) = 1$ and
$$
B_{ef} =A_e A_f \IND (e_2 = f_1) \IND (e_1 \ne f_2 ),
$$
where $A$ is the graph's adjacency matrix.
For integer $k \geq 1$, $e,f \in \vec E(V)$, we define $\Gamma^ k_{ ef}$ as the set of non-backtracking walks $ \gamma = (\gamma_0, \ldots, \gamma_{k} )$ of  length  $k $ starting from $(\gamma_0,\gamma_1) = e$ and ending at $(\gamma_{k-1},\gamma_k) = f$ in the complete graph on the vertex set $V$. We have that
$$
(B^{k} )_{e f} = \sum_{\gamma \in \Gamma^{k+1} _{e f}} \prod_{s=0}^{k} A_{\gamma_{s} \gamma_{s+1}}.
$$

We associate to each walk $\gamma = (\gamma_0,\ldots, \gamma_k)$, a graph $G(\gamma) = ( V(\gamma), E(\gamma) ) $ with vertex set $V(\gamma) = \{ \gamma_{i}, 0 \leq i \leq k \}$ and edge set $E (\gamma)$ the set of distinct visited edges $ \{ \gamma_{i} , \gamma_{i+1}\}, 0 \leq i \leq k-1$. Following \cite{MNS}, we say that a graph $H$ is {\em tangle-free} 
(or $\ell$-tangle free to make the dependence in $\ell$ explicit) 
if every neighborhood of radius $\ell$ in $H$ contains at most one cycle. Otherwise, $H$ is said to be tangled. We say that $\gamma$ is tangle-free or tangled if $G(\gamma)$ is. Obviously, if $G$ is tangle-free and $1 \leq k \leq \ell$ then 
$
B^{k}   = B^{(k)},
$
where 
$$
B^{(k)}_{ef} = \sum_{\gamma \in F^{k+1} _{e f}} \prod_{s=0}^{k} A_{\gamma_{s} \gamma_{s+1}},
$$
and $F^{k+1} _{e f}$ is the subset of tangle-free paths in $\Gamma^{k+1}_{ef}$. For $u \ne v$, we set 
$$
\underline A_{u v}  = A_{uv} - \frac \alpha n.
$$

We define similarly the matrix $\Delta^{(k)}$ on $\dR^{\vec E(V)}$ 
$$
 \Delta^{(k)}_{ef} = \sum_{\gamma \in F^{k+1} _{e f}}   \prod_{s=0}^{k}  \underline A_{\gamma_{s} \gamma_{s+1}}.
$$
The matrix $\Delta^{(k)}$ can be thought of as an attempt to center the non-backtracking matrix $B^k$ when the underlying graph is tangle-free.   We use the convention that a product over an empty set is equal to $1$. 
We also set 
\begin{equation}\label{eq:defDeltaB0}
\Delta^{(0)}_{ef} = \IND ( e = f )\underline A_{e} \quad \hbox{ and } \quad B^{(0)}_{ef} = \IND ( e= f)A_e.
\end{equation}
Notably, $B^{(0)}$ is the projection on $\vec E$. We have the following telescopic sum decomposition. 
\begin{eqnarray}\label{eq:telescope}
B^{(\ell)} _{e f} &  =  & \Delta^{(\ell)}_{ef}  +\sum_{t = 0}^\ell \sum_{\gamma \in F^{\ell+1} _{e f}}     \prod_{s=0}^{t-1} \underline A_{\gamma_{s} \gamma_{s+1}} \PAR{ \frac \alpha n  } \prod_{s=t+1}^\ell A_{\gamma_{s} \gamma_{s+1}}.   
\end{eqnarray}
Indeed, 
$$
\prod_{s=0}^\ell x_s = \prod_{s=0}^\ell y_s  + \sum_{t=0}^{\ell}\prod_{s=0}^{t-1} y_s  ( x_t - y_t) \prod_{s=t+1}^{\ell} x_s.
$$
We denote by $K$ the non-backtracking matrix of the complete graph on $V$. For $0 \leq t \leq \ell$, we define $R^{(\ell)}_t$ via
$$
(R_t ^{(\ell)} )_{ef}   =  \sum_{\gamma \in F^{\ell+1} _{t,e f}} \prod_{s=0}^{t-1} \underline A_{\gamma_{s} \gamma_{s+1}} \prod_{s=t+1}^\ell A_{\gamma_{s} \gamma_{s+1}},
$$
where for $1 \leq t \leq \ell-1$, $F^{\ell+1}_{t, ef} \subset \Gamma^{\ell+1}_{ef} $ is the set of non-backtracking tangled paths $\gamma = (\gamma_0, \ldots, \gamma_{\ell +1}) =  (\gamma',\gamma'') \in  \Gamma^{\ell+1}_{ef}$ with $\gamma' = (\gamma_0, \ldots, \gamma_t)  \in F^{t}_{eg}$, $\gamma'' = ( \gamma_{t+1}, \ldots , \gamma_{\ell +1}) \in F^{\ell- t}_{g' f}$ for some $g,g' \in \vec E (V)$.  For $t = 0$, $F_{0,ef}^{\ell+1}$ is the set of non-backtracking tangled paths $\gamma = (\gamma',\gamma'')$ with $\gamma' = e_1$, $\gamma'' = ( \gamma_{1}, \ldots , \gamma_{\ell +1}) \in F^{\ell}_{g' f}$ for some $g' \in \vec E (V)$ (necessarily $g'_1 = e_2$). Similarly, for $t = \ell$, $F_{\ell,ef}^{\ell+1}$ is the set of non-backtracking tangled paths $\gamma = (\gamma_0, \ldots, \gamma_{\ell +1}) =  (\gamma',\gamma'')$ with $\gamma'' = f_2$, $\gamma' = ( \gamma_{0}, \ldots , \gamma_{\ell}) \in F^{\ell}_{e g}$ for some $g \in \vec E(V)$ (necessarily $g_2 = f_1$). 

We define
$$
L = K^2- \chi \chi^*,
$$
($L$ is nearly the orthogonal projection of $K^2$ on $\chi^\perp$).  We further denote for $1 \leq t \leq \ell-1$
$$
S_t^{(\ell)} = \Delta^{(t-1)} L B^{(\ell - t -1)}.
$$
We then have 
\begin{proposition}\label{prop:expansion}
With the above notations matrix $B^{(\ell)}$ admits the following expansion
 \begin{equation}\label{eq:decompBk}
B^{(\ell)}  =  \Delta^{(\ell)}   +  \frac \alpha n K B^{(\ell-1)}  +  \frac \alpha n  \sum_{t = 1} ^{\ell-1}  \Delta^{(t-1)} K^2  B^{(\ell - t -1)}  +  \frac \alpha n \Delta^{(\ell-1)} K  -   \frac \alpha n    \sum_{t = 0}^{\ell} R^{(\ell)}_t.
\end{equation}
If $G$ is tangle-free, for any normed vector $x\in\dC^{\vec E(V)}$, one has
\begin{eqnarray}
\| B^{\ell} x \| &   \leq &  \|  \Delta^{(\ell)} \|   +    \frac{ \alpha}{  n}  \| K B^{(\ell-1)} \|    +    \frac \alpha  n   \sum_{t = 1} ^{\ell-1}    \| \Delta^{(t-1)} \chi  \| \ABS{ \langle \chi , B^{(\ell-t-1)} x \rangle }  \nonumber \\
& & \quad +  \;  \frac{\alpha}{n}    \sum_{t = 1} ^{\ell-1} \| S_t^{(\ell)} \|   +  \alpha \| \Delta^{(\ell-1)}  \|   +  \frac \alpha n   \sum_{t = 0}^\ell \| R^{(\ell)}_t \|.\label{eq:decompBkx}
\end{eqnarray}
\end{proposition}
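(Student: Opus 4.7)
The identity (\ref{eq:decompBk}) is a purely algebraic consequence of the telescopic expansion (\ref{eq:telescope}). The plan is, for each $t\in\{0,\ldots,\ell\}$, to rewrite the corresponding term of (\ref{eq:telescope}) as a matrix product $\Delta^{(t-1)}\cdot(\text{join})\cdot B^{(\ell-t-1)}$, to identify the discrepancy between this matrix product and the tangle-free inner sum as the correction $R^{(\ell)}_t$, and finally to derive the norm bound (\ref{eq:decompBkx}) by applying the triangle inequality together with the split $K^2 = \chi\chi^* + L$.

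Fix $1 \leq t \leq \ell - 1$ and any walk $\gamma = (\gamma_0, \ldots, \gamma_{\ell+1})$ contributing to the $t$-th telescopic term. Split $\gamma$ into the sub-walks $\gamma' = (\gamma_0, \ldots, \gamma_t)$, ending at the oriented edge $g := (\gamma_{t-1}, \gamma_t)$, and $\gamma'' = (\gamma_{t+1}, \ldots, \gamma_{\ell+1})$, starting at $g' := (\gamma_{t+1}, \gamma_{t+2})$. The intermediate edge $(\gamma_t, \gamma_{t+1}) = (g_2, g'_1)$ is then determined by $(g, g')$, while the non-backtracking constraints at positions $t$ and $t+1$ read $g_1 \neq g'_1$ and $g_2 \neq g'_2$, i.e. $K^2_{gg'} = 1$. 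The weight $\prod_{s=0}^{t-1} \underline A_{\gamma_s \gamma_{s+1}} \prod_{s=t+1}^\ell A_{\gamma_s \gamma_{s+1}}$ factors exactly as the weight of $\gamma'$ in $\Delta^{(t-1)}_{eg}$ times the weight of $\gamma''$ in $B^{(\ell-t-1)}_{g'f}$. Crucially, any sub-walk of a tangle-free walk is tangle-free, because $G(\gamma')$ and $G(\gamma'')$ are subgraphs of $G(\gamma)$ and balls of radius $\ell$ only shrink under subgraph inclusion. Hence summing over tangle-free $\gamma \in F^{\ell+1}_{ef}$ equals the full matrix product minus the contribution from joins for which $\gamma$ becomes tangled while $\gamma'$, $\gamma''$ remain tangle-free:
\begin{equation*}
\sum_{\gamma \in F^{\ell+1}_{ef}} \prod_{s=0}^{t-1} \underline A_{\gamma_s \gamma_{s+1}} \prod_{s=t+1}^\ell A_{\gamma_s \gamma_{s+1}} = (\Delta^{(t-1)} K^2 B^{(\ell-t-1)})_{ef} - (R^{(\ell)}_t)_{ef},
\end{equation*}
by the very definition of $F^{\ell+1}_{t, ef}$. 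The boundary cases $t = 0$ and $t = \ell$ correspond to trivial sub-walks $\gamma' = (\gamma_0)$ or $\gamma'' = (\gamma_{\ell+1})$: only one non-backtracking constraint survives at the join, so $K^2$ is replaced by $K$ and the terms become $\frac{\alpha}{n}(KB^{(\ell-1)} - R^{(\ell)}_0)$ and $\frac{\alpha}{n}(\Delta^{(\ell-1)} K - R^{(\ell)}_\ell)$. Multiplying by $\alpha/n$ and summing over $t$, together with the $\Delta^{(\ell)}$ term, yields (\ref{eq:decompBk}).

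For (\ref{eq:decompBkx}), under the tangle-free assumption on $G$ we have $B^\ell = B^{(\ell)}$, since any non-backtracking walk of $\ell+1$ edges has its graph contained in a radius-$\ell$ ball of $G$ and therefore has at most one cycle. Apply (\ref{eq:decompBk}) to a unit vector $x$ and use the triangle inequality to bound each term by an operator norm; for the right-boundary term use the crude estimate $\|K\| \leq n$ (each row and column of $K$ has at most $n-2$ ones), so $\frac{\alpha}{n}\|\Delta^{(\ell-1)}K\| \leq \alpha \|\Delta^{(\ell-1)}\|$. For each middle term, the key trick is to use $K^2 = \chi\chi^* + L$ to split
\begin{equation*}
\Delta^{(t-1)} K^2 B^{(\ell-t-1)} x = \langle \chi, B^{(\ell-t-1)} x \rangle \, \Delta^{(t-1)} \chi + S_t^{(\ell)} x,
\end{equation*}
whose norm is at most $|\langle \chi, B^{(\ell-t-1)} x \rangle| \, \|\Delta^{(t-1)} \chi\| + \|S_t^{(\ell)}\|$. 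Summing over $t$ gives (\ref{eq:decompBkx}).

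The main obstacle is the combinatorial bookkeeping establishing the displayed identity: one must verify that the map $\gamma \mapsto (\gamma', \text{middle edge}, \gamma'')$ is a bijection between $\{\gamma \in \Gamma^{\ell+1}_{ef}: \gamma', \gamma''\text{ tangle-free}\}$ and the set of pairs $(\gamma', \gamma'')$ in $F^t \times F^{\ell-t}$ joined through an oriented edge certified by $K^2_{gg'} = 1$, and furthermore that the tangle-free condition on $\gamma$ removes exactly the subset $F^{\ell+1}_{t, ef}$. Once this combinatorial identification is secured, the rest is elementary matrix algebra and triangle inequality estimates.
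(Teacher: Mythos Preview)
Your proof is correct and follows essentially the same approach as the paper's: you establish \eqref{eq:decompBk} by recognizing each telescopic term as a matrix product $\Delta^{(t-1)}K^2 B^{(\ell-t-1)}$ (or $KB^{(\ell-1)}$, $\Delta^{(\ell-1)}K$ at the boundaries) minus the tangled correction $R^{(\ell)}_t$, and then derive \eqref{eq:decompBkx} via $B^\ell=B^{(\ell)}$, the split $K^2=\chi\chi^*+L$, and $\|K\|\le n$. The paper's own proof is terse and states exactly these steps without spelling out the combinatorial bijection you carefully verify.
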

\begin{proof}
Equation~\eqref{eq:decompBk} readily follows by adding and subtracting $\frac \alpha n R^{(\ell)}_t$ to the $t$-th term of the summation in~\eqref{eq:telescope} and noticing that this term plus $\frac \alpha n R^{(\ell)}_t$factorizes into a matrix product.

Inequality~\eqref{eq:decompBkx} follows from  ~\eqref{eq:decompBk} by noting that $B^{\ell}=B^{(\ell)}$ as $G$ is tangle-free, decomposing $K^2$ into $L+\chi \chi^*$, and finally using the fact that $\|K\|\le n$.
\end{proof}

\subsection{Norm bounds}
The following proposition will be established in Section~\ref{sec:path_counts} using path counting combinatorial arguments.
\begin{proposition}\label{prop:norm_bounds}
Let $\ell\sim \kappa \log_\alpha n$ with $\kappa \in (0,1/6)$.
With high probability, the following norm bounds hold for all $k$, $0 \leq k \leq \ell$:
\begin{eqnarray}
\label{prop:normDelta}
\| \Delta^{(k)} \| \leq ( \log n) ^{10} \alpha^{k /2},
\\
\label{prop:Dscalar}
\|  \Delta^{(k)} \chi  \|    \leq (\log n) ^{5}  \alpha^{k / 2} \sqrt n,
\\
\label{prop:normR}
\| R^{(\ell)}_k \|    \leq (\log n) ^{25}  \alpha^{\ell  - k /2},\\
\label{prop:normB2}
 \| B^{(k)} \|    \leq (\log n) ^{10}  \alpha^{k} \quad \hbox{ and } \quad 
 \|  K B^{(k)} \|    \leq \sqrt n (\log n) ^{10}  \alpha^{k},
\end{eqnarray}
and the following bound holds for all $k$, $1 \leq k \leq \ell-1$:
\begin{equation}
\label{prop:normS}
\| S^{(\ell)}_k \|    \leq \sqrt n (\log n) ^{20}  \alpha^{\ell  - k /2}.
\end{equation}
\end{proposition}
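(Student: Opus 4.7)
The plan is to prove all six bounds in Proposition~\ref{prop:norm_bounds} by the trace (moment) method, adapting the F\"uredi-Koml\'os enumeration to non-backtracking tangle-free walks. For a real matrix $M$ and integer $m\ge 1$, the starting point is $\mathbb{E}\|M\|^{2m}\le \mathbb{E}\,\mathrm{tr}((MM^*)^m)$, which combined with Markov's inequality and the choice $m\asymp \log n$ converts a moment bound of the form $C^m\cdot \theta^{2m}$ into the high-probability bound $\|M\|\le (\log n)^{O(1)}\theta$. Since every matrix in the proposition has entries indexed by pairs of oriented edges $(e,f)$ and equal to a sum over tangle-free non-backtracking paths of products of (centered or uncentered) Bernoulli entries, $\mathrm{tr}((MM^*)^m)$ expands into a sum over closed $2m$-tuples of such paths, glued alternately at shared endpoints (with edge reversals on the $M^*$-factors). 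Grouping these tuples by the labeled multigraph $H=(V(H),E(H))$ of edges they traverse, with edge multiplicities $\{n_e\}_{e\in E(H)}$, the expectation factorizes over edges and one obtains a master bound of the form $\sum_{H}n^{|V(H)|}\prod_e\mathbb{E}|A_e^{\textrm{cent.?}}|^{n_e}\cdot \mathcal{N}(H)$, where $\mathcal{N}(H)$ counts the ways to embed $2m$ non-backtracking tangle-free paths of the prescribed lengths with that edge support.

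For $\Delta^{(k)}$ (the heart of the argument), the centering of $\underline{A}$ forces every edge of $H$ to be visited at least twice, i.e.\ $n_e\ge 2$, so that $|E(H)|\le m(k+1)$ and each edge contributes at most $\alpha/n$. The tangle-free constraint on each of the $2m$ individual paths bounds the number of cycles each path can create in $H$ by one, so the excess $|E(H)|-|V(H)|+1$ is at most $O(m)$; combined with the $n_e\ge 2$ constraint this forces $H$ to be essentially a tree with $O(m)$ extra edges, producing the cancellation $n^{|V(H)|}(\alpha/n)^{|E(H)|}\le n^{-\Omega(m)}\alpha^{|E(H)|}$ which is then compensated by an explicit enumeration of $\mathcal{N}(H)$ via a F\"uredi-Koml\'os style encoding of each path by its sequence of ``first visits'' and ``revisits''. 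Carried out carefully this yields $\mathbb{E}\,\mathrm{tr}((\Delta^{(k)}\Delta^{(k)*})^m)\le C^m\alpha^{km}$, whence \eqref{prop:normDelta} after taking the $2m$-th root. The bound \eqref{prop:Dscalar} on $\|\Delta^{(k)}\chi\|$ is obtained by computing $\mathbb{E}\langle\chi,(\Delta^{(k)}\Delta^{(k)*})^m\chi\rangle$, where the two $\chi$-ends contribute a free endpoint that adds one vertex to $V(H)$ and hence an extra factor~$n$; the square root gives the $\sqrt{n}$ in the stated bound.

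The remaining cases use the same skeleton with targeted modifications. For $B^{(k)}$ the entries are uncentered, so the constraint $n_e\ge 2$ is lost; however, the dominant term corresponds to $H$ being (essentially) a union of $2m$ disjoint non-backtracking tangle-free walks, each contributing $\alpha^k$, which gives the $\alpha^k$ bound in \eqref{prop:normB2}. For $KB^{(k)}$ the extra $K$-factor prepends one free non-backtracking step, which adds one free endpoint and costs a $\sqrt{n}$; for $S_t^{(\ell)}=\Delta^{(t-1)}LB^{(\ell-t-1)}$ the matrix $L=K^2-\chi\chi^*$ plays the role of removing the Perron direction in the middle, so that the ``$B$-block'' no longer contributes its $\alpha^{\ell-t-1}$ eigenvalue in the bulk; combining with the $\Delta$-bound from \eqref{prop:normDelta} gives \eqref{prop:normS}. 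Finally, for $R_t^{(\ell)}$ the defining paths are \emph{tangled}, hence contain at least two cycles in some $\ell$-neighborhood; each additional cycle in $H$ forces a self-intersection, saving a factor $n^{-1}$ in the $n^{|V(H)|}(\alpha/n)^{|E(H)|}$ weighing, which converts the typical $\alpha^{\ell}$ path weight into $\alpha^{\ell-k/2}$ after summation, yielding \eqref{prop:normR}.

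The main obstacle is the combinatorial enumeration in Step~2: one must simultaneously control (i) the topology of the union graph $H$ under the tangle-free hypothesis on each constituent path, (ii) the number of ways $2m$ non-backtracking tangle-free paths can share edges in a prescribed incidence pattern without violating the tangle-free condition on any of them, and (iii) the multiplicity-weighted edge bounds in a way tight enough that the $\alpha^{km}$ target is met. The F\"uredi-Koml\'os encoding for simple walks gives a template, but the non-backtracking constraint alters the revisit combinatorics, and the tangle-free condition is a per-path (rather than global) restriction that must be tracked along the walk; developing an encoding scheme that handles both simultaneously, with overhead only polylogarithmic in $n$, is the genuine technical content of Section~\ref{sec:path_counts}.
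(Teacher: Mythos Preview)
Your overall framework is right: trace method, expansion over $2m$-tuples of tangle-free paths glued at endpoints, and a F\"uredi--Koml\'os style encoding. But the accounting you give for $\Delta^{(k)}$ is backwards, and as written would not close. You claim the tangle-free constraint forces the excess $\epsilon=|E(H)|-|V(H)|+1$ to be $O(m)$ and that this produces ``$n^{|V(H)|}(\alpha/n)^{|E(H)|}\le n^{-\Omega(m)}\alpha^{|E(H)|}$''. Neither is correct. The excess is \emph{not} bounded a priori (a path $\gamma_i$ is tangle-free in its own graph $G(\gamma_i)$, but can close many cycles in the union $H$ by hitting vertices already visited by other $\gamma_j$), and the dominant contribution comes from the \emph{tree} case $\epsilon=0$, where the weight is $n\,\alpha^{km}$, not $n^{-\Omega(m)}$. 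The polylog factor in \eqref{prop:normDelta} is precisely $n^{1/(2m)}$ with the choice $m\sim \log n/\log\log n$ (not $m\asymp\log n$). What tangle-freeness actually buys is an enumeration bound of the shape $|\mathcal{W}_{k,m}(v,e)|\le k^{2m}(2km)^{6m\epsilon}$: each path $\gamma_i$ visits at most one cycle, so its non-tree crossings split into one ``short cycling'' time and at most $\epsilon-1$ ``long cycling'' times, and these (with their marks) suffice to reconstruct the path. The sum over $\epsilon\ge 0$ is then a convergent geometric series with ratio $(2km)^{6m}/n\to 0$; this is the mechanism, not a hard cap on $\epsilon$.

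Two further points. For $R^{(\ell)}_t$, the saving is not ``two cycles each giving $n^{-1}$''; it is the single observation that tangledness of each $\gamma_i$ forces every connected component of $H$ to contain a cycle, so $v(\gamma)\le e(\gamma)$ rather than $v(\gamma)-1\le e(\gamma)$, which removes the factor $n$ that survived in the $\Delta$ bound. One then needs a separate enumeration lemma (the paper's Lemma~\ref{le:enumpath2}) because each $\gamma_i$ is only tangle-free on its two halves $\gamma_i^1,\gamma_i^2$. For $S^{(\ell)}_t$, ``$L$ removes the Perron direction'' is the right intuition for why $L$ is introduced, but it is not how the bound is proved: $\|L\|$ is of order $n$, so $\|\Delta^{(t-1)}\|\|L\|\|B^{(\ell-t-1)}\|$ is too large. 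The actual argument writes $L=-I-K'$ where $K'_{ef}\in\{0,1\}$ is supported on pairs with $K_{ef}=1$, $K_{f^{-1}e}=1$ or $K_{ef^{-1}}=1$, and bounds $\|\Delta^{(t-1)}K'\|$ directly by the trace method (gaining the same $\sqrt n$ as in the $KB^{(k)}$ bound because $K'$ only contributes one ``free'' step).
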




\subsection{Proof of Proposition \ref{prop:Bellx}}
\label{subsec:proofBellx}
Together with Propositions \ref{prop:expansion}  and~\ref{prop:norm_bounds}, we shall also need the next two results, established by local analysis in Section \ref{sec:local}. In particular the forthcoming Lemma \ref{le:tls2} implies that
\begin{lemma} \label{le:tls}
For $\ell \sim  \kappa \log_\alpha n $ with $\kappa < 1/2$, \whp  the random graph $G$ is $\ell$-tangle-free. 
\end{lemma}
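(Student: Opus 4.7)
The plan is to apply the first moment method. Call a pair $(v,H)$ a \emph{tangled witness} if $v\in V$ and $H\subset G$ is a connected subgraph of excess at least two lying inside $B_G(v,\ell)$. The random graph $G$ is $\ell$-tangled iff such a witness exists, so by the union bound it suffices to show that the expected number of tangled witnesses tends to zero.

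First I would reduce to minimal witnesses. Take $H$ to be minimal among excess-$2$ subgraphs contained in $B_G(v,\ell)$ and containing $v$; then $H$ is the union of two simple cycles $C_1, C_2$ together with two (possibly empty) simple paths $P_1, P_2$ joining $v$ to them, and the $2$-core of $H$ is one of three standard shapes (theta, figure-eight or dumbbell). Write $g_i = |C_i|$ and $p_i = |P_i|$. The containment $H\subset B_G(v,\ell)$ forces $p_i\le \ell$, and, since every vertex of $C_i$ must lie within $G$-distance $\ell$ of $v$ via $P_i$, also $g_i\le 2(\ell - p_i)$. Consequently $|V(H)|\le 4\ell$ and $|E(H)| = |V(H)|+1 \le 4\ell+1$.

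Second, I would estimate the expected count. For a fixed topology and fixed shape parameters $(p_1,p_2,g_1,g_2)$, the number of injective embeddings of $H$ into $K_n$ with root at a prescribed $v$ is at most $(n-1)^{|V(H)|-1}$, and each edge of $H$ is present in $G$ independently with probability $\alpha/n$. Summing over the $O(\ell^4)$ choices of shape parameters, the three $2$-core topologies and the $n$ choices of $v$ yields
\begin{equation*}
\dP(G\text{ is }\ell\text{-tangled}) \;\le\; \mathrm{poly}(\log n)\cdot n\cdot\max_{k\le 4\ell} n^{k-1}(\alpha/n)^{k+1} \;=\; \mathrm{poly}(\log n)\cdot\frac{\alpha^{O(\ell)}}{n}.
\end{equation*}
With $\ell\sim\kappa\log_\alpha n$ the right-hand side is $n^{O(\kappa)-1}$ up to logarithmic factors. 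To sharpen the exponent and reach $\kappa<1/2$ I would exploit the BFS-tree description of $B_G(v,\ell)$: conditionally on the BFS tree rooted at $v$, the non-tree edges within $B_G(v,\ell)$ are independent Bernoulli$(\alpha/n)$ over $\binom{|B_G(v,\ell)|}{2}-(|B_G(v,\ell)|-1)$ pairs, so the conditional probability of having at least two extra edges is of order $(|B_G(v,\ell)|^2\alpha/n)^2$. Combining this second-moment-type bound with the concentration of $|B_G(v,\ell)|$ around $\alpha^\ell$ (obtained by coupling with the Galton--Watson branching process of Section \ref{subsec:MTGW} and carried out in Section \ref{sec:local}) gives a probability of order $\alpha^{4\ell}/n^2$ per vertex, and the union bound over $v$ yields $o(1)$ in the target regime.

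The main obstacle is the careful bookkeeping of minimal-tangle shapes and the sharp tail bound on the number of extra edges in $B_G(v,\ell)$; in particular one must use both the BFS-tree independence of non-tree edges and a good concentration estimate for the ball size, both of which are packaged in the probabilistic estimates of Section \ref{sec:local} (specifically, the forthcoming Lemma \ref{le:tls2} from which Lemma \ref{le:tls} follows as stated in the excerpt).
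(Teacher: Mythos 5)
The paper has no separate proof of Lemma~\ref{le:tls}: it is just a restatement of the first conclusion of Lemma~\ref{le:tls2}, whose proof runs the exploration process, observes that conditionally on the BFS tree the non-tree edges within $V(G,v)_\ell$ are independent Bernoulli variables of parameter at most $a/n$, bounds the probability of two or more such edges, and takes a union bound over $v$. Your proposal takes essentially the same route --- you defer the sharp estimate to Lemma~\ref{le:tls2} and sketch the same BFS/conditional-independence calculation --- and in addition you offer a self-contained first-moment enumeration of minimal excess-two witnesses as a warmup, which is a legitimate alternative angle.

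There is, however, a numerical discrepancy you should not gloss over. You compute a per-vertex probability of order $\bigl(|V(G,v)_\ell|^2\alpha/n\bigr)^2\approx\alpha^{4\ell}/n^2$ and assert that the union bound over $n$ vertices yields $o(1)$ in the claimed range; but $n\cdot\alpha^{4\ell}/n^2=\alpha^{4\ell}/n=n^{4\kappa-1+o(1)}$, which is $o(1)$ only for $\kappa<1/4$, not throughout $\kappa<1/2$. Your first-moment warmup exhibits exactly the same exponent: the extremal witness (a figure-eight or dumbbell) has roughly $4\ell$ vertices and $4\ell+1$ edges, so its expected count is $\approx\alpha^{4\ell}/n$, again needing $\kappa<1/4$. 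The paper's proof of Lemma~\ref{le:tls2} claims the sharper per-vertex bound $a^2\,\dE S_\ell(v)^2/n^2\approx\alpha^{2\ell}/n^2$ via a $\Bin(S_\ell(v),a/n)$ stochastic bound on the number of non-tree edges in the ball; but the number of untested pairs of vertices inside the ball is of order $S_\ell(v)^2$, not $S_\ell(v)$, which is consistent with your $\alpha^{4\ell}/n^2$, not with the paper's $\alpha^{2\ell}/n^2$. You should therefore state the range $\kappa<1/4$ that your estimate actually delivers rather than assert the lemma's nominal $\kappa<1/2$; this is harmless for the paper's results, since Theorems~\ref{th:ER} and~\ref{th:main} only invoke $\ell$ with $\kappa<1/6$.
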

For the Erd\H{o}s-R\'enyi graph, Corollary \ref{cor:Blperp} states the following.

\begin{proposition}\label{cor:bls2}
For $\ell \sim  \kappa \log_\alpha n $ with $\kappa < 1/2$, \whp, for any $0 \leq t \leq \ell -1$, it holds that 
$$
\sup_{\| x \| = 1, \langle B^{\ell}  \chi , x \rangle = 0 }  \ABS{ \langle B^{t}  \chi ,   x \rangle }\leq    (\log n)^5 n ^{1/2} \alpha^{t/2}.
$$
\end{proposition}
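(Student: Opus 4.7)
\medskip

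The plan is to reduce the statement to a norm bound and then exploit the martingale structure of the normalized non-backtracking walk counts. For $x \in \mathbb{R}^{\vec E}$ with $\|x\|=1$ and $\langle B^\ell \chi, x\rangle = 0$, and any scalar $c$, one has
$$
|\langle B^t \chi, x \rangle | = | \langle B^t \chi - c B^\ell \chi, x \rangle | \leq \| B^t \chi - c B^\ell \chi \|.
$$
Choosing $c = \alpha^{t-\ell}$, it suffices to establish, with high probability,
$$
\| B^t \chi - \alpha^{t-\ell} B^\ell \chi \| \leq (\log n)^{5} \sqrt{n}\, \alpha^{t/2},
$$
or equivalently, in terms of $M_s := \alpha^{-s} B^s \chi$,
$$
\| M_t - M_\ell \| \leq (\log n)^5 \sqrt{n}\, \alpha^{-t/2}.
$$

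The coordinate $M_s(e) = \alpha^{-s} N_s(e)$ is $\alpha^{-s}$ times the number of non-backtracking walks of length $s+1$ starting at $e$. On the high-probability event that $G$ is $\ell$-tangle-free (Lemma \ref{le:tls}), the $\ell$-neighborhood of each oriented edge $e = (u,v)$ is either a tree or contains a single cycle, so $N_s(e)$ agrees with the generation-$s$ size $Z_s(e)$ of the Galton-Watson subtree rooted at $v$ with the edge back to $u$ removed. This tree can be coupled with a Poisson($\alpha$) Galton-Watson branching process, which is the standard setup of Section \ref{sec:local}. For such a branching process, $s \mapsto \alpha^{-s} Z_s$ is a square-integrable martingale with second-moment increments $\mathbb{E}[(\alpha^{-\ell} Z_\ell - \alpha^{-t} Z_t)^2] \leq C \alpha^{-t}$, a direct Kesten-Stigum type computation.

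Given the bound on individual coordinates, the core estimate is
$$
\mathbb{E}\| M_t - M_\ell \|^2 = \sum_{e \in \vec E} \mathbb{E}[(M_t(e) - M_\ell(e))^2] \leq C\, m\, \alpha^{-t} = O(n \alpha^{-t}),
$$
using $m = |\vec E| = O(n)$. Upgrading the expectation bound to a high-probability bound with a logarithmic loss can be done either via Markov's inequality applied to an appropriate high moment or by a direct variance argument; this step typically costs $(\log n)^{10}$ inside the square, giving the announced $(\log n)^5$ factor outside.

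The main obstacle is the local analysis that makes this coupling rigorous on the actual random graph $\mathcal G(n,\alpha/n)$ and not just on an idealized Galton-Watson tree: one must (i) show that the $\ell$-neighborhoods are tree-like with the correct error term, using $\ell \sim \kappa \log_\alpha n$ with $\kappa < 1/2$ so that a union bound over the $n$ vertices still works; (ii) handle the weak dependence between the neighborhoods of distinct edges when computing $\mathbb{E}\|M_t - M_\ell\|^2$. Both points are precisely the content of the local analysis developed in Section \ref{sec:local}, and Proposition \ref{cor:bls2} then follows as a corollary (Corollary \ref{cor:Blperp}) of the same machinery used to prove Proposition \ref{prop:Bellpsi}.
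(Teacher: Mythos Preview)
Your reduction is exactly right and is the same one the paper makes: using the constraint $\langle B^\ell\chi,x\rangle=0$ one is really bounding $\|B^t\chi-\alpha^{t-\ell}B^\ell\chi\|$, and the Kesten--Stigum martingale structure of the local neighborhood is what controls each coordinate. The paper's proof (Corollary~\ref{cor:Blperp}) carries out this same plan.

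The execution in the paper differs from your second-moment sketch in one useful way. Rather than bounding $\dE\|M_t-M_\ell\|^2$ and then upgrading, the paper first establishes a \emph{pointwise} high-probability bound per edge (Proposition~\ref{prop:localB}): outside a small exceptional set $\vec E_\ell$, one has $|B^t\chi(e)-\alpha^{t-\ell}B^\ell\chi(e)|\le(\log n)^4\alpha^{t/2}$; on $\vec E_\ell$ only the crude bound $|B^s\chi(e)|\le(\log n)^2\alpha^s$ is used, together with $|\vec E_\ell|\le(\log n)^3\alpha^\ell$. Summing by Cauchy--Schwarz then gives the norm bound directly, w.h.p., without any moment-to-deviation upgrade and without ever needing to analyse dependence between different coordinates. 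Your step~(ii) on ``weak dependence'' is therefore not needed in the paper's route; it would only enter if one insisted on a variance bound for $\|M_t-M_\ell\|^2$ to do the upgrade.

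One small imprecision to flag: tangle-freeness does not make $N_s(e)$ literally equal to a Galton--Watson generation size. It only guarantees that the $\ell$-ball has at most one cycle; even when it is a tree, that tree is the graph's actual neighborhood, not a Poisson GW tree. The identification is made only after the explicit coupling of Proposition~\ref{prop:couplingYZ}, and the coordinatewise martingale deviation bound comes from the quantitative Kesten--Stigum estimate (Theorem~\ref{th:growthZBP}) transferred through that coupling. You acknowledge this later, but the sentence ``$N_s(e)$ agrees with the generation-$s$ size $Z_s(e)$'' overstates what tangle-freeness alone gives.
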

We now have all the ingredients necessary to prove Proposition~\ref{prop:Bellx}. In view of Lemma \ref{le:tls}, we may use the bound \eqref{eq:decompBkx} of Proposition~\ref{prop:expansion} and take the supremum over of all $x$, $\| x \| =1$,  $\langle \check \varphi , x \rangle = \langle \chi, B^\ell x \rangle = 0$. 
By the norm bounds (\ref{prop:normDelta})-(\ref{prop:normR})-(\ref{prop:normS}) of Proposition~\ref{prop:norm_bounds}, \whp
\begin{eqnarray*}
\alpha \| \Delta^{(\ell-1)}  \|  +  \|  \Delta^{(\ell)} \|  +  \frac \alpha n   \sum_{t = 0}^\ell \| R^{(\ell)}_t \| + \frac \alpha n   \sum_{t = 1}^{\ell-1} \| S^{(\ell)}_t \| \leq  C  (\log n )^{c} \alpha^{\ell/2} ( 1 + \alpha^{\ell/2} / \sqrt  n) =  O (( \log n)^c   \alpha^{\ell/2} ). 
\end{eqnarray*}
Also, from \eqref{eq:check}, since $\check \chi = \chi$,
$$
\sup_{\| x \| = 1, \langle  \chi ,  B^{\ell}  x \rangle = 0 }  \ABS{ \langle  \chi ,    B^{(t)}  x \rangle } = \sup_{\| x \| = 1, \langle  \chi ,  B^{\ell}  \check x \rangle = 0 }  \ABS{ \langle  \chi ,    B^{(t)}  \check x \rangle } = \sup_{\| x \| = 1, \langle B^{\ell}  \chi , x \rangle = 0 }  \ABS{ \langle B^{(t)}  \chi ,   x \rangle }.
$$
Hence, from Proposition \ref{cor:bls2} and norm bound (\ref{prop:Dscalar}), \whp
\begin{eqnarray*}
 \|  \Delta^{(t-1)} \chi \| | \langle  \chi ,  B^{(\ell - t -1)} x \rangle | & \leq & C (\log n )^c    n    \alpha ^{\ell /2}  
\end{eqnarray*}
Hence, \whp
$$
\frac{\alpha}{n} \sum_{t=1}^{\ell-1}  \| \Delta^{(t-1)} \| | \langle  \chi ,  B^{(\ell - t -1)} x \rangle | = O \PAR{ (\log n )^{c+1}   \alpha^{\ell/2} }. 
$$
It remains to use norm bound ((\ref{prop:normB2}) to deal with the term $\|  K B^{(\ell-1)} \| / n $  in \eqref{eq:decompBkx} to conclude the proof of Proposition~\ref{prop:Bellx}. 

\section{Proof of Proposition~\ref{prop:norm_bounds}: path count combinatorics}
\label{sec:path_counts} 
In this section, we use the method of moments to prove the norm upper bounds stated in Proposition~\ref{prop:norm_bounds}. Recall that $\ell \sim \kappa \log_\alpha n$ with $\kappa >0$. All our constants will depend implicitly on $\kappa$.  
\subsection{Bound~(\ref{prop:normDelta}) on $\| \Delta^{(k)}\|$}
The proof will use a version of the trace method. For $n \geq 3$, we set 
\begin{equation}\label{eq:choicem}
m = \left\lfloor  \frac{ \log n }{ 13 \log (\log n)} \right\rfloor.
\end{equation}
The symmetry \eqref{eq:check} implies that $\Delta^{(k)}_{ef} = \Delta^{(k)}_{f^{-1} e^{-1}}$. 
With the convention that $e_{2m + 1} = e_1$, we get 
\begin{eqnarray}
\| \Delta^{(k-1)}  \| ^{2 m} = \| \Delta^{(k-1)} { \Delta^{(k-1)} }^*  \| ^{m} & \leq & \tr \BRA{ \PAR{  \Delta^{(k-1)} { \Delta^{(k-1)} }^*}^{m}  } \nonumber\\
& = & \sum_{e_1, \ldots, e_{2m}}\prod_{i=1}^{m}  (\Delta^{(k-1)} ) _{e_{2i-1} , e_{2 i}}(\Delta^{(k-1)} ) _{e_{2i+1} , e_{2 i}} \nonumber \\
& = & \sum_{e_1, \ldots, e_{2m}}\prod_{i=1}^{m}  (\Delta^{(k-1)} ) _{e_{2i-1} , e_{2 i}}(\Delta^{(k-1)} ) _{ e^{-1}_{2 i} e^{-1}_{2i+1}} \nonumber \\
& =  &  \sum_{\gamma \in W_{k,m} }   \prod_{i=1}^{2m}  \prod_{s=1}^{k} \underline  A_{\gamma_{i,s-1}  \gamma_{i,s}} \label{eq:trDeltak}
\end{eqnarray}
where $W_{k,m}$ is the set of sequence of paths $\gamma = ( \gamma_1, \ldots, \gamma_{2m})$ such that $\gamma_i = (\gamma_{i,0}, \cdots, \gamma_{i,k})\in V^{k+1}$ is non-backtracking tangle-free of length $k$ and for all $i = 1, \ldots, 2m$, 
$$
(\gamma_{i, k-1},\gamma_{i, k}) = (\gamma_{i+1,1}, \gamma_{i+1,0} ),
$$
with the convention that $\gamma_0 = \gamma_{2m}$.

We take expectations in \eqref{eq:trDeltak} and use independence of the edges $A_{xy}$ together with $\dE \underline A_{xy}  =0$. We find 
\begin{eqnarray}
\dE \| \Delta^{(k-1)}  \| ^{2 m} & \leq  &  \sum_{\gamma \in W'_{k,m} }  \dE  \prod_{i=1}^{2m}  \prod_{s=1}^{k} \underline  A_{\gamma_{i,s-1}  \gamma_{i,s}} \label{eq:trDeltak2}
\end{eqnarray}
 where $W'_{k,m}$ is the subset of $W_{k,m}$ where each non-oriented edge is visited at least twice.  For each $\gamma \in W_{k,m}$ we associate the graph $G(\gamma) = ( V(\gamma), E(\gamma) ) $ of visited vertices and edges.  We set 
$$
v(\gamma) = |V(\gamma)|\;  \hbox{ and }  \;  e(\gamma) =  |E (\gamma) |. 
$$

We say that a path $\gamma$ is canonical if $V(\gamma) = \{ 1, \cdots, v(\gamma) \}$ and the vertices are first visited in order. $\cW_{k,m}$ will denote the set of canonical paths in $W_{k,m}$. Every canonical path is isomorphic to ${n \choose v(\gamma)}v(\gamma)!$ paths in $W_{k,m}$. We also have the following

\begin{lemma}[Enumeration of canonical paths]\label{le:enumpath}
Let $\cW_{k,m} (v,e) $ be the set of canonical paths with $v(\gamma) = v$ and $e(\gamma) = e$. We have 
$$
| \cW _{k,m} (v,e) | \leq  k^{2m} (2km)^{ 6 m ( e -v +1)}.
$$
\end{lemma}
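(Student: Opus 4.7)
The plan is to construct an efficient combinatorial encoding of each canonical path $\gamma \in \cW_{k,m}(v,e)$, adapting the Füredi--Komlós trace--method bookkeeping to the non-backtracking, tangle-free setting. Write $L := 2mk$. View $\gamma = (\gamma_1,\ldots,\gamma_{2m})$ as a single walk of length $L$ on $[v]$, where vertices are labelled in order of first appearance (by canonicality). Classify each of the $L$ edge-traversals as a \emph{tree step} (first traversal of an edge ending at a previously unseen vertex), an \emph{excess step} (first traversal of an edge ending at an already seen vertex), or a \emph{repeat step} (traversal of an already seen edge). Canonicality forces exactly $v-1$ tree steps, $c := e-v+1$ excess steps, and $L-e$ repeat steps.

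The cheap parts of the encoding are: a tree step costs nothing (the endpoint is the next unused integer, forced by the step history); and each excess step is encoded by specifying its endpoint, one of at most $v\le L$ options, for a total factor of at most $L^{c}$. The bulk of the work is to encode the repeat steps, which I propose to group into maximal \emph{excursions} of consecutive repeat moves. Each excursion can be described by (i) its starting position in $[2m]\times[k]$ (at most $L$ choices) and (ii) the data needed to recover its exit (the endpoint through which the walk first leaves the previously-seen subgraph, at most $L$ choices). This gives at most $L^{O(N)}$ for the total encoding cost, where $N$ is the number of excursions.

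The tangle-free hypothesis is used at exactly this step: since $G(\gamma_i)$ contains at most one cycle, the walk $\gamma_i$ behaves almost like a walk in a tree, and its switches between tree-like and repeat-like behaviour can be bounded by $O(c_i+1)$, where $c_i$ is the number of excess steps occurring for the first time in the $i$-th segment. Summing, $N = O(m+c)$, so the repeat-encoding cost is $L^{O(m+c)}$. Finally, within each segment $\gamma_i$ that contains a cycle one must record the position in $[k]$ at which this cycle is first closed, contributing the factor $k^{2m}$. Combining the three factors and absorbing constants into the exponent yields $k^{2m}(2km)^{6m(e-v+1)}$.

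The main obstacle is the bound $N = O(m+c)$. It requires combining the one-cycle-per-segment bound given by tangle-freeness with the non-backtracking constraint, so that within each $\gamma_i$ the walk switches between tree-like and repeat-like behaviour only $O(c_i+1)$ times. Care is also needed at the boundary between $\gamma_i$ and $\gamma_{i+1}$, where the constraint $(\gamma_{i,k-1},\gamma_{i,k}) = (\gamma_{i+1,1},\gamma_{i+1,0})$ forces a trivial ``turn-around'' that must not be counted as a genuine excursion; defining the excursion decomposition precisely enough to make this rigorous — and to recover $\gamma$ uniquely from its encoding — is where the real combinatorial work lies.
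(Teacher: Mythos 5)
Your plan---a F\H{u}redi--Koml\'os-style injective encoding of canonical paths, with the tangle-free and non-backtracking hypotheses bounding the amount of data needed to reconstruct $\gamma$---is the right framework and is the same strategy as the paper's, though you organize the bookkeeping differently. The paper classifies times into ``first times'' (your tree steps) and ``important times'' (your excess and repeat steps combined), then uses tangle-freeness segment by segment to single out ``short cycling'' and ``long cycling'' times, of which there are at most $\epsilon=e-v+1$ per segment, hence $O(m\epsilon)$ marked events in total. You instead propose a tree/excess/repeat trichotomy and group repeat steps into maximal excursions. These are genuinely different decompositions, and if yours is carried through it even gives a slightly sharper count of marked events ($O(m+\epsilon)$ rather than $O(m\epsilon)$), which still satisfies the lemma since the stated bound is generous.

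As written, though, there is a genuine gap which you yourself flag: neither the estimate $N=O(m+c)$ on the number of excursions nor the claim that each excursion is recoverable from $O(1)$ marks is proved, and the tangle-free hypothesis is invoked at the wrong place. For the count $N$, no tangle-freeness is needed: a repeat step can never immediately follow a tree step within a segment, since a freshly discovered vertex has exactly one already-seen incident edge (the tree edge just traversed) and using it would backtrack. Thus every excursion either opens a segment (at most $2m$, since the first step of $\gamma_{i+1}$ always reverses the last edge of $\gamma_i$ and is a forced repeat) or is preceded by an excess step (at most $c$), giving $N\le 2m+c$ outright. Where tangle-freeness genuinely enters is the per-excursion decoding: an excursion is a non-backtracking walk on already-seen edges of $G(\gamma_i)$, and because $G(\gamma_i)$ has at most one cycle, prescribing the entry and exit determines the walk up to a choice of arm of the cycle and at most $k$ laps around it. Without this per-segment one-cycle property, the already-explored subgraph may contain up to $\epsilon$ independent cycles and the start-plus-exit encoding fails to be injective. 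Until these two points, together with the segment-boundary convention you mention, are made precise enough to exhibit an injection into a set of cardinality $k^{2m}(2km)^{6m(e-v+1)}$, this remains a plan rather than a proof.
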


\begin{proof}
In order to upper bound $| \cW_{k,m} ( v, e) | $ we need to find an injective way to encode the canonical paths $x \in \cW_{k,m} ( v, e) $.

Let $x = (x_{i,t})_{1 \leq i \leq 2 m, 0 \leq t \leq k}  \in \cW _{k,m} (v,e) $. We set $y_{i,t} = \{ x_{i,t},x_{i,t+1}\}$, $y_{i,t}$ will be called an edge of $x$. We explore the sequence $(x_{i,t})$ in lexicographic order denoted by $\preceq$ (that is $(i,t)\preceq (i+1,t')$ and $(i,t)\preceq(i,t+1)$). We think of the index $(i,t)$ as a time.  For $0 \leq t \leq k-1$, we say that $(i ,t)$ is a {\em first time}, if  $x_{i,t +1}$ has not been seen before (that is $x_{i,t+1} \ne x_{i', t'}$ for all $(i',t') \preceq (i,t)$).  If $(i,t)$ is a first time the edge $y_{i,t}$ is called a {\em tree edge}.  By construction, the set of tree edges forms a tree $T$ with vertex set $\{1, \ldots , v\}$. The edges which are not in $T$ are called the {\em excess edges} of $x$. Any vertex different from $1$ has its associated tree edge. It follows that the cardinal of excess edges is $\epsilon = e - v +1$.

We build a first encoding of $\cW_{k,m}(v,e)$. If $(i,t)$ is not a first time, we say that $(i,t)$ is an {\em important time} and we mark the time $(i,t)$  by the vector $(x_{i,t+1},x_{i,\tau})$, where $(i,\tau)$ is the next time that $y_{i,\tau}$ will not be a tree edge (by convention $\tau = k$ if $x_{i,s}$ remains on the tree for all $t+1 \leq s \leq k$). Since there is a unique non-backtracking path between two vertices of a tree, we can reconstruct $x \in \cW_{k,m}$ from the position of the important times and their mark. It gives rise to our first encoding.

The main issue with this encoding is that the number of important
times could be large. We have however not used so far the hypothesis
that each path $x_i$ is tangle free. To this end, we are going to
partition important times into three categories, {\em short cycling},
{\em long cycling} and {\em superfluous} times. First consider the
case where the $i$-th path $x_i$ contains a cycle. For each $i$, the
first time $(i,t)$ such that $x_{i,t+1} \in \{ x_{i,0}, \ldots,
x_{i,t} \}$ is called a short cycling time.  Let $0 \leq \sigma \leq
t$ be such that $x_{i,t+1} = x_{i,\sigma}$. By the  assumption of
tangle-freeness, $C := (x_{i,\sigma},\cdots, x_{i,t+1})$ is the only
cycle visited by $x_i$. We denote by $(i,\tau)$ the first time after
$(i,t)$ that $y_{i,\tau}$ in not an edge of $C$ (by convention $\tau =
k$ if $x_i$ remains on $C$). We  add the extra mark $\tau$ to the
short cycling time. Important times $(i,t)$ with $1 \leq t < \sigma$
or $\tau < t \leq k$ are called long cycling times. The other
important times are called superfluous. The key observation  is that
for each $1 \leq i \leq 2 m$, the number of long cycling times $(i,t)$
is bounded by  $\epsilon-1$ (since there is at most one cycle, no edge
of $x$ can be seen twice outside those of $C$, the $-1$ coming from
the fact that  the short cycling time is an excess edge). 
Now consider the case where the $i$-th path does not contain a cycle,
then all important times are called long cycling times and their
number is bounded by $\epsilon$.

We now have our second encoding. We can reconstruct $x$ from the
positions of the long cycling  and the short cycling times and their
marks. For each $1 \leq i \leq 2m$, there are at most $1$ short
cycling time and $ \epsilon-1$ long cycling times within $x_i$ if
$x_i$ contains a cycle and $0$ short cycling time and $\epsilon$ long
cycling times if $x_i$ does not contain a cycle. 
There are at most $k ^{2m \epsilon}$ ways to position them (in time). There are at most $v^2$ different possible marks for a long cycling time and $v^2 k$ possible marks for a short cycling time. We deduce that    
$$
| \cW _{k,m} (v,e) | \leq    k ^{2 m \epsilon} (v^2 k )^{2m}   (v^2 ) ^{2m (\epsilon-1)}. 
$$
We use $v \leq 2 k m$ to obtain the announced bound.
\end{proof}

\begin{proof}[Proof of Proposition~\ref{prop:norm_bounds}, norm bound (\ref{prop:normDelta})]
From \eqref{eq:trDeltak2} and Markov inequality, it suffices to prove that 
\begin{equation}\label{eq:boundS}
S = \sum_{\gamma \in W'_{k,m} }  \dE   \prod_{i=1}^{2m}  \prod_{s=1}^{k} \underline  A_{\gamma_{i,s-1}  \gamma_{i,s}} \leq (C \log n )^{16m} \alpha^{km}. 
\end{equation}
Observe that if $\gamma \in W'_{k,m}$, $v(\gamma) - 1 \leq e(\gamma) \leq k m $ and $ v(\gamma) \geq 3$. As ${n \choose v(\gamma)}v(\gamma)!< n^{v(\gamma)}$,  any $\gamma \in \cW_{k,m}$ is isomorphic to less that $n^{v(\gamma)}$ elements in $W_{k,m}$.  Also,  from the independence of the edges and $\dE \underline A_{xy} ^p \leq \alpha / n$ for integer $p \geq 2$, we get that 
\begin{equation}\label{eq:expAunder}
 \dE   \prod_{i=1}^{2m}  \prod_{s=1}^{k} \underline  A_{\gamma_{i,s-1}  \gamma_{i,s}} \leq \PAR{ \frac{ \alpha } n }^{e(\gamma)}. 
\end{equation}
Hence, using Lemma \ref{le:enumpath}, we obtain for $k\leq \ell$,
\begin{eqnarray}
S & \leq & \sum_{v=3}^{k m +1} \sum_{e = v - 1} ^{ km } | \cW _{k,m} (v,e) | \PAR{ \frac{ \alpha}{n} }^e n ^v \nonumber\\
& \leq & n  \alpha^{km}\sum_{v=3}^{k m +1} \sum_{e = v - 1} ^{ km } k^{2m} ( 2k m )^{ 6 m ( e -v +1) } n ^{v - e -1}\nonumber \\
& \leq &  n  \alpha^{km} \ell ^{ 2m } (\ell m) \sum_{s = 0} ^{ \infty } \PAR{ \frac{ (2\ell m)^{6m }}{n}}  ^{ s}\cdot \label{eq:Sboundddd}
 \end{eqnarray}
For our choice of $m$ in \eqref{eq:choicem}, we have, for $n$ large enough,
$$
n^{1/(2m)} =  o ( \log n)^7  \; , \quad   \ell m = o ( \log n)^2  \quad  \hbox{ and }  \quad (2 \ell m) ^{6m} \leq  n^{12/13}. 
$$
In particular, the above geometric series converges  and  \eqref{eq:boundS} follows. 
\end{proof}
\subsection{Bound (\ref{prop:Dscalar}) on $\|\Delta^{(k)} \chi\|$}
\begin{proof}
The bound (\ref{prop:Dscalar}) on $\|\Delta^{(k)} \chi\|$ we will now establish improves by a factor $\sqrt n$ on the trivial estimate  $\| \Delta^{(k)} \chi\| \leq \|\chi \| \|\Delta^{(k)} \|$. 
Its proof parallels the argument used to show (\ref{prop:normDelta}). We have 
\begin{eqnarray*}
\dE  \|  \Delta^{(k-1)} \chi  \| ^2 & = & \dE \sum_{e,f,g}  \Delta^{(k-1)}_{ef} \Delta^{(k-1)}_{eg} \\
& = & \sum_{\gamma \in W''_{k,1} } \dE \prod_{i=1}^2 \prod_{s=1}^k \underline A_{\gamma_{i,s-1}, \gamma_{i,s}}, 
\end{eqnarray*}
where $W''_{k,1}$ is the set of pairs of paths $(\gamma_1, \gamma_2)$ such $\gamma_i = (\gamma_{i,0}, \ldots, \gamma_{i,k})$ is non-backtracking and $(\gamma_{1,k-1}, \gamma_{1,k}) = (\gamma_{2,1}, \gamma_{2,0})$ and each edge is visited at least twice. The only difference with $W_{k,1}$ defined above is that we do not require that $(\gamma_{1,0},\gamma_{1,1}) = (\gamma_{2,k},\gamma_{2,k-1})$. However, this last condition $(\gamma_{1,0},\gamma_{1,1}) = (\gamma_{2m,k},\gamma_{2m,k-1})$  was not used in the proof of Lemma \ref{le:enumpath}. It follows that the set of canonical paths in $W''_{k,1}$ with $v$ distinct vertices and $e$ distinct edges has cardinal bounded by $k^2 (2 k) ^{ 6 ( e -v +1) }$. Since the paths are connected and each edge appears at least twice, we have $v-1 \leq e \leq k$. As in the proof of (\ref{prop:normDelta}), we get from \eqref{eq:Sboundddd} with $m=1$
$$
\dE  \|  \Delta^{(k-1)} \chi  \| ^2 \leq  C  n \alpha^k ( \log n)^3. 
$$ 
We conclude with Markov inequality and the union bound.
\end{proof}
\subsection{Bound (\ref{prop:normR}) on $\|R^{(\ell)}_k\|$}
For $n \geq 3$, we set 
\begin{equation}\label{eq:choicem2}
m = \left\lfloor  \frac{ \log n }{ 25 \log (\log n)} \right\rfloor.
\end{equation}
For $0 \leq k \leq \ell-1$, we have that 
\begin{eqnarray}
\| R^{(\ell-1)}_k  \| ^{2 m} & \leq & \tr \BRA{ \PAR{  R^{(\ell-1)}_k { R^{(\ell-1)}_k }^*}^{m}  } \nonumber\\
& =  &  \sum_{\gamma \in T'_{\ell,m,k} }   \prod_{i=1}^{2m}  \prod_{s=1}^{k} \underline  A_{\gamma_{i,s-1}  \gamma_{i,s}}  \prod_{s=k+2}^{\ell} A_{\gamma_{i,s-1}  \gamma_{i,s}}  \label{eq:trRk}
\end{eqnarray}
where $T'_{\ell,m,k}$ is the set of sequence of paths $\gamma = ( \gamma_1, \ldots, \gamma_{2m})$ such that $\gamma^1_i = (\gamma_{i,0}, \cdots, \gamma_{i,k})$ and $\gamma^2_i = (\gamma_{i,k+1}, \cdots, \gamma_{i,\ell})$ are non-backtracking tangle-free, $\gamma_i = (\gamma^1 _i, \gamma^2_i)$ is non-backtracking tangled and for all odd $i\in\{ 1, \ldots, 2m\}$, 
\begin{equation*}
(\gamma_{i,0}, \gamma_{i,1} ) = (\gamma_{i-1, 0},\gamma_{i-1,1}) \quad \hbox{ and } \quad (\gamma_{i,\ell-1}, \gamma_{i,\ell} ) = (\gamma_{i+1, \ell-1},\gamma_{i+1, \ell}) ,
\end{equation*}
with the convention that $\gamma_0 = \gamma_{2m}$.

We define $G(\gamma)  = (V(\gamma), E(\gamma))$ as the union of the graph $G(\gamma_i^z)$, $1 \leq i \leq 2m$, $z \in \{1,2\}$. Note that the edges $(\gamma_{i,k}, \gamma_{i,k+1})$ are not taken into account in $G(\gamma)$.  As usual, we set $v(\gamma) = |V(\gamma)|$  and $e(\gamma) =  |E (\gamma) | \geq v(\gamma) $. Since $\gamma_i$ is tangled  either (a) $G(\gamma_i)$ contains a cycle and is connected or (b) both $G(\gamma_i^1)$ and $G(\gamma_i^2)$ contain a cycle. In particular, all connected components of $G(\gamma)$ contain a cycle and it follows that 
$$
v(\gamma) \leq e(\gamma). 
$$
 Taking the expectation in \eqref{eq:trRk}, we find that 
\begin{eqnarray}
\dE \| R^{(\ell-1)} _k \| ^{2 m} & \leq  &  \sum_{\gamma \in T_{\ell,m,k} }  \dE  \prod_{i=1}^{2m}  \prod_{s=1}^{k} \underline  A_{\gamma_{i,s-1}  \gamma_{i,s}}  \prod_{s=k+2}^{\ell} A_{\gamma_{i,s-1}  \gamma_{i,s}}  \label{eq:trRk2},
\end{eqnarray}
 where $T_{\ell,m,k}$ is the subset of $\gamma \in T'_{\ell,m,k}$ such that 
\begin{equation}\label{eq:egammaT}
v(\gamma) \leq e(\gamma) \leq k m + 2 m ( \ell - 1- k) = m (2 \ell - 2 - k).
\end{equation}  Indeed, for the contribution of a given $\gamma$ in \eqref{eq:trRk2} to be non-zero, each pair $\{\gamma_{i,s-1} , \gamma_{i,s}\}$, $1 \leq i \leq 2m$, $1 \leq s \leq k$, should appear at least twice in the sequence of the $2 (\ell -1)m$ pairs $\{\gamma_{i,s-1} , \gamma_{i,s}\}$, $s \ne k+1$.

\begin{lemma}[Enumeration of canonical tangled paths]\label{le:enumpath2}
Let $\cT_{\ell,m,k} (v,e) $ be the set of canonical paths in $T_{\ell,m,k}$ with $v(\gamma) = v$ and $e(\gamma) = e$. We have 
$$
| \cT _{\ell,m,k} (v,e) | \leq   (4 \ell m )^{ 12 m ( e - v +1) + 8 m }.
$$
\end{lemma}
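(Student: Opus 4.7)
The plan is to extend the canonical-path encoding of Lemma \ref{le:enumpath} to the tangled setting by treating each of the $4m$ tangle-free half-paths $\gamma_i^z$ ($i\in[2m]$, $z\in\{1,2\}$) individually and then combining the encodings. Set $\epsilon := e - v + 1$.

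First I would fix a lexicographic order on the triples $(i,z,t)$ and explore the half-paths accordingly, declaring $(i,z,t)$ a \emph{first time} whenever the vertex $\gamma_{i,z,t+1}$ has not been visited globally before. As in Lemma \ref{le:enumpath}, the first-time edges form a spanning tree of $G(\gamma)$ on the $v$ vertices, leaving exactly $\epsilon$ excess edges. For each half-path $\gamma_i^z$, which is non-backtracking and tangle-free of length at most $\ell$, I would classify its important (non-first-time) times into at most one \emph{short cycling} time, at most $\epsilon-1$ \emph{long cycling} times, and the remaining \emph{superfluous} times, exactly as in Lemma \ref{le:enumpath}. The tangle-free property of each $\gamma_i^z$ ensures that at most $\epsilon$ cycling times per half-path suffice to reconstruct its vertex trajectory, the other steps being forced walks on the spanning tree.

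Next I would count possible encodings. Each half-path contributes at most $\ell^{\epsilon}$ choices for the positions of its cycling times (in a slot of length $\le\ell$) and at most $(v^2\ell)\cdot(v^2)^{\epsilon-1}$ choices for the associated marks (a vertex pair together with an additional jump index for the short cycling time, a vertex pair for each long cycling time). Taking the product over all $4m$ half-paths yields
\begin{equation*}
|\cT_{\ell,m,k}(v,e)| \le \ell^{4m\epsilon}\cdot(v^2\ell)^{4m}\cdot(v^2)^{4m(\epsilon-1)} = \ell^{4m\epsilon+4m}\,v^{8m\epsilon}.
\end{equation*}
Using the bound \eqref{eq:egammaT}, namely $v\le e+1\le m(2\ell-2-k)+1\le 2\ell m$, this is at most $(2\ell m)^{12m\epsilon+4m}$, which is in turn bounded by $(4\ell m)^{12m\epsilon+8m}$, the announced estimate.

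The main obstacle is to verify that separate tangle-free encodings of the two halves suffice despite the combined path $\gamma_i$ being tangled: in case (a) of the dichotomy stated just before the lemma, the extra cycle of $G(\gamma_i)$ uses edges from both $\gamma_i^1$ and $\gamma_i^2$, so neither half-path encoding alone witnesses the tangling. The resolution is that the Lemma \ref{le:enumpath} encoding records the full vertex trajectory of each half-path (including both endpoints), so any shared vertices between the two halves - and hence the tangling structure - are recovered automatically upon decoding, without any supplementary mark. A minor additional bookkeeping issue is the boundary gluing conditions for odd $i$, but these only restrict the admissible encodings and so may be ignored when bounding from above.
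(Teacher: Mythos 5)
Your high-level plan matches the paper's (encode each of the $4m$ tangle-free half-paths separately with the Lemma~\ref{le:enumpath} machinery), but the argument as written has a genuine gap tied to the fact that the bridge edges $\{\gamma_{i,k},\gamma_{i,k+1}\}$ are excluded from $G(\gamma)$, so that $G(\gamma)$ is in general disconnected. This breaks two of your steps. First, the claim that the first-time edges form a spanning tree of $G(\gamma)$ with exactly $\epsilon=e-v+1$ excess edges is wrong when $G(\gamma)$ has $c>1$ components: one gets a spanning forest with $v-c$ edges and $e-v+c=\epsilon+c-1>\epsilon$ excess edges. The paper recovers a per-half-path bound of $\epsilon$ cycling times by first observing that the tangled-ness of each $\gamma_i$ forces every connected component of $G(\gamma)$ to contain a cycle (this is the dichotomy stated just before the lemma), from which it follows that each component has at most $\epsilon$ excess edges. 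You neither state nor use this structural fact, and without it the per-half-path count $\epsilon$ is unjustified.

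Second and more seriously, your encoding is not injective. When a second half $\gamma_i^2$ lies in a component disjoint from what has been explored so far, its starting vertex $\gamma_{i,k+1}$, when it is an already-seen vertex, is \emph{not} recoverable from the cycling-time marks: the bridge was deleted, so there is no forced non-backtracking tree step that produces it, and it is not the next canonical label either. Your ``main obstacle'' discussion mislocates the difficulty in recovering the tangling structure; the real issue is precisely this unrecorded jump. The paper handles it by a careful reordering $\hat\gamma$ (reversing the even-indexed paths so each $\hat\gamma_{i}$ starts where the previous one ends, and reversing the initial segment of $\hat\gamma''_i$ so that exploration of the second half resumes from a known vertex once it reattaches) together with one extra mark $(q_i,\hat\gamma_{i,q_i})$ per $i$ recording the reattachment position and vertex. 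This is exactly the factor $(2\ell v)^{2m}$ in the paper's count which is absent from yours; without it the map from canonical paths to encodings is not one-to-one and the counting argument collapses.
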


\begin{proof}
We will adapt the proof of Lemma \ref{le:enumpath} and use the same terminology. We start by reordering $\gamma \in \cT_{\ell,m,k}$ into a new sequence which preserves as much as possible the connectivity of the path. First, we reorder $\gamma = (\gamma_1, \ldots, \gamma_{2m})$ into $\hat \gamma = (x_1, \ldots, x_{2m})$ by setting for $i$ odd, $\hat \gamma_i = \gamma_i$ and for $i$ even, $\hat \gamma_{i,t} = \gamma_{i,\ell-t}$. Also, for $i$ odd, we set $k_i = k$ and for $i$ even $k_i = \ell -k-1$. Finally, we write $\hat \gamma_i = (\hat \gamma'_i, \hat \gamma''_i)$ with $\hat \gamma'_i = (\hat \gamma_{i,0}, \ldots, \hat \gamma_{i,k_i})$ and $\hat \gamma''_i = (\hat \gamma_{i,k_i+1}, \ldots, \hat \gamma_{i,\ell})$. To each $i$, we say that $\gamma_i$ is connected or disconnected whether $ G(\hat \gamma''_i)$ intersects the graph $H_i = \cup_{j < i} G(\hat \gamma_j) \cup   G(\hat \gamma'_i)$ or not. If $\gamma_i$ is disconnected, we define for $0 \leq t \leq \ell$, $x_{i,t} = \hat \gamma_{i,t}$. If $\gamma_i$ is connected, for $0 \leq t \leq k_i$, we set $x_{i,t} = \hat \gamma_{i,t}$, and if $q_i > k_i$ is the first time such that $\hat \gamma_{i,q_i} \in H_i$, we set for $k_i +1 \leq t \leq q_i$, $x_{i,t} = \hat \gamma_{q_i+k_i +1 -t}$  and for $q_i + 1 \leq t \leq \ell$, $x_{i,t} = \hat \gamma_{i,t}$. We then explore the sequence $(x_{i,t})$ in lexicographic order and set $y_{i,t} = \{ x_{i,t}, x_{i,t+1} \}$. The definition of first time, tree edge and excess edge carry over, that is $(i,t) \ne (i,k_i)$ is a first time if the end vertex of $x_{i,t+1}$ has not been seen before. When $\gamma_i$ is connected, we add the extra mark $(q_i,\hat \gamma_{q_i})$, if $\gamma_i$ is disconnected this extra mark is set to $0$. With our ordering, all vertices of $V(\gamma) \backslash\{1 \}$ will have an associated tree edge, at the exception of $x_{i,k_i+1}$ when $\gamma_{i}$ is disconnected. If $\delta$ is the number of disconnected $\gamma_i$'s, we deduce that there are $\delta + e + v -1$ excess edges. Note however that there are at most  $\epsilon = e + v -1$ excess edges in each connected component of $G(\gamma)$.   

We may now repeat the proof of Lemma \ref{le:enumpath}. The main difference is that, for each $i$, we use that $\hat \gamma'_i$ and $\hat \gamma''_i$ are tangle free, it gives short cycling times and long cycling times for both $\hat \gamma'_i$ and $\hat \gamma''_i$.  For each $i$, there are at most $2$ short cycling times and $2 (\epsilon-1)$ long cycling times. Since there are at most $ \ell  ^{4m \epsilon}$ ways to position these cycling times, we arrive at    
$$
| \cT _{\ell,m,k} (v,e)) | \leq (2 \ell v )^{2m} \ell^{4 m \epsilon} (v^2 \ell )^{4m}   (v^2 ) ^{4m (\epsilon-1)}, 
$$
where the factor $(2 \ell v )^{2m}$ accounts for the extra mark.  Using $v \leq 2 \ell m$, we obtain the claimed statement. \end{proof}

\begin{proof}[Proof of bound (\ref{prop:normR})]
From \eqref{eq:trRk2}, it suffices to prove that 
\begin{equation}\label{eq:boundS2}
S = \sum_{\gamma \in T_{\ell,m,k} }  \dE   \prod_{i=1}^{2m}  \prod_{s=1}^{k} \underline  A_{\gamma_{i,s-1}  \gamma_{i,s}}  \prod_{s=k+2}^{\ell} A_{\gamma_{i,s-1}  \gamma_{i,s}}  \leq (C \log n )^{24m} \alpha^{(2\ell - k)m}. 
\end{equation}

As in \eqref{eq:expAunder}, we find 
\begin{equation}\label{eq:expAunder2}
\dE \prod_{i=1}^{2m}  \prod_{s=1}^{k} \underline  A_{\gamma_{i,s-1}  \gamma_{i,s}}  \prod_{s=k+2}^{\ell} A_{\gamma_{i,s-1}  \gamma_{i,s}}  \leq \PAR{ \frac{ \alpha } n }^{e(\gamma)}. 
\end{equation}
From \eqref{eq:egammaT} and Lemma \ref{le:enumpath2}, we obtain
\begin{eqnarray*}
S & \leq & \sum_{v=1}^{ m (2 \ell - 2 - k)} \sum_{e = v } ^{ m (2 \ell - 2 - k) } | \cT _{\ell,m,k} (v,e) | \PAR{ \frac{ \alpha}{n} }^e n ^v \\
& \leq & \alpha^{(2\ell - k)m }\sum_{v=1}^{m (2 \ell - 2 - k)} \sum_{e = v } ^{ \infty } (4 \ell m )^{ 12 m ( e - v) + 20 m } n ^{v - e } \\
& \leq &  \alpha^{(2\ell - k)m} ( 4 \ell m )^{ 20 m } (2 \ell m) \sum_{s = 0} ^{ \infty } \PAR{ \frac{ (4 \ell m)^{12m }}{n}}  ^{ s} 
 \end{eqnarray*}
For our choice of $m$ in \eqref{eq:choicem2}, we have
$\ell m = o (\log n)^2$ and $(4 \ell m)^{12m } \leq n^{24/25}$ and \eqref{eq:boundS2} follows. 
\end{proof}

\subsection{Bound (\ref{prop:normB2}) on $\|B^{(k)}\|$}

Since $\| K \|$ is of order $n$,  we observe that the second statement in (\ref{prop:normB2}) improves by a factor $\sqrt n$ the crude bound   $  \|  K B^{(k)} \|   \leq \|K \| \| B^ {(k)}\|$. 

\begin{proof}
We only prove the second statement. The first statement is proved similarly. The argument again parallels that used to show (\ref{prop:normDelta}). We take $m$ as in \eqref{eq:choicem}. We have  
\begin{eqnarray}
\|  K B^{(k-2)}  \| ^{2 m} & \leq & \tr \BRA{ \PAR{  ( K  B^{(k-2)} ) (  K B^{(k-2)} )^*}^{m}  } \nonumber\\
& =  &  \sum_{\gamma \in W_{k,m} }   \prod_{i=1}^{m}  \prod_{s=2}^{k}  A_{\gamma_{2i-1,s-1}  \gamma_{2i-1,s}}  \prod_{s=1}^{k-1} A_{\gamma_{2i,s-1}  \gamma_{2i,s}} \label{eq:trBk}
\end{eqnarray}
where $W_{k,m}$ is the set of sequence of paths $\gamma = ( \gamma_1, \ldots, \gamma_{2m})$ defined below \eqref{eq:trDeltak}. From \eqref{eq:trBk} and Markov inequality, it suffices to prove that 
\begin{equation}\label{eq:boundS222}
S = \sum_{\gamma \in W_{k,m} }  \dE  \prod_{i=1}^{m}  \prod_{s=2}^{k}  A_{\gamma_{2i-1,s-1}  \gamma_{2i-1,s}}  \prod_{s=1}^{k-1} A_{\gamma_{2i,s-1}\gamma_{2i,s}} \leq (C \log n )^{16m}  n^m \alpha^{2km}. 
\end{equation}
If $\gamma \in \cW_{k,m}$ is a canonical element of $W_{k,m}$ then $v(\gamma) - 1 \leq e(\gamma) \leq 2 k m $ and $ v(\gamma) \geq 3$. Also, any $\gamma \in \cW_{k,m}$ is isomorphic to less that $n^{v(\gamma)}$ elements in $W_{k,m}$.  Moreover, we have that 
\begin{equation}\label{eq:expA}
 \dE   \prod_{i=1}^{m}  \prod_{s=2}^{k}  A_{\gamma_{2i-1,s-1}  \gamma_{2i-1,s}}  \prod_{s=1}^{k-1} A_{\gamma_{2i,s-1}\gamma_{2i,s}} \leq \PAR{ \frac{ \alpha } n }^{e(\gamma) -m}, 
\end{equation}
indeed, for any $p \geq 1$, $\dE A_{u,v} ^p \leq \alpha / n$ and, since $(\gamma_{2i+1,0},\gamma_{2i+1,1} )  = (\gamma_{2i,k},\gamma_{2i,k-1} )$ at most $m$ distinct edges are covered by the union of $\{ \gamma_{2i-1,0} , \gamma_{2i-1,1}\}$ and $ \{\gamma_{2i,k-1}  ,\gamma_{2i,k}\}$. Hence, using Lemma \ref{le:enumpath}, we obtain
\begin{eqnarray*}
S & \leq &\PAR{ \frac{ n } \alpha }^{m}\sum_{v=3}^{k m +1} \sum_{e = v - 1} ^{ 2km } | \cW _{k,m} (v,e) | \PAR{ \frac{ \alpha}{n} }^e n ^v \\
& \leq & n^{m+1}  \alpha^{(2 k -1) m}\sum_{v=3}^{k m +1} \sum_{e = v - 1} ^{ 2 km } k ^{2m} (2 k m )^{ 6 m ( e -v +1) } n ^{v - e -1} \\
& \leq &  n^{m+1} \alpha^{(2k-1)m} ( \ell )^{ 2m } (\ell m) \sum_{s = 0} ^{ \infty } \PAR{ \frac{ (2 \ell m)^{6m }}{n}}  ^{ s} 
 \end{eqnarray*}
For our choice of $m$ in \eqref{eq:choicem}, the above geometric series converges  and  \eqref{eq:boundS222} follows. 
 \end{proof}
\subsection{Bound (\ref{prop:normS}) on $\|S_k^{(\ell)}\|$}
\begin{proof}
Observe that 
$ L_{ef} = 0 $ unless $e = f$, $K_{ef} = 1$, $K_{f^ {-1} e} = 1$ or $K_{e  f^{-1}} = 1$ in which cases $ L_{ef}  = -1 $. We may thus decompose 
$$
L=    - I -  K',
$$
where $I$ is the identity, and the non-zero entries of $K'$ are equal $1$ and are the pairs $(e,f)$ such that $K_{ef} = 1$, $K_{f^ {-1} e} = 1$ or $K_{e  f^{-1}} = 1$. Thus
\begin{eqnarray*}
\| S^{(\ell)}_k \| & \leq &    \| \Delta^{(k-1)} \|  \| B^{(\ell-k -1)} \|  +  \| \Delta^{(\ell-1)} K' \| \|  B^{(\ell-k-1)} \| .
\end{eqnarray*}
Bounds (\ref{prop:normDelta})-(\ref{prop:normB2}) imply that the first term has a smaller order than the intended bound (\ref{prop:normS}). Hence we only need to bound the last term. We use again the method of moments. We observe that $K'_{ef} \leq K_{ef} + (P K)_{ef} + (KP)_{ef}$. A straightforward adaptation of the proof of bound (\ref{prop:normB2}) shows that \whp, for any $1 \leq k \leq \ell-1$,
$$
\| \Delta^{(k-1)} K' \|  \leq  \sqrt n (\log n) ^{10}  \alpha^{k/2},
$$
which concludes the proof.
\end{proof}



\section{Stochastic Block Model : proof of Theorem \ref{th:main}}
\label{sec:proof_sbm_v2}
In this section, we give the strategy of proof for Theorem \ref{th:main}. Let $\ell = \ell (n)\sim \kappa \log_{\alpha} n$  for some $\kappa \in(0,\gamma/6)$ as in Theorem \ref{th:main}. Recalling the definition (\ref{eq:def_chi}) of vector $\chi_k$,  we further introduce for all $k\in[r]$:
\begin{equation}
\label{eq:defvarphik}
\varphi_k = \frac{ B^{\ell} \chi_k }{\| B^{\ell} \chi_k \|}\,  , \quad   \theta_k = \| B^{\ell} \check \varphi_k  \|,
\end{equation}
and
$$
\zeta_k=   \frac{ B^{\ell} \check \varphi_k }{ \theta_k} =  \frac{ B^{\ell} B^{*\ell} \check \chi_k }{ \| B^{\ell}B^{*\ell} \check \chi_k \|}.
$$
(in the above, if $\theta_k = 0$, we set $\zeta_k = 0$).  We also define
$$
H = \mathrm{span} (  \check \varphi_k , k \in [r]). 
$$
We then have the following
\begin{proposition} \label{prop:Bellphi2} For some $b,c>0$, \whp 
\begin{enumerate}[(i)]
\item \label{i1}$b | \mu^{\ell}_k | \leq \theta_k  \leq   c | \mu^{\ell}_k |$  if $k \in [r_0]$,
\item\label{i3}$ \sign( \mu_k ^{\ell}) \langle \zeta_k , \check \varphi_k\rangle   \geq  b $ if $k  \in [r_0]$,
\item \label{i2}$\theta_k  \leq  (\log n)^{c} \alpha^{\ell/2}$ if $k \in [r] \backslash [r_0]$,
\item\label{i4}$ \ABS{ \langle  \varphi_j ,   \varphi_k\rangle }  \leq  (\log n)^c  \alpha^{3\ell /2} n^{- \gamma /2} $ if $k \ne j \in [r]$, 
\item\label{i5}   $\ABS{ \langle \zeta_j , \check  \varphi_k \rangle }  \leq  (\log n)^c  \alpha^{2 \ell} n^{- \gamma /2} $  if $k \ne j \in [r_0]$.
\item \label{i6} $\ABS{ \langle \zeta_j , \zeta_k \rangle }  \leq  (\log n)^c  \alpha^{5 \ell/2} n^{- \gamma /2}$  if $k \ne j \in [r_0]$.
\end{enumerate}
\end{proposition}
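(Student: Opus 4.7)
The plan is to exploit that for $\ell \sim \kappa \log_\alpha n$ with $\kappa < \gamma/6$, the graph $G$ is $\ell$-tangle-free \whp (cf.\ Lemma~\ref{le:tls}). On that event, for any oriented edge $e=(u,v) \in \vec E$, $(B^\ell \chi_k)(e) = \sum_{w} \phi_k(\sigma(w))$, where the sum runs over the endpoints $w$ at depth $\ell$ in the non-backtracking subtree rooted at $e$. By the coupling constructed in Section~\ref{sec:local}, this local structure is close \whp to that of a multi-type Galton--Watson tree with Poisson$(M_{ij})$ offspring distribution (type $j$ per type-$i$ parent), rooted at a vertex of type $\sigma(v)$ whose parent has type $\sigma(u)$.

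Items (\ref{i1}), (\ref{i3}) and (\ref{i2}) would then follow from the multi-type Kesten--Stigum theorem of Section~\ref{subsec:MTGW}: the martingale $W_k^{(\ell)} := \mu_k^{-\ell} \langle \phi_k, Z_\ell\rangle$, where $Z_\ell$ is the generation-$\ell$ type-vector of the tree, converges in $L^2$ to a non-degenerate limit $W_k$ with $\EE[W_k \mid \text{root type}=i]=\phi_k(i)$ and strictly positive variance precisely when $\mu_k^2 > \alpha$, i.e.\ for $k \in [r_0]$. Transferring this through the coupling gives the heuristic $(B^\ell \chi_k)(e) \approx \mu_k^\ell W_k(e)$, and a law of large numbers over the $2|E|$ directed edges---whose $\ell$-neighborhoods are asymptotically independent up to the coupling error---produces $\|B^\ell \chi_k\|^2 / (n |\mu_k|^{2\ell}) \to c_k > 0$. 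Applied to $B^{2\ell}\chi_k$, the same scheme controls $\theta_k$ and $\langle \zeta_k, \check\varphi_k\rangle$, yielding (\ref{i1}) and (\ref{i3}). For $k \in [r]\setminus[r_0]$ as in (\ref{i2}), $W_k^{(\ell)}$ is no longer $L^2$-bounded, but its second moment grows only as $\alpha^\ell$ (sub-Kesten--Stigum regime), which directly translates, after the corresponding LLN, into $\theta_k \leq (\log n)^c \alpha^{\ell/2}$.

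For (\ref{i4})--(\ref{i6}) I would use a second-moment argument exploiting asymptotic orthogonality. For (\ref{i4}), write
\[
\langle B^\ell \chi_j,\, B^\ell \chi_k\rangle = \sum_{e \in \vec E} (B^\ell \chi_j)(e)(B^\ell \chi_k)(e).
\]
Each summand is approximated via the coupling by $\mu_j^\ell \mu_k^\ell W_j(e)W_k(e)$, and conditioned on the root type $\sigma(e_2)=i$ the product has expectation $\phi_j(i)\phi_k(i)$. Summing over $v \in V$, using \eqref{eq:defgamma} and the orthogonality \eqref{eq:phiONpi}, the total expectation collapses to $n \alpha\langle \phi_j,\phi_k\rangle_\pi = 0$ up to an $O(|\mu_j \mu_k|^\ell n^{1-\gamma})$ correction. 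A variance bound coming from the near-independence of distinct $\ell$-neighborhoods gives a concentration of order $|\mu_j \mu_k|^\ell n^{1-\gamma/2} (\log n)^{O(1)}$. Dividing by $\|B^\ell \chi_j\| \|B^\ell \chi_k\|$, which by the previous paragraph is of order at least $n^{1/2}\alpha^{\ell/2}$ times $|\mu_j \mu_k|^\ell / \alpha^\ell$ (whether or not $j$ or $k$ belong to $[r_0]$), yields (\ref{i4}). Items (\ref{i5}) and (\ref{i6}) are handled by the identical scheme applied respectively to $\langle B^{2\ell}\check\chi_j, B^\ell \chi_k\rangle$ and $\langle B^{2\ell}\check\chi_j, B^{2\ell}\check\chi_k\rangle$, with extra factors of $\alpha^\ell$ accounting for the higher powers of $\alpha$ in the bounds.

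The main obstacle is the quantitative coupling: the tree-coupling between the $\ell$-neighborhood in the SBM and the multi-type Galton--Watson tree must be controlled to error $O(n^{-\gamma})$, since this is precisely what powers the $n^{-\gamma/2}$ concentration rate in the cross-term cancellations. The constraint $\kappa<\gamma/6$ is exactly the one making $\alpha^{3\ell/2}n^{-\gamma/2}$, $\alpha^{2\ell}n^{-\gamma/2}$ and $\alpha^{5\ell/2}n^{-\gamma/2}$ all $o(1)$. A secondary subtlety is the sub-Kesten--Stigum bound in (\ref{i2}), where the martingale limit is degenerate and the bound must come from a direct second-moment computation without any convergence in $L^2$.
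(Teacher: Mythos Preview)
Your high-level architecture is right---couple to the multi-type Galton--Watson tree, invoke Kesten--Stigum for $k\in[r_0]$ and a second-moment bound otherwise, then run a weak law of large numbers over the oriented edges. This is exactly what the paper does. But there is a genuine gap in your treatment of $\theta_k$, $\zeta_k$, and especially item~(\ref{i6}).

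The issue is that $\theta_k = \|B^\ell B^{*\ell}\check\chi_k\|/\|B^\ell\chi_k\|$ and $\zeta_k = B^\ell B^{*\ell}\check\chi_k/\|B^\ell B^{*\ell}\check\chi_k\|$ involve $B^\ell B^{*\ell}$, not $B^{2\ell}$. These are very different operators. On a tree, $(B^\ell B^{*\ell}\check\chi_k)(e)$ counts paths that go out $\ell$ steps from $e$ to some $f$ and then, reversing orientation, go $\ell$ further steps from $f^{-1}$; since the tree is acyclic this second leg \emph{backtracks} partway toward $e$ before branching off. The resulting local functional is not a $2\ell$-generation martingale increment but the ``cross-generation functional'' $Q_{k,\ell}$ of Section~\ref{subsec:MTGW} (see \eqref{eq:QklNBW}--\eqref{eq:defQkl} and \eqref{why_Q_kl}). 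Its $L^2$ analysis for $k\in[r_0]$ and its $O(\alpha^{2\ell}\ell^5)$ second-moment bound for $k\notin[r_0]$ (Theorem~\ref{th:growthQkl}) are what actually drive (\ref{i1}), (\ref{i2}), (\ref{i3}); your ``apply the same scheme to $B^{2\ell}\chi_k$'' does not produce these. For (\ref{i5}) the numerator does rewrite as $\langle B^\ell\chi_j, B^{2\ell}\chi_k\rangle$ via the parity symmetry, but the denominator still needs $\|B^\ell B^{*\ell}\check\chi_j\|$, hence the $Q_{k,\ell}$ analysis again.

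Item~(\ref{i6}) is more delicate still: $\langle\zeta_j,\zeta_k\rangle$ has numerator $\langle B^\ell B^{*\ell}\check\chi_j,\,B^\ell B^{*\ell}\check\chi_k\rangle$, which does \emph{not} reduce to an inner product of $B^{2\ell}$-type vectors. The cancellation you need is $\dE\, Q_{j,\ell}Q_{k,\ell}=0$ for $j\ne k$ under the stationary root law, and this is a separate fact about the Markov random field structure of spins on the tree (Lemma~\ref{mkv_field}, Theorem~\ref{le:ouff2}); it is not a consequence of the $\pi$-orthogonality \eqref{eq:phiONpi} of $\phi_j,\phi_k$ alone. Without this decorrelation result the second-moment scheme for (\ref{i6}) does not close.
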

Proposition~\ref{prop:Bellphi2} will follow from the local analysis done in Section~\ref{sec:local}. The next Proposition will be established in Section~\ref{sec:norm_nbm} using a matrix expansion together with norm bounds derived by combinatorial arguments parallel to the proof of Proposition~\ref{prop:Bellx} for the Erd\H{o}s-R\'enyi graph.
\begin{proposition} \label{prop:Bellx2}For some $c >0$, \whp
\begin{equation*}
\sup_{ x  \in H^\perp, \|x\| = 1} \| B^{\ell} x \| \leq (\log n)^c \alpha^{\ell/2}. 
\end{equation*}
\end{proposition}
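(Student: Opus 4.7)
The plan is to follow the proof of Proposition \ref{prop:Bellx} for the Erd\H{o}s-R\'enyi case step by step, replacing each rank-one object by its rank-$r$ counterpart dictated by the spectral decomposition \eqref{eq:Wspec} of $W$.

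First, since $\ell \sim \kappa \log_\alpha n$ with $\kappa < \gamma/6 < 1/2$, Lemma \ref{le:tls} gives that with high probability $G$ is $\ell$-tangle-free, so $B^{\ell} = B^{(\ell)}$. Next I would set up an SBM analog of the matrix expansion of Proposition \ref{prop:expansion} by telescoping around the inhomogeneous mean $\bar A_{uv} = W(\sigma(u),\sigma(v))/n$, with centred entries $\underline A_{uv} = A_{uv} - \bar A_{uv}$. Using \eqref{eq:Wspec} to write $\bar A_{uv} = n^{-1}\sum_{k\in[r]} \mu_k\, \phi_k(\sigma(u))\phi_k(\sigma(v))$, each ``mean insertion'' at position $t$ in a non-backtracking walk factors, through the identity $\chi_k(e) = \phi_k(\sigma(e_2))$, as a rank-$r$ sum of matrix products of the schematic form $(\mu_k/n)\,\Delta^{(t-1)} \check\chi_k \chi_k^{*} K B^{(\ell-t-2)}$, with boundary adjustments at $t\in\{0,\ell-1,\ell\}$ as in Proposition \ref{prop:expansion}. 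The residual ``error'' matrices $\Delta^{(\ell)}$, $R^{(\ell)}_t$ and $S^{(\ell)}_t$ are defined identically to the ER case, with $\underline A$ now being the SBM-centred variable.

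To bound the norms of these matrices, I would rerun the trace/path-counting moment method of Section \ref{sec:path_counts}. The proofs of Proposition \ref{prop:norm_bounds} rely only on the independence of the edge variables and on the entry bound $\dE[\underline A_{uv}^p] \le C/n$ for every $p \ge 2$, both of which continue to hold in the SBM with $C = \max_{i,j} W(i,j)$. Consequently, the same inequalities transfer (with constants depending on $W$): $\|\Delta^{(k)}\| \le (\log n)^{10}\alpha^{k/2}$, $\|\Delta^{(k)}\chi_j\| \le (\log n)^{5}\sqrt n\,\alpha^{k/2}$, $\|R^{(\ell)}_t\| \le (\log n)^{25}\alpha^{\ell-t/2}$, $\|B^{(k)}\| \le (\log n)^{10}\alpha^k$, $\|K B^{(k)}\| \le (\log n)^{10}\sqrt n\,\alpha^k$ and the corresponding $S^{(\ell)}_t$ bound, uniformly in $k \le \ell$ and $t \in \{1,\dots,\ell-1\}$ with high probability.

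The orthogonality step is where the hypothesis $x \in H^\perp$ is used. For unit $x \in H^\perp$, the only potentially dangerous terms in the expansion are precisely the $r$ rank-one bulk pieces, each of which produces a scalar factor of the form $\langle \chi_k, B^{(\ell-t-1)} x\rangle$. Using the edge-reversal symmetry \eqref{eq:check} this reduces to estimating $|\langle B^t \chi_k, x\rangle|$ for $x \perp \check\varphi_j$ for every $j\in[r]$. I would establish the SBM analog of Proposition \ref{cor:bls2}, namely that for every $k\in[r]$ and every $t \le \ell-1$,
\[
\sup_{\|x\|=1,\ x\in H^\perp} |\langle B^t \chi_k, x\rangle| \le (\log n)^{c'} \sqrt n\, \alpha^{t/2},
\]
the idea being to invoke Proposition \ref{prop:Bellphi2} together with the quantitative local analysis of Section \ref{sec:local} to decompose $B^t \chi_k$ into its projection onto $H$ plus an orthogonal remainder of size $O((\log n)^{c'}\sqrt n\,\alpha^{t/2})$; the hypothesis $x \in H^\perp$ then annihilates the projection. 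Assembling all pieces exactly as in Subsection \ref{subsec:proofBellx} and summing over $t \in \{0,\dots,\ell\}$ then yields $\|B^\ell x\| \le (\log n)^c \alpha^{\ell/2}$.

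The main obstacle is this analog of Proposition \ref{cor:bls2}: proving it requires simultaneously tracking the $r$ principal directions $\check\varphi_1,\dots,\check\varphi_r$ (rather than the single $P$-invariant vector $\chi$ of the ER case, for which $\check\chi=\chi$), and crucially exploiting the quantitative cross-orthogonality estimates of Proposition \ref{prop:Bellphi2}(iv)-(vi) to ensure that the components of $B^t \chi_k$ along $\check\varphi_j$ for $j \ne k$ do not accumulate through repeated applications of $B$.
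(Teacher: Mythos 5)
Your overall plan — tangle-freeness, a telescoped expansion of $B^{(\ell)}$ around the SBM mean $\bar A$ producing rank-$r$ bulk terms via \eqref{eq:Wspec}, trace-method norm bounds, and an orthogonality step to kill the bulk contributions for $x\in H^\perp$ — is exactly the paper's strategy, and the orthogonality step you flag as ``the main obstacle'' is in fact already available in the paper as Corollary \ref{cor:Blperp}, proved by the local coupling analysis rather than by the cross-orthogonality estimates of Proposition \ref{prop:Bellphi2}(iv)--(vi).

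The genuine gap is in the transfer of the path-counting bounds. You assert that the proofs of Proposition \ref{prop:norm_bounds} ``rely only on the independence of the edge variables and on the entry bound $\dE[\underline A_{uv}^p]\le C/n$,'' so that the SBM bounds follow with $C=\max_{i,j}W_{ij}=:a$. If you actually run the ER argument with only that input, the estimate \eqref{eq:expAunder} becomes $\dE\prod\underline A\le(a/n)^{e(\gamma)}$, and the factor $\alpha^{km}$ in \eqref{eq:Sboundddd} is replaced by $a^{km}$. After taking $2m$-th roots this gives $\|\Delta^{(k)}\|\le(\log n)^{c}a^{k/2}$, not $(\log n)^{c}\alpha^{k/2}$. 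Since $a\ge\alpha$ with equality only in the Erd\H{o}s--R\'enyi case, and since $k$ ranges up to $\ell\sim\kappa\log_\alpha n$, the ratio $(a/\alpha)^{\ell/2}$ is $n^{\Theta(1)}$, so this loss cannot be absorbed into ``constants depending on $W$'' and the final bound $(\log n)^{c}\alpha^{\ell/2}$ on $\|B^\ell x\|$ does not follow. The missing ingredient is precisely the inhomogeneous refinement \eqref{eq:expAunder000}: for a canonical path $\gamma$ with $v$ vertices and $e$ edges, one must fix a spanning tree, use $\dE\underline A_{uv}^p\le a/n$ only on the $e-v+1$ excess edges, and for the remaining $v-1$ tree edges iteratively peel leaves and \emph{sum over the vertex labelings}, which produces a factor $\bar\alpha_n/n$ per tree edge because $\sum_j n(j)W_{ij}/n\le\bar\alpha_n$. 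This is where the average degree $\alpha$ (and not the worst-case connection rate $a$) enters the exponent, and it is exactly what your proposal skips. Without it the argument does not close.
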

We now check that the two preceding propositions imply  Theorem
\ref{th:main}.  We consider $(\bar  \varphi_1, \cdots, \bar
\varphi_{r'} )$ obtained by the Gram-Schmidt orthonormalization of $(\check \varphi_1, \cdots, \check \varphi_{r} )$. By Lemma \ref{le:GS} and Proposition \ref{prop:Bellphi2}\eqref{i4}, \whp $r' = r$ and  for all $k \in [r]$,
\begin{equation}\label{eq:checkbarphi}
\|\check \varphi_k -\bar \varphi_k \| = O (  (\log n) ^c  \alpha^{3 \ell /2} n ^{ - \gamma/2}).
\end{equation} 

Similarly, for $k \in [r_0]$, we denote by $\tilde \zeta_k$ the orthogonal projection of $\zeta_k$ on the orthogonal of the vector space spanned by $\bar \varphi_j$, $j  \in [r_0]$, $j\ne k$ and  $\tilde \zeta_j$, $j < k$. We set $\bar \zeta_k = \tilde \zeta_k / \| \tilde \zeta_k\|$. From Proposition \ref{prop:Bellphi2}\eqref{i5}-\eqref{i6}, we find \whp for $k \in [r_0]$,  
\begin{equation}\label{eq:barzeta}
\| \zeta_k -\bar \zeta_k \| = O (  (\log n) ^c  \alpha^{5 \ell/2} n ^{ - \gamma/2}).
\end{equation} 
We then set 
$$
D_0 = \sum_{k = 1} ^{r_0} \theta_k  \bar \zeta_k \bar   \varphi_k^*.
$$

Since $\| \check \varphi_k -\bar \varphi_k \| = o(1)$, from Proposition \ref{prop:Bellphi2}\eqref{i1}-\eqref{i2},  we find by induction on $k \in [r]$, \whp for all $k \in [r]$,
$$
\| B^{\ell} \bar \varphi_k \| = O (   \alpha^{\ell}  ).
$$
Consequently, from Proposition \ref{prop:Bellx2}, we have \whp 
$$
\| B^{\ell} \| = O (  \alpha^{\ell }  ).
$$

In particular, since $D_0 \bar\varphi_k = \theta_k \bar \zeta_k = B^{\ell} \check \varphi_k +  \theta_k(  \bar \zeta_k  - \zeta_k)$, we get for $k \in [r_0]$, 
\begin{eqnarray*}
\| B^{\ell} \bar \varphi_k - D_0 \bar \varphi_k \|&  \leq&    \| B^{\ell} \| \| \bar \varphi_k -  \check \varphi_k \|   +\| B^{\ell} \check \varphi_k - D_0 \bar \varphi_k \|  +\theta_k  \| \bar \zeta_k - \zeta_k \| \nonumber \\
& = & O ( (\log n)^{c} \alpha^{7\ell/2} n ^{ - \gamma/2} ).
\end{eqnarray*}
We have $\alpha^{7\ell/2} n ^{ - \gamma/2} = n ^{7 \kappa  /2 + o(1) - \gamma/2}$. Since  $0 < \kappa < \gamma/6$,  $7\kappa /2  - \gamma / 2 < \kappa /2$, we thus obtain, if $P_0$ is the orthogonal projection of $H_0 = \mathrm{span} (  \bar \varphi_k , k \in [r_0] )$,  
\begin{equation}
\| B^{\ell} P_0  - D_0 \|  =   O ( \alpha^{\ell / 2}  ).\label{eq:boundBD0}
\end{equation}

We also set $D_1 = B^{\ell} P_1$ where $P_1$ is the orthogonal projection of $H_1 = \mathrm{span} (  \bar \varphi_k , k \in [r] \backslash [r_0] )$  and $C = B^{\ell} - D_0 - D_1$. Arguing similarly, from Proposition \ref{prop:Bellphi2}\eqref{i2}, \whp, for $k \in [r] \backslash [r_0]$, 
\begin{equation*}
\| D_1 \bar \varphi_k   \| = \| B^\ell \bar \varphi_k   \|  \leq    \| B^{\ell} \| \| \bar \varphi_k -  \check \varphi_k \|  + \| B^{\ell} \check \varphi_k  \| = O ( (\log n)^{c} \alpha^{\ell / 2}  ).
\end{equation*}
Hence 
\begin{equation}\label{eq:boundBD1}
\| D_1  \| = O ( (\log n)^{c} \alpha^{\ell / 2}  ).
\end{equation}

Also, let $y \in \dR^{\vec E}$ with $\| y \| = 1$. We write $ y = x + h_0 + h_1$ with $x \in H^{\perp}$, $h_1  \in H_1$, $h_0 \in H_0 = \mathrm{span} ( \varphi_k , k \in  [r_0] ) $. We find
$$
\|C  y \| =  \| B^{\ell} x + (  B^{\ell} -D_0 )  h_0  \| \leq    \sup_{ x  \in H ^\perp, \|x\| = 1} \| B^{\ell} x \|  + \| B^{\ell} P_0 -D_0  \|.
$$
Hence, Proposition \ref{prop:Bellx2} and \eqref{eq:boundBD0}-\eqref{eq:boundBD1} imply that \whp
\begin{equation*}
\| C \|  = O (  (\log n)^ c \alpha^{\ell /2}).
\end{equation*}

We decompose $B^{\ell} =   D_0  + R$ with $R = C + D_1$, from what precedes \whp
\begin{equation*}\label{eq:boundnormC2}
\| R \| =  O ( (\log n)^{c} \alpha^{\ell / 2}  ).
\end{equation*}

We are now in position to apply Proposition \ref{prop:sing2eig2}. From \eqref{eq:barzeta}, the statement of Proposition \ref{prop:Bellphi2}\eqref{i3} also holds with $\zeta_k$ replaced by $\bar \zeta_k$. It readily implies Theorem \ref{th:main}.

\section{Controls on the growth of Poisson multi-type branching processes}
\label{subsec:MTGW}
In this section we derive results for multi-type Galton-Watson
branching processes with Poisson offspring that will be crucial for the local analysis of
Section \ref{sec:local}.
We refer to Section \ref{sec:main_sbm} for the notation used below.
\subsection{Two theorems of Kesten and Stigum}
We consider a multi-type branching process where a particle of type $j \in [r]$ has a $\Poi(M_{ij})$ number of children with type $i$. We
denote by $Z_t=(Z_t(1),\dots, Z_t(r))$ the population at generation
$t$, where $Z_t(i)$ is the number of particles at generation $t$ with
type $i$. We denote by $\cF_t$ the natural filtration associated to
$Z_t$. Following Kesten and Stigum \cite{MR0198552,MR0200979}, we
have the following statement.
\begin{theorem}\label{th:kestenstigum}
For any $k \in [r_0]$,
$$
X_k(t) =  \frac{\langle \phi_{k} , Z _t \rangle}{\mu^t_k} - \langle  \phi_k, Z_0  \rangle,
$$
is an $\cF_t$-martingale converging a.s. and in $L^2$ such that for some $C > 0$ and all $t \geq 0$, $\dE X_k (t) = 0$ and $\dE [  X_k^2 (t) | Z_0 ] \leq C \| Z_0 \|_1$. 
\end{theorem}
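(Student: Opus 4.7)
\smallskip

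\textbf{Proof plan for Theorem \ref{th:kestenstigum}.} The plan is to verify the martingale property directly from the eigenvector identity $\phi_k^* M = \mu_k \phi_k^*$, then to compute the conditional variance of the increment, and finally to appeal to Doob's theorem. Throughout, I will condition on $Z_0$.

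\emph{Step 1 (martingale and zero mean).} Given $\mathcal{F}_t$, the number of type-$i$ offspring produced by the $Z_t(j)$ particles of type $j$ is $\mathrm{Poi}(M_{ij} Z_t(j))$, and these are independent across $i,j$. Hence $\dE[Z_{t+1}\mid \mathcal{F}_t] = M Z_t$, and so
$$
\dE\bigl[\langle \phi_k, Z_{t+1}\rangle \mid \mathcal{F}_t\bigr] = \langle \phi_k, M Z_t\rangle = \mu_k \langle \phi_k, Z_t\rangle,
$$
so that $\dE[X_k(t+1)\mid \mathcal{F}_t] = X_k(t)$. Since $X_k(0)=0$ by definition, $\dE X_k(t)=0$ for all $t\ge 0$.

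\emph{Step 2 ($L^2$ bound).} I write $\langle \phi_k, Z_{t+1}\rangle = \sum_{j} \sum_{n=1}^{Z_t(j)} S_{n,j}$, where $S_{n,j} = \sum_i \phi_k(i)\xi^{(i)}_{n,j}$ with $\xi^{(i)}_{n,j}\sim \mathrm{Poi}(M_{ij})$ mutually independent. Each $S_{n,j}$ has variance $v_k(j) := \sum_i \phi_k(i)^2 M_{ij}$. By conditional independence,
$$
\mathrm{Var}\bigl(\langle \phi_k, Z_{t+1}\rangle \mid \mathcal{F}_t\bigr) = \langle v_k, Z_t \rangle.
$$
Using $\dE[Z_t\mid Z_0] = M^t Z_0$ and condition (\ref{cond:deg}), which gives $\mathbb{1}^* M = \alpha \mathbb{1}^*$, I obtain $\|M^t Z_0\|_1 = \alpha^t \|Z_0\|_1 = \mu_1^t\|Z_0\|_1$, so
$$
\dE\bigl[\langle v_k, Z_t\rangle \mid Z_0\bigr] \le \|v_k\|_\infty\, \mu_1^t \,\|Z_0\|_1.
$$
Taking conditional expectation of the martingale increment yields $\dE[X_k(t+1)^2 - X_k(t)^2 \mid Z_0] \le \|v_k\|_\infty \|Z_0\|_1\, \mu_1^t / \mu_k^{2(t+1)}$. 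Summing over $t$ gives a geometric series with ratio $\mu_1/\mu_k^2 < 1$ (by definition of $r_0$), so
$$
\dE[X_k(t)^2 \mid Z_0] \le \frac{\|v_k\|_\infty}{\mu_k^2 - \mu_1}\, \|Z_0\|_1 =: C \|Z_0\|_1,
$$
uniformly in $t$.

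\emph{Step 3 (convergence).} Since $X_k(t)$ is an $L^2$-bounded $\mathcal{F}_t$-martingale, Doob's martingale convergence theorem yields almost-sure and $L^2$ convergence.

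The essential ingredient — and the only place where the restriction $k\in[r_0]$ enters — is the spectral gap condition $\mu_k^2 > \mu_1$ that makes the variance series geometrically summable; everything else is a direct computation using the Poisson offspring law, the eigenvector relation, and condition (\ref{cond:deg}). There is no real obstacle beyond keeping track of the conditional variance carefully.
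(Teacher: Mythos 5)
Your proof is correct and follows the same overall architecture as the paper's (martingale from the eigenvector relation, conditional variance from the Poisson offspring law, geometric summability from $\mu_k^2>\mu_1$, Doob's theorem), but your variance computation is organized differently and is in fact a bit cleaner. The paper first bounds the \emph{vector} second moment $\dE[\|Z_{s+1}-MZ_s\|_2^2\mid Z_s]=\langle \IND, M Z_s\rangle$, then applies Cauchy--Schwarz to get $\dE\langle\phi_k,Z_{s+1}-MZ_s\rangle^2\le\|\phi_k\|_2^2\,\dE\|Z_{s+1}-MZ_s\|_2^2$, and controls $\langle \IND, M^{s+1}Z_0\rangle$ via Perron--Frobenius. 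You instead compute the exact scalar conditional variance $\mathrm{Var}(\langle\phi_k,Z_{t+1}\rangle\mid\cF_t)=\langle v_k,Z_t\rangle$ with $v_k(j)=\sum_i\phi_k(i)^2M_{ij}$, avoiding the Cauchy--Schwarz slack, and you use the exact stochasticity identity $\IND^*M=\alpha\IND^*$ from assumption (\ref{cond:deg}) rather than an asymptotic Perron--Frobenius bound, which gives $\|M^t Z_0\|_1=\alpha^t\|Z_0\|_1$ exactly. Both routes give the same qualitative bound $\dE[X_k^2(t)\mid Z_0]\le C\|Z_0\|_1$; yours tracks a slightly sharper constant. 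The remainder (orthogonality of martingale increments, Doob's $L^2$ convergence theorem) is identical in substance.
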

\begin{proof}
We include the proof for later use. For $0\leq s<t$, we have
\BEAS
Z _t-M^{t-s} Z _s =\sum_{u=s}^{t-1} M^{t-u-1}(Z_{u+1}-M Z_u),
\EEAS
so that, as $\phi_k^*M=\mu_k \phi_k^*$,
\BEA\label{eq:phikZ}
\frac{\langle \phi_k ,  Z_t  \rangle}{\mu^t_k} = \frac{\langle \phi_k
  ,  Z_0 \rangle}{\mu_k^s} 
+\sum_{u=s}^{t-1} \frac{\langle \phi_k , (Z_{u+1}-MZ_u ) \rangle }{\mu_k^{u+1}}.
\EEA
It follows easily that $(X_k(t))$ is an $\cF_t$-martingale with mean $0$. From Doob's martingale convergence Theorem, the statement will follow if we prove that for some $C >0$ and all integer $t \geq 0$, 
$$
\dE [ X^2_k (t) | Z_0  ] \leq C \| Z_0 \|_1 = C \langle \IND, Z_0 \rangle. 
$$
To this end, we denote by $Z_{s+1}(i,j)$ the number of individuals of type $i$ in the $s+1$-th generation  which
descend from a particle of type $j$ in the $s$-th generation. Thus
$\sum_{j \in [r]} Z_{s+1}(i,j) = Z_{s+1} (i)$. We then have
\BEA
\dE\left[ \|Z_{s+1}-MZ_s \|_2^2\right|Z_s] &=& \sum_{i \in
  [r]}\dE\left[(Z_{s+1}(i)-\sum_{j\in [r]}
  M_{ij} Z_{s}(j) )^2|Z_s\right] \nonumber \\
&=& \sum_{i,j\in [r]} \dE\left[ \left(Z_{s+1} (i,j) - 
    M_{ij} Z_s (j) \right)^2|Z_s(j)\right]  \nonumber \\
&=& \sum_{i,j\in [r]} M_{ij}Z_s(j)  \nonumber  \\
& = & \langle \IND, M Z_s \rangle, \label{eq:dedufheo}
\EEA
where in the penultimate equality we used the fact that the variance of a
Poisson random variable equals its mean. It follows that
$$
\dE  \left[ \|Z_{s+1}-M Z_s \|^2\right|Z_0]  =\langle  \IND , M^{s+1} Z_0 \rangle . 
$$
The Perron-Frobenius Theorem implies that the matrix $(M / \mu_1 )^s $
converges elementwise to $\psi_1 \phi_1 ^*  / \phi_1 ^* \psi_1 $ as $s
\to \infty$. In our case, $\phi_1 = \IND $, $\psi_ 1= \pi^*$ and
$\langle \IND , \psi_1 \rangle  =1$. 
Consequently, for some $C \geq 1$,  
$$
\dE  \left[ \|Z_{s+1}- MZ_s \|_2^2\right|Z_0]  =  (1 + o(1) )   \langle  \IND , Z_0 \rangle \mu_1 ^{s+1} \leq C   \langle  \IND , Z_0 \rangle \mu_1 ^{s+1}.
$$
 Hence finally, 
\BEAS
\dE [ X^2_k (t) | Z_0  ]  &= & \sum_{s=0}^{t-1}  \frac{ \dE [ \langle \phi_k , (Z_{s+1}-MZ_s ) \rangle ^2 | Z_0 ] }{\mu_k^{2(s+1)}}\\
& \leq & \sum_{s=0}^{t-1}  \frac{ \| \phi_k \|_2^2 \dE [   \| Z_{s+1}-MZ_s \|_2 ^2 | Z_0 ] }{\mu_k^{2(s+1)}}\\
& \leq & C  \langle  \IND , Z_0 \rangle \sum_{s=0}^{t-1} \PAR{ \frac{ \mu_1 }{\mu_k^2} }^{s+1}.
\EEAS
Since $\mu_k ^2 > \mu_1$ the above series is convergent.  \end{proof}

We also need to control the behavior of $\langle \phi_{k} , Z _t \rangle$ for $k \in  [r] \backslash[r_0]$.  The next result is contained in Kesten and Stigum \cite[Theorem 2.4]{MR0200979}.
\begin{theorem}\label{th:kestenstigum2}
Assume $Z_0 = x$. For  $k \in [r]\backslash [r_0]$ define 
$$
X_k (t)  = \left\{ \begin{array}{ll} 
  \frac{\langle \phi_{k} , Z _t \rangle}{\mu_1^{t/2}} & \hbox{if } \, \mu_k^2 < \mu_1  \\
   \frac{\langle \phi_{k} , Z _t \rangle}{\mu_1^{t/2} t^{1/2}} & \hbox{if }\,  \mu_k^2 =  \mu_1. 
\end{array}\right.
$$
Then $X_k(t)$ converges  weakly to a random variable $X_k$ with finite positive variance.
\end{theorem}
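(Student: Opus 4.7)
The plan is to represent $\langle \phi_k, Z_t\rangle$ as a weighted sum of martingale differences and apply a martingale CLT with stable convergence. From \eqref{eq:phikZ} with $s=0$, after multiplying through by $\mu_k^t$,
\[
\langle \phi_k, Z_t\rangle = \mu_k^t \langle \phi_k, Z_0\rangle + \sum_{u=0}^{t-1} \mu_k^{t-u-1} D_u, \qquad D_u := \langle \phi_k, Z_{u+1} - MZ_u\rangle,
\]
so $X_k(t) = \sum_{u=0}^{t-1} a_{t,u} D_u + o(1)$ with $a_{t,u} := \mu_k^{t-u-1}/\mu_1^{t/2}$ in the subcritical case $\mu_k^2 < \mu_1$, and $a_{t,u} := \mu_k^{t-u-1}/(\mu_1^{t/2}\sqrt{t})$ in the critical case $\mu_k^2 = \mu_1$; the deterministic prefactor is $o(1)$ either way.

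\textbf{Conditional quadratic variation.} Reusing the offspring decomposition in the proof of Theorem~\ref{th:kestenstigum}, the variables $Z_{u+1}(i,j)$ are independent Poissons with parameter $M_{ij} Z_u(j)$ given $\cF_u$, so $\dE[D_u^2\mid\cF_u] = \langle v_k, Z_u\rangle$ with $v_k(j) := \sum_i \phi_k(i)^2 M_{ij}$; by \eqref{cond:deg} one has $M^*\IND=\mu_1\IND$, whence $\langle v_k, \psi_1\rangle = \sum_i \phi_k(i)^2 (M\psi_1)_i = \mu_1 \|\phi_k\|_\pi^2$. A geometric (respectively Ces\`aro) summation yields $\sum_u a_{t,u}^2 \mu_1^{u+1} \to \mu_1/(\mu_1-\mu_k^2)$ subcritically (resp.\ $\to 1$ critically). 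Combined with the vector-valued Kesten-Stigum convergence $Z_u/\mu_1^u \to W\psi_1$ a.s.\ (derived from Theorem~\ref{th:kestenstigum} applied at $k=1$ together with the positive regularity of $M$ imposed in~\eqref{hyp1}, or from a coordinate-wise martingale argument) this produces, in probability,
\[
V_t := \sum_{u=0}^{t-1} a_{t,u}^2\, \dE[D_u^2 \mid \cF_u] \longrightarrow \sigma_k^2 W,
\]
with $W := \lim_u \|Z_u\|_1/\mu_1^u$ and $\sigma_k^2$ equal to $\mu_1 \|\phi_k\|_\pi^2 / (\mu_1 - \mu_k^2)$ or $\|\phi_k\|_\pi^2$ according to the case.

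\textbf{Martingale CLT, Lindeberg, and main obstacle.} Lindeberg is verified through a fourth-moment bound: since the central fourth moment of a $\Poi(\lambda)$ equals $3\lambda^2+\lambda$, independence of the offspring yields $\dE[D_u^4 \mid \cF_u] \leq C(\langle v_k, Z_u\rangle^2 + \langle w_k, Z_u\rangle)$ for a suitable vector $w_k$, and summation with weight $a_{t,u}^4$ together with $\dE[Z_u] = M^u Z_0$ gives $\sum_u \dE[(a_{t,u}D_u)^4] \to 0$ in both cases. A stable martingale CLT then provides $X_k(t) \xrightarrow{d} \sigma_k W^{1/2} N$ with $N \sim \mathcal N(0,1)$ independent of $W$, a limit with finite strictly positive variance $\sigma_k^2 \dE[W] = \sigma_k^2 \langle \IND, Z_0\rangle$. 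The main obstacle will be the critical case $\mu_k^2 = \mu_1$, in which the weights $a_{t,u}^2$ have size only $O(1/t)$ and the slack in the Lindeberg step is tight: careful uniform-in-$u$ control of $\dE[D_u^4]$ through the Poisson structure is essential to prevent occasional large branching fluctuations from producing a non-Gaussian contribution. A secondary difficulty is promoting Theorem~\ref{th:kestenstigum} (scalar convergence at $k=1$) to the full vector statement $Z_u/\mu_1^u \to W\psi_1$ needed for the convergence of $V_t$.
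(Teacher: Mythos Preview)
Your approach is correct and is essentially the classical proof of Kesten and Stigum. The paper itself does not give a proof: it simply cites \cite[Theorem~2.4]{MR0200979} and notes that the limit $X_k$ is a mixture of Gaussians, which is exactly the form $\sigma_k W^{1/2}N$ you obtain. Your martingale CLT route is the standard one for this result.

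A few remarks on the obstacles you flag. For the conditional quadratic variation you only need $\langle v_k, Z_u\rangle/\mu_1^u \to W\langle v_k,\psi_1\rangle$, not the full vector convergence $Z_u/\mu_1^u \to W\psi_1$. This follows without circularity: expand $v_k$ in the basis $\{\phi_m\}$, use Theorem~\ref{th:kestenstigum} for $m=1$, use $|\mu_m|<\mu_1$ for $m\in[r_0]\setminus\{1\}$, and for $m\notin[r_0]$ use only the crude $L^2$ bound $\dE\langle\phi_m,Z_u\rangle^2 = \sum_{s<u}\mu_m^{2(u-s-1)}\dE[D_s^2] = O(\mu_1^u\cdot u)$ (which follows from \eqref{eq:dedufheo} and requires no CLT). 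This gives $\langle\phi_m,Z_u\rangle/\mu_1^u\to 0$ in probability, closing the loop. For the critical case, your fourth-moment Lindeberg computation gives $\sum_u a_{t,u}^4\,\dE[D_u^4] = O(1/t)$, so the ``tightness'' you mention is real but harmless. Finally, note that $\|\phi_k\|_\pi^2=1$ by \eqref{eq:phiONpi}, so your $\sigma_k^2$ simplifies to $\mu_1/(\mu_1-\mu_k^2)$ and $1$ respectively.
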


Note that Theorem 2.4 in \cite{MR0200979} expresses $X_k$ as a mixture of Gaussian variables.  The normalization in the case $ \mu_k^2 =  \mu_1$ comes from the fact that $M$ is diagonalizable, and hence all its Jordan blocks are of size $1$.

\subsection{Quantitative versions of the Kesten-Stigum Theorems}
We will also need probabilistic bounds on the growth of the total population at generation $t$ defined as
$$
S_t = \|Z_t \|_1 = \langle \phi_1 , Z_t \rangle.
$$
We observe that \eqref{cond:deg} implies that $S_t$ itself is a Galton-Walton branching process with offspring distribution $\Poi (\mu_1)$.   

\begin{lemma}\label{le:growthS}
Assume $S_0 =1$.  There exist $c_0, c_1 >0$ such that for all $s \geq 0$, 
$$
\dP \PAR{ \forall k\ge 1, S_{k}  \leq  s \mu_1^k  }  \geq 1 -  c_1 e^{ - c_0 s}. 
$$
\end{lemma}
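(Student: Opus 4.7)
The first observation is that the total population $S_k = \|Z_k\|_1$ is itself a scalar Galton-Watson process with $\Poi(\mu_1)$ offspring distribution: under assumption~(\ref{cond:deg}), each particle of any type $j \in [r]$ produces $\sum_i \Poi(M_{ij}) = \Poi(\mu_1)$ total children, so conditionally $S_k \mid S_{k-1} \sim \Poi(\mu_1 S_{k-1})$. Hence $M_k := S_k/\mu_1^k$ is a non-negative martingale with $\dE M_k = 1$, and the scalar MGF satisfies the recursion $g_k(\lambda) := \dE[e^{\lambda S_k}] = g_{k-1}(h(\lambda))$ with $h(\lambda) = \mu_1(e^\lambda - 1)$, so that $g_k(\lambda) = \exp(h^{(k)}(\lambda))$, where $h^{(k)}$ denotes the $k$-fold iterate.

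The key quantitative step will be to prove that for some $\theta_0 > 0$ depending only on $\mu_1$,
$$ \sup_{k \geq 0} \dE\bigl[e^{\theta M_k}\bigr] \leq e^{2\theta} \quad \text{for all } \theta \in (0, \theta_0]. $$
To this end, I would set $c_j := \mu_1^{k-j} h^{(j)}(\theta/\mu_1^k)$, so that $c_0 = \theta$ and the target is $c_k \leq 2\theta$. The elementary bound $\mu_1(e^x - 1) \leq \mu_1 x e^x$ (for $x \geq 0$) turns the recursion into $c_{j+1} \leq c_j \exp(c_j/\mu_1^{k-j})$. I plan to prove by induction that every $c_j$ stays below $2\theta$: assuming this up to step $j$, telescoping $\log c_{i+1} - \log c_i \leq 2\theta/\mu_1^{k-i}$ for $i \leq j$ and summing the geometric series $\sum_{i \geq 1}\mu_1^{-i} = 1/(\mu_1-1)$ (here $\mu_1 > 1$ is crucial) yields $c_{j+1} \leq \theta \exp(2\theta/(\mu_1-1))$, which is at most $2\theta$ provided $\theta \leq \theta_0 := (\mu_1-1)\log 2 / 2$. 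This closes the induction and delivers the claimed MGF bound.

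The proof is then completed by a standard exponential maximal inequality. Since $M_k$ is a martingale and $x\mapsto e^{\theta x}$ is convex and non-decreasing for $\theta \geq 0$, the process $(e^{\theta M_k})_{k \geq 0}$ is a non-negative submartingale, and Doob's maximal inequality yields
$$ \dP\bigl(\max_{1 \leq k \leq n} M_k \geq s\bigr) \leq e^{-\theta s}\,\dE\bigl[e^{\theta M_n}\bigr] \leq e^{2\theta - \theta s}. $$
Letting $n \to \infty$ gives the conclusion with $c_0 = \theta_0$ and $c_1 = e^{2\theta_0}$, the latter constant also absorbing the small-$s$ regime where the bound would otherwise be trivial. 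The main obstacle will be controlling the iterated MGF $h^{(k)}(\theta/\mu_1^k)$: since $e^x - 1 > x$, the iterates overshoot their linearization $\theta/\mu_1^{k-j}$, and the multiplicative error at each step must be tracked carefully — supercriticality $\mu_1 > 1$ is precisely what makes $\sum_i \mu_1^{-i}$ finite and keeps the accumulated error $O(\theta)$.
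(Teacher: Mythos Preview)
Your proof is correct and takes a genuinely different route from the paper. The paper argues step by step: it fixes thresholds $s f_k \mu_1^k$ with $f_k=\prod_{j\le k}(1+\veps_j)$ for $\veps_j=\mu_1^{-j/2}\sqrt j$, applies a Chernov bound to control $\dP(S_{k+1}>sf_{k+1}\mu_1^{k+1}\mid S_k\le sf_k\mu_1^k)\le e^{-c_0'(k+1)s}$, and then sums these conditional failure probabilities over $k$. You instead exploit the exact Poisson MGF recursion to bound $\sup_k \dE e^{\theta M_k}$ uniformly and then invoke Doob's submartingale maximal inequality in one stroke.

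Your argument is cleaner here and gives explicit constants. The paper's approach, however, is chosen with reuse in mind: essentially the same proof is recycled in Lemma~\ref{le:growthSt} for the neighbourhood growth $S_t(w)$ in the random graph, where the offspring are not exactly Poisson but only stochastically dominated by $\Poi(\bar\alpha_n)$. The step-by-step Chernov bound transfers directly under stochastic domination, whereas your exact MGF iteration $g_k(\lambda)=\exp(h^{(k)}(\lambda))$ relies on the precise Poisson form and would require an additional domination argument to port over.
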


\begin{proof}
For $k \geq 1$, we set 
$$
\veps_k = \mu_1^{-k/2} \sqrt k \quad  \hbox{ and } \quad f_k = \prod_{\ell=1}^k ( 1 + \veps_{\ell}).
$$
It is straightforward to check that $f_k$ converges, hence there exist constants $c_0,c_1 >0$ such that for all $k \geq 1$,  
\begin{equation}\label{eq:boundfkvepsk}
c_0 \leq f_k \leq c_1 \quad  \hbox{ and } \quad \veps_k \leq c_1.
\end{equation}
Using Chernov bound, if $Y_i$ are i.i.d. $\Poi(\mu_1)$ variables then, for any integer $ \ell \geq 1$ and positive real $s>1$, 
\begin{equation}\label{eq:chernovPoi}
\dP \PAR{ \sum_{i=1} ^\ell Y_i \geq \ell \mu_1 s  }  \leq e^{- \ell \mu_1 \gamma (s)},
\end{equation}
where we have set $\gamma (s) =  s \log s - s + 1$. In particular, on the event $\{ S_k  \leq   s f_k \mu_1^{k } \} \in \cF_k$, we have
$$
\dP \PAR{ S_{k+1} > s f_{k+1}\mu_1^{k+1}   | \cF_k } \leq  e^{ - s \mu_1^{k+1} f_k  \gamma ( 1 + \veps_{k+1} )} \leq e^{ - c'_0  s \mu_1^{k+1}   \veps_{k+1}^2 } = e^{ - c'_0  (k+1)s} ,
$$
where we have used the existence of some $\theta >0$ such that for $x\in[0,c_1]$, one has $\gamma( 1 + x) \geq \theta x^2$. Finally by our choice of $\veps_k$ and \eqref{eq:boundfkvepsk}, if $s \geq \max(1/c'_0, 1 / c_1) $, 
$$
\dP \PAR{ \exists k : S_{k}  > s c_1 \mu_1^{k+1}  } \leq  \sum_{\ell=1}^k e^{ -   c'_0s \ell  } \leq \frac{ e^{ - c'_0 s} }{ 1 - e^{-c'_0s}}. 
$$
Hence we deduce the statement of the lemma for some (suitably redefined) constants $c_0,c_1 >0$. 
\end{proof}

A key ingredient in the subsequent analysis will be the following result, which bounds by how much the growth of processes $s\to \langle \phi_{k} , Z _s \rangle$ deviates from a purely deterministic exponential growth.

\begin{theorem}\label{th:growthZBP}
Let $\beta >0$ and $Z_0 = x\in\dN^r$ be fixed. There exists $C  = C(x,\beta) > 0$  such that with probability at least $1 - n^{-\beta}$, for all $k \in [r_0]$, all $s, t\geq 0$,  with $0 \leq s < t $, 
$$
\ABS{  \langle \phi_{k} , Z _s \rangle -  \mu^{s-t}_{k} \langle  \phi_k, Z_t  \rangle } \leq C  (s +1) \mu_1 ^{s / 2} ( \log n )^{3/2}. 
$$
and for all $k \in [r] \backslash [r_0]$, all $ t \geq 0  $, 
$$
\ABS{  \langle \phi_{k} , Z _t \rangle  } \leq C  (t +1)^2  \mu_1 ^{t / 2} ( \log n )^{3/2}. 
$$
Finally, for all $k \in [r] \backslash [r_0]$, all  $ t \geq 0  $, $\dE \ABS{  \langle \phi_{k} , Z _t \rangle  }^2  \leq C (t +1)^3  \mu_1 ^{t }$. 
\end{theorem}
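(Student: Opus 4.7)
The plan is to combine the martingale decomposition underlying Theorem~\ref{th:kestenstigum} with Freedman's inequality for martingales, after first controlling the total population $S_u = \|Z_u\|_1$ via Lemma~\ref{le:growthS}. Taking $s \asymp \log n$ in that lemma produces an event $\cE_1$ of probability $\geq 1 - n^{-\beta-1}$ on which $S_u \leq C (\log n) \mu_1^u$ for every $u \geq 0$. Writing $D_u := \langle \phi_k, Z_{u+1} - M Z_u \rangle$ and using that, conditionally on $\cF_u$, the variables $\{Z_{u+1}(i,j)\}_{i,j}$ are independent Poisson with means $M_{ij} Z_u(j)$, condition \eqref{cond:deg} yields
$$\VAR[D_u \mid \cF_u] \leq \|\phi_k\|_\infty^2 \sum_j \sum_i M_{ij} Z_u(j) = \|\phi_k\|_\infty^2 \mu_1 S_u \leq C (\log n) \mu_1^{u+1} \quad \text{on } \cE_1.$$
A per-step Bennett inequality for centered Poisson sums, applied after a union bound over $u \leq C' \log n$, furnishes a further event $\cE_2$ on which $|D_u| \leq C (\log n) \mu_1^{(u+1)/2}$ for all relevant $u$.

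For $k \in [r_0]$, rearranging \eqref{eq:phikZ} gives
$$\langle \phi_k, Z_s \rangle - \mu_k^{s-t} \langle \phi_k, Z_t \rangle = - \sum_{u=s}^{t-1} \mu_k^{s-u-1} D_u,$$
a martingale difference sum in $t$ starting from $t=s$. Since $\mu_k^2 > \mu_1$, the weighted conditional variance sum is a convergent geometric series bounded, on $\cE_1$, by $V \leq C (\log n) \mu_1^s$ uniformly in $t$; the maximal weighted jump is dominated by the $u=s$ term, giving $B \leq C (\log n) \mu_1^{(s+1)/2} / |\mu_k|$. Freedman's inequality applied to the running supremum of this martingale, with $\tau = \beta \log n + 2 \log(s+1)$ and a union bound over all $s \geq 0$, then yields the desired uniform estimate of order $(s+1) \mu_1^{s/2} (\log n)^{3/2}$.

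For $k \in [r] \setminus [r_0]$, the same identity with $s = 0$ reads
$$\langle \phi_k, Z_t \rangle = \mu_k^t \langle \phi_k, Z_0 \rangle - \sum_{u=0}^{t-1} \mu_k^{t-u-1} D_u.$$
The deterministic term has modulus $O(\mu_1^{t/2})$ since $|\mu_k| \leq \sqrt{\mu_1}$, while the weighted conditional variance sum now satisfies $V \leq C (t+1) (\log n) \mu_1^t$, with the extra $(t+1)$ factor arising in the critical case $\mu_k^2 = \mu_1$. Freedman's inequality plus a union bound over $t$ then yields the second claim. The $L^2$ bound follows by computing $\dE|\langle \phi_k, Z_t\rangle|^2$ directly from the identity above as $(\mu_k^t \langle \phi_k, Z_0\rangle)^2$ plus the sum of $\mu_k^{2(t-u-1)} \dE \VAR[D_u \mid \cF_u]$ (orthogonality of martingale differences), and using $\dE S_u = \mu_1^u \|Z_0\|_1$; this actually gives the sharper bound $\dE|\langle \phi_k, Z_t\rangle|^2 \leq C (t+1) \mu_1^t$. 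The main technical obstacle is bookkeeping the polylog factors: the parameter $\tau$ in Freedman's inequality must be chosen so that the Gaussian contribution $\sqrt{V\tau}$ dominates the jump contribution $B \tau$ in each regime, and the union bounds over $s$, $t$, and $k$ must be absorbed into the $(s+1)$ and $(t+1)^2$ prefactors in the theorem without further loss.
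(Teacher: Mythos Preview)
Your approach is essentially correct and shares the paper's core ingredients: the telescoping identity \eqref{eq:phikZ}, Lemma~\ref{le:growthS} to control $S_u$, and Poisson concentration for the increments $D_u = \langle \phi_k, Z_{u+1}-MZ_u\rangle$. The main structural difference is that the paper does \emph{not} invoke Freedman's inequality at all: once one has, on a high-probability event, the per-step bound $\|Z_{u+1}-MZ_u\|_2 \le C(u+1)(\log n)\,\|Z_u\|_1^{1/2}$ together with $\|Z_u\|_1 \le C(\log n)\mu_1^u$, one simply sums the resulting deterministic bound $|D_u|\le C'(u+1)(\log n)^{3/2}\mu_1^{u/2}$ over the geometric weights $|\mu_k|^{s-u-1}$. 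Your Bennett step already delivers exactly this pointwise control, so the subsequent Freedman argument is redundant; and since on your event $\cE_2$ the jump bound $B$ is of the same order as $\sqrt{V}$, the Freedman route actually costs an extra $\sqrt{\log n}$ rather than saving one. Two small gaps to patch: (i) your union bound ``over $u\le C'\log n$'' does not cover the statement ``for all $s,t\ge 0$''; the paper handles all $u\ge 0$ by inserting the factor $(u+1)$ in the per-step deviation (see \eqref{eq:4U}), which makes the union bound over $u\in\dN$ summable; (ii) correspondingly your claimed $\cE_2$-bound $|D_u|\le C(\log n)\mu_1^{(u+1)/2}$ should carry a $(u+1)$ factor. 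On the other hand, your treatment of the final $L^2$ estimate via orthogonality of the martingale differences $D_u$ and $\dE S_u=\mu_1^u\|Z_0\|_1$ is cleaner than the paper's, which goes through the auxiliary variable $U=\sup_t \|Z_{t+1}-MZ_t\|_2/((t+1)\|Z_t\|_1^{1/2})$ and Cauchy--Schwarz; your route indeed yields the sharper $O((t+1)\mu_1^t)$.
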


\begin{proof}
We start with  classical tail bounds for $Y \stackrel{d}{=} \Poi(\lambda)$. From \eqref{eq:chernovPoi} for $s >0$,  
$$
\dP \PAR{  Y - \lambda  >\lambda s } \leq  e^{ - \lambda \gamma(1+s)},
$$
with $\gamma (s) =  s  \log s + 1 - s$. Similarly, for $ s < 1$  one has
$$
\dP \PAR{  Y - \lambda < - \lambda s } \leq e^{ - \lambda \gamma(1-s)},
$$
where by convention $\gamma (x) = + \infty$ for $x \leq 0$. Let $\delta (x) :=  \gamma(1-x) \wedge \gamma (1+x)$. Then  for any $s \geq 0$, 
$$
\dP \PAR{ \ABS{ Y - \lambda } > \lambda s } \leq 2 e^{ - \lambda \delta(s)}.
$$
In particular, for any $i \in [r]$, letting $y := M Z_t$, we have, if $Z_t  \ne 0$, 
$$
\dP \PAR{ | Z_{t+1} (i) -  y (i)  |  > s \| y \|^{1/2}_1 \bigm| \cF_t } \leq  2e^{ - y(i) \delta ( s \| y \|^{1/2}_1 / y(i) )}.
$$
Consider first the case where $ s \| y \|^{1/2}_1  \leq y(i)  $. As there exists $\theta>0$ such that  for all $x\in[0,1]$, $\delta (x) \geq \theta x^2$, we get 
$$
\dP \PAR{ | Z_{t+1} (i) -  y (i)  |  > s \| y \|^{1/2}_1 \bigm| \cF_t } \leq  2e^{ - \frac{ \theta s^2  \| y \|_1 }{ y(i) }} \leq  2e^{ -\theta s^2 } .
$$
Consider now the case where $ s \| y \|^{1/2}_1   > y(i)$. As there exists $\theta'>0$ such that, for all $x \geq 1$, $\delta (x) \geq \theta' x$, we get 
$$
\dP \PAR{ | Z_{t+1} (i) -  y (i)  |  > s \| y \|^{1/2}_1 \bigm| \cF_t } \leq  2e^{ -  \theta' s  \| y \|^{1/2}_1 } \leq  2e^{ -\theta' \sqrt \mu_1  s },
$$
since $Z_t \ne 0$ implies that $\| y \|_1 \geq \mu_1$ from \eqref{cond:deg}. Thus there exists some $ c_0 >0$ such that, for any $s \geq 0$, 
$$
\dP \PAR{ \| Z_{t+1}  -  M Z_t  \|_2  > s \| Z_t \|^{1/2}_1 \bigm| \cF_t } \leq \sum_{i=1}^r \dP \PAR{ | Z_{t+1} (i) -  y (i)|  > \frac{ s \| Z_t \|^{1/2}_1 }{ \sqrt r}\bigm| \cF_t }  \leq 2r e^{ - c_0 (s \wedge s^2)}.
$$
 If $Z_t = 0$,  then $Z_{t+1} = 0$ and the same bound trivially holds. We thus obtain the existence of constants $c_0,c_1>0$ such that, for any $u \geq 1$, 
\begin{equation}\label{eq:4U}
\dP \PAR{ \forall t \ge 0,  \| Z_{t+1}  -  M Z_t  \|_2  \leq   u (t+1)  \log n  \| Z_t \|_1^{1/2}   } \geq 1 - \sum_{t\geq 1} 2r e^{-c_0 u  t \log n} \geq 1 - c_1 n^{ - c_0 u}.   
\end{equation}

Now, from \eqref{eq:phikZ}, for any $s$, $0\le s\le t$, 
$$
\ABS{  \langle \phi_{k} , Z _s \rangle -  \mu^{s-t}_{k} \langle  \phi_k, Z_t  \rangle } \leq \mu_k ^ s \sum_{h=s}^{t-1}   \frac{ \| \phi_k \|_2 \| Z_{h+1}  -  M Z_h  \|_2}{ \mu_k ^{h+1}}\cdot
$$

From Equation (\ref{eq:4U}) and Lemma \ref{le:growthS}, for $C$ large enough, with probability at least $1 - n ^{-\beta}$, we have for all $h \geq 0$ that $ \| Z_{h+1}  -  M Z_h  \|_2 \leq C  (\log n )  ( h+1)  \| Z_h \|_1^{1/2} $ and $\| Z_h \|_1 \leq C( \log n )  \mu_1^h$. On this event, we get,  for $k \in [r_0]$,
$$
\ABS{  \langle \phi_{k} , Z _s \rangle -  \mu^{s-t}_{k} \langle  \phi_k, Z_t  \rangle } \leq C' (\log n )^{3/2}  \mu_k ^ s  \sum_{h=s}^{t-1}  (h+1) \PAR{ \frac{   \sqrt \mu_1  }{ \mu_k }}^{h} \leq C''  (\log n )^{3/2} (s+1)  \mu_1^{s/2},
$$
where at the last line, we used that $\mu_k ^2 > \mu_1$ and $\sum_{h
  \geq s} h a^h \leq c(a) s a ^s$ for $0 < a < 1$. Similarly, on the
same event, for $k \in [r]\backslash [r_0]$, from \eqref{eq:phikZ}, for
$t \geq 1$ and $s=0$, 
$$
\ABS{  \langle \phi_{k} , Z _t \rangle   - \mu_k ^ t  \langle \phi_{k} , Z _0 \rangle } \leq   \mu_k ^t  \sum_{u=0}^{t-1}    \frac{\|\phi_k \|_2 \| Z_{u+1}  -  M Z_u  \|_2 }{\mu_k ^{ u + 1}} \leq C' (\log n )^{3/2}  \mu_k ^t \sum_{u=0} ^{t-1}  (u+1)  \PAR{ \frac{   \sqrt \mu_1  }{ \mu_k }}^{u}.
$$
Using now $\mu_k ^2 \leq \mu_1$, we have $ \sum_{u=0} ^{t-1}  (u+1)  \PAR{ \frac{   \sqrt \mu_1  }{ \mu_k }}^{u} = O ( t^2 (\sqrt { \mu_1}/\mu_k) ^t )$. 

For the last result, we define 
$$
U = \sup_{t \geq 0}  \frac{ \| Z_{t+1}  -  M Z_t  \|_2 }{ (t+1)  \| Z_t\|_1^{1/2} }.
$$
From  \eqref{eq:4U} (with $n = 2$), for any $p \geq 1$, $\dE U ^p = O(1)$. We obtain from \eqref{eq:phikZ} and Cauchy-Schwartz inequality
\begin{eqnarray*}
\dE \ABS{  \langle \phi_{k} , Z _t \rangle   - \mu_k ^ t  \langle \phi_{k} , Z _0 \rangle }^2 &\leq&   \mu_k ^{2t}   \sum_{s=0}^{t-1} \dE \frac{\|\phi_k \|^2_2 \| Z_{s+1}  -  M Z_s  \|^2_2 }{\mu_k ^{2( s + 1)}} \\
&\leq &    \mu_k ^{2t}   \sum_{s=0}^{t-1} \dE \frac{U^2  (s+1)^ 2   \| Z_s \|_1 }{\mu_k ^{2( s + 1)}} \\
& \leq &  t^2   \mu_k ^{2t} \sqrt{\dE U^4 }    \sum_{s=0}^{t-1}   \frac{\sqrt{ \dE \| Z_s \|^2_1 }}{\mu_k ^{2( s + 1)}}  \\
& = & O \PAR{ t^3  \mu_1 ^{t} },
\end{eqnarray*}
where for the last equality, we used the fact that $\dE  \| Z_s \|^2_1
= O ( \mu_1^{2s})$ which follows from Theorem
\ref{th:kestenstigum} with $k=1$ (recall that $\phi_1=\IND$), and the bound of $O(t \mu_1^t/\mu_k^{2t})$ on the sum, which holds for $k\notin[r_0]$. 
 \end{proof}

\subsection{A cross-generation functional}
For the subsequent analysis, in order to control the law of the candidate eigenvectors $B^{\ell}B^{*\ell}\check \chi_k$, we also need to consider a functional of the multi-type branching process which depends on particles in more than one generation. More precisely, assuming that $\|Z_0 \|_1= 1$, we denote by $V$ the particles of the random tree and $o \in V$ the starting particle. Particle $v \in V$ has type $\sigma(v) \in [r]$ and generation $|v|$ from $o \in V$. For $v \in V$ and integer $t \geq 0$, let $Y^v_t$ denote the set of particles of generation $t$ from $v$ in  the subtree of particles with common ancestor $v \in V$. Finally, $Z^v_t  = ( Z^v_t (1), \cdots , Z^v _t (r))$ is the vector of population at generation $t$ from $v$, i.e. $Z_t ^v (i) = \sum_{u \in Y^v_t} \IND(  \sigma(u) = i)$. We set 
$$
S^v _t = \| Z^v _t \|_1 = \langle \phi_1 , Z^v _t \rangle.
$$
With our previous notation, $Z^o_t = Z_t$, $S^o_t = S_t$. We fix an integer $ k \in [r]$, $\ell \geq 1$ and set 
\begin{equation}\label{eq:QklNBW}
Q_{k,\ell} = \sum_{(u_0,\ldots,u_{2\ell+1})\in{\mathcal P}_{2\ell+1}} \phi_k ( \sigma (u_{2 \ell +1}) ) ,
\end{equation}
where the sum is over $ (u_0, \ldots, u_{2 \ell +1})\in{\mathcal P}_{2\ell+1}$, the set of paths in the tree
tree starting from $u_0 = o$ of length $2 \ell +1$ with $(u_0, \ldots,
u_{\ell})$ and $(u_{\ell}, \ldots, u_{2 \ell +1})$ non-backtracking and
$u_{\ell -1}  = u_{\ell +1}$ (i.e. $ (u_0, \ldots, u_{2 \ell +1})$
backtracks exactly once at the $\ell+1$-th step). 

The following alternative representation of $Q_{k,\ell}$ will prove useful. By distinguishing paths $ (u_0, \ldots, u_{2 \ell +1})$ according to the smallest depth $t\in\{0,\ldots,\ell-1\}$ to which they climb back after visiting $u_{\ell+1}$ and the node $u_{2\ell -t}$ they then visit at level $t$ we have that
\begin{equation}\label{eq:defQkl}
Q_{k,\ell} =  \sum_{t=0} ^{\ell -1} \sum_{ u \in Y^o_{t}} L_{k,\ell}^u,
\end{equation}
where we let for $|u|=t\geq 0$,
$$
L^u_{k,\ell} = \sum_{w\in Y^u_1} S^w_{\ell-t-1}\left( \sum_{v\in
    Y^u_1\backslash \{w\}} \langle \phi_k, Z^v_t\rangle \right).
$$
We then have
\begin{theorem}\label{th:growthQkl}
Assume $Z_0 = \delta_x$.  For $k \in [r_0]$, $Q_{k,\ell}/ \mu_k ^{2 \ell}$ converges in $L^2$ as $\ell$ tends to infinity to a random variable with mean $ \mu_k\phi_k
(x)  / (\mu_k^2 / \alpha-1)$.
For $k \in [r] \backslash[r_0]$, there exists  a constant $C$ such that $\dE Q_{k,\ell}^2 \leq  C \alpha ^{2 \ell} \ell^5$. 
\end{theorem}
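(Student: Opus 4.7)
The plan is to compute the mean $\dE Q_{k,\ell}$ exactly and to bound the second moment $\dE Q_{k,\ell}^2$, using the tree decomposition \eqref{eq:defQkl}: $Q_{k,\ell} = \sum_{t=0}^{\ell-1} T_t$ with $T_t = \sum_{u \in Y^o_t} L^u_{k,\ell}$. Throughout, I exploit that $L^u_{k,\ell}$ depends only on the subtree rooted at $u$, which, conditionally on $\sigma(u)$, has the law of an independent branching process started at that type.

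\textbf{Mean.} I first condition on $Y^u_1$ and use independence of the sub-subtrees rooted at distinct children of $u$ together with the two identities $\dE[S^w_s | \sigma(w)] = \alpha^s$ (since \eqref{cond:deg} makes the total offspring per vertex $\Poi(\alpha)$) and $\dE[\langle \phi_k, Z^v_s \rangle | \sigma(v)] = \mu_k^s \phi_k(\sigma(v))$ (from $\phi_k^* M = \mu_k \phi_k^*$), to get $\dE[L^u_{k,\ell} \mid Y^u_1] = \alpha^{\ell-t-1}\mu_k^t (|Y^u_1|-1)\langle \phi_k, Z^u_1\rangle$. Under $\sigma(u)=j$, the type-wise children counts $(N_i)_{i \in [r]}$ are independent $\Poi(M_{ij})$, so a short calculation using $\alpha = \sum_i M_{ij}$ gives $\dE[(|Y^u_1|-1)\langle\phi_k, Z^u_1\rangle \mid \sigma(u)] = \alpha\mu_k \phi_k(\sigma(u))$. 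Summing over $u \in Y^o_t$ (with $\dE \langle \phi_k, Z_t \rangle = \mu_k^t \phi_k(x)$) and then over $t$ produces $\dE Q_{k,\ell} = \mu_k \phi_k(x) \alpha^\ell \sum_{t=0}^{\ell-1}(\mu_k^2/\alpha)^t$, from which the claimed asymptotic mean for $k \in [r_0]$ and the bound $|\dE Q_{k,\ell}|^2 = O(\alpha^{2\ell}\ell^2)$ for $k \notin [r_0]$ follow immediately.

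\textbf{Second moment.} The identity $L^u_{k,\ell} = \langle \phi_k, Z^u_{t+1}\rangle S^u_{\ell-t} - \sum_{w \in Y^u_1} S^w_{\ell-t-1}\langle \phi_k, Z^w_t\rangle$ (from $\sum_{v \neq w} = \sum_v - (v{=}w)$) expresses $L^u_{k,\ell}$ in terms of standard branching functionals with known second-moment bounds: $\dE[(S^u_s)^2\mid\sigma(u)] \leq C\alpha^{2s}$ (single-type Poisson Galton-Watson), and from Theorem~\ref{th:kestenstigum} and Theorem~\ref{th:growthZBP}, $\dE[\langle \phi_k, Z^u_s\rangle^2\mid\sigma(u)] \leq C\mu_k^{2s}$ when $k \in [r_0]$ and $\leq C(s+1)^3 \alpha^s$ otherwise. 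Expanding $\dE Q_{k,\ell}^2 = \sum_{s,t} \dE[T_s T_t]$: the diagonal $\dE T_t^2$ is treated by $\dE T_t^2 = \dE\VAR(T_t \mid \cF_t) + \dE(\dE[T_t\mid\cF_t])^2$, where the first summand equals $\sum_u \dE\VAR(L^u_{k,\ell}\mid \sigma(u))$ by conditional independence of disjoint subtrees, and the second equals $\alpha^{2(\ell-t)}\mu_k^{2(t+1)} \dE\langle\phi_k, Z_t\rangle^2$. For $s < t$, I partition $Y^o_t$ by ancestor in $Y^o_s$: distinct-ancestor contributions factor by independence of disjoint subtrees and recombine with $\dE T_s \cdot \dE T_t$ up to errors bounded by the moment inputs above, while same-ancestor contributions are bounded by Cauchy-Schwarz. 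Summing over $s, t \in \{0, \ldots, \ell-1\}$ and using $\mu_k^2 \leq \alpha$ for $k \notin [r_0]$ yields the claimed $\dE Q_{k,\ell}^2 \leq C\alpha^{2\ell}\ell^5$.

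\textbf{$L^2$ convergence for $k \in [r_0]$.} With the change of variable $s = \ell - t$, the conditional-mean part of $T_t/\mu_k^{2\ell}$ reads $\mu_k(\alpha/\mu_k^2)^s \cdot (\langle\phi_k, Z_{\ell-s}\rangle/\mu_k^{\ell-s})$. By Theorem~\ref{th:kestenstigum}, for each fixed $s$ the ratio $\langle\phi_k, Z_{\ell-s}\rangle/\mu_k^{\ell-s}$ converges in $L^2$ to $W := X_k + \phi_k(x)$ (which has mean $\phi_k(x)$), and is $L^2$-bounded uniformly in $\ell$ and $s$. Since $\alpha/\mu_k^2 < 1$, the geometric weights make the sum over $s$ uniformly $L^2$-absolutely summable, so by dominated convergence the truncated series converges in $L^2$, with the limit $\mu_k \alpha/(\mu_k^2 - \alpha) \cdot W$ of mean $\mu_k \phi_k(x)/(\mu_k^2/\alpha - 1)$, as required. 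The fluctuation $(T_t - \dE[T_t\mid\cF_t])/\mu_k^{2\ell}$ vanishes in $L^2$ by the conditional-variance bound from the previous step (in the $k \in [r_0]$ regime the geometric series in $\mu_k^2/\alpha > 1$ makes all tails negligible after normalization by $\mu_k^{-2\ell}$).

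\textbf{Main obstacle.} The technical heart is the off-diagonal covariance analysis in the second step: $T_s$ and $T_t$ for $s < t$ do not decouple cleanly because each node of $Y^o_t$ is a descendant of a unique node of $Y^o_s$, so the relevant functionals share subtree structure. Carefully isolating the factorizing (distinct-ancestor) contributions from the genuinely correlated (common-ancestor) ones, and tracking the polynomial $\ell$-factors arising from the $(s+1)^3$ second-moment bound in Theorem~\ref{th:growthZBP} followed by a double summation over $s, t$, is what produces the precise $\ell^5$ factor.
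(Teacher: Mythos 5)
Your mean computation and the $L^2$-convergence argument for $k\in[r_0]$ are essentially the paper's (condition on $\cF_t$, identify the conditional mean $\mu_k^{2t+1}\alpha^{\ell-t}Y_k(t)$ with $Y_k$ the Kesten--Stigum martingale, and use the geometric weights $\rho_k^{t-\ell}$ with $\rho_k=\mu_k^2/\alpha>1$). Where you diverge is the second moment, and here the paper's route is both simpler and more robust than yours. You propose to expand $\dE Q_{k,\ell}^2=\sum_{s,t}\dE[T_sT_t]$ and to handle the off-diagonal terms by partitioning $Y^o_t$ by common ancestor in $Y^o_s$, calling this the ``technical heart'' of the argument. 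The paper sidesteps that analysis entirely: it bounds $\|Q_{k,\ell}\|_2\le\sum_t\|T_t\|_2$ by the $L^2$ triangle inequality, then bounds each $\|T_t\|_2\le\|\dE_{\cF_t}T_t\|_2+\|\VAR_{\cF_t}(T_t)^{1/2}\|_2$; for $k\notin[r_0]$ each summand is $O(\alpha^\ell(t+1)^{3/2})$ (using the $\dE\langle\phi_k,Z_t\rangle^2\le C(t+1)^3\alpha^t$ bound of Theorem~\ref{th:growthZBP}), so $\sum_t\|T_t\|_2=O(\alpha^\ell\ell^{5/2})$ and squaring gives $\alpha^{2\ell}\ell^5$. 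The off-diagonal covariances you flag as the main obstacle are a phantom: they are bounded by $\|T_s\|_2\|T_t\|_2$ via Cauchy--Schwarz, which is exactly what the triangle inequality already records. Your analysis misattributes the $\ell^5$ factor to a delicate covariance isolation, when in the paper it falls out of a one-line summation.

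A second, more local issue: you rewrite $L^u_{k,\ell}=\langle\phi_k,Z^u_{t+1}\rangle S^u_{\ell-t}-\sum_{w\in Y^u_1}S^w_{\ell-t-1}\langle\phi_k,Z^w_t\rangle$ and claim this ``expresses $L^u$ in terms of standard branching functionals with known second-moment bounds.'' The first term is a product of two correlated functionals of the same subtree, and bounding its second moment by the product of second moments of the factors is not valid; you would need either fourth-moment inputs (which Theorems~\ref{th:kestenstigum} and~\ref{th:growthZBP} do not directly supply for $\langle\phi_k,Z_t\rangle$) or a decoupling step. The paper achieves decoupling by conditioning on $\cF_{t+1}$, i.e.\ on the children of $u$, so that the factors $S^w_{\ell-t-1}$ and $\langle\phi_k,Z^v_t\rangle$ for $v\ne w$ become independent and only second moments per subtree, together with $\dE|Y^u_1|^4$, are needed. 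Your proposal should incorporate this conditioning explicitly; without it the variance bound $\VAR_{\cF_t}(L^u)\le C\,\dE_*\langle\phi_k,Z_t\rangle^2\,\dE_*S^2_{\ell-t-1}$ is not justified.
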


\begin{proof}[Proof of Theorem \ref{th:growthQkl}]
Let $\cF_t$ be the filtration generated by $(Z_0, \ldots, Z_t)$. The
variables $(L_{k,\ell}^u,  u \in  Y^o_{t})$  are independent given
$\cF_t$. We will show that the sum (\ref{eq:defQkl}) concentrates around its mean. Let
us first compute the mean of $L_{k\ell}^u$ for $u \in Y^o _t$. We use the
fact, that given $\cF_{t+1}$ and $v\ne w \in Y^o_{t+1}$, $Z^v_t$ and $S^w_{\ell-t-1}$
are independent. Hence we have with the short-hand notation $\dE_{\cF_t}=\dE(\cdot|\cF_t)$:
$$
\dE_{\cF_t} L_{k,\ell}^u  = \dE_{\cF_t}  \sum_{ (v,w) \in Y_1 ^u , v \ne w} \dE_{\cF_{t+1}}  \langle \phi_k , Z^v_ t  \rangle  \dE_{\cF_{t+1}}  S^w_{\ell - t -1}.
$$
By assumption \eqref{cond:deg}, $\dE_{\cF_{t+1}}  S^w_{\ell - t
  -1}=\alpha^{\ell-t-1}$. Moreover, we have $\dE_{\cF_{t+1}}  \langle
\phi_k , Z^v_ t  \rangle = \mu_k^t \langle \phi_k,Z^v_0\rangle$ so
that 
\begin{eqnarray}
\dE_{\cF_t} L_{k,\ell}^u   
& = & \alpha^{\ell-t-1} \mu_k^t \dE_{\cF_t} \PAR{(|Y^u_1|-1)\sum_{v\in
    Y^u_1}\langle \phi_k, Z^v_0\rangle}  \nonumber\\
&=& \alpha^{\ell-t-1} \mu_k^t \dE_{\cF_t} \PAR{(|Y^u_1|-1)|Y^u_1|}
\sum_{i\in [r]} \phi_k(i)\frac{M_{i,\sigma(u)}}{\alpha}\nonumber\\
& = & \mu_k^{t+1} \alpha^{\ell - t} \langle  \phi_k , Z^u_0 \rangle, \label{eq:meanLu2}
\end{eqnarray}
and 
\begin{equation}\label{eq:meanLu}
\dE_{\cF_t} \sum_{ u \in Y^o_{t}} L_{k,\ell}^u = \mu_k^{t+1} \alpha^{\ell - t} \langle  \phi_k , Z_t \rangle =\mu_k^{2t+1} \alpha^{\ell - t} Y_k(t),
\end{equation}
where $Y_k (t) = X_k ( t) + \langle \phi_k , Z_0 \rangle$ and $X_k$ is the centered martingale defined in Theorem \ref{th:kestenstigum}.

We now prove the statements of the theorem for $k \in [r_0]$. We find similarly
$$
\begin{array}{ll}
\VAR_{\cF_t} (L_{k,\ell}^u) &= \dE_{\cF_t} ( L_{k,\ell}^u - \dE _{\cF_t} L_{k,\ell}^u ) ^2 \\
&\leq \dE_{\cF_t} ( L_{k,\ell}^u ) ^2 \\
&= \dE_{\cF_t} \left(\sum_{v\ne w\in Y^u_1}  S^w_{\ell-t-1} \langle \phi_k, Z^v_t\rangle \right)^2\\
&\leq C  \dE_*  \langle \phi_k , Z_ t  \rangle^2   \dE_* S_{\ell - t -1}^2, 
\end{array}
$$
where $\dE_* ( \cdot ) = \max_{i \in [r]} \dE (  \cdot | Z_0 = \delta_i )$ and constant $C$ can be taken equal to $\dE_{*}|Y^o_1|^4$. For $k \in [r_0]$, we deduce from Theorem \ref{th:kestenstigum}, that for some new $C >0$, 
\begin{equation}\label{eq:varLu}
\VAR_{\cF_t} \PAR{\sum_{ u \in Y^o_{t}} L_{k,\ell}^u}   = \sum_{ u \in Y^o_{t}} \VAR_{\cF_t} (L_{k,\ell}^u)  \leq C  \mu_k^{2t} \alpha^{2 (\ell - t)} S_t. 
\end{equation}
We now define 
\begin{equation}\label{barqkl}
\bar Q_{k,\ell} = \sum_{t=0} ^{\ell-1}   \dE_{\cF_t}  \sum_{ u \in Y^o_{t}} L_{k,\ell}^u  =  \sum_{t=0} ^{\ell-1}  \mu_k^{2t+1} \alpha^{\ell - t} Y_k(t).
\end{equation}
Since $k \in [r_0]$, $\rho_k := \mu^2_k / \alpha > 1$. We write
$$
\frac{\bar Q_{k,\ell}}{ \mu_k ^{2 \ell}} = \mu_k \sum_{t=0}^{\ell -1} \rho_k^{t - \ell} Y_k(t).
$$
From  Theorem \ref{th:kestenstigum}, $\bar Q_{k,\ell}/ \mu_k ^{2 \ell}$ converges a.s. to $\mu_kY_k(\infty) / ( \rho_k - 1)$ where $Y_k(\infty) = X_k ( \infty) + \langle \phi_k , Z_0 \rangle$ and $X_k (\infty)$ is the limit of the martingale defined in Theorem \ref{th:kestenstigum}. Moreover, $\bar Q_{k,\ell}/ \mu_k ^{2 \ell}$ also converges in $L^2$. Indeed,  we find easily from Cauchy-Schwartz inequality,
\begin{eqnarray*}
\dE\PAR{ \frac{\bar Q_{k,\ell}}{ \mu_k ^{2 \ell}}  -  \frac{\mu_kY_k(\infty)}{  \rho_k - 1}  (1 - \rho_k ^{-\ell}) }^2 &= & \mu_k^2   \dE \PAR{ \sum_{t= 0} ^{\ell -1} \rho_k^{t-\ell}  ( Y_k (t) - Y_k (\infty) )  }^2  \\
& \leq & \mu_k^2  \PAR{  \sum_{t = 0} ^{\ell - 1} \rho_k^{t-\ell} } \PAR{ \sum_{t = 0} ^{\ell - 1}  \dE ( Y_k (t) - Y_k (\infty) ) ^2 \rho_k^{t-\ell} }
\end{eqnarray*}
Since $\rho_k > 1$, the first term of the above expression is of order $O(1)$. For the second term, from Theorem \ref{th:kestenstigum}, for any $\veps > 0$, there is $t_0$ such that for all $t \geq t_0$, $\dE ( Y_k (t) - Y_k (\infty) ) ^2 \leq \veps$. We find that the second term is $O(\veps  + \rho_k^{t_0 - \ell}) = o(1)$. It proves that $\bar Q_{k,\ell} / \mu_k ^{2\ell}$ converges in $L^2$.

We now check that $Q_{k,\ell}$ and $\bar Q_{k,\ell}$ are close in $L^2$ for $k \in [r_0]$.  For a real random variable $Z$, set
$\|Z\|_2 = \sqrt { \dE Z^2}$. From \eqref{eq:defQkl}-\eqref{eq:varLu} and the triangle inequality, we get
\begin{eqnarray*}
\| Q_{k,\ell} - \bar Q_{k,\ell} \|_2 & \leq & \sum_{t=0}^{\ell-1} \NRM{\sum_{ u \in Y^o_{t}} L_{k,\ell}^u - \dE_{\cF_t} \sum_{ u \in Y^o_{t}} L_{k,\ell}^u }_2 \nonumber\\
& = &  \sum_{t=0}^{\ell-1} \NRM{ \PAR{ \VAR_{\cF_t} \PAR{\sum_{ u \in Y^o_{t}} L_{k,\ell}^u}}^{1/2} }_2\\
& \leq & C \sum_{t=0}^{\ell}  \mu_k ^{t} \alpha^{\ell - t} \| \sqrt {S_t}\|_2  = O ( \mu_k ^{ \ell} \alpha^{\ell/2}) = o ( \mu_k^{2\ell})\nonumber,
\end{eqnarray*}
where at the last line, we have used that Lemma \ref{le:growthS} and $k \in [r_0]$. It follows that $\|(Q_{k,\ell} - \bar Q_{k,\ell}) / \mu_k^{2\ell}\|_2$ goes to $0$ and it concludes the statements of the theorem for $k \in [r_0]$.

For $k \notin [r_0]$, we note that $\dE Z^2 \leq \dE \PAR{\dE_Y Z}^2 + \dE \VAR_Y(Z)$ so that
$\|Z\|_2 \leq \|\dE_Y Z\|_2 + \|\VAR_Y(Z)^{1/2}\|_2$. From \eqref{eq:defQkl} and the triangle inequality, we get
\begin{eqnarray}
\| Q_{k,\ell} \|_2 & \leq & \sum_{t=0}^{\ell-1} \NRM{\sum_{ u \in Y^o_{t}} L_{k,\ell}^u}_2 \nonumber\\
& \leq &  \sum_{t=0}^{\ell-1} \NRM{ \dE_{\cF_t} \PAR{\sum_{ u \in Y^o_{t}} L_{k,\ell}^u}}_2 + \NRM{ \PAR{ \VAR_{\cF_t} \PAR{\sum_{ u \in Y^o_{t}} L_{k,\ell}^u}}^{1/2} }_2 \label{eq:L2triangle}.
\end{eqnarray}
The last statement of Theorem \ref{th:growthZBP} and \eqref{eq:varLu} give
$$
 \VAR_{\cF_t} \PAR{ \sum_{ u \in Y^o_{t}} L_{k,\ell}^u }= O ( S_t   \alpha^{2 (\ell - t)}   \alpha^t t^3  ) .
$$
We deduce from \eqref{eq:L2triangle} and \eqref{eq:meanLu},
$$
\| Q_{k,\ell}\|_2 \leq C \sum_{t=0}^{\ell-1} \PAR{ \mu_k^{t} \alpha^{\ell - t}\| \langle  \phi_k , Z_t \rangle \|_2 +  \alpha^{ \ell - t}   \alpha^{t/2} t^{3/2} \| \sqrt {S_t}\|_2 }= O ( \alpha^\ell \ell^{5/2}  ).
$$
It concludes the proof.
\end{proof}

We finish this section with a rough bound on $Q_{k,\ell}$. 
\begin{lemma}\label{le:growthQklLp}
For any $p \geq 1$, there exists a constant $C = C(p,\alpha) >0$ such that for any $k \in [r]$, 
$$
\dE |Q_{k,\ell}|^p \leq C \alpha^{2 p\ell}. 
$$
\end{lemma}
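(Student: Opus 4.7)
The plan is to reduce bounding $\dE|Q_{k,\ell}|^p$ to a moment estimate on the total path count $N_\ell := |\mathcal P_{2\ell+1}|$. Since $\phi_k=\Pi^{-1/2}u_k$ with $u_k$ a unit vector and $\min_i\pi(i)>0$, the vector $\phi_k$ has bounded entries with an $\ell^\infty$ norm depending only on the block model parameters; hence $|Q_{k,\ell}|\le \|\phi_k\|_\infty N_\ell$, and it suffices to prove $\dE N_\ell^p = O(\alpha^{2p\ell})$ with a constant depending only on $p$ and $\alpha$. The overall strategy is to dominate $N_\ell$ pointwise by a sum of products of Galton--Watson total-population sizes, then control its $p$-th moment via a combination of H\"older (inside each generation) and Minkowski (across generations).

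First I would specialize the decomposition~\eqref{eq:defQkl} to $k=1$, using $\phi_1=\IND$ by~\eqref{eq:stoch} and hence $\langle\IND,Z^v_t\rangle = S^v_t$, to write
\[
N_\ell \;=\; \sum_{t=0}^{\ell-1}\sum_{u\in Y^o_t}\sum_{w\in Y^u_1}\sum_{v\in Y^u_1\setminus\{w\}} S^w_{\ell-t-1}\, S^v_t .
\]
Dropping the constraint $v\neq w$ and collapsing the inner sums via $\sum_{w\in Y^u_1} S^w_{\ell-t-1}=S^u_{\ell-t}$ and $\sum_{v\in Y^u_1} S^v_t=S^u_{t+1}$ produces the clean upper bound
\[
N_\ell \;\le\; \sum_{t=0}^{\ell-1}\sum_{u\in Y^o_t} S^u_{\ell-t}\, S^u_{t+1}.
\]
Next I would invoke Lemma~\ref{le:growthS}: by condition~\eqref{cond:deg}, the total-population process $(S^u_s)_{s\ge 0}$ in the subtree rooted at any particle $u$ is a $\mathrm{Poi}(\alpha)$ Galton--Watson process starting from $1$, independently of the type of $u$. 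Thus $M^u_\ast := \sup_{s\ge 0} S^u_s/\alpha^s$ has exponential tails with parameters uniform in $u$, so in particular $\dE[(M^u_\ast)^q\mid\sigma(u)]\le C_q$ for every $q\ge 1$. Substituting $S^u_s\le \alpha^s M^u_\ast$ yields $N_\ell \le \alpha^{\ell+1}\, V$ with $V := \sum_{t=0}^{\ell-1}V_t$ and $V_t := \sum_{u\in Y^o_t}(M^u_\ast)^2$.

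The remaining step is to show $\|V\|_p = O(\alpha^\ell)$, which together with $N_\ell\le \alpha^{\ell+1} V$ delivers $\dE N_\ell^p \le \alpha^{p(\ell+1)}\,\dE V^p = O(\alpha^{2p\ell})$. Conditional on $\mathcal F_t=\sigma(Z_0,\dots,Z_t)$ the summands of $V_t$ are independent with uniformly bounded conditional moments, so H\"older's inequality gives $\dE[V_t^p\mid\mathcal F_t]\le S_t^{p-1}\sum_u \dE[(M^u_\ast)^{2p}\mid\sigma(u)]\le C_p S_t^p$. Taking expectations and using $\dE S_t^p = O(\alpha^{pt})$ (itself an immediate consequence of Lemma~\ref{le:growthS} applied to the ambient tree) yields $\|V_t\|_p = O(\alpha^t)$, and Minkowski's inequality applied to $V=\sum_{t=0}^{\ell-1}V_t$ then produces $\|V\|_p \le \sum_t \|V_t\|_p = O(\sum_t \alpha^t) = O(\alpha^\ell)$.

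The only delicate point in the argument is the uniformity of the exponential tails of $M^u_\ast$ over the starting type of $u$, which is crucial so that the constant in $\dE V_t^p\le C_p \dE S_t^p$ does not degrade with $t$ or $\ell$; this uniformity is exactly what is provided by hypothesis~\eqref{cond:deg}, which collapses the multi-type total-population process to a single-type $\mathrm{Poi}(\alpha)$ Galton--Watson process to which Lemma~\ref{le:growthS} applies directly. Everything else is a routine application of H\"older and Minkowski, so the computation produces the desired $\dE |Q_{k,\ell}|^p \leq C\alpha^{2p\ell}$ with $C=C(p,\alpha)$.
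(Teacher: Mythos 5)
Your proof is correct and follows essentially the same strategy as the paper: both decompose $Q_{k,\ell}$ by generation, bound the $L^p$ contribution of generation $t$ by $O(\alpha^{\ell+t})$ using the uniform $\Poi(\alpha)$ Galton--Watson moment bounds from Lemma~\ref{le:growthS} (exploiting hypothesis~\eqref{cond:deg}) together with a power-mean/H\"older inequality inside the generation, and then sum via Minkowski. Your repackaging via the maximal function $M^u_\ast$ and the reduction $|Q_{k,\ell}|\le\|\phi_k\|_\infty Q_{1,\ell}$ is a clean cosmetic variant of the paper's direct conditional-moment computation, not a genuinely different route.
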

\begin{proof}
We use the notation of Theorem \ref{th:growthQkl}. First, from Lemma \ref{le:growthS}, for any $p \geq 1$, $\dE S_{t} ^p  \leq C \alpha^{t p}$. In particular, for any $ v \in  Y^o_{t+1}$ and $k \in [r]$,
$$
\dE_{\cF_{t+1} } \ABS{ \langle \phi_k, Z^v_t \rangle    }^p  = O \PAR{  \dE_{\cF_{t+1} } \PAR{S^v_t }^p  }  =O (  \alpha^{t p}).
$$ 
We use twice the bound $|\sum_{i=1}^n  x_i |^p \leq n^{p-1} \sum_{i=1}^n |x_i|^p$. We find 
\begin{eqnarray*}
\dE_{\cF_t} \ABS{ \sum_{ u \in Y^o_{t}} L_{k,\ell}^u }^p &\leq & S_{t} ^{p-1} \sum_{ u \in Y^o_{t}} \dE_{\cF_t}  |L_{k,\ell}^u|^p \\
&\leq &S_{t} ^{p-1} \sum_{ u \in Y^o_{t}} \dE_{\cF_t}  \dE_{\cF_{t+1}} \ABS{\sum_{(v,w) \in Y_1 ^u , v \ne w } \langle \phi_k, Z^v_t \rangle   S^w_{\ell - t -1} }^p \\
&\leq & S_{t} ^{p-1} \sum_{ u \in Y^o_{t}} \dE_{\cF_t} (S^u_1)^{2(p-1)}  \sum_{(v,w) \in Y_1 ^u , v \ne w }  C \alpha^{tp} \alpha^{(\ell - t - 1)p} \\
&\leq & S_{t} ^{p-1} \sum_{ u \in Y^o_{t}}  C \alpha^{(\ell - 1) p}  \dE_{\cF_t} (S^u_1)^{2p}  \\
&\leq & C' \alpha^{\ell p} S_{t} ^{p},
\end{eqnarray*}
for some new constant  $C'$ depending on $\alpha$ and $p$. We deduce that for some new $C >0$,
$$
\dE \ABS{ \sum_{ u \in Y^o_{t}} L_{k,\ell}^u }^p \leq C^p \alpha^{(\ell + t) p  }.
$$
For a real random variable $Z$, set $\|Z\|_p = ( \dE Z^p)^{1/p}$.  We use \eqref{eq:defQkl} and the triangle inequality, we get
\begin{eqnarray*}
\| Q_{k,\ell}\|_p  \leq  \sum_{t=0}^{\ell-1} \NRM{\sum_{ u \in Y^o_{t}} L_{k,\ell}^u}_p  \leq   C \sum_{t=0}^{\ell-1} \alpha^{(\ell + t) } = O ( \alpha^{2\ell}).
\end{eqnarray*} \end{proof}

\subsection{Decorrelation in homogeneous Galton-Watson branching processes}
We now establish that the variables $Q_{k,\ell}$ and $Q_{j,\ell}$ are uncorrelated when $k\ne j$. To this end we need the following lemma.
\begin{lemma}\label{mkv_field}
Assume that the spin $\sigma(o)$ at the root node $o$ is distributed according to the stationary distribution $\pi$. 
Conditionally on the branching tree ${\mathcal T}$, the process of spins $\sigma(u)$ attached to the vertices of the tree is a Markov random field. For any two neighbor nodes $u,v$ of ${\mathcal T}$ and any $i,j\in[r]$, one has the following transition probabilities 
$$
\dP(\sigma(u)=i|\sigma(v)=j,{\mathcal T})=\frac{1}{\alpha}M_{ij}.
$$
For any two (possibly equal) nodes $u,v$ of ${\mathcal T}$, any $i,j\in[r]$, $i\ne j$, one has
\begin{equation}\label{cond_exp_tmp}
\dE(\phi_k(\sigma(u))\phi_j(\sigma(v))|{\mathcal T})=0.
\end{equation}
\end{lemma}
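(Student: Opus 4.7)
The plan is to derive the Markov random field structure by Poisson splitting, verify $\pi$-reversibility so that the parent/child orientation becomes irrelevant, and then read off the covariance identity from the eigenvector equation $\phi_k^* M = \mu_k \phi_k^*$ combined with the orthonormality relation \eqref{eq:phiONpi}.

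For the Markov property and the transition formula, I would argue as follows. Conditional on $\sigma(v)=j$, the number of type-$i$ children of $v$ is $\text{Poi}(M_{ij})$, independently across $i$, so by assumption \eqref{cond:deg} the total degree of $v$ in $\mathcal{T}$ is $\text{Poi}(\alpha)$ irrespective of $j$. In particular $\mathcal{T}$ (as an unlabeled rooted tree) is distributed as a standard $\text{Poi}(\alpha)$ Galton--Watson tree and is \emph{independent} of the spin assignment, so conditioning on $\mathcal{T}$ does not distort conditional spin laws. Classical Poisson splitting then shows that, given $\sigma(v)=j$ and the offspring cardinality of $v$, the types of the children are i.i.d.\ with law $(M_{ij}/\alpha)_{i\in[r]}$; by the branching property the different subtrees are conditionally independent given the spins of their roots. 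Iterating generation by generation yields the Markov random field property on $\mathcal{T}$ and the parent-to-child transition kernel $T_{ij} := M_{ij}/\alpha$.

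Next, from \eqref{eq:stoch}, $\pi^* M = \alpha \pi^*$, so $\pi$ is stationary for $T$; and using $M_{ij}=\pi(i)W_{ij}$ together with the symmetry of $W$, one checks $\pi(j)T_{ij} = \pi(i)T_{ji}$, i.e.\ $T$ is reversible with respect to $\pi$. Since $\sigma(o)\sim\pi$ by hypothesis, every vertex has spin marginal $\pi$ and the transition formula $\dP(\sigma(u)=i|\sigma(v)=j,\mathcal{T}) = M_{ij}/\alpha$ holds for \emph{any} neighboring pair $u,v$, irrespective of which is the ancestor.

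For the vanishing of the covariance when $k\ne j$, the diagonal case $u=v$ is immediate from \eqref{eq:phiONpi}, since $\dE[\phi_k(\sigma(u))\phi_j(\sigma(u))|\mathcal{T}] = \langle\phi_k,\phi_j\rangle_\pi = 0$. For $u\ne v$, let $d$ denote their graph distance in $\mathcal{T}$ and apply the Markov property along the unique $v$-to-$u$ path:
$$
\dE[\phi_k(\sigma(u))|\sigma(v)=j,\mathcal{T}] = \sum_{i\in[r]}(T^d)_{ij}\phi_k(i) = (\mu_k/\alpha)^d\phi_k(j),
$$
where I used $\phi_k^* M = \mu_k\phi_k^*$ and hence $\phi_k^* T^d = (\mu_k/\alpha)^d\phi_k^*$. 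Integrating against $\sigma(v)\sim\pi$ then gives
$$
\dE[\phi_k(\sigma(u))\phi_j(\sigma(v))|\mathcal{T}] = (\mu_k/\alpha)^d \langle\phi_k,\phi_j\rangle_\pi = 0.
$$
The only point requiring care is the first step, where the independence of $\mathcal{T}$ and the spin assignment, a consequence of the equal-degree assumption \eqref{cond:deg}, is what makes conditioning on the unlabeled tree $\mathcal{T}$ innocuous.
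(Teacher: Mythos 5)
Your proposal is correct and follows essentially the same route as the paper's proof: Poisson splitting gives the conditionally i.i.d.\ children types with law $M_{\cdot\,\sigma(v)}/\alpha$, $\pi$-reversibility of the kernel (from $M=\Pi W$, $W$ symmetric, and \eqref{cond:deg}) lets you ignore the parent/child orientation, and the covariance vanishes by propagating $\phi_k$ along the unique tree path via $\phi_k^* M=\mu_k\phi_k^*$ and then invoking the $\pi$-orthogonality \eqref{eq:phiONpi}. The only cosmetic difference is that you phrase the last step with the $d$-step kernel $T^d$ and make explicit the (correct) observation that \eqref{cond:deg} decouples the tree shape from the spins, whereas the paper conditions step by step along the path with a filtration; the content is identical.
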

\begin{proof}
By standard properties of independent Poisson random variables, conditionally on the spin $\sigma(o)$ and on the number of children of the root $o$, then the spins of each of the children of the root are i.i.d., distributed according to $M_{\cdot \sigma(o)}/\alpha$. Moreover, $\pi$ is the stationary distribution for this transition kernel, which is reversible, as follows from the relation $M=\Pi W$ and the facts that $W$ is symmetric together with  the assumption (\ref{cond:deg}) that the column sums of $M$ all coincide with $\alpha$. 
The Markov random field property and the expression of the transition kernel follow by iterating this argument. 

We now evaluate the conditional expectation in (\ref{cond_exp_tmp}). Let $u_1=u,\ldots,u_t=v$ denote the unique path in ${\mathcal T}$ connecting nodes $u$ and $v$. Let ${\mathcal{F}}_s$ denote the $\sigma$-field generated by ${\mathcal T}$  and the spin variables $\sigma(u_1),\ldots,\sigma(u_s)$. We then have by the Markov random field property
$$
\dE(\phi_j(\sigma(u_{s+1}))|{\mathcal{F}}_s)=\sum_{i\in[r]}\frac{1}{\alpha}M_{i \sigma(u_s)}\phi_j(i)=\frac{\mu_j}{\alpha}\phi_j(\sigma(u_s)),
$$
where we used the fact that $\phi_j$ is a left-eigenvector of $M$ associated with eigenvalue $\mu_j$. Thus
$$
\begin{array}{ll}
\dE(\phi_k(\sigma(u))\phi_j(\sigma(v))|{\mathcal T})&=\left(\frac{\mu_j}{\alpha}\right)^{t-1}\dE(\phi_k(\sigma(u))\phi_j(\sigma(u))|{\mathcal T})\\
&=\left(\frac{\mu_j}{\alpha}\right)^{t-1}\sum_{i\in[r]}\pi_i \phi_k(i)\phi_j(i)\\
&=0,
\end{array}
$$
where the last equality follows from $\pi$-orthogonality (\ref{eq:phiONpi}) between vectors $\phi_k$ and $\phi_j$ for $j\ne k$.
\end{proof}

We now show the following

\begin{theorem}\label{le:ouff2}
Let $j \ne k \in [r]$ and $Z_0 = \delta_\iota$ where $\iota$ has distribution $(\pi(1), \ldots , \pi(r))$. Then for any $\ell \geq 0$, 
$$
\dE Q_{k,\ell}Q_{j,\ell} = 0. 
$$
\end{theorem}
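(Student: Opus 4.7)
The strategy is to reduce the claim to the conditional orthogonality statement \eqref{cond_exp_tmp} in Lemma \ref{mkv_field}, which is the key property that makes the cross term vanish exactly (not just to leading order).

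First, I would write the product as an unconditional double sum indexed by pairs of admissible paths in the random tree:
\begin{equation*}
Q_{k,\ell}Q_{j,\ell} \;=\; \sum_{p,p' \in \mathcal{P}_{2\ell+1}} \phi_k\bigl(\sigma(u^p_{2\ell+1})\bigr)\,\phi_j\bigl(\sigma(u^{p'}_{2\ell+1})\bigr).
\end{equation*}
Since the set $\mathcal{P}_{2\ell+1}$ of admissible paths of length $2\ell+1$ backtracking once at step $\ell+1$ is determined by the (unlabeled) branching tree $\mathcal{T}$, I would condition on $\mathcal{T}$ and interchange summation and conditional expectation. This is where the assumption $Z_0 = \delta_\iota$ with $\iota\sim\pi$ matters: it guarantees that the spin field on $\mathcal{T}$ is generated under the stationary initial law, so the hypotheses of Lemma \ref{mkv_field} are satisfied. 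I would also note that the Poisson offspring structure plus condition \eqref{cond:deg} implies the total number of children of each node is $\Poi(\alpha)$ independently of its type, so the unlabeled tree $\mathcal{T}$ is independent of the spins $\sigma$, and given $\mathcal{T}$ the spins form the Markov random field described in Lemma \ref{mkv_field}.

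Next, for each fixed pair $(p,p')$ conditioned on $\mathcal{T}$, the terminal nodes $u := u^p_{2\ell+1}$ and $v := u^{p'}_{2\ell+1}$ are two deterministic (possibly equal) vertices of $\mathcal{T}$. Since $j \ne k$, Lemma \ref{mkv_field} applies and gives
\begin{equation*}
\dE\bigl[\,\phi_k(\sigma(u))\,\phi_j(\sigma(v))\;\bigm|\;\mathcal{T}\,\bigr] \;=\; 0.
\end{equation*}
Summing this identity over the $\mathcal{T}$-measurable index set $\mathcal{P}_{2\ell+1}\times\mathcal{P}_{2\ell+1}$ yields $\dE[\,Q_{k,\ell}Q_{j,\ell}\mid \mathcal{T}\,]=0$, and taking outer expectation over $\mathcal{T}$ yields the theorem.

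The only non-routine ingredient is the already-proven Lemma \ref{mkv_field}; the essential point there was that, because $P_{ij}=M_{ij}/\alpha$ is a reversible kernel with stationary law $\pi$ and $\phi_k$ is a right eigenvector of $P$ with eigenvalue $\mu_k/\alpha$, one has
\begin{equation*}
\dE[\phi_k(\sigma(u))\phi_j(\sigma(v))\mid\mathcal{T}] \;=\; (\mu_j/\alpha)^{d_{\mathcal{T}}(u,v)}\langle \phi_k,\phi_j\rangle_\pi \;=\; 0
\end{equation*}
by the $\pi$-orthonormality relation \eqref{eq:phiONpi}. I do not anticipate a genuine obstacle here; the main thing to be careful about is that the paths in $\mathcal{P}_{2\ell+1}$ are determined by $\mathcal{T}$ alone (not by the spins), which is precisely what the $\alpha$-Poisson reformulation of the offspring law secures, so swapping $\sum_{p,p'}$ with $\dE[\cdot\mid\mathcal{T}]$ is unambiguous.
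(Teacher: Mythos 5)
Your proof is correct and follows essentially the same route as the paper's: condition on the unlabeled tree $\mathcal{T}$, expand $Q_{k,\ell}Q_{j,\ell}$ as a double sum over $\mathcal{T}$-measurable index sets, and apply the conditional orthogonality \eqref{cond_exp_tmp} of Lemma \ref{mkv_field} to kill each term. The remark that the total offspring law is $\Poi(\alpha)$ independently of type is a useful elaboration of why the index set is $\mathcal{T}$-measurable, but the argument is the same.
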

\begin{proof}
Write $Q_{k,\ell}$ as 
$$
Q_{k,\ell}=\sum_{(v,w)\in{\mathcal P}({\mathcal T})}\phi_k(\sigma(w)),
$$
where the sum extends over a set  ${\mathcal P}({\mathcal T})$ of node pairs $(v,w)$ that depends only on the tree ${\mathcal T}$. Using the analogue expression for $Q_{j,\ell}$ one obtains
$$
\begin{array}{ll}
\dE(Q_{k,\ell}Q_{j,\ell}|{\mathcal T})&=\sum_{(v,w)\in{\mathcal P}({\mathcal T})}\sum_{(v',w')\in{\mathcal P}({\mathcal T})}\dE(\phi_k(\sigma(w))\phi_j(\sigma(w'))|{\mathcal T})\\
&=0
\end{array}
$$
by Lemma \ref{mkv_field}, Equation\eqref{cond_exp_tmp}. This concludes the proof.
\end{proof}

\section{Local structure of random graphs}
\label{sec:local}
We now derive the necessary controls on the local structure of the SBM random graphs under consideration. Coupling results will allow to bound the deviation of their local structure from branching processes. Asymptotic independence between local neighborhoods of distinct nodes will then be used to establish weak laws of large numbers.
\subsection{Coupling}

For $ e , f \in \vec E(V)$, we define the "oriented" distance 
$$
\vec d ( e, f) = \min_{\gamma} \ell( \gamma )
$$
where  the minimum is taken over all self-avoiding paths $\gamma = (\gamma_0, \gamma_1, \cdots , \gamma_{\ell+1} )$  in $G$ such that $(\gamma_0, \gamma_1) = e$, $(\gamma_{\ell} , \gamma_{\ell+1} ) = f$ and for all $1 \leq k \leq \ell+1$, $\{ \gamma_k , \gamma_{k+1} \} \in E$ (we do not require that $e \in \vec E$). Observe that $\vec d$ is not symmetric, we have instead $\vec d (e, f) = \vec d (f^{-1}, e^{-1})$.

Then,  for integer $t \geq 0$,  we introduce the vector $Y_t(e) = (Y_t(e) (i))_{i \in [r]}$ where, for $i \in [r]$,
\begin{equation}\label{eq:defYti}
Y_t (e)(i) = \ABS{ \left\{ f \in \vec E : \vec d ( e, f) = t  , \sigma(f_2) =  i \right\} }.
\end{equation}
We also set 
$$
S_t (e) = \|Y_t(e) \|_1 = \ABS{ \left\{ f \in \vec E : \vec d ( e, f) = t \right\} }.
$$
The vector $Y_t(e)$ counts the types at oriented distance $t$ from $e$.

We shall denote by $S_t(v)$ the set of vertices at distance $t$ from $v$.
We introduce 
\begin{align}
& n(i)  =   \sum_{v=1} ^n \IND ( \sigma(v)  = i) , 
 \quad \quad \pi_n(i)   = \frac { n(i)  }{ n} , \nonumber \\
& \alpha_n (i )  =  \sum_{j=1}^r \pi_n(i) W_{ij}  ,  \quad \quad 
  \bar \alpha_n  = \max_{i \in [r]} \alpha_n (i) = \alpha + O ( n^{-\gamma}),\label{eq:defalphan}
\end{align}
where at the last line we have used Assumption \eqref{cond:deg}-\eqref{eq:defgamma}.  Central to our local study is the classical exploration process of the neighborhood of $v$ which starts with $A_0 = \{ v \}$ and at stage $t \geq 0$, if $A_t$ is not empty, takes a vertex in $A_t$ at minimal distance  from $v$, say $v_t$, reveals its neighbors, say $N_{t+1}$, in $[n] \backslash A_t$, and update $A_{t+1} = ( A_t \cup N_{t+1}  ) \backslash \{ v_t\}$. We will denote by $\cF_t$ the  filtration generated by $(A_0, \cdots, A_{t})$ and by $D_t = \cup_{0 \leq s \leq t} A_s$ the set of discovered vertices at time $t$. We start by establishing a rough bound on the growth of $S_t$.

\begin{lemma}\label{le:growthSt}
There exist $c_0, c_1 >0$ such that for all $s \geq 0$ and for any $w \in [n] \cup \vec E(V)$,    
$$
\dP \PAR{ \forall t \ge 0: S_{t} (w)   \leq  s \bar \alpha_n^t  }  \geq 1 -  c_1 e^{ - c_0 s}. 
$$
Consequently, for any $p\geq 1$, there exists $c >0$ such that $$\dE \max_{v \in [n] , t \geq 0} \PAR{  \frac{ S_{t} (v) }{ \bar \alpha^t_n} }^p \leq c (\log n)^p.$$
 \end{lemma}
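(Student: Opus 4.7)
The plan is to mirror the proof of Lemma \ref{le:growthS} for a Galton--Watson branching process, the key observation being that the exploration of neighborhoods in the SBM is stochastically dominated by a Galton--Watson-like recursion with mean offspring $\bar\alpha_n$. Concretely, let $\cF_t$ denote the $\sigma$-field generated by the breadth-first exploration of the neighborhood of $w$ up to distance $t$, and let $D_t$ be the set of already-discovered vertices. For $w = v \in [n]$, any vertex $u$ at distance $t$ from $v$ has a set of neighbors in $[n]\setminus D_t$ that is, conditionally on $\cF_t$, a union of independent Bernoulli$(W(\sigma(u),\sigma(u'))/n \wedge 1)$ trials over $u' \in [n]\setminus D_t$; its conditional mean is at most $\alpha_n(\sigma(u)) \leq \bar\alpha_n$. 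Hence $S_{t+1}(w)$ is stochastically dominated, given $\cF_t$, by a $\Bin(n S_t(w),\bar\alpha_n / n)$ variable. The case $w \in \vec E(V)$ is identical: the oriented distance induces an exploration along self-avoiding paths starting with edge $w$, and each candidate extension is again a Bernoulli edge, so at most $\bar\alpha_n S_t(w)$ new oriented edges appear in conditional expectation at step $t+1$.

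I would then follow verbatim the scheme of Lemma \ref{le:growthS}: set $\varepsilon_k = \bar\alpha_n^{-k/2}\sqrt{k}$ and $f_k = \prod_{\ell = 1}^k (1+\varepsilon_\ell)$. Since $\bar\alpha_n \to \alpha > 1$, for $n$ large enough $\bar\alpha_n$ lies in a compact subinterval of $(1,\infty)$, so $f_k$ stays bounded in $[c_0, c_1]$ and $\sup_k \varepsilon_k \leq c_1$ uniformly in $n$. A Chernoff bound for the binomial (analogous to~(\ref{eq:chernovPoi}) and using $\gamma(1+x) \geq \theta x^2$ on $[0, c_1]$) yields, on the event $\{S_t(w) \leq s f_t \bar\alpha_n^t\} \in \cF_t$ and for $s$ above an absolute threshold,
\[
\dP\bigl(S_{t+1}(w) > s f_{t+1} \bar\alpha_n^{t+1}\,\bigm|\,\cF_t\bigr) \leq e^{-c s (t+1)}
\]
for some $c > 0$ independent of $n$ and $t$. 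Summing the resulting geometric series gives $\dP\bigl(\exists t\geq 0 : S_t(w) > s c_1 \bar\alpha_n^t\bigr) \leq c_1' e^{-c_0' s}$, which is the first claim after rescaling the constants.

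For the moment bound, set $X_v := \sup_{t\geq 0} S_t(v)/\bar\alpha_n^t$. The first part yields $\dP(X_v > s) \leq c_1 e^{-c_0 s}$ uniformly in $v \in [n]$, so by the union bound $\dP(\max_v X_v > s) \leq c_1 n e^{-c_0 s}$. Writing $\dE (\max_v X_v)^p = \int_0^\infty p s^{p-1} \dP(\max_v X_v > s)\,ds$ and splitting the integral at $s_n = (2/c_0)\log n$ gives a contribution $O((\log n)^p)$ from $s \leq s_n$ (where we simply use $\dP \leq 1$) and a contribution $O(1)$ from $s > s_n$ (using the exponential tail), establishing the claimed $O((\log n)^p)$ bound.

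The main obstacle is ensuring all constants remain uniform in $n$: this is controlled by $\bar\alpha_n = \alpha + O(n^{-\gamma})$ being bounded away from $1$ and from infinity, which stabilizes the bounds on $f_k$, the Chernoff estimate, and the comparison with the Poisson case. A secondary delicate point is the edge-rooted exploration for $w \in \vec E(V)$: the oriented distance is not symmetric, but because its one-step growth still factors through independent Bernoulli trials revealed along undiscovered edges, the same stochastic domination applies and the argument goes through unchanged.
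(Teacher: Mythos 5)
Your proof is correct and follows essentially the same route as the paper: compare the one-step growth of the exploration of $w$'s neighborhood with a branching recursion of mean offspring $\bar\alpha_n$, then run the same $\varepsilon_k,f_k$ Chernoff/geometric-series scheme as Lemma \ref{le:growthS}, and finally derive the $p$-th moment bound by splitting the tail integral at order $\log n$. One small imprecision worth flagging: a sum of independent Bernoulli trials with varying success probabilities and total mean $\le\lambda$ is not in general \emph{stochastically} dominated by $\Bin(N,\lambda/N)$ — what actually holds, and is all the Chernoff step needs, is an MGF bound, e.g. $\dE e^{\theta\sum B_i}=\prod(1+p_i(e^\theta-1))\le e^{\lambda(e^\theta-1)}$ for $\theta\ge0$. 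The paper handles this cleanly by bounding the conditional offspring MGF by that of $\Poi(\bar\alpha_n)$ and then invoking the Poisson Chernoff estimate \eqref{eq:chernovPoi} from Lemma \ref{le:growthS} verbatim with $\mu_1$ replaced by $\bar\alpha_n$; you instead re-derive the binomial Chernoff bound directly, which works too, but the phrase ``stochastically dominated'' should be replaced by an MGF (or subexponential) comparison.
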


\begin{proof}
Recall that $\dE |X|^p = p \int_0^{\infty} x^{p-1} \dP ( |X| \geq t ) dt$ and $\dP ( \max_v X_v \geq t ) \leq 1 \wedge \sum_{v} \dP ( X_v \geq t)$. Then the second statement is a direct consequence of the first statement.

To prove the first statement, observe that, in the exploration process, given $\cF_t$, if $v_{t}$ has type $j$, the number of neighbors of $v_{t}$ in $[n] \backslash D_t$ is upper bounded stochastically by 
$$
V_j = \sum_{i=1} ^r V_{ij},
$$
where $V_{ij} \stackrel{d}{=} \Bin ( n(i), W_{ij} / n) = \Bin ( n(i), \pi_n (i) W_{ij} / n(i) ) $ are independent. In particular, for any $\theta \geq  0$, using $1 + t \leq e^t$, we find
$$
\dE e^{\theta V_j} =  \prod_{i=1}  ^r \dE  \PAR{ 1 - \frac{\pi_n(i) W_{ij}}{n(i) } + \frac {\pi_n (i) W_{ij}}{n(i) }  e^\theta }^{n(i)} \leq e^{ - \alpha_n ( j) + \alpha_n (j) e^{\theta} } \leq e^{ - \bar \alpha_n+ \bar \alpha_n  e^{\theta} }. 
$$

For any $j \in [r]$, we have thus bound for $\theta \geq 0$ the characteristic function of $V_j$ by the characteristic function of a $\Poi(\bar \alpha_n)$ variable. It remains finally to repeat the proof of Lemma \ref{le:growthS} from \eqref{eq:chernovPoi} with $\mu_1$ replaced by $\bar \alpha_n$. \end{proof}

We now check that the random graph $G$ is locally tree-like. For $v \in [n]$ and integer $h \geq 0$, we denote by $(G,v)_h$ the rooted subgraph of $G$ rooted at $v$, spanned by the vertices at distance at most $h$ from $v$. If $e = (u,v) \in \vec E (V)$, we set $(G,e)_h = (G',v)_h$ where $G'$ is the graph $G$ with the edge $\{u,v\}$ removed (if it was present in $G$). 

\begin{lemma} \label{le:tls2}
Let $\ell \sim \kappa \log_\alpha n $ with $\kappa < 1/2$. Then, \whp  the random graph $G$ is $\ell$-tangle-free and \whp there are less that $ \bar \alpha^\ell    \log n$ vertices whose $\ell$-neighborhood contains a cycle. 
\end{lemma}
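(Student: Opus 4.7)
Both statements are first-moment estimates built on the BFS exploration of radius-$\ell$ neighborhoods, combined with the tail bound on $|B(v,\ell)|$ from Lemma~\ref{le:growthSt}.

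I begin with the bound on the number of cyclic vertices. Fix $v$ and run the BFS exploration from $v$ up to depth $\ell$; the ball $B(v,\ell)$ contains a cycle iff at some step of the exploration we reveal a ``back edge'', i.e.\ an edge from the currently-processed vertex $v_t$ to some already-discovered vertex different from its parent. Conditional on $\cF_t$, each potential back-edge at step $t$ exists independently with probability at most $\bar W/n$, and the number of candidates is bounded by $|D_t|\le |B(v,\ell)|$. Summing over processed vertices gives the conditional bound
\[
\dE\bigl[\#\text{back edges in BFS of } v \;\big|\; |B(v,\ell)|\bigr] \;\le\; \frac{\bar W}{n}\,|B(v,\ell)|^{2}.
\]
Taking expectations and using the $L^{p}$ control $\dE|B(v,\ell)|^{2}\le C(\log n)^{c}\bar\alpha^{2\ell}$ from Lemma~\ref{le:growthSt} yields $\dP(v\text{ cyclic}) = O((\log n)^{c}\bar\alpha^{2\ell}/n)$, so that $\dE[\#\{v\text{ cyclic}\}]$ is of order $(\log n)^{c}\bar\alpha^{2\ell}$. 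The whp upper bound then follows from Markov's inequality, after absorbing powers of $\bar\alpha^{\ell}=n^{\kappa+o(1)}$ and logarithmic losses into a single $\log n$ factor (possible thanks to the margin left by $\kappa<1/2$).

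For the tangle-freeness claim, a vertex $v$ is tangled iff $B(v,\ell)$ has cyclomatic number at least two, equivalently contains a minimally bicyclic subgraph $H$, namely either a theta graph (three internally vertex-disjoint paths between two vertices) or a handcuff graph (two cycles joined by a path). Each such $H$ with $k=|V(H)|$ vertices has $k+1$ edges. I would bound
\[
\dE[\#\text{tangled}]\;\le\;\sum_{H\text{ min.\ bicycle}}\#\{v:V(H)\subseteq B(v,\ell)\},
\]
and for each $H$, control $\#\{v : V(H)\subseteq B(v,\ell)\}\le |B(w,\ell)|$ for any $w\in V(H)$, again bounded by Lemma~\ref{le:growthSt}. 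For a fixed shape with $k$ vertices and $k+1$ edges, the expected number of labelled copies in $G$ is at most $n^{k}(\bar W/n)^{k+1}=\bar W^{k+1}/n$. Multiplying by the factor $O(\bar\alpha^{\ell}(\log n)^{c})$ coming from the ball size, summing over all shapes and over $k\le 2\ell$, the dominant contributions come from the smallest bicycles; the constraint that the entire bicycle fits within a radius-$\ell$ ball restricts the admissible path lengths and makes the total $o(1)$ under the assumption on $\kappa$.

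The main obstacle is the careful combinatorial bookkeeping: enumerating shapes of minimal bicycles together with the geometric constraint that they fit inside a single radius-$\ell$ ball, and matching exponents of $n$, $\bar\alpha^{\ell}$ and $\bar W$ so that the first-moment sum is $o(1)$. In particular, a naive union bound over all pairs of non-tree edges in the BFS gives a weaker range of $\kappa$; the tighter counting above, exploiting that both cycles of the bicycle share the same ball and hence have total length at most $O(\ell)$, is what allows one to reach the full range $\kappa<1/2$ asserted in the lemma.
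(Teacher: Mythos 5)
Your high-level plan (BFS exploration of the $\ell$-ball, first-moment estimates, Markov) matches the paper, but both halves of your argument have problems, and the second half diverges substantially from the paper's proof.

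For the count of cyclic vertices, the paper conditions on the stopping time $\tau$ at which the ball is fully revealed; the discovered edges then form a spanning tree of $V(G,v)_\ell$, and the number of undiscovered edges inside the ball is stochastically dominated by $\Bin(m, a/n)$ with $m=|V(G,v)_\ell|$. This gives $\dP\bigl((G,v)_\ell \hbox{ not a tree}\bigr)\le a\,\dE S_\ell(v)/n=O(\alpha^\ell/n)$, hence $\dE[\hbox{number of cyclic vertices}]=O(\alpha^\ell)$, and Markov at threshold $\bar\alpha^\ell\log n$ gives $O(1/\log n)=o(1)$. Your estimate instead bounds the candidate back-edges by $|B(v,\ell)|^2$ and lands on $\dE[\hbox{number of cyclic vertices}]=O((\log n)^c\bar\alpha^{2\ell})$, which is larger by a full factor $\bar\alpha^\ell$. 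From there, Markov yields only a bound of order $(\log n)^{c+1}\bar\alpha^{2\ell}$, \emph{not} $\bar\alpha^\ell\log n$. Your closing remark that the excess factor $\bar\alpha^\ell=n^{\kappa+o(1)}$ can be ``absorbed into a single $\log n$ factor'' is simply false — a polynomial in $n$ cannot be hidden inside logarithms — so this step is a genuine gap. You should examine whether the $\Bin(m,a/n)$ domination (one power of $m$, not two) is attainable for your exploration; with the cruder $m^2$ count the stated threshold does not follow.

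For $\ell$-tangle-freeness the paper's argument is a two-line continuation of the same idea: a tangled ball requires at least two undiscovered edges, so $\dP \le \dP(\Bin(m, a/n)\ge 2)\le (a/n)^2 m(m-1)$; summing over $v$ and using $\dE S_\ell(v)^2=O((\log n)^2\alpha^{2\ell})$ from Lemma~\ref{le:growthSt} gives $O(\alpha^{2\ell}/n)=o(1)$ for $\kappa<1/2$. Your alternative route — enumerating minimal bicyclic subgraphs (theta and handcuff shapes), counting labelled copies, and restricting to those fitting inside a radius-$\ell$ ball — is in principle a valid strategy but is far more laborious, and you explicitly leave the combinatorial bookkeeping (the ``main obstacle'') undone, so this part is incomplete. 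It also quietly assumes independence between the event that a given bicycle $H$ is present and the number of roots $v$ with $V(H)\subseteq B(v,\ell)$; these are positively correlated, so the product bound $n^{k}(\bar W/n)^{k+1}\cdot O(\bar\alpha^\ell(\log n)^c)$ is not a legitimate first-moment estimate as written.
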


\begin{proof}
We start by proving the second statement. In the exploration process all vertices in  $V(G,v)_\ell$ have been revealed at time $\tau$. Let $\tau$ be defined as the first time $t$ at which all nodes at distance $\ell$ or less from $v$ have been discovered.  It is clearly a stopping time for the filtration $ \cF_t$. By construction, given $\cF_\tau$, the set of discovered edges in $V(G,v)_{\ell}$ builds a spanning tree of $V(G,v)_{\ell}$. Also, given $\cF_\tau$, the number of undiscovered edges between two vertices in $V(G,v)_{\ell}$ is stochastically upper bounded by $\Bin (  m , a / n)$ where $m = | V ( G, v)_{\ell} | = S_{\ell} (v)$ and $a = \max_{i,j} W(i,j)$. It follows from Lemma \ref{le:growthSt} that, for some $c >0$,  
\begin{equation}\label{eq:probnottree}
\dP \PAR{ (G,v)_{\ell} \hbox{ is not a tree}} \leq \frac{ a \dE S_{\ell} (v) }{n} \leq \frac{ c \alpha^\ell  }{n}.
\end{equation}
Hence from Markov inequality,
$$
\dP \PAR{ \sum_{v} \IND ( (G,v)_{\ell} \hbox{ is not a tree} ) \geq   \alpha^\ell    \log n } \leq \frac{ c}{\log n}.
$$
The second statement follows.

We now turn to the first statement. First recall that the probability that $\Bin(m,q)$ is not in $\{0,1\}$ is at most $q^2 m (m-1) \leq q^2 m^2$.  Also, if $G$ is $\ell$-tangled, then there exists $v \in [n]$ such that $V(G,v)_{\ell}$ has at least two  undiscovered edges. In particular, from the union bound,  
$$
\dP \PAR{ G \hbox{ is $\ell$-tangled}} \leq \sum_{v=1} ^n \frac{ a^2 \dE S_{\ell} (v)^2 }{n^2} \leq  \frac{ c \alpha^{2 \ell}  }{n} = o(1).
$$
where $c >0$ and we have used again Lemma \ref{le:growthSt}. \end{proof}

We conclude this subsection with a coupling of the process $Z_t(e)$ and a multi-type Galton-Watson tree. Recall that for probability measures $P,Q$ on a countable set $\cX$, the total variation distance is given by 
$$
\DTV (P,Q) = \frac 1 2 \sum_{x \in \cX} | P(x) - Q(x) | = \min \dP ( X \ne Y), 
$$
where the minimum is over all coupling $(X,Y)$ such that $X \stackrel{d}{\sim} P$, $Y \stackrel{d}{\sim} Q$.   

\begin{proposition}\label{prop:couplingYZ} 
Let $\ell \sim  \kappa \log_\alpha n  $ with $0 \leq \kappa  < 1/2$ and $e = (u,v) \in \vec E (V)$. Let $(T,o)$ be the random rooted tree associated to the Galton-Watson branching process defined in Section \ref{subsec:MTGW} started from $Z_0 = \delta_{\sigma(v)}$. The total variation distance between the law of $(G,e)_{\ell}$ and $(T,o)_{\ell}$ goes to $0$ as $O (  (\log n ) \alpha^{\ell}n^{-\gamma \wedge ( 1- \kappa) })$. The same holds with $(G,e)_{\ell}$ replaced by $(G,v)_{\ell}$. 
\end{proposition}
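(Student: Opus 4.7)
The plan is to build the coupling through a standard breadth-first exploration of the neighborhood of $e$ (or $v$) in $G$, matched step by step with a Poisson multi-type branching process. At each stage $t$ of the exploration we have revealed a set $D_t \subset [n]$ and are about to expose the neighbors of the next vertex to process, call it $w_t$ of type $\sigma(w_t) = j$. Conditionally on $\cF_t$, the number of new type-$i$ neighbors of $w_t$ is distributed as $\Bin\bigl(n(i) - D_t(i) - \IND(i=j),\, W_{ij}/n\bigr)$, where $D_t(i) = |\{u \in D_t : \sigma(u) = i\}|$. On the branching process side, the corresponding offspring count is $\Poi(M_{ij}) = \Poi(\pi(i) W_{ij})$, and these counts are independent across $i$ conditionally on the history.

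The key step is to bound the one-step total variation error. By the classical estimate $\DTV(\Bin(N,p),\Poi(Np)) \leq Np^2$ together with $\DTV(\Poi(\lambda),\Poi(\lambda')) \leq |\lambda-\lambda'|$, I would bound
\[
\DTV\!\Bigl(\Bin\!\bigl(n(i)-D_t(i)-\IND(i=j),\,W_{ij}/n\bigr),\,\Poi(M_{ij})\Bigr) \leq \frac{C}{n} + \frac{C\,D_t(i)}{n} + C\,\|\pi_n-\pi\|_\infty,
\]
the three terms capturing respectively the Binomial--Poisson gap, the depletion of the ambient type class by already-discovered vertices, and the discrepancy between the empirical composition $\pi_n$ and $\pi$ via assumption \eqref{eq:defgamma}. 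Summing over the $r$ types and concatenating over the exploration steps (the standard fact that coupling errors for sequential conditionally-independent draws add up in total variation), the total coupling error is at most
\[
C \sum_{u \in V(G,v)_{\ell-1}}\left( \frac{1}{n} + \frac{|D|}{n} + n^{-\gamma}\right)\, ,
\]
where $D$ denotes the full set of discovered vertices up to depth $\ell$; this is dominated by a constant times $|V(G,v)_\ell| \cdot \bigl(|V(G,v)_\ell|/n + n^{-\gamma}\bigr)$.

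To convert this into a bound in expectation, I invoke Lemma~\ref{le:growthSt}, which gives $\dE |V(G,v)_\ell|^p = O((\log n)^p \bar\alpha_n^{\ell p})$ for any $p \geq 1$, together with $\bar \alpha_n = \alpha + O(n^{-\gamma})$ so that $\bar\alpha_n^\ell = (1+o(1))\alpha^\ell$. By Cauchy--Schwarz applied to the $|V(G,v)_\ell|^2/n$ term, the first piece is of order $\alpha^{2\ell}(\log n)^2/n = \alpha^\ell(\log n)^2 n^{-(1-\kappa)+o(1)}$, while the second piece is of order $\alpha^\ell (\log n) n^{-\gamma}$; combining these gives the advertised rate $O((\log n)\alpha^\ell n^{-\gamma\wedge(1-\kappa)})$ up to logarithmic factors, which is what the statement claims. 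Handling $(G,e)_\ell$ rather than $(G,v)_\ell$ just means starting the exploration from $v$ after deleting the edge $\{u,v\}$, which changes the degree bookkeeping by at most one and has negligible effect on the same bound.

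The main technical obstacle is not any single estimate but the careful bookkeeping: one must verify that the exploration-process filtration faithfully reproduces the Galton--Watson offspring distribution when no coupling failure occurs, so that the accumulated one-step TV bounds do indeed control $\DTV\!\bigl((G,e)_\ell,(T,o)_\ell\bigr)$; this rests on the fact that conditional on the types so far, the remaining edges of $G$ are still independent Bernoullis with the original parameters. The other delicate point is ensuring that the stopping rule ``stop exploring once depth $\ell$ is reached'' is compatible with the coupling, which is standard provided we process vertices in BFS order of their current (coupled) depths.
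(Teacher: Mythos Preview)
Your proposal is essentially correct and follows the same approach as the paper: a BFS exploration coupled step by step with the branching process, with one-step total variation errors controlled via the standard Binomial--Poisson and Poisson--Poisson bounds, summed over the (random) number of exploration steps. The paper organizes the argument slightly differently, conditioning on the good event $\Omega_\tau=\{|D_\tau|\leq c\alpha^\ell\log n\}$ (which holds with probability $\geq 1-1/n$ by Lemma~\ref{le:growthSt}) and then summing the deterministic one-step bound $Cn^{-\gamma\wedge(1-\kappa)}$ over the $O(\alpha^\ell\log n)$ steps, rather than taking expectations directly as you do. Both routes work; note in passing that for a \emph{fixed} starting vertex the first part of Lemma~\ref{le:growthSt} gives $\dE|V(G,v)_\ell|^p=O(\alpha^{\ell p})$ without the log, so your extra logarithmic factors are avoidable.

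There is one small gap you should patch. Your accumulated one-step coupling only controls the law of the BFS \emph{spanning tree} of $(G,e)_\ell$: at each step you reveal the neighbors of $w_t$ in $[n]\setminus D_t$, but $(G,e)_\ell$ as a rooted graph may also contain ``back edges'' from $w_t$ into $D_t\setminus\{\text{parent of }w_t\}$. The branching process never has such edges, so even a perfect coupling at the BFS level can fail to give $(G,e)_\ell=(T,o)_\ell$. The paper addresses this separately via \eqref{eq:probnottree}: $\dP\bigl((G,e)_\ell\text{ is not a tree}\bigr)\leq c\alpha^\ell/n$, obtained by noting that conditionally on the BFS tree, the number of extra edges inside $V(G,v)_\ell$ is stochastically dominated by $\Bin(|V(G,v)_\ell|^2, a/n)$. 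This contributes a term of the same order as your $|V(G,v)_\ell|^2/n$ depletion term and hence does not affect the final rate, but it is a distinct event that must be accounted for.
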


\begin{proof}
We prove the first statement, the proof of the second statement is identical (see comment below \eqref{eq:nti}). If $e = (u,v)$ and $G' = G \backslash \{u,v\}$, we consider the filtration $\cF_t$ associated to the exploration process of $(G',v)$. We let $\tau$ be the stopping time where all vertices of $(G,e)_{\ell}$ have been revealed. We set $y_0 = \delta_{v} $ and at step $t \geq 0$, we denote by $y_{t+1} = (y_{t+1}(1), \ldots, y_{t+1} (r) )$ the number of discovered neighbors of $v_{t}$ in $[n] \backslash A_t$  of each type. If $\sigma(v_t) = j$ then, given $\cF_t$, the variables $(y_{t+1}(i))_{i \in [r]}$ are independent and $y_{t+1} (i)$ has distribution
$
\Bin ( n_{t} (i), W_{ij} / n )
$
where 
\begin{equation}\label{eq:nti}
n_{t}(i) = n(i) -  \sum_{s=0}^t  y_s (i) - \IND ( t= 0, \sigma(u) = i),
\end{equation}
(the last term comes from the difference between $G$ and $G'$; this term is not present in the case of the second statement on $(G,v)_{\ell}$). We perform the same exploration process on $(T,o)$, that is a breath-first search of the tree, we discover at each step the offsprings say $x_{t+1} = (x_{t+1}(1), \ldots, x_{t+1} (r) )$ of the active vertex $v_t$. In particular, if $v_t$ has type $j$ then the variables $(x_{t+1}(i))_{i \in [r]}$ are conditionally independent and $x_{t+1} (i)$ has distribution
$
\Poi ( \pi (i) W_{ij}).
$
To couple the two processes, we shall use the following classical bounds, (see e.g. \cite{BarbourChen05}),
\begin{equation}\label{eq:TVstein}
\DTV \PAR{ \Bin \PAR{  m , \frac \lambda  m } , \Poi ( \lambda) } \leq \frac \lambda  m \;\quad  \hbox{ and } \; \quad  \DTV \PAR{  \Poi ( \lambda) ,  \Poi ( \lambda') } \leq | \lambda - \lambda' |.
\end{equation}

For $0 \leq t \leq \tau$, define the event  $\Omega_t = \{   |D_t|    \leq c \alpha^\ell \log n \} \in \cF_t$, where $D_t$ is the set of discovered vertices. By Lemma \ref{le:growthSt}, for $c$ large enough, $\tau \leq c \alpha^{\ell} \log n$ and $\Omega_{\tau}$ holds with probability larger than $1 - 1/ n$. Also, by \eqref{eq:probnottree}, with probability at least $1 -c \alpha^{\ell}/ n$, $(G,e)_{\ell}$ is a tree. It follows that by iteration, it is enough to check that, if $\Omega_t$ holds,  there exists $C >0$ such that 
\begin{equation}\label{eq:couplingrec}
\DTV ( P_{t+1} , Q_{t+1} ) \leq C  n^{- \gamma \wedge ( 1- \kappa)} ,
\end{equation}
where $P_{t+1}$ is the distribution of  $y_{t+1}$ under $\dP ( \cdot | \cF_t)$ and $Q_{t+1}$ has the law of $x_{t+1} = (x_1, \cdots, x_r)$ where $x_i$ are independent with distribution $\Poi ( \pi (i) W_{ij} )$, with $j = \sigma ( v_{t})$. However from \eqref{eq:TVstein} and the triangle inequality, we have
\begin{eqnarray*}
\DTV ( P_{t+1} , Q_{t+1} )&  \leq &\DTV \PAR{ P_{t+1} ,\bigotimes_{i \in [r]} \Poi \PAR{ \frac{ n_t(i) W_{ij} }{n} }} + \DTV \PAR{ \bigotimes_{i \in [r]} \Poi \PAR{ \frac{ n_t(i) W_{ij} }{n}} , Q_{t+1} } \\
&\leq & \sum_{i=1}^r \PAR{\frac{ W_{ij} }{n} + W_{ij} \ABS{ \frac{n_t(i)}{n} - \pi(i)}} \\
& \leq & C \frac{\alpha^{\ell} (\log n)}{n} + \sum_{i=1}^r W_{ij} \ABS{ \pi_n(i) - \pi(i)}. 
\end{eqnarray*}
From \eqref{eq:defgamma}, the latter is $O ( n^{-\gamma \wedge ( 1 - \kappa)})$. We thus have proved that \eqref{eq:couplingrec} holds for some new $C >0$.
\end{proof}

We will use the following corollary of Proposition \ref{prop:couplingYZ}.

\begin{corollary}
\label{cor:vartree}
Let $\ell  \sim \kappa \log_\alpha n$ with $0 < \kappa  < \gamma \wedge 1/2$. For $e \in \vec E(V)$, we define the event $\cE (e)$ that for all $0 \leq t <  \ell $ and $k \in [r]$: 
$\ABS{  \langle \phi_k , Y_t (e) \rangle - \mu_k^{t - \ell}  \langle \phi_k , Y_\ell (e) \rangle } \leq (\log n)^4 \alpha^{t/2} $, if $k \in [r_0]$, and $\ABS{  \langle \phi_k , Y_t (e) \rangle } \leq (\log n)^4 \alpha^{t/2} $, if $k \in [r] \backslash [r_0]$.

Then,  \whp the number of edges $e \in \vec E$ such that  $\cE(e)$ does not hold is at most $(\log n)^{2} \alpha^{\ell} n^{1  - \gamma}$.\end{corollary}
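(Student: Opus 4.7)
The strategy is a first-moment argument combining the Galton--Watson coupling (Proposition \ref{prop:couplingYZ}) with the quantitative Kesten--Stigum control on the tree (Theorem \ref{th:growthZBP}), saving a factor of order $n$ via a key independence observation.

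\textbf{Step 1 (single-edge estimate).} Fix $e=(u,v)\in\vec E(V)$. By Proposition \ref{prop:couplingYZ}, there is a coupling of $(G,e)_\ell$ with the depth-$\ell$ truncation of a multitype Galton--Watson tree $(T,o)$ started from $Z_0=\delta_{\sigma(v)}$, failing on an event of probability at most $\eta_n = O((\log n)\alpha^\ell n^{-\gamma\wedge(1-\kappa)})$; on the coupling event, $Y_t(e)=Z_t$ for all $0\le t\le \ell$. Applying Theorem \ref{th:growthZBP} with $\beta=2$, with probability at least $1-n^{-2}$ the tree satisfies
\[
\bigl|\langle\phi_k,Z_s\rangle-\mu_k^{s-t}\langle\phi_k,Z_t\rangle\bigr|\le C(s+1)\alpha^{s/2}(\log n)^{3/2}\quad (k\in[r_0],\ s<t\le\ell),
\]
\[
\bigl|\langle\phi_k,Z_t\rangle\bigr|\le C(t+1)^2\alpha^{t/2}(\log n)^{3/2}\quad (k\in[r]\setminus[r_0],\ t\le\ell),
\]
using $\mu_1=\alpha$. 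Since $\ell = O(\log n)$, both prefactors are $\le(\log n)^4$ for $n$ large, so the tree event forces $\cE(e)$. Hence $\dP(\cE(e)^c)\le \eta_n + n^{-2}$.

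\textbf{Step 2 (independence and Markov).} Any self-avoiding path $\gamma$ entering the definition of $\vec d(e,\cdot)$ has $\gamma_s\notin\{u,v\}$ for $s\ge 2$ by self-avoidance, and so uses no edge incident to both $u$ and $v$; consequently $Y_t(e)$, and hence $\cE(e)$, is measurable with respect to $G':=G\setminus\{u,v\}$ and $\sigma$, and in particular is independent of $A_{u,v}=\IND\{e\in\vec E\}$. Writing $N:=|\{e\in\vec E:\cE(e)\text{ fails}\}|$ and using $\dP(e\in\vec E)\le C_0/n$ with $C_0=\max_{ij}W_{ij}$, by linearity of expectation
\[
\dE N \,=\, \sum_{e\in\vec E(V)}\dP(e\in\vec E)\,\dP(\cE(e)^c)\,\le\, |\vec E(V)|\cdot\frac{C_0}{n}\cdot(\eta_n+n^{-2}) \,=\, O(n\eta_n) + o(1),
\]
which after substituting $\eta_n$ is $O((\log n)\alpha^\ell n^{1-\gamma\wedge(1-\kappa)})$. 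Markov's inequality then yields $N\le(\log n)^2\alpha^\ell n^{1-\gamma}$ whp, at least in the main regime $\gamma\le 1-\kappa$ where $\gamma\wedge(1-\kappa)=\gamma$; the residual case $\gamma>1-\kappa$ is handled analogously with a corresponding adjustment of exponent.

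The main obstacle is precisely the independence observation in Step 2: without it, naively summing $\dP(\cE(e)^c)$ over the $n(n-1)$ pairs in $\vec E(V)$ would be off by a factor of order $n$ and yield nothing useful. The remaining work is essentially bookkeeping on top of the two input results; the polynomial-in-$\ell$ prefactors coming from Theorem \ref{th:growthZBP} are absorbed into the extra logarithmic factors of the statement thanks to $\ell=O(\log n)$.
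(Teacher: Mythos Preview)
Your proof is correct and follows essentially the same route as the paper's: bound $\dP(\cE(e)^c)$ for a single $e$ via the coupling of Proposition~\ref{prop:couplingYZ} combined with Theorem~\ref{th:growthZBP}, then finish with a first-moment/Markov argument over edges. The paper's version is terser (it uses $\beta=1$ and simply notes that w.h.p.\ $|\vec E|\le 2\alpha n$ before invoking ``the union bound''), but the independence of $\cE(e)$ from $A_{u,v}$ that you make explicit in Step~2 is precisely what justifies that shortcut; your handling of the residual case $\gamma>1-\kappa$ is no more and no less complete than the paper's.
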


\begin{proof}
First,  with exponentially large probability there are less that $2 \alpha n$ edges in $\vec E$. From the union bound, it is thus enough to prove that for any $e \in \vec E (V)$, $\dP ( \cE(e)^c  ) \leq C   (\log n) \alpha^{\ell} n^{- \gamma}$.   To show this, use the coupling result of  Proposition \ref{prop:couplingYZ} to deduce that with probability at least $1 - C (\log n) \alpha^{\ell} n^{- \gamma}$, the processes $(Y_t(e) )_{0 \leq t \leq \ell}$ and $(Z_t)_{0 \leq t \leq \ell}$ coincide. It then remains to use Theorem \ref{th:growthZBP} with $\beta = 1$.
\end{proof}

\subsection{Geometric growth of linear functions of non-backtracking walks}

For $k \in [r]$, we recall that
$$
\chi_k (e) = \phi_k (\sigma(e_2)).
$$
The next proposition asserts that for most $e \in \vec E$, $\langle B^t \chi_k , \delta_e \rangle$ grows nearly geometrically in $t$ with rate $\mu_k$ up to an error of order $\alpha^ {t/2}$.

\begin{proposition}\label{prop:localB}
Let $\ell \sim \kappa \log_{\alpha} n $ with $0 < \kappa < \gamma \wedge 1/2$. There exists  a random subset of edges $\vec E_{\ell} \subset \vec E$ such that \whp the following holds
\begin{enumerate}[(i)]
\item for all $e \in \vec E \backslash \vec E_{\ell}$, $0 \leq t \leq \ell$,  
\begin{eqnarray*}
\ABS{\langle B^t \chi_k , \delta_e \rangle -  \mu_k ^{t-\ell} \langle B^\ell \chi_k , \delta_e \rangle } & \leq  & (\log n )^4 \alpha^{t/2},  \quad \hbox{ if $k \in [r_0]$},\\
\ABS{\langle B^t \chi_k , \delta_e \rangle } & \leq  & (\log n )^4 \alpha^{t/2},  \quad \hbox{ if $k \in [r] \backslash [r_0]$},
\end{eqnarray*}
\item
for all $e \in \vec E_{\ell}$, $0 \leq t \leq \ell$ and $k \in [r]$,
\begin{eqnarray*}
\ABS{\langle B^t \chi_k , \delta_e \rangle } &  \leq  & (\log n )^2  \alpha^{t},
\end{eqnarray*}
\item 
$| \vec E_{\ell} | \leq (\log n)^3 \alpha^{\ell} n^{1 - \gamma}$.
\end{enumerate} 
\end{proposition}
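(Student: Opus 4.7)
The plan is to choose $\vec E_\ell$ so that its complement consists of edges whose local neighborhoods are trees and on which the multi-type branching approximation is quantitatively tight, while the exceptional set $\vec E_\ell$ itself stays small. Specifically, I will take $\vec E_\ell$ to be the set of $e \in \vec E$ such that either (a) the event $\mathcal E(e)$ of Corollary \ref{cor:vartree} fails, or (b) the rooted ball $(G,e)_\ell$ is not a tree.

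The first step is the cardinality bound (iii). Corollary \ref{cor:vartree} directly gives that the number of type-(a) edges is at most $(\log n)^2 \alpha^\ell n^{1-\gamma}$ \whp. For type (b), if $(G,e)_\ell$ contains a cycle then this cycle already lies in $(G,e_2)_\ell$, so by Lemma \ref{le:tls2} the number of vertices whose $\ell$-ball contains a cycle is at most $\bar\alpha^\ell \log n$ \whp. Since Lemma \ref{le:growthSt} at $t=1$ yields a uniform $O(\log n)$ bound on vertex degrees \whp, at most $(\log n)^2 \alpha^\ell$ oriented edges are of type (b). This is dominated by the type-(a) term (using $\bar\alpha^\ell=(1+o(1))\alpha^\ell$), giving (iii).

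For (i), on $\vec E \setminus \vec E_\ell$ the ball $(G,e)_\ell$ is a tree, so every non-backtracking walk of length $t+1$ from $e$ with $0\le t\le \ell$ is self-avoiding and corresponds bijectively to an oriented edge $f$ with $\vec d(e,f)=t$. Hence
\[
\langle B^t \chi_k, \delta_e \rangle \;=\; \sum_{f : \vec d(e,f) = t} \phi_k(\sigma(f_2)) \;=\; \langle \phi_k, Y_t(e) \rangle,
\]
and the two bounds in (i) are precisely those provided by the event $\mathcal E(e)$ via Corollary \ref{cor:vartree}.

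For (ii), I will use that by Lemma \ref{le:tls2} the graph $G$ is $\ell$-tangle-free \whp, so $(G,e)_\ell$ contains at most one cycle for every $e$. Any non-backtracking walk of length $t+1$ from $e$ in such a single-cycle ball decomposes canonically into a self-avoiding initial segment reaching the unique cycle, a consecutive run of steps in one of the two cyclic directions, and a self-avoiding exit segment. Enumerating these configurations gives the pointwise estimate $\sum_f (B^t)_{ef} = O(\ell\, S_t(e))$. Combining this with the uniform bound $S_t(e) \leq C(\log n)\alpha^t$, obtained from Lemma \ref{le:growthSt} via a union bound over $\vec E$ and $\{0,\ldots,\ell\}$, together with the trivial estimate $|\chi_k(f)|\leq \|\phi_k\|_\infty = O(1)$, yields $|\langle B^t\chi_k,\delta_e\rangle|\leq (\log n)^2\alpha^t$ uniformly over $e\in \vec E_\ell$, $0\le t\le \ell$, and $k\in[r]$, which is (ii).

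The main obstacle is the combinatorial count underlying step (ii): one has to verify rigorously that in a ball containing a single cycle, the number of non-backtracking walks of length $t+1$ from $e$ exceeds the self-avoiding count $S_t(e)$ by at most a factor polynomial in $\ell$. The decomposition entering/traversing/exiting the cycle must be shown to be injective and all configurations carefully enumerated.
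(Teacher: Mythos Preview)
Your proof is correct and follows essentially the same route as the paper: the same definition of $\vec E_\ell$ (edges where either $\cE(e)$ fails or the local ball is not a tree), and the same arguments for (i) and (iii) via Corollary~\ref{cor:vartree} and Lemma~\ref{le:tls2} (the paper uses $(G,e_2)_\ell$ rather than $(G,e)_\ell$, but this is immaterial).

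The only divergence is in (ii). You propose decomposing each non-backtracking walk in the unicyclic ball into an entry segment, a traversal of the cycle, and an exit segment, and then enumerating configurations to reach $\sum_f(B^t)_{ef}=O(\ell\,S_t(e))$. The paper shortcuts this with the single observation that in an $\ell$-tangle-free graph one has $(B^t)_{ef}\le 2$ for every pair $e,f\in\vec E$ and every $t\le\ell$ --- once the unique tree paths connecting $e$ and $f$ to the at most one cycle are fixed, the only remaining freedom in a non-backtracking walk of prescribed length is the direction of traversal around the cycle --- whence $|\langle B^t\chi_k,\delta_e\rangle|\le 2\|\phi_k\|_\infty S_t(e)$, and Lemma~\ref{le:growthSt} finishes. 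So the combinatorial enumeration you flag as the main obstacle can be bypassed: the very entry/cycle/exit decomposition you describe, read for a \emph{fixed} endpoint $f$ rather than summed over $f$, already yields the pointwise bound $(B^t)_{ef}\le 2$ without any further counting.
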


\begin{proof}
We define $\vec E_{\ell}$ as the set of oriented edges such that either $(G,e_2)_{\ell}$ is not a tree or the event $\cE(e)$ defined  in Corollary \ref{cor:vartree}  does not holds.  Then by Lemma \ref{le:tls2} and Corollary \ref{cor:vartree}, \whp $\vec  E_{\ell}$ satisfies condition (iii). Moreover, by definition if  $(G,e_2)_{\ell}$ is a tree
\begin{equation}\label{eq:etachi}
\langle B^t \chi_k , \delta_e \rangle = \langle \phi_k , Y_t (e) \rangle. 
\end{equation}
and statement (i) follows from Corollary \ref{cor:vartree}. For statement (ii), we simply use that \whp  $G$ is tangle free (by Lemma \ref{le:tls2}), hence, there are at most two non-backtracking walks of length $t$ from $e$ to any $f$. We get  
$$
\langle B^t \chi_k , \delta_e \rangle \leq 2 \| \phi_k \|_{\infty} S_t (e). 
$$ 
However, by Lemma \ref{le:growthSt} \whp for all $t \geq 0$ and all $e \in \vec E$, $|S_t(e)| \leq C (\log n) \alpha^t$. \end{proof}

\begin{corollary}\label{cor:Blperp}
Let $\ell \sim \kappa \log_{\alpha} n $ with $0 < \kappa < \gamma/2$.  With high probability, for any $0 \leq t \leq \ell-1$ and $k \in [r]$,
$$
\sup_{\langle B^\ell \chi_k , x \rangle = 0, \| x\| = 1 } \ABS{ \langle B^t \chi_k , x \rangle } \leq  (\log n)^5 n^{1/2} \alpha^{t/2}.
$$
\end{corollary}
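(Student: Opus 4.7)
}

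The plan is to exploit the orthogonality $\langle B^{\ell}\chi_k, x\rangle = 0$ by writing, for any scalar $c$,
$$
\langle B^{t}\chi_k, x\rangle \;=\; \langle B^{t}\chi_k - c\, B^{\ell}\chi_k, x\rangle,
$$
so that by Cauchy--Schwarz, the supremum in the statement is bounded by $\|B^{t}\chi_k - c\, B^{\ell}\chi_k\|$. The idea is to choose $c$ so that Proposition~\ref{prop:localB}(i) directly controls each coordinate of this difference. Concretely, I would take $c = \mu_k^{t-\ell}$ when $k\in[r_0]$ and $c=0$ when $k\in[r]\setminus[r_0]$.

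Next, I would compute the squared norm coordinate by coordinate and split the sum according to whether $e$ lies in the exceptional set $\vec E_\ell$ of Proposition~\ref{prop:localB}. For $e\in\vec E\setminus\vec E_\ell$, part (i) of that proposition gives that each coordinate of $B^{t}\chi_k - c\,B^{\ell}\chi_k$ is at most $(\log n)^4 \alpha^{t/2}$ in absolute value; summing over the $O(n)$ such edges (using e.g.\ $|\vec E|\le 2\alpha n$ \whp) yields a contribution of order $(\log n)^8\, n\, \alpha^{t+1}$ to the squared norm. For $e\in\vec E_\ell$, part (ii) gives $|\langle B^{t}\chi_k,\delta_e\rangle|\le (\log n)^2 \alpha^t$ and $|c\,\langle B^{\ell}\chi_k,\delta_e\rangle|\le|\mu_k|^{t-\ell}(\log n)^2\alpha^\ell$; using $|\mu_k|^{2(t-\ell)}\alpha^{2\ell}\le \alpha^{\ell+t}$ (since $\mu_k^2>\alpha$ for $k\in[r_0]$, and trivially for $k\notin[r_0]$ where $c=0$), each coordinate squared is $O((\log n)^4\alpha^{\ell+t})$. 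Combined with the cardinality bound of part (iii), the bad-edge contribution is at most $(\log n)^7\, \alpha^{2\ell+t}\, n^{1-\gamma}$.

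The main (and really only) quantitative obstacle is matching the bad-edge contribution to the target bound $(\log n)^{10}\,n\,\alpha^{t}$. Dividing out, this requires $\alpha^{2\ell} n^{-\gamma}\le (\log n)^3$, i.e., $n^{2\kappa-\gamma}\le (\log n)^3$, which is guaranteed by the hypothesis $\kappa<\gamma/2$ with room to spare. Adding the two contributions and taking square roots gives
$$
\|B^{t}\chi_k - c\,B^{\ell}\chi_k\| \;\le\; (\log n)^{5}\, n^{1/2}\, \alpha^{t/2}
$$
\whp, uniformly in $0\le t\le \ell-1$ and $k\in[r]$ (using the trivial union bound over the $r\ell = O(\log n)$ pairs $(k,t)$, which only costs another log factor absorbed in the exponent). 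This yields the claimed supremum bound.
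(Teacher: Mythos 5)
Your proposal is correct and follows essentially the same route as the paper's proof: both exploit $\langle B^\ell\chi_k,x\rangle=0$ to introduce the subtraction by $\mu_k^{t-\ell}B^\ell\chi_k$ for $k\in[r_0]$, split the contribution over $\vec E_\ell$ and its complement, and apply parts (i)--(iii) of Proposition~\ref{prop:localB} together with Cauchy--Schwarz and the condition $\kappa<\gamma/2$. The only stylistic difference is that you bound $\|B^t\chi_k - c\,B^\ell\chi_k\|$ directly rather than decomposing the inner product into good- and bad-edge sums as the paper does, and your handling of $|\mu_k|^{t-\ell}$ via $\mu_k^2>\alpha$ is in fact slightly more careful than the paper's intermediate display.
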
 
\begin{proof}
We write
\BEAS
\langle B^t \chi_k , x \rangle  =  \sum_{e \in \vec E_{\ell} } x_e \langle B^t \chi_k , \delta_e \rangle +  \sum_{e \notin \vec E_{\ell} } x_e \langle B^t \chi_k , \delta_e \rangle  = I + J. 
\EEAS
From Cauchy-Schwartz inequality, the first term is bounded \whp by 
$$
\ABS { I }  \leq  (\log n )^2  \alpha^{t}  \sum_{e \in \vec E_{\ell} } |x_e|\leq   (\log n )^2  \alpha^{t}  \sqrt{ |\vec E_{\ell} |} \leq (\log n)^{4} \alpha^t \alpha^{\ell/2} n^{(1 - \gamma)/2} = o (   n^{1/2} \alpha^{t/2} ),
$$
where we have used  that $\alpha^{t /2}  \alpha^{\ell/2} n^{ - \gamma/2}  \leq  n^{\kappa - \gamma/2+o(1)} $ and $\kappa < \gamma/2$. 
For the second term, if $k \in [r_0]$, using $\langle B^{\ell} \chi_k , x \rangle = 0$, we get similarly \whp
\BEAS
\ABS { J } & \leq &  \mu_k^{t - \ell}  \sum_{e \in \vec E_{\ell} } |x_e| \ABS{ \langle B^\ell \chi_k , \delta_e \rangle} + \sum_{e \notin \vec E_\ell} |x_e |  \ABS{ \langle B^t \chi_k , \delta_e \rangle -  \mu_k^{t - \ell}   \langle B^\ell \chi_k , \delta_e \rangle} \\
& \leq & (\log n)^{2} \alpha^{t - \ell}   \alpha^{\ell} \alpha^{\ell /2} n^{(1  - \gamma)/2} +  (\log n)^4  \sqrt{ |\vec E |}  \alpha^{t/2} \\
& \leq & (\log n)^{2}  n^{1/2} \alpha^{t /2} \alpha^{\ell } n^{  - \gamma/2} + C (\log n)^4  n^{1/2} \alpha^{t/2}.
\EEAS
Finally, if $k \in [r] \backslash [r_0]$, we simply write \whp
$
\ABS { J }  \leq   \sum_{e \notin \vec E_\ell} |x_e |  \ABS{ \langle B^t \chi_k , \delta_e \rangle}   \leq   (\log n)^4  n^{1/2} \alpha^{t/2}.
$
\end{proof}

\subsection{Laws of large numbers for local functions}
We first prove weak laws of large numbers for general local functionals of SBM random graphs that will then be applied to specific functionals of interest.
\subsubsection{Weak laws of large numbers for local functionals: convergence speed}

We start with a general variance bound for local functions of an inhomogeneous random graph.  A colored graph  is a graph $G = (V,E)$ with a map $\sigma : V \to [r]$. We denote  by $\cG^*$ the set of rooted colored graphs, i.e. the set of pairs $(G,o)$ formed by a colored graph $G$ and  a distinguished vertex $o \in V$. We shall say that a function $\tau$ from $\cG^*$  to $\dR$ is $\ell$-local, if  $\tau (G,o)$ is only function of $(G,o)_\ell$.

\begin{proposition}
\label{prop:varloc}
There exists $c  >0$ such that if $\tau, \varphi : \cG^* \to \dR$ are $\ell$-local, $| \tau(G,o) | \leq  \varphi (G,o)$  and $\varphi$ is non-decreasing by the addition of edges, then 
$$
\VAR\PAR{  \sum_{v=1}^n    \tau ( G, v) } \leq c n  \bar \alpha_n^{2\ell }\PAR{ \dE \max_{v \in [n]} \varphi^4 (G,v)}^{1/ 2}.
$$
\end{proposition}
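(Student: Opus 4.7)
The plan is to apply the Efron-Stein inequality to the independent edge-indicators of the SBM. This yields
$$
\VAR(F) \;\leq\; \sum_e p_e\, \dE[(F_e^+ - F_e^-)^2],
$$
where $F = \sum_{v=1}^n \tau(G,v)$, $p_e = W(\sigma(x_e),\sigma(y_e))/n$, and $F_e^\pm$ denotes the value of $F$ when edge $e = \{x,y\}$ is forced present (resp.\ absent) with all other edges remaining SBM-distributed. For each such $e$, $\ell$-locality forces $\tau(G_e^+,v) - \tau(G_e^-,v)$ to vanish unless $v$ lies within graph-distance $\ell$ of $x$ or $y$ in $G_e^+$; combined with $|\tau| \leq \varphi$ and the monotonicity of $\varphi$ under edge-addition, this yields
$$
|F_e^+ - F_e^-| \;\leq\; 2\, \big(|V(G_e^+,x)_\ell| + |V(G_e^+,y)_\ell|\big)\,\max_v \varphi(G_e^+, v).
$$

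The key step is the summation over $e$ via the identity
$$
p_e\, \dE[H(G_e^+)] \;=\; \dE[H(G)\,\IND(e \in G)],
$$
valid since the conditional distribution of $G$ given $\{e\in G\}$ coincides with that of $G_e^+$. Applied to $H(G,e) = (|V(G,x_e)_\ell|+|V(G,y_e)_\ell|)^2 \max_v \varphi(G,v)^2$, and combined with the elementary identity $\sum_{e \in G}(|V(G,x_e)_\ell|^2 + |V(G,y_e)_\ell|^2) = \sum_u \deg(u)\, |V(G,u)_\ell|^2$, one obtains
$$
\sum_e p_e\, \dE[(F_e^+ - F_e^-)^2] \;\leq\; 8\, \dE\!\SBRA{\max_v \varphi(G,v)^2 \sum_u \deg(u)\, |V(G,u)_\ell|^2}.
$$
A Cauchy-Schwarz at this level separates the desired factor $(\dE \max_v \varphi(G,v)^4)^{1/2}$ from a purely graph-theoretic quantity.

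It remains to bound $\dE\big(\sum_u \deg(u) |V(G,u)_\ell|^2\big)^2$. A further Cauchy-Schwarz in the sum over $u$ reduces the problem to bounding $\dE[\deg(u)^2 |V(G,u)_\ell|^4]$ for each individual $u$. Lemma~\ref{le:growthSt}, applied at a single vertex (and therefore without the logarithmic penalty incurred by the union bound), provides an exponential tail for $M_u := \max_t S_t(u)/\bar\alpha_n^t$, hence $\dE M_u^p \leq C_p$ for every $p$. Since $\deg(u) \leq M_u \bar\alpha_n$ and $|V(G,u)_\ell| \leq C_1 M_u \bar\alpha_n^\ell$ for $\bar\alpha_n$ bounded below by $1$ (which holds for $n$ large since $\alpha > 1$), one obtains $\dE[\deg(u)^2 |V(G,u)_\ell|^4] \leq C \bar\alpha_n^{4\ell+2}$ uniformly in $u$. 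Collecting these bounds gives $\VAR(F) \leq C\, n\, \bar\alpha_n^{2\ell+1}\,(\dE \max_v \varphi^4)^{1/2}$, and the extra $\bar\alpha_n$ factor is absorbed into the constant $c$.

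The main obstacle lies precisely in the handling of $\max_v \varphi(G_e^+,v)^4$: applying Cauchy-Schwarz edge-by-edge and then converting this back to $\dE\max_v \varphi(G,v)^4$ through the ratio $p_e^{-1} = O(n)$ would cost an extra $\sqrt{n}$. Performing the summation over $e$ via the identity $p_e \dE[H(G_e^+)] = \dE[H(G)\IND(e\in G)]$ before Cauchy-Schwarz is exactly what avoids this loss and delivers the correct linear-in-$n$ prefactor.
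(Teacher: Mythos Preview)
Your proof is correct and reaches the same conclusion, but it differs from the paper's argument in how Efron--Stein is applied. The paper does not resample edge-by-edge: instead it groups the edge-indicators into $n$ independent blocks $X_k=\{v<k:\{v,k\}\in E\}$, one per vertex. Setting $X_k=\emptyset$ gives a graph $G_k\subseteq G$, so monotonicity of $\varphi$ immediately yields $|Y-Y_k|\le \Sigma(G,k)\cdot 2\max_v\varphi(G,v)$ with $\Sigma(G,k)=|V(G,k)_\ell|$; summing gives $\sum_k|Y-Y_k|^2\le Z\Lambda^2$ with $Z=\sum_k\Sigma(G,k)^2$, and a single Cauchy--Schwarz $\dE[Z\Lambda^2]\le(\dE Z^2)^{1/2}(\dE\Lambda^4)^{1/2}$ together with $\dE Z^2\le c\,n^2\bar\alpha_n^{4\ell}$ from Lemma~\ref{le:growthSt} finishes the proof.

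Your edge-by-edge route is slightly longer because forcing an edge present produces $G_e^+\not\subseteq G$, which is exactly the obstacle you flag in your last paragraph; the conditioning identity $p_e\,\dE[H(G_e^+)]=\dE[H(G)\IND(e\in G)]$ is the correct device to undo this, and the resulting sum $\sum_u\deg(u)|V(G,u)_\ell|^2$ carries an extra $\deg(u)$ factor that you then control via Lemma~\ref{le:growthSt}. The paper's vertex-block approach sidesteps both the conditioning trick and the degree factor by construction, since $G_k\subseteq G$ from the outset. Both arguments ultimately rest on the same moment bounds for $|V(G,u)_\ell|$; the paper's is just more economical.
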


\begin{proof}
We first bound the expectation of 
$$
Z  =  \sum_{u=1} ^n \Sigma^2 ( G,u),
$$
where $\Sigma(G,u)$ is defined as the number of vertices at distance $\ell$ from $u \in V$.  By Lemma \ref{le:growthSt}, for any $u \in [n]$, 
$$
\dP \PAR{  \Sigma ( G , u ) \geq s \frac{ \bar \alpha_n ^{\ell+1} - 1}{\bar \alpha_n  -1 }  } \leq c_1 e^{-c_0 s}.
$$
Hence, for any $p \geq 1$, for some $c_p >0$, 
$$
 \dE Z^p  \leq n^{p-1} \dE \sum_{u=1} ^n \Sigma^{2p} ( G,u) \leq \ c_p  n^p  \bar  \alpha_n ^{2\ell p }.
$$ 
Now, for $1 \leq k \leq n$,  let $X_k = \{ 1 \leq v \leq k : \{v, k \} \in E \}$, where $E$ is the edge set of $G$. The vector $(X_1, \cdots, X_n)$ is an independent vector and for some function $F$,
$$
Y:= \sum_{u=1}^n \tau (G,u)  =  F ( X_1, \cdots, X_n).
$$
We also define $G_k$ as the graph with edge set $ \cup_{v \ne k}X_v $. We set 
$$
Y_k =   \sum_{u=1}^n  \tau ( G_k,u).
$$ 
Since $\tau$ is $\ell$-local, we observe that $\tau ( G,u) -  \tau ( G_k ,u)$ can be non zero only if $u \in V ( (G, k)_{\ell})$ and it is bounded by $\Lambda =  2 \max_{u \in [n]} \varphi( G,u) $. Consequently, 
$$
  \sum_{k=1} ^ n | Y - Y_k |^2  \leq  \sum_{k=1}^n  \Sigma^2 ( G,k) \Lambda^2 = Z \Lambda^2, 
$$
Finally, we conclude by using Efron-Stein's inequality: $
\VAR(Y) \leq    \dE  Z \Lambda^2 \leq   \sqrt{ \dE  Z^2 }\sqrt{ \dE \Lambda^4}. 
$ \end{proof}

We now apply the above proposition and Proposition \ref{prop:couplingYZ} to show that the SBM random graph with uniform root selection converges weakly to the multi-type Galton-Watson process previously studied. The established convergence implies convergence for the local weak topology of Benjamini and Schramm (see \cite{benjamini-schramm-2001}). Crucially we are able to consider local functions with logarithmic distance parameter $\ell$ and obtain bounds on the convergence speed.


\begin{proposition}\label{prop:locmart0}
Let $\ell \sim \kappa \log_\alpha n$ with $0 < \kappa < 1/2$. There exists $c  >0$, such that if $\tau, \varphi : \cG^* \to \dR$ are $\ell$-local, $| \tau(G,o) | \leq  \varphi (G,o)$  and $\varphi$ is non-decreasing by the addition of edges, then if $\dE \varphi ( T,o)$ is finite, 
\begin{eqnarray*}
\dE\ABS{\frac 1 n   \sum_{v=1}^n    \tau ( G, v)  - \dE \tau ( T, o)  }  & \leq & c   \frac{\alpha^{\ell/2} \sqrt{ \log n}  }{n ^{\gamma/2}}   \PAR{ \PAR{ \dE \max_{v \in [n]} \varphi^4 (G,v) } ^{1/ 4 } \vee \PAR{ \dE \varphi^2  (T,o)}^{1/2}  },
\end{eqnarray*}
where $(T,o)$ is the random rooted tree associated to the Galton-Watson branching process defined in Section \ref{subsec:MTGW} started from $Z_0 = \delta_{\iota}$ and $\iota$ has distribution $(\pi(1), \ldots, \pi(r))$.  
\end{proposition}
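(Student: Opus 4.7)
The plan is to split the $L^1$ deviation via the triangle inequality into a bias term $|n^{-1}\sum_v\dE\tau(G,v)-\dE\tau(T,o)|$ and a centered fluctuation term $\dE|n^{-1}\sum_v(\tau(G,v)-\dE\tau(G,v))|$, then handle each piece with a tool already developed in the paper.

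For the bias, I would fix $v\in[n]$ and invoke the vertex version of Proposition~\ref{prop:couplingYZ}: there is a coupling of $(G,v)_\ell$ with a rooted Galton-Watson tree $(T,o)_\ell$ started from $Z_0=\delta_{\sigma(v)}$, whose defect probability satisfies $p_v=O((\log n)\alpha^\ell n^{-\gamma\wedge(1-\kappa)})$. Since $\tau$ is $\ell$-local with $|\tau|\le\varphi$, on the good event $\tau(G,v)=\tau(T,o)$, so Cauchy-Schwarz yields
\begin{equation*}
|\dE\tau(G,v)-\dE\tau(T,o)|\;\le\;\sqrt{p_v}\bigl((\dE\varphi^2(G,v))^{1/2}+(\dE\varphi^2(T,o))^{1/2}\bigr).
\end{equation*}
Averaging over $v\in[n]$, using the empirical type distribution $\pi_n$ (which matches $\pi$ up to $O(n^{-\gamma})$ by~\eqref{eq:defgamma}) to identify $n^{-1}\sum_v\dE\tau(T,o)$ with the $\pi$-averaged tree expectation, and bounding $\dE\varphi^2(G,v)\le(\dE\max_u\varphi^4(G,u))^{1/2}$, yields a bias of order $\sqrt{(\log n)\alpha^\ell/n^{\gamma\wedge(1-\kappa)}}$ times the moment factor in the statement.

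For the fluctuation, Jensen's inequality gives $\dE|X-\dE X|\le\VAR(X)^{1/2}$, and Proposition~\ref{prop:varloc} applied to $\sum_v\tau(G,v)$ produces
\begin{equation*}
n^{-1}\VAR\Bigl(\sum_v\tau(G,v)\Bigr)^{1/2}\;\le\;c\,\bar\alpha_n^{\,\ell}\,n^{-1/2}\bigl(\dE\max_v\varphi^4(G,v)\bigr)^{1/4}.
\end{equation*}
Since $\ell\sim\kappa\log_\alpha n$ with $\kappa<1/2$ and $\bar\alpha_n=\alpha+o(1)$, the fluctuation rate $\alpha^\ell n^{-1/2}=n^{\kappa-1/2}$ is absorbed by the announced rate, and adding the two contributions finishes the proof.

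The main difficulty is not conceptual but bookkeeping: one has to (i) convert the $L^2$ norms on $G$ and on $T$ into the common uniform fourth-moment quantity $\dE\max_v\varphi^4(G,v)$ (and the $\dE\varphi^2(T,o)$ term appearing through the $\vee$ in the statement), (ii) absorb the discrepancy between $\pi_n$ and $\pi$ when summing over root types, and (iii) compare the two competing rates $\sqrt{\alpha^\ell/n^{\gamma\wedge(1-\kappa)}}$ and $\alpha^\ell n^{-1/2}$ to recover the clean exponent $\alpha^{\ell/2}n^{-\gamma/2}$. None of these steps requires new input beyond Propositions~\ref{prop:couplingYZ} and~\ref{prop:varloc}.
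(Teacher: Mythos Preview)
Your proposal is correct and follows essentially the same approach as the paper: split into a bias term handled via the coupling of Proposition~\ref{prop:couplingYZ} plus Cauchy--Schwarz, and a fluctuation term handled via Jensen together with the variance bound of Proposition~\ref{prop:varloc}, then absorb the $\pi_n$ versus $\pi$ discrepancy through~\eqref{eq:defgamma}. The three bookkeeping points you list are exactly the ones the paper's proof works through.
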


\begin{proof}
In view of Proposition \ref{prop:varloc} and Jensen's inequality,  it is sufficient to prove that 
\begin{equation}\label{eq:locmart0}
\ABS{ \frac 1 n   \sum_{v=1}^n    \dE \tau ( G, v) - \dE \tau ( T, o) } = O \PAR{\frac{\alpha^{\ell/2}}{n ^{\beta}} \sqrt{ \log n}  \PAR{ \max_{v \in [n]} \dE \varphi^2 (G,v)   \vee  \dE \varphi^2  (T,o)}^{1/2}},
\end{equation}
 with $\beta =(\gamma \wedge (1 - \kappa))/2$. For $i \in [r]$, we set $\tau_i = \tau ( T_i ,o)$ where $(T_i,o)$ has distribution the the random tree $(T,o)$ tarted from $Z_0 = \delta_{i}$. Let $v \in V$ with $\sigma(v) = i$. We denote by $\chi_v $  the indicator function that the coupling of $(G,v)_{\ell}$ and $(T_i,o)_{\ell}$ described in Proposition \ref{prop:couplingYZ} is not successful. We have, from Cauchy-Schwartz inequality,
\begin{eqnarray*}
 \ABS{ \dE \tau ( G,v)  - \dE \tau_i } & =& \ABS{   \dE  \chi_v \tau ( G,v)  - \dE  \chi_v   \tau_i  } \nonumber \\
 &\leq  & 2  \sqrt { ( \dE\chi_v )  \PAR{ \dE \varphi  (G,v)^2   \vee  \dE \varphi  (T_i,o)^2 }} \nonumber  \\
& = & O \PAR{\alpha^{\ell/2} n ^{-\beta}  \sqrt{ \log n}  \sqrt{  \dE \varphi  (G,v)^2   \vee  \dE \varphi  (T_i,o)^2 }  } .
\end{eqnarray*}
Let $v_1, \ldots v_r$ be fixed vertices such that $\sigma(v_i) = i$ (since $\pi(i) > 0$ such $v_i$ exists for $n$ large enough). We recall that $\dE \tau(G,v)$ depends only on $\sigma(v)$. Hence, using \eqref{eq:defgamma}
\begin{eqnarray*}
\dE \frac{1}{n} \sum_{v \in V}  \tau ( G,v)  & = &  \sum_{i=1}^r\frac{ n(i) }{n} \dE \tau ( G,v_i) \nonumber\\
&= & \sum_{i  = 1}^r  \BRA{(\pi(i) + O (n^{-\gamma})  )  \dE \tau_i +  O \PAR{\alpha^{\ell/2} n ^{-\beta}  \sqrt{ \log n}  \sqrt{  \dE \varphi  (G,v_i)^2   \vee  \dE \varphi  (T_i,o)^2 }  }} \nonumber\\
& =& \dE \tau(T,o) + O \PAR{\alpha^{\ell/2} n ^{-\beta}  \sqrt{ \log n}  \sqrt{  \dE \varphi  (G,v_i)^2   \vee  \dE \varphi  (T_i,o)^2 }  }. 
\end{eqnarray*}
It concludes the proof of \eqref{eq:locmart0}. \end{proof}

\subsubsection{Law of large numbers for specific local functions}

We will now apply Proposition \ref{prop:locmart0} to  deduce weak laws of large numbers for expressions closely related to $\langle B^{\ell} \chi_k , B^{\ell} \chi_j\rangle $, $\langle B^{2\ell} \chi_k , B^{\ell} \chi_j\rangle $ and $\langle B^{\ell} B^{*\ell}\check  \chi_k , B^{\ell} B^{*\ell} \check \chi_j \rangle $. Recall the definition of $Y_t (e)$ in  \eqref{eq:defYti}.

\begin{proposition}\label{th:locmart}
Let $\ell \sim   \kappa \log_\alpha n$ with $0 < \kappa < \gamma/4$. 
\begin{enumerate}[(i)]
\item \label{zz1} 
For any $ k\in [r_0]$, there exists $\rho_k >0$ such that, in probability, 
$$
\frac 1 {\alpha n} \sum_{e \in \vec E}  \frac{\langle \phi_k , Y_{\ell} (e) \rangle^2}{ \mu_k ^{2 \ell}}  \to \rho_k.
$$
\item \label{zz2} 
For any $k \in [r] \backslash [r_0]$, there exists $\rho_k >0$ such that \whp
$$
\frac 1 {\alpha n} \sum_{e \in \vec E} \frac{ \langle \phi_k , Y_{\ell} (e) \rangle^2 }{\alpha ^{ \ell}}  \geq \rho_k.
$$
\item \label{zz3} For any $k \ne j \in [r]$,
$$
\dE \ABS{ \frac 1 {\alpha n} \sum_{e \in \vec E} \langle \phi_k , Y_{\ell} (e) \rangle  \langle \phi_j , Y_{\ell} (e) \rangle} = O ( \alpha^{5 \ell/2} n^{-\gamma/2} (\log n)^{5/2} ) .
$$
\item  \label{zz4} For any $ k \ne j \in [r]$, 
$$
\dE \ABS{\frac 1 {\alpha n} \sum_{e \in \vec E} \langle \phi_k , Y_{2\ell} (e) \rangle    \langle \phi_j , Y_{\ell} (e) \rangle }  = O \PAR{ \alpha^{7\ell/2}n ^{-\gamma/2} (\log n)^{5/2} }.
$$
\item  \label{zz5} For any $ k  \in [r_0]$, in probability
$$
\frac 1 {\alpha n} \sum_{e \in \vec E} \frac{\langle \phi_k , Y_{2\ell} (e) \rangle    \langle \phi_k , Y_{\ell} (e) \rangle}{\mu_k ^ {3\ell}  }  \to  \rho_k.
$$

\end{enumerate}
\end{proposition}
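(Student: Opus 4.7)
The plan is to follow the template used for item~(\ref{zz1}), exploiting a martingale collapse identity that matches the limit to the very same constant $\rho_k$. Introduce the $(2\ell)$-local functional
\[
\tau_k(G,e) := \frac{\langle \phi_k,Y_\ell(e)\rangle\,\langle\phi_k,Y_{2\ell}(e)\rangle}{\mu_k^{3\ell}}
\]
and rewrite the edge sum as a vertex sum $\sum_{e\in\vec E}\tau_k(G,e) = \sum_v\tilde\tau_k(G,v)$, with $\tilde\tau_k(G,v):=\sum_{u:\{u,v\}\in E}\tau_k(G,(u,v))$ a $(2\ell+1)$-local functional of $(G,v)$, bringing the analysis under the scope of Propositions~\ref{prop:varloc}--\ref{prop:locmart0}. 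The assumption $\kappa<\gamma/4$ ensures $2\kappa<\gamma\wedge 1/2$, so both Proposition~\ref{prop:couplingYZ} and Corollary~\ref{cor:vartree} remain applicable at radius $2\ell$.

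For the mean, the coupling of Proposition~\ref{prop:couplingYZ} at depth $2\ell$ reduces $\dE\tau_k(G,e)$ to the corresponding branching-process expectation with $Z_0=\delta_{\sigma(e_2)}$, up to an error $O((\log n)\alpha^{2\ell}n^{-\gamma})=o(1)$. The key observation is the tower identity $\dE[\langle\phi_k,Z_{2\ell}\rangle\mid\cF_\ell]=\mu_k^\ell\,\langle\phi_k,Z_\ell\rangle$, which follows from $\phi_k^*M=\mu_k\phi_k^*$; it yields
\[
\dE\left[\frac{\langle\phi_k,Z_\ell\rangle\langle\phi_k,Z_{2\ell}\rangle}{\mu_k^{3\ell}}\right] \;=\; \dE\left[\frac{\langle\phi_k,Z_\ell\rangle^2}{\mu_k^{2\ell}}\right] \;\xrightarrow[\ell\to\infty]{}\; \dE\, Y_k(\infty)^2,
\]
the convergence being in $L^2$ by Theorem~\ref{th:kestenstigum} (recall $k\in[r_0]$). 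Averaging over the root type, asymptotically $\pi$-distributed by~\eqref{eq:defgamma}, identifies this limit with exactly the constant $\rho_k$ of item~(\ref{zz1}).

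For concentration, we truncate using the event $\cE(e)$ from Corollary~\ref{cor:vartree} instantiated at radius $2\ell$. On $\cE(e)$ we have $\langle\phi_k,Y_t(e)\rangle=\mu_k^{t-2\ell}\langle\phi_k,Y_{2\ell}(e)\rangle+O((\log n)^4\alpha^{t/2})$ for $t\in\{\ell,2\ell\}$, and this together with the typical bound $|\langle\phi_k,Y_{2\ell}(e)\rangle|=O(\mu_k^{2\ell}(\log n)^c)$ (available via the coupling and Theorem~\ref{th:growthZBP}) shows that $|\tau_k\IND_{\cE(e)}|$ is uniformly $O((\log n)^c)$. An application of Proposition~\ref{prop:varloc} to this truncated functional through a suitable monotone dominator then yields a variance bound of order $\alpha^{4\ell}(\log n)^{c'}/n=o(1)$ under $\kappa<1/4$. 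The tail $\tau_k\IND_{\cE(e)^c}$ is handled by combining the crude worst-case bound $|\tau_k|\le (\log n)^2\alpha^{3\ell/2}$ (from Lemma~\ref{le:growthSt} and $\mu_k>\sqrt{\alpha}$) with $|\{e:\cE(e)^c\}|\le (\log n)^2\alpha^{2\ell}n^{1-\gamma}$, producing an $L^1$ tail of order $(\log n)^{c''}\alpha^{7\ell/2-1}n^{-\gamma}=o(1)$ whenever $\kappa<2\gamma/7$, which is implied by $\kappa<\gamma/4$.

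The main obstacle is the non-monotonicity of the indicator $\IND_{\cE(e)}$ under edge addition, which prevents a direct application of Proposition~\ref{prop:varloc} to the hard truncation. We would circumvent this by replacing $\tau_k\IND_{\cE(e)}$ with a soft cap $T_k(G,e):=\tau_k(G,e)\wedge C(\log n)^c$, applying Proposition~\ref{prop:varloc} to $T_k$ via the intrinsically monotone dominator $\varphi(G,e):=C(\log n)^c$ (so that $\tilde\varphi(G,v)=C(\log n)^c S_1(v)$ has controlled moments by Lemma~\ref{le:growthSt}), and handling the residual $\tau_k-T_k$ in $L^1$ via Cauchy--Schwarz using the $L^2$-bound $\dE\tau_k^2=O((\log n)^{c'''})$ obtained from the coupling and the branching-process moment estimates of Lemma~\ref{le:growthQklLp}, together with the tail probability $\dP(|\tau_k|>C(\log n)^c)$, which is $o(1)$ thanks to the same $\cE(e)$-based estimates.
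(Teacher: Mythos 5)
The proposal addresses only item~(\ref{zz5}) of the five-item proposition; items (\ref{zz1})--(\ref{zz4}) are not treated.

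For the mean, your martingale collapse identity $\dE[\langle\phi_k,Z_{2\ell}\rangle\mid\cF_\ell] = \mu_k^\ell\langle\phi_k,Z_\ell\rangle$ is correct and is exactly what the paper uses to match the limit with the constant $\rho_k$ from item~(\ref{zz1}).

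For the concentration, you take a different route from the paper, and it contains a genuine gap. First, the worry that ``the non-monotonicity of $\IND_{\cE(e)}$ prevents a direct application of Proposition~\ref{prop:varloc}'' is misplaced: that proposition requires monotonicity under edge addition only of the \emph{dominator} $\varphi$, not of $\tau$ itself. The paper simply repeats the argument for~(\ref{zz4}), dominating $\tau(G,v)$ by $\varphi(G,v)=S_{2\ell+1}(v)\,S_{\ell+1}(v)/\mu_k^{3\ell}$ --- which is manifestly monotone since $S_t$ counts paths --- and feeds $\varphi$ directly into Proposition~\ref{prop:locmart0} via Lemma~\ref{le:growthSt}; no truncation is needed. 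Second, the residual control in your soft-cap scheme rests on the claim $\dE\tau_k^2 = O((\log n)^{c})$, which is not supported by the cited Lemma~\ref{le:growthQklLp}: that lemma controls the cross-generation functional $Q_{k,\ell}$ (which appears in Proposition~\ref{th:locmart2}, not in this one) and its bound is the polynomial estimate $\dE Q_{k,\ell}^2 = O(\alpha^{4\ell})$, not polylogarithmic. For the quantity you actually need, $\tau_k=\langle\phi_k,Y_\ell(e)\rangle\langle\phi_k,Y_{2\ell}(e)\rangle/\mu_k^{3\ell}$, the crude bound via Lemma~\ref{le:growthSt} gives only $\dE\tau_k^2 = O(\alpha^{3\ell})$. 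Even for the Poisson branching process, squared integrability of $\tau_k$ would require fourth-moment control of the Kesten--Stigum martingale, which is not established in the paper and does not follow from $\mu_k^2>\alpha$ alone. With the correct order $\dE\tau_k^2 = O(\alpha^{3\ell})$, your Cauchy--Schwarz residual estimate $\sqrt{\dE\tau_k^2}\,\sqrt{\dP(\cE(e)^c)} = O\bigl(\alpha^{5\ell/2}\,n^{-\gamma/2}\,(\log n)^{1/2}\bigr)$ is $o(1)$ only when $\kappa<\gamma/5$; the proposal's closing clause ``implied by $\kappa<\gamma/4$'' does not hold, and the residual does not vanish under the stated hypothesis.
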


\begin{proof}
Let $Z_t$, $t\geq 0$, be the Galton-Watson branching process defined in Section \ref{subsec:MTGW} started from $Z_0 = \delta_{\iota}$ and $\iota$ has distribution $(\pi(1), \ldots, \pi(r))$. We denote by $(T,o)$ the associated random rooted tree.   If $k \in [r_0]$, by Theorem \ref{th:kestenstigum}, for some $\rho_k >0$, 
$$
\dE \langle \phi_k , Z_\ell \rangle^2 \mu_k ^{-2\ell} =  \rho_k + o(1). 
$$
We set $\tau(G,v) = \sum_{e \in \vec E : e_2 = v} \langle \phi_k , Y_{\ell} (e) \rangle^2 \mu_k ^{- 2 \ell} $. We observe that $$\sum_{e \in \vec E : e_2 = v}   \langle \phi_k , Y_{\ell} (e) \rangle^2 \leq  \sum_{e \in \vec E : e_2 = v} S^2 _{\ell} (e) \leq  S^2_{\ell +1}(v).$$ We get, using $\mu_k^{-2}<\alpha^{-1}$, that $\tau(G,v)  \leq \varphi( G,v) :=  \alpha^{-\ell} S^2_{\ell+1} (v)$. Also,  Lemma \ref{le:growthSt} implies that $\dE \max_v \varphi( G,v)^4  = O ( ( \log n)^8 \alpha^{4\ell})$. The same upper bound holds for  $\varphi(T,o)$ by Lemma \ref{le:growthS}.  We deduce from Proposition \ref{prop:locmart0} that 
$$
\dE \ABS{ \frac 1 {\alpha n} \sum_{e \in \vec E}  \frac{\langle \phi_k , Y_{\ell} (e) \rangle^2}{ \mu_k ^{2 \ell}}  - \frac{\dE \langle \phi_k , Z_\ell \rangle^2 }{\mu_k ^{2\ell}} } = O\PAR{\alpha^{3\ell/2} (\log n)^{5/2} n^{-\gamma/2}} .
$$
It proves statement \eqref{zz1} of the proposition.

For statement \eqref{zz2}, we use Theorem  \ref{th:kestenstigum2} instead. We denote by $X_{k}$ the limit martingale in Theorem  \ref{th:kestenstigum2} when $Z_0 = \delta_\iota$. If $\mu_k ^2 < \mu_1$, we find similarly that  for any $\theta >0$,
$$
 \frac{1}{\alpha n} \sum_{e \in \vec E}  \PAR{ \frac{ \langle \phi_k , Y_{\ell} (e) \rangle }{ \alpha ^{\ell/2}}}^2 \wedge \theta 
$$ 
converges in $L^1$ to  $\dE (  | X_{k} |^2 \wedge \theta ) $. 
From Theorem  \ref{th:kestenstigum2}, the latter is positive if $\theta$ is large enough. In the case $\mu_k ^2 = \mu_1$, by Theorem  \ref{th:kestenstigum2}, we need to normalize by $ \alpha ^{\ell/2}\ell^{1/2}$.

For statement \eqref{zz3}, we use Lemma \ref{mkv_field}. We set $\tau(G,v) := \sum_{e \in \vec E : e_2 = v} \langle \phi_k , Y_{\ell} (e) \rangle  \langle \phi_j , Y_{\ell} (e) \rangle$. As above, we have $\tau(G,v)  \leq \varphi( G,v) =  S^2_{\ell+1} (v)$ and $\dE \max_v \varphi( G,v)^4  = O ( ( \log n)^8 \alpha^{8\ell})$ by Lemma \ref{le:growthSt}. The same upper bound holds for  $\varphi(T,o)$  from Lemma \ref{le:growthS}. It remains to apply Proposition \ref{prop:locmart0}.

For statement \eqref{zz4}, we have $\dE \langle \phi_k , Z_{2\ell}  \rangle    \langle \phi_j , Z_{\ell}  \rangle = \mu_k^{\ell} \dE \langle \phi_k , Z_{\ell}  \rangle    \langle \phi_j , Z_{\ell}  \rangle = 0$ by Lemma \ref{mkv_field}.  We set $\tau(G,v) = \sum_{e \in \vec E : e_2 = v} \langle \phi_k , Y_{2\ell} (e) \rangle  \langle \phi_j , Y_{\ell} (e) \rangle$. We have 
$$\tau(G,v)  \leq \varphi( G,v) =  \sum_{ e \in \vec E : e_2 = v } S_{2\ell} (e) S_{\ell} (e) \leq S_{2\ell+1}(v)S_{\ell+1} (v).$$ 
Moreover, $\dE \max_v \varphi( G,v)^4  = O ( ( \log n)^8 \alpha^{12\ell})$ by Lemma \ref{le:growthSt} and Cauchy-Schwarz inequality. The same upper bound holds for  $\varphi(T,o)$  from Lemma \ref{le:growthS}. It remains to apply Proposition \ref{prop:locmart0}.
 
Finally, for statement \eqref{zz5}, $\dE \langle \phi_k , Z_{2\ell}  \rangle    \langle \phi_k , Z_{\ell}  \rangle =  \mu_k^{\ell} \dE \langle \phi_k , Z_{\ell}  \rangle^2 =  \mu_k^{3\ell} (\rho_k +o(1))$. We use $\mu_k ^2 > \alpha$ and then repeat the proof of statement \eqref{zz4}, we get
$$
\dE \ABS{ \frac 1 {\alpha n} \sum_{e \in \vec E} \frac{\langle \phi_k , Y_{2\ell} (e) \rangle    \langle \phi_k , Y_{\ell} (e) \rangle}{\mu_k ^ {3\ell}  }  -  \rho_k} = O \PAR{\alpha^{7 \ell / 2 - 3 \ell /2 } n ^{-\gamma /2 } (\log n)^{5/2}}.
$$ 
Since $\kappa   < \gamma /4$, the right hand side is $o(1)$. 
\end{proof}

We conclude this subsection by estimates on quantities which are closely related to $B^{\ell} B^{*\ell} \check \chi_k$.  For $e \in \vec E(V)$, we define for $t \geq 0$, $\cY_t (e) = \{ f \in \vec E : \vec d (e,f) = t \}$. For $k \in [r]$, we set
\begin{equation}\label{eq:defPkl}
P_{k,\ell} (e) = \sum_{t = 0} ^{\ell -1} \sum_{f \in \cY_{t} (e) } L_k(f).
\end{equation}
where $$L_k(f) = \sum_{(g,h) \in \cY_1 (f) \backslash \cY_t(e); g \ne h  } \langle \phi_k , \tilde Y_t(g)\rangle \tilde S_{\ell - t -1} (h),$$
and $\tilde Y_t(g)$, $\tilde S_{\ell - t -1}(h) = \|\tilde Y_{\ell - t-1}(h)\|_1$ are the variables $ Y_t(g)$, $S_{\ell - t -1}(h)$ defined on the graph $G$ where all  edges in $(G,e_2)_t$ have been removed. In particular, if $(G,e)_{2\ell}$ is a tree, $\tilde Y_s(g)$ and $Y_s(g)$ coincide for $s \leq 2 \ell -t$.

We also define
\begin{equation}\label{eq:defSkl}
S_{k,\ell} (e) = S_{\ell}(e) \phi_k ( \sigma (e_1)).
\end{equation}

As can be seen from \eqref{eq:QklNBW}, when $(G,e_2)_{2\ell}$ is a tree, it follows that 
\begin{equation}\label{why_Q_kl}
B^{\ell} B^{*\ell} \check \chi_k (e) = P_{k,\ell} (e) + S_{k,\ell} (e). 
\end{equation}
This is the reason why controls of quantities $P_{k,\ell}$ and $S_{k,\ell}$, themselves based on the previous branching analysis of quantities $Q_{k,\ell}$ and $S_{\ell}$ respectively, will be instrumental in our analysis of vectors $B^{\ell} B^{*\ell} \check \chi_k$.  We have the following
\begin{proposition}\label{th:locmart2}
Let $\ell \sim   \kappa \log_\alpha n$ with $0 < \kappa < \gamma/5$.  
\begin{enumerate}[(i)]
\item \label{yy1} 
For any $ k\in [r_0]$, there exists $\rho'_k >0$ such that \whp
$$
\frac 1 {\alpha n} \sum_{e \in \vec E}  \frac{P^2_{k,\ell} (e) }{ \mu_k ^{4 \ell}}  \to  \rho'_k.
$$
\item \label{yy3} For any $k \in [r] \backslash [r_0]$, there exists $C_k >0$ such that \whp
$$
\frac 1 {\alpha n} \sum_{e \in \vec E} \frac{ P^2_{k,\ell}(e)  }{\alpha ^{ 2\ell} (\log n)^5 }  \leq C_k.
$$
\item  \label{yy4} For any $ k \ne j \in [r]$, for some $c> 0$,
$$
\dE \ABS{\frac 1 {\alpha n} \sum_{e \in \vec E} ( P_{k,\ell} (e) + S_{k,\ell} (e) ) ( P_{j,\ell}(e) + S_{k,\ell} (e))  }  = O \PAR{ \alpha^{9 \ell /2 }n ^{-\gamma/2} (\log n)^{c} }.
$$
\end{enumerate}
\end{proposition}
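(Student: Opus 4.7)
The proof of Proposition \ref{th:locmart2} adapts the argument of Proposition \ref{th:locmart} from $\ell$-local to $2\ell$-local functionals: for each $v\in[n]$, the inner sum $\sum_{e\in\vec E,\,e_2=v}$ of $P_{k,\ell}^2$, $P_{k,\ell}P_{j,\ell}$, and the $S_{k,\ell}$-cross-terms depends only on $(G,v)_{2\ell+1}$, and whenever this neighborhood is a tree (which holds whp for all but at most $(\log n)^3\alpha^{2\ell}n^{1-\gamma}$ vertices by Lemma \ref{le:tls2} applied at depth $2\ell$) we have the key identities $P_{k,\ell}(e)=Q_{k,\ell}$ (evaluated on the subtree rooted at $e_2$, with the edge to $e_1$ pruned, as in \eqref{why_Q_kl}) and $S_{k,\ell}(e)=S_\ell\phi_k(\sigma(e_1))$. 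The overall workflow is thus: (a) use the coupling of Proposition \ref{prop:couplingYZ} at depth $2\ell$ to transfer each computation to the Galton--Watson tree; (b) apply Proposition \ref{prop:locmart0} (with $\ell$ replaced by $2\ell$) to reduce to branching-process expectations; (c) evaluate these via Theorem \ref{th:growthQkl}, Lemma \ref{le:growthQklLp} and the decorrelation result of Theorem \ref{le:ouff2}. The hypothesis $\kappa<\gamma/5$ is precisely what makes the coupling error $\alpha^\ell n^{-\gamma/2}\sqrt{\log n}$ negligible once combined with the fourth-moment envelopes (which are polynomial in $\alpha^\ell$ and $\log n$ via Lemmas \ref{le:growthS} and \ref{le:growthSt}) and the relevant normalizations.

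For (i), I will take $\tau(G,v)=\mu_k^{-4\ell}\sum_{e:e_2=v}P_{k,\ell}(e)^2$, dominated by an envelope of the form $\varphi(G,v)=C\mu_k^{-4\ell}S_{2\ell+1}(v)^3$ derived from $|L_k(f)|\lesssim \|\phi_k\|_\infty S_1(f)^2 S_\ell(g)S_{\ell-t-1}(h)$ and a summation of $f$ over the $\ell$-sphere around $e$. By Theorem \ref{th:growthQkl}, $Q_{k,\ell}/\mu_k^{2\ell}$ converges in $L^2$ to a limit with positive variance (its mean being $\mu_k\phi_k(\sigma(o))/(\mu_k^2/\alpha-1)\neq 0$), from which one obtains $\dE\tau(T,o)\to\alpha\rho'_k$ with $\rho'_k>0$; Proposition \ref{prop:locmart0} then produces the $L^1$-convergence yielding (i) after division by $\alpha$. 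For (ii), Theorem \ref{th:growthQkl} provides $\dE Q_{k,\ell}^2\leq C\alpha^{2\ell}\ell^5$ when $k\notin[r_0]$, which by the coupling transfers to $\dE P_{k,\ell}(e)^2=O(\alpha^{2\ell}(\log n)^5)$ modulo a negligible contribution from coupling failures, and Markov's inequality over the edge sum yields the claimed whp upper bound with some constant $C_k$.

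For (iii), expanding $(P_{k,\ell}+S_{k,\ell})(P_{j,\ell}+S_{j,\ell})$ yields four summands, whose branching-process analogues are $Q_kQ_j$, $Q_k\,S_\ell\phi_j(\sigma(\mathrm{root}))$, $Q_j\,S_\ell\phi_k(\sigma(\mathrm{root}))$ and $S_\ell^2\phi_k(\sigma(\mathrm{root}))\phi_j(\sigma(\mathrm{root}))$. Each has zero expectation on the tree: the pure $QQ$-term by Theorem \ref{le:ouff2}; the $SS$-term because $S_\ell$ has a root-type-invariant law (a consequence of \eqref{cond:deg} making the per-vertex offspring total Poisson$(\alpha)$ regardless of parent type), so that $\dE[S_\ell^2\phi_k(\sigma(o))\phi_j(\sigma(o))]=\dE S_\ell^2\cdot\sum_i\pi(i)\phi_k(i)\phi_j(i)=0$ by \eqref{eq:phiONpi}; and each mixed $QS$-term by the same Markov-random-field route as Lemma \ref{mkv_field}, which after conditioning on the tree factorizes each pair-expectation as $\dE[\phi_k(\sigma(u))\phi_j(\sigma(w))\mid T]=(\mu_j/\alpha)^{d(u,w)-1}\sum_i\pi(i)\phi_k(i)\phi_j(i)=0$. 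Proposition \ref{prop:locmart0} with envelope $\varphi(G,v)\leq CS_{2\ell+1}(v)^4$ and the coupling error at depth $2\ell$ then produces the announced bound $O(\alpha^{9\ell/2}n^{-\gamma/2}(\log n)^c)$ after dividing by $\alpha n$.

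The main technical obstacle is verifying the vanishing of the mixed $QS$ expectations in (iii), which are not directly covered by Theorem \ref{le:ouff2}. The plan is to use the path decomposition \eqref{eq:defQkl} of $Q_{k,\ell}$ together with $S_\ell=\langle\phi_1,Z_\ell\rangle$ to rewrite the product as a sum indexed by pairs of tree vertices, condition on the tree alone, and then apply the reversible Markov chain transition calculation of Lemma \ref{mkv_field} along the unique path between each such pair (noting $\phi_1=\IND\neq \phi_k$ for $k\neq 1$), producing a common factor $\sum_i\pi(i)\phi_k(i)\phi_j(i)=0$. A secondary bookkeeping challenge is matching the exact exponent $9\ell/2$ in (iii): the envelope $S_{2\ell+1}^4$ has fourth moment of order $\alpha^{16\ell}(\log n)^c$, whose quarter-power $\alpha^{4\ell}(\log n)^c$ times the coupling error $\alpha^\ell n^{-\gamma/2}\sqrt{\log n}$ gives $\alpha^{5\ell}n^{-\gamma/2}(\log n)^c$ before normalization; a slightly sharper envelope based on the pointwise structure of $(P_{k,\ell}+S_{k,\ell})$ yields the stated rate and pins down the threshold $\kappa<\gamma/5$ as the optimal admissible one.
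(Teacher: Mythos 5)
Your overall workflow — couple the $2\ell$-neighborhood to a Poisson multitype Galton--Watson tree, apply the local law of large numbers of Proposition~\ref{prop:locmart0}, and evaluate the resulting tree expectations via Theorem~\ref{th:growthQkl}, Lemma~\ref{le:growthQklLp} and the decorrelation of Theorem~\ref{le:ouff2} — is the same as the paper's. Your explicit treatment of the mixed $QS$ and $SS$ cross-terms in (iii) via the Markov-random-field factorization of Lemma~\ref{mkv_field} is a welcome filling-in of a step the paper treats very tersely (the paper only writes ``we use the above computation, together with Theorem~\ref{le:ouff2} and \eqref{eq:phiONpi}''), and your observation that $\dE Q_{k,\ell}Q_{j,\ell}=0$ alone is not enough is correct.

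The genuine gap is in the envelope $\varphi$, and it is not a bookkeeping detail: it changes the admissible range of $\kappa$. Your proposed envelope $\varphi(G,v)=C\mu_k^{-4\ell}S_{2\ell+1}(v)^3$ is of order $M^3\alpha^{6\ell}\mu_k^{-4\ell}$, and since $\mu_k^2>\alpha$ gives only $\mu_k^{-4\ell}\le\alpha^{-2\ell}$, this is $\lesssim M^3\alpha^{4\ell}$. The paper's envelope is the much sharper $CM(v)^6\alpha^{2\ell}$, obtained by defining the \emph{uniform} growth ratio $M(v)=\max_{t\le\ell}\max_{u\in(G,v)_t}\max_{s\le 2\ell-t}S_s(u)/\alpha^s$ and bounding \emph{each} intermediate count $\tilde S_s(u)$ by $M(v)\alpha^s$ rather than by a single global quantity $S_{2\ell+1}(v)$. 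This yields $|P_{k,\ell}(e)|\lesssim M(v)^c\alpha^{2\ell}$, so $\sum_{e:e_2=v}P_{k,\ell}^2\lesssim M^c\alpha^{4\ell}$ and then $\tau=\mu_k^{-4\ell}\sum_eP_{k,\ell}^2\lesssim M^c\alpha^{2\ell}$. The difference between $\alpha^{2\ell}$ and your $\alpha^{4\ell}$ is fatal: feeding your envelope into Proposition~\ref{prop:locmart0} at depth $2\ell$ gives an error of order $\alpha^{5\ell}n^{-\gamma/2}(\log n)^c$, which forces $\kappa<\gamma/10$ rather than the stated $\kappa<\gamma/5$ (for (iii) you compute $\alpha^{5\ell}$ yourself, and the claim that ``a slightly sharper envelope yields the stated rate'' is precisely the $M(v)$ construction, which you would need to supply, not defer). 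Also, moment bounds such as $\dE P_{k,\ell}(e)^p=O(\alpha^{2\ell p})$ cannot be obtained purely by ``transferring'' Lemma~\ref{le:growthQklLp} through the total-variation coupling — the coupling controls probabilities, not moments; the paper instead reruns the argument of Lemma~\ref{le:growthQklLp} directly on the graph exploration using the monotonicity of $\tilde S_s$ and Lemma~\ref{le:growthSt}, and you would need to do the same.
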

\begin{proof}
Let $Z_t$, $t\geq 0$, be the Galton-Watson branching process defined in Section \ref{subsec:MTGW} started from $Z_0 = \delta_{\iota}$ and $\iota$ has distribution $(\pi(1), \ldots, \pi(r))$. We denote by $(T,o)$ the associated random rooted tree.   For any $k \in [r_0]$, by Theorem \ref{th:growthQkl}, for some positive constant $\rho'_k$, 
\begin{equation}\label{eq:gdueoudge}
 \frac{\dE [Q_{k,\ell}^2  ] }{ \mu_k ^{2 \ell}}  = \rho'_k + o(1).
\end{equation}
On the other hand, Theorem \ref{th:growthQkl} also ensures that for some $C_k > 0$, for $k \in [r]\backslash [r_0]$, 
\begin{equation}\label{eq:gdueoudg} \frac{\dE [Q^2_{k,\ell} ] }{ \alpha ^{2 \ell} (\log n) ^5} \leq 2 C_k.
\end{equation}

Let us denote here by $\cF_t$ the $\sigma$-algebra spanned by $(G,e)_t$, given $\cF_{t+1}$ and $g \in (G,e)_{t+1}$. By a monotonicity argument, the statement of Lemma \ref{le:growthSt} applies to variables $(\tilde S_s (g), s \geq 0)$.  Thus for any $p \geq 1$, there is a constant $c >0$ such that for any integer $s \geq 0$, $\dE [ \tilde S_s (g) ^p | \cF_{t+1} ] \leq c \bar \alpha_n^{sp} $. We  thus have, repeating the proof of Lemma \ref{le:growthQklLp}, that for any fixed $p \geq 1$, 
$$
\dE P_{k,\ell}(e)^p \leq C ( \bar \alpha_n ) ^{2 \ell p} = O ( \alpha^{2\ell p}).
$$

Then the argument in the proof of Proposition \ref{th:locmart} can be applied. For statement \eqref{yy1}, we let  $k \in [r_0]$ and define $\tau(G,v) = \sum_{e \in \vec E, e_2 = v} P^2_{k,\ell} (e)  \mu^{-4 \ell}_k$. Let $$M (v) = \max_{0 \leq t \leq \ell} \max_{u \in (G,v)_t } \max_{s \leq 2 \ell - t}  (S_s (u) / \alpha^s).$$ 
Since $\mu_k^2 > \alpha$, we have the rough bound
\begin{eqnarray*}
\tau(G,v) & \leq & \alpha^{-2 \ell}  \sum_{e \in \vec E, e_2 = v} \PAR{ \sum_{t=0}^{\ell -1} \sum_{f \in \cY_t(e)}   M^2 (v) \alpha^{t+1} \alpha^{\ell -t} }^2 \nonumber \\
& = & \PAR{   M^2 (v) \alpha }^2    \sum_{e \in \vec E, e_2 = v} \PAR{ \sum_{t=0}^{\ell -1} \sum_{f \in \cY_t(e)}  1 }^2 \label{eq:taugv2}
\end{eqnarray*}
Hence $
\tau(G,v) \leq CM(v)^6 \alpha^{2\ell} = \varphi(G,v).
$
However, we have by Lemma \ref{le:growthSt} that $\dE \max_v \varphi( G,v)^4  = O ( ( \log n)^{24}  \alpha^{8\ell})$. By Lemma \ref{le:growthS}, the same bound holds for $\varphi(T,o)$.   We deduce from Proposition \ref{prop:locmart0}  that 
$$
\dE \ABS{  \frac 1 {\alpha n} \sum_{e \in \vec E}  \frac{P^2_{k,\ell} (e) }{ \mu_k ^{4 \ell}}  -  \frac{ \dE Q^2_{k,\ell} }{ \mu_k ^{4 \ell}} } = O \PAR{ \alpha^{5\ell/2}  n^{- \gamma /2} (\log n)^c   }.
$$
Since $\kappa < \gamma / 5$, the right hand side goes to $0$ and statement \eqref{yy1} follows from \eqref{eq:gdueoudge}. Statement \eqref{yy3} is proved similarly using \eqref{eq:gdueoudg}. For statement \eqref{yy4}, we use the above computation, together with Theorem \ref{le:ouff2} and \eqref{eq:phiONpi}. It gives the claimed bound.  \end{proof}

\subsection{Proof of Proposition \ref{prop:Bellphi2}}

For the next lemma, recall the definitions \eqref{eq:defPkl}-\eqref{eq:defSkl} of the vectors $P_{k,\ell}$ and $S_{k,\ell}$ in $\dR^{\vec E}$. We also introduce the vector in $\dR^{\vec E}$, for $k \in [r]$,
$$
N_{k,\ell} (e)=  \langle \phi_k  , Y_{\ell} (e) \rangle. 
$$

\begin{lemma}\label{le:Beta}
Let $\ell \sim \kappa \log_\alpha n$ with $0 < \kappa < \gamma \wedge 1/2$.Then, \whp 
$\| B^{\ell} \chi_k - N_{k,\ell} \| =  O \PAR {  (\log n )^{5/2} \alpha ^{3\ell/2} } = o \PAR{  \alpha^{\ell/2} \sqrt n } $, $ \| B^{\ell} B^{* \ell} \check \chi_k - P_{k,\ell} - S_{k,\ell} \| =  O ( (\log n)^4 \alpha^{3\ell} )  $ and $ \| B^{\ell} B^{* \ell} \check \chi_k - P_{k,\ell} \| = O ( \alpha^{\ell} \sqrt n )$.
\end{lemma}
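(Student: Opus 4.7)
The plan is to exploit pointwise identities that hold whenever the relevant neighborhoods of $e$ are trees, and to handle the small ``bad'' set of edges whose neighborhoods contain a cycle via crude magnitude bounds combined with a count of that set. The size of the bad set will be governed by Lemma \ref{le:tls2} together with the standard maximum-degree bound $\Delta(G) = O(\log n)$ \whp, and the pointwise magnitudes by Proposition \ref{prop:localB}\,(ii) and Lemma \ref{le:growthSt}. For the first bound, I would observe that when $(G, e_2)_\ell$ is a tree, each non-backtracking walk of $\ell+1$ edges from $e$ ends at a unique directed edge at oriented distance $\ell$, giving the pointwise equality $B^\ell \chi_k(e) = N_{k,\ell}(e)$; hence the discrepancy is supported on $\{e \in \vec E : (G, e_2)_\ell \text{ is not a tree}\}$, of size $O(\alpha^\ell (\log n)^2)$ \whp (Lemma \ref{le:tls2} bounds the number of bad vertices by $\alpha^\ell \log n$, each with $O(\log n)$ outgoing edges). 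On such edges both $|B^\ell \chi_k(e)|$ and $|N_{k,\ell}(e)| \le \|\phi_k\|_\infty S_\ell(e)$ are $O((\log n)\alpha^\ell)$, so squaring and summing yields the claim (up to the precise power of $\log n$).

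For the second bound, I would first rewrite, using the PT-symmetry $B^{*\ell}\check{\chi}_k = P B^\ell \chi_k$ derived from \eqref{eq:check}, the quantity $B^\ell B^{*\ell}\check{\chi}_k(e) = \sum_{f \in \vec E} (B^\ell)_{ef}\, B^\ell \chi_k(f^{-1})$, expressing it as a double sum over pairs of non-backtracking walks of $\ell+1$ edges that concatenate into a walk of length $2\ell+2$ with a single forced backtrack at the middle. When $(G,e_2)_{2\ell}$ is a tree, parametrizing the second walk by the depth $t \in \{0,\ldots,\ell-1\}$ at which it first leaves the reversed first walk, together with the boundary case in which it backtracks all the way back to $\gamma_0$, recovers exactly the decomposition $P_{k,\ell}(e) + S_{k,\ell}(e)$ of \eqref{why_Q_kl}. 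The discrepancy is then supported on $\{e : (G,e_2)_{2\ell} \text{ is not a tree}\}$, of size $O(\alpha^{2\ell}(\log n)^2)$ \whp by the same argument applied at scale $2\ell$ (permitted since $2\kappa < 1/2$). On such $e$, Proposition \ref{prop:localB}\,(ii) applied to both factors yields $|B^\ell B^{*\ell}\check{\chi}_k(e)| = O((\log n)^2 \alpha^{2\ell})$, and termwise crude bounds in \eqref{eq:defPkl} via Lemma \ref{le:growthSt} give $|P_{k,\ell}(e)| + |S_{k,\ell}(e)| = O((\log n)^c \alpha^{2\ell})$ for some constant $c$; squaring and summing over the bad set closes this step.

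For the third bound, I would apply the triangle inequality $\|B^\ell B^{*\ell}\check{\chi}_k - P_{k,\ell}\| \le \|B^\ell B^{*\ell}\check{\chi}_k - P_{k,\ell} - S_{k,\ell}\| + \|S_{k,\ell}\|$. The first term is $O((\log n)^4 \alpha^{3\ell})$ by the second bound. For the second, $\|S_{k,\ell}\|^2 \le \|\phi_k\|_\infty^2 \sum_{e \in \vec E} S_\ell(e)^2$, and Proposition \ref{th:locmart}\eqref{zz1} applied with $k=1$ (where $\phi_1 = \IND$, $\mu_1 = \alpha$, and $\langle \phi_1, Y_\ell(e)\rangle = S_\ell(e)$) yields $\sum_e S_\ell(e)^2 = O(n \alpha^{2\ell+1})$ in probability, hence $\|S_{k,\ell}\| = O(\sqrt n\, \alpha^\ell)$. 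Since $\kappa < \gamma/6 < 1/4$ gives $\alpha^{3\ell} = n^{3\kappa + o(1)} = o(\sqrt n\, \alpha^\ell)$, the $S_{k,\ell}$ contribution dominates and the third bound follows.

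The hard part will be the combinatorial verification of the decomposition \eqref{why_Q_kl} in the tree case: one needs a clean bijective enumeration distinguishing the purely backtracking boundary case (which reconstructs the length-$\ell$ forward walk in reverse, producing the $S_{k,\ell}(e)$ term) from the branching-off cases indexed by $t \in \{0,\ldots,\ell-1\}$ (producing the $P_{k,\ell}(e)$ term, with the truncated quantities $\tilde Y_t(g)$ and $\tilde S_{\ell-t-1}(h)$ in \eqref{eq:defPkl} arising precisely because the new subtree must avoid the edges already used on the initial $e \to f$ segment of the forward walk). Beyond this, the remaining work is only bookkeeping of polylog factors.
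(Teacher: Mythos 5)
Your argument matches the paper's proof in all essentials: the same tree-case pointwise identities, the same bad sets $\vec E'_{\ell}$ and $\vec E'_{2\ell}$ controlled via Lemma \ref{le:tls2} and bounded on by Lemma \ref{le:growthSt} and Proposition \ref{prop:localB}\,(ii), the same $\|S_{k,\ell}\| = O(\sqrt{n}\,\alpha^\ell)$ estimate from Proposition \ref{th:locmart}\,(i) with $k=1$, and the same triangle inequality for the third bound. The only small discrepancy is that you invoke $\kappa<\gamma/6$ for the final domination $\alpha^{3\ell}=o(\sqrt{n}\,\alpha^\ell)$ whereas the lemma only assumes $\kappa<\gamma\wedge 1/2$; the paper has the same tacit gap (it writes ``since $\kappa<1/2$'', which is insufficient), and in both cases the effective constraint $\kappa<\gamma/4\le 1/4$ is supplied by the cited Proposition \ref{th:locmart}.
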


\begin{proof}Let $\vec E_\ell$ be as in Proposition \ref{prop:localB} and let $\vec E'_{\ell} \subset \vec E_{\ell}$ be the subset of edges such that $ (G,e_2)_{\ell}$ is a not tree. From \eqref{eq:etachi}, Lemma \ref{le:growthSt} and Proposition \ref{prop:localB} we have \whp  
\begin{eqnarray*}
\| N_{k,\ell} - B^{\ell} \chi_k \|^2  &  = & \sum_{e \in \vec E'_\ell}  \PAR{ \langle \phi_k  , Y_{\ell} (e) \rangle  - \langle B^{\ell} \chi_k , \delta_e \rangle }^2 \\
& \leq & 2 \sum_{e \in \vec E'_\ell} \PAR{  S_{\ell} (e) ^2 +  \langle B^{\ell} \chi_k , \delta_e \rangle ^2} \\
&= &
O \PAR{     | \vec E'_\ell|   \log (n)^4  \alpha^{2 \ell}} = O \PAR {  (\log n )^5 \alpha ^{3\ell} },  
\end{eqnarray*}
where at the last line, we have used Lemma \ref{le:tls2}. Since $\kappa < 1/2$, it proves the first statement.

Similarly for the second statement, recall as stated in \eqref{why_Q_kl} that when $(G,e_2)_{2\ell}$ is a tree, then $
B^{\ell} B^{*\ell} \check \chi_k (e) = P_{k,\ell} (e) + S_{k,\ell} (e)$.
Let $\vec E'_{2\ell} \subset \vec E_{2\ell}$ be the subset of edges such that $ (G,e_2)_{2\ell}$ is not a tree. If $G$ is $2\ell$-tangle free then there  are at most two different non-backtracking paths between two edges. Hence, if $e \in \vec E'_{2\ell}$, 
$$
| B^{\ell} B^{*\ell} \check \chi_k (e) | \leq 2  \| \phi_k \|_{\infty}( P_{1,\ell} (e) + S_\ell (e)) \leq 2 ( P_{1,\ell} (e) + S_\ell (e)).
$$
Now, by Lemma \ref{le:growthSt}, \whp $S_{\ell} (e) \leq C (\log n)  \alpha^{\ell}$. Moreover, if $M = \max_{v , t \leq \ell} S_{t} (v) / \alpha^t \leq  C \log n$,  $P_{1,\ell} (e)  \leq \sum_{t = 0} ^{\ell-1} \sum_{f \in \cY_t (e)} \alpha^{t+1} \alpha^{\ell - t} M^2  \leq M^3 \sum_{t = 0}^{\ell -1} \alpha^{t + \ell +1} $. So finally, \whp for all $e\in \vec E'_{2 \ell}$, 
$$
| B^{\ell} B^{*\ell} \check \chi_k (e) | = O ( (\log n)^3 \alpha^{2\ell} ). 
$$
 Hence, by Lemma \ref{le:tls2}, \whp, 
\begin{equation*}\label{eq:nrmBB}
\| B^{\ell} B^{*\ell} \check \chi_k - P_{k,\ell} - S_{k,\ell} \| = O ( \sqrt{| E'_{2\ell}|} (\log n)^3 \alpha^{2\ell} ) = O ( (\log n)^4 \alpha^{3\ell} ) . 
\end{equation*}
On the other hand,  from Proposition \ref{th:locmart}\eqref{zz1}, \whp 
\begin{equation}\label{eq:nrmveps}
\| S_{k,\ell} \|  = O ( \sqrt n \alpha^{\ell} ).
\end{equation} 
The conclusion follows since $\kappa < 1/2$.\end{proof}

All ingredients are finally gathered to prove Proposition \ref{prop:Bellphi2}. 

\begin{proof}[Proof of Proposition \ref{prop:Bellphi2}]  We use the notation of Lemma \ref{le:Beta}.

\noindent{\em{Proof of \eqref{i1}. }} Let $k \in [r_0]$. By definition 
$$
\theta_k = \frac{ \| B^{\ell} B^{*\ell} \check \chi_k \|}{\| B^{\ell} \chi_k \|}.
$$
From Proposition  \ref{th:locmart}\eqref{zz1} and Proposition \ref{th:locmart2}\eqref{yy1} respectively, for some positive constants $c_0, c_1$, \whp 
$$\frac{c_0}{2}   \leq \frac{  \| N_{k,\ell} \| }{\sqrt n  \mu_k ^{\ell}}  \leq 2 c_1  \quad  \hbox{ and } \quad \frac {c_0}{2}  \leq \frac{\| P_{k,\ell} \| }{ \sqrt n  \mu_k ^{2\ell}} \leq 2 c_1  .$$ It remains to use Lemma \ref{le:Beta} and the assumption $\mu_k ^2 > \alpha$. We find \whp
\begin{equation}\label{eq:normeBBB}
c_0 \leq \frac{\| B^{\ell} \chi_k \| }{ \sqrt n  \mu_k ^{\ell}  } \leq c_1  \quad  \hbox{ and } \quad  c_0 \leq \frac{ \| B^{\ell} B^{*\ell} \check \chi_k \|}{ \sqrt n  \mu_k ^{2\ell}} \leq c_1.
\end{equation}

\noindent{\em{Proof of \eqref{i2}. }}
Let $k \in [r] \backslash [r_0]$. From  Proposition  \ref{th:locmart}\eqref{zz2} \whp $\| N_{k,\ell} \| \geq (c_0/2) \sqrt n \alpha^{\ell/2}$ and from Proposition  \ref{th:locmart2}\eqref{yy3} \whp $ \| P_{k,\ell} \| = O ( \sqrt n (\log n)^{5/2} \alpha^{\ell} ) $. Using Lemma \ref{le:Beta}, we find \whp
\begin{equation}\label{eq:normeBBB2}
c_0 \leq \frac{\| B^{\ell} \chi_k \| }{ \sqrt n  \alpha ^{\ell/2}  }  \quad  \hbox{ and } \quad   \frac{ \| B^{\ell} B^{*\ell} \check \chi_k \|}{ \sqrt n ( \log n)^{5/2} \alpha ^{\ell}} \leq c_1.
\end{equation}

\noindent{\em{Proof of \eqref{i3}. }}
Let $k \in [r_0]$. Since $Px = \check x$ is an isometry,
$$
\langle \zeta_k , \check \varphi_k \rangle = \frac{\langle  B^{\ell} B^{*\ell}\check \chi_k  , B^{*\ell}  \check \chi_k \rangle}{ \| B^{\ell} B^{*\ell}\check \chi_k \|  \| B^{*\ell}  \check \chi_k \| } =  \frac{\langle  B^{\ell} \chi_k  , B^{2\ell}   \chi_k \rangle}{ \|  B^{\ell} B^{*\ell}\check \chi_k \| \|  B^{\ell}  \chi_k \|}
$$
In view of \eqref{eq:normeBBB}, it is sufficient to prove that for some $c_0 >0$, \whp $\langle  B^{\ell} \chi_k  , B^{2\ell}   \chi_k \rangle > c_0 \mu_k^{3\ell} n$. Note then that
\begin{eqnarray}\label{eq:uiuo}
\ABS{ \langle  B^{\ell} \chi_k  , B^{2\ell}   \chi_k \rangle -  \langle  N_{k,\ell}  ,  N_{k,2 \ell} \rangle }&  \leq&  \| B^{\ell} \chi_k  \| \| B^{2\ell}   \chi_k  - N_{k,2 \ell} \|  + \| B^{\ell} \chi_k   - N_{k,\ell} \| \| N_{k,2 \ell} \|.
\end{eqnarray}
However, from \eqref{eq:normeBBB} and Lemma \ref{le:Beta}, we have \whp $ \| B^{\ell} \chi_k  \| \| B^{2\ell}   \chi_k  - N_{k,2 \ell} \| = o ( \mu_k ^{3\ell} n)$. Also, from Proposition  \ref{th:locmart}\eqref{zz1}, and  Lemma \ref{le:Beta},  we find \whp $\| B^{\ell} \chi_k   - N_{k,\ell} \| \|  N_{k,2 \ell} \| = o ( \mu_k ^{3\ell} n)$. So finally, \whp 
$$
\ABS{ \langle  B^{\ell} \chi_k  , B^{2\ell}   \chi_k \rangle -  \langle  N_{k,\ell}  ,  N_{k,2 \ell} \rangle } = o ( \mu_k ^{3\ell} n).
$$
On the other hand, by Proposition  \ref{th:locmart}\eqref{zz5}, $\langle  N_{k,\ell}  ,  N_{k,2 \ell} \rangle$ is \whp larger than $c_0 \mu_{k}^{3\ell} n$ for some $c_0 >0$. It concludes the proof   \eqref{i3}.

\noindent{\em{Proof of \eqref{i4}. }}
Let $\bar \mu_k = \mu_k \vee \sqrt \alpha$. From \eqref{eq:check} and \eqref{eq:normeBBB}-\eqref{eq:normeBBB2} for $k , j \in  [r]$, \whp
$$
\ABS{\langle \check \varphi_j , \check \varphi_k \rangle } =  \frac{\ABS{ \langle  B^{\ell} \chi_j  , B^{\ell}   \chi_k \rangle} }{ \|  B^{\ell}   \chi_j  \| \|  B^{\ell}  \chi_k \|} \leq  \frac{\ABS{\langle  B^{\ell} \chi_j  , B^{\ell}   \chi_k \rangle}}{ c^2_0 n  \bar \mu_j ^ {\ell}  \bar \mu_k ^ {\ell}} \leq  \frac{\ABS{\langle  B^{\ell} \chi_j  , B^{\ell}   \chi_k \rangle}}{ c^2_0 n   \alpha^ {\ell}}.
$$
In addition, Equations \eqref{eq:normeBBB}-\eqref{eq:normeBBB2}, Proposition  \ref{th:locmart}\eqref{zz1} and Lemma \ref{le:Beta} entail that \whp
\begin{eqnarray*}
\ABS{ \langle  B^{\ell} \chi_j  , B^{\ell}   \chi_k \rangle  -  \langle  N_{j,\ell}  ,  N_{k,\ell} \rangle } \leq   \| B^{\ell} \chi_j  \| \| B^{\ell}   \chi_k  - N_{k,\ell} \|  + \| B^{\ell} \chi_j   - N_{j,\ell} \| \|  N_{k,\ell} \| =  O  \PAR{  \alpha^{5 \ell/2} (\log n)^{5/2} \sqrt n   }. 
\end{eqnarray*}
From Proposition  \ref{th:locmart}\eqref{zz3}, we get if $k \ne j$ that \whp
$$
\ABS{ \langle \check \varphi_j , \check \varphi_k \rangle} = O \PAR{  \alpha^{3\ell /2} n^{-\gamma/2} (\log n)^{5/2} }.
$$
\noindent{\em{Proof of \eqref{i5}. }} Let $k \ne j  \in [r_0]$. From \eqref{eq:check} and \eqref{eq:normeBBB}, \whp
$$
\ABS{\langle \zeta_j, \check \varphi_k \rangle } =  \frac{\ABS{ \langle  B^{\ell} \chi_j  , B^{2\ell}   \chi_k \rangle} }{ \|  B^{\ell} B^{*\ell}\check \chi_k \| \|  B^{\ell}  \chi_k \|} \leq  \frac{\ABS{ \langle  B^{\ell} \chi_j  , B^{2\ell}   \chi_k \rangle} }{ c_0^2 n  \alpha^ {3\ell/2}}.
$$
As in \eqref{eq:uiuo}, we use $\ABS{ \langle x , y \rangle -  \langle x' , y' \rangle } \leq \|x'\| \| y - y ' \| + \|y \| \| x - x ' \|$. We find from \eqref{eq:normeBBB}, Proposition \ref{th:locmart}\eqref{zz1} and Lemma \ref{le:Beta} that  \whp
$$
\ABS{\langle  B^{\ell} \chi_j  , B^{2\ell}   \chi_k \rangle -\langle  N_{j,\ell}  , N_{k,2 \ell} \rangle } = O ( \alpha^{4\ell} \sqrt n  (\log n)^{5/2}). 
$$
Also, from Proposition  \ref{th:locmart}\eqref{zz4}, \whp $\langle  N_{j,\ell}  ,  N_{k,2 \ell} \rangle$ is $O( n ^{1  - \gamma/2} \alpha^{7\ell/2} (\log n)^{5/2})$. 
We conclude finally that 
$$
\ABS{\langle \zeta_j, \check \varphi_k \rangle }  =  O \PAR{ \alpha^{2\ell} n^ {-\gamma/2} (\log n)^{5/2} }. 
$$
\noindent{\em{Proof of \eqref{i6}. }} 
We again use the same argument. Let $k\ne j  \in [r_0]$. From \eqref{eq:normeBBB},
$$
\ABS{\langle \zeta_k , \zeta_j \rangle} = \frac{\ABS{\langle  B^{\ell} B^{*\ell}\check \chi_k  , B^{\ell}B^{*\ell}  \check \chi_k \rangle} }{ \| B^{\ell} B^{*\ell}\check \chi_k \|  \| B^{\ell} B^{*\ell}\check \chi_j \|  } \leq   \frac{\ABS{\langle  B^{\ell} B^{*\ell}\check \chi_k  ,B^{\ell} B^{*\ell}  \check \chi_k \rangle} }{ c_0^2 n  \alpha^ {2\ell}}.$$
Recall that $\ABS{ \langle x , y \rangle -  \langle x' , y' \rangle } \leq \|x'\| \| y - y ' \| + \|y \| \| x - x ' \|$. Then from \eqref{eq:normeBBB}, Proposition \ref{th:locmart2}\eqref{yy1} and Lemma \ref{le:Beta}, we obtain \whp
$$\ABS{ \langle B^{\ell} B^{* \ell} \check \chi_k  , B^{\ell} B^{* \ell} \check \chi_j  \rangle -  \langle P_{k,\ell}  + S_{k,\ell}, N_{j,\ell} + S_{k,\ell} \rangle } = O ( (\log n)^4   \alpha^{5\ell} \sqrt n).$$ 
Finally, from Proposition \ref{th:locmart2}\eqref{yy4}, $\langle P_{k,\ell}  + S_{k,\ell}, N_{j,\ell} + S_{k,\ell} \rangle$ is $O (n ^{1 - \gamma/2} \alpha^{9\ell/2} (\log n) ^{5/2} ) $. 
\end{proof}

\section{Norm of non-backtracking matrices}
\label{sec:norm_nbm}
In this section we prove Proposition \ref{prop:Bellx2}. The argument used for Erd\H{o}s-R\'enyi graphs extends rather directly to the stochastic block model.

\subsection{Decomposition of $B^{\ell}$}

In this paragraph, we essentially repeat the argument of subsection \ref{subsec:decomp}. We define, for $u \ne v \in V$, the centered variable, 
$$
\underline A_{uv} = A_{uv} - W_{\sigma(u) \sigma(v)}.
$$
We now re-define $K$ as the weighted non-backtracking matrix on the complete graph on $V$, for $e,f \in \vec E (V)$,
$$
K_{e f} = \IND ( e \to  f ) W_{\sigma(e_1) \sigma(e_2)},  
$$
where $e \to f$ represents the non-backtracking property, $e_2 =  f_1$ and $e \ne f^{-1}$. We also introduce
$$
K^{(2)}_{ef}  = \IND ( e \stackrel{2}{\to}  f ) W_{\sigma(e_2) \sigma(f_1)},  
$$
where $e \stackrel{2}{\to}  f$ means that there is a non-backtracking path with one intermediate edge between $e$ and $f$.  We define $\Delta^{(\ell)}$, $B^{(\ell)}$ as in subsection \ref{subsec:decomp}. $R^{(\ell)}_t$ is now defined as
$$
(R_t ^{(\ell)} )_{ef}   =  \sum_{\gamma \in F^{\ell+1} _{t,e f}} \prod_{s=0}^{t-1} \underline A_{\gamma_{s} \gamma_{s+1}}  W_{\sigma(\gamma_t) \sigma(\gamma_{t+1})} \prod_{t+1}^\ell A_{\gamma_{s} \gamma_{s+1}},
$$
where the set of paths $F^{\ell+1} _{t,e f}$ is still defined as in Section \ref{subsec:decomp}.
We again use the  decomposition 
\begin{eqnarray*}
B^{(\ell)} _{e f} &  =  & \Delta^{(\ell)}_{ef}  + \sum_{\gamma \in F^{\ell+1} _{e f}}    \sum_{t = 0}^\ell \prod_{s=0}^{t-1} \underline A_{\gamma_{s} \gamma_{s+1}} \PAR{ \frac {W_{\sigma(\gamma_t) \sigma(\gamma_{t+1})} } {n  } }\prod_{t+1}^\ell A_{\gamma_{s} \gamma_{s+1}}
\end{eqnarray*}
to obtain
\begin{equation}\label{eq:decompBk2}
B^{(\ell)}  =  \Delta^{(\ell)}   +  \frac 1 n K B^{(\ell-1)}  +  \frac 1 n  \sum_{t = 1} ^{\ell-1}  \Delta^{(t-1)} K^{(2)}  B^{(\ell - t -1)}  +  \frac 1 n \Delta^{(\ell-1)} K  -   \frac \alpha n    \sum_{t = 0}^\ell R^{(\ell)}_t.
\end{equation}
We introduce
$$
\bar W = \sum_k \mu_k  \chi_k \check \chi_k ^* \quad \hbox{ and } \quad L = K^{(2)} - \bar W.
$$
It now follows from \eqref{eq:decompBk2} that when $G$ is $\ell$-tangle-free,
\begin{eqnarray}
\| B^{\ell} x \| &   \leq &  \|  \Delta^{(\ell)} \|   +    \frac{ 1}{  n}  \| K  B^{(\ell-1)} \|   +    \frac 1  n  \sum_{j=1} ^{r} \mu_j  \sum_{t = 1} ^{\ell-1}    \| \Delta^{(t-1)} \chi_j  \| \ABS{ \langle \check \chi_j , B^{\ell-t-1} x \rangle }  \nonumber \\
& & \quad +  \;  \frac{1}{n}    \sum_{t = 1} ^{\ell-1} \| S_t^{(\ell)} \|   +   \| \Delta^{(\ell-1)}  \|   +  \frac{\alpha}{n}   \sum_{t = 0}^\ell \| R^{(\ell)}_t \|\label{eq:decompBkx2},
\end{eqnarray}
where we have again let $S_t^{(\ell)}:=\Delta^{(t-1)}LB^{(\ell-t-1)}$ as in Section \ref{subsec:decomp}.
We will now upper bound the above expression over all $x$ such that $\langle \check \chi_j , B^{\ell} x \rangle = 0$. 

\subsection{Proof of Proposition \ref{prop:Bellx2}}

The proof of Proposition \ref{prop:Bellx2} parallels that  of Proposition \ref{prop:Bellx}. 

The main task is to adapt the arguments of Section \ref{sec:path_counts}  to bound  the norms $\| \Delta^{(t)} \|$, $\| \Delta^{(t)} \chi_k \|$, $\| R^{(\ell)}_t \| $, $\|B^{(t)} \|$, $\| K B^{(t)} \|$, $\|S^{(\ell)}_t \|$. We only the two main differences. 

First, the expressions \eqref{eq:expAunder}-\eqref{eq:expAunder2}-\eqref{eq:expA} now depend on the types of the vertices involved in a path. We treat for example the case of \eqref{eq:expAunder} needed for Proposition \ref{prop:normDelta}. We claim that  if $\gamma \in W_{k,m}$ is a canonical path with $e $ edges and $v$ vertices, 
\begin{equation}\label{eq:expAunder000}
\frac{1}{ n^v}  \sum_{ \tau } \dE \prod_{i=1} ^{2m} \prod_{s=1} ^{k} \underline A_{\tau(\gamma_{i,s-1}), \tau (\gamma_{i,s}) } \leq \PAR{ \frac{\bar  \alpha_n}{n} }^{v-1} \PAR{ \frac{a}{n} }^{e - v + 1}, 
\end{equation}
where the sum is over all injections $\tau : [k] \to [n]$, $a = \max_{i,j} W_{ij}$ and $\bar   \alpha_n = \alpha + O (n^{-\gamma})$ is defined in \eqref{eq:defalphan}. Indeed, we consider a spanning tree of $\gamma$, for the $e- v + 1$ edges not present in the spanning tree, we use the bound, $\dE \underline A_{uv} ^p \leq W_{\sigma(u) \sigma(v)} / n  \leq a / n$ for any $p \geq 1$ and $u,v \in [n]$. For the remaining $v-1$ edges, we take a leaf, say $l$, of the spanning tree of $\gamma$, and denote its unique neighbor by $g$. Then, the injection $\tau : [k] \to [n]$ will give a label say $i = \sigma( \tau (g))$ to $g$ and $j  = \sigma( \tau (l))$ to $l$. We use the bound that for any $p \geq 1$ and $i \in [n]$, $ \sum_{j=1} ^r n(j) W_{ij}  / n \leq \bar   \alpha_n$. Hence, summing over all possible values of $\tau(l)$ while fixing $\tau(q), q \ne l$, gives a factor of at most  $ \alpha_n / n $ in \eqref{eq:expAunder000}. We then remove $l$ from the spanning tree and repeat this procedure $v-1$ times, this yields \eqref{eq:expAunder000}.

With \eqref{eq:expAunder000} in place of \eqref{eq:expAunder}, using Lemma \ref{le:enumpath} we then bound $S$ given by \eqref{eq:boundS} as follows 
\begin{eqnarray*}
S & \leq & \sum_{v=3}^{k m +1} \sum_{e = v - 1} ^{ km } | \cW _{k,m} (v,e) | \PAR{ \frac{ \bar  \alpha_n}{n} }^{v-1}  \PAR{ \frac{a}{n} }^{e - v + 1} n ^v \\
& \leq & n  \bar  \alpha_n^{km}\sum_{v=3}^{k m +1} \sum_{e = v - 1} ^{ km } (2 k  m )^{ 10 m ( e -v +1) + 8 m} a^{e-v+1}  n ^{v - e -1} \\
& \leq &  n \bar   \alpha_n^{km} ( 2 \ell m )^{ 8m } (\ell m) \sum_{s = 0} ^{ \infty } \PAR{ \frac{ (2 \ell a m)^{10m }}{n}}  ^{ s}. 
 \end{eqnarray*}
In the range of $k \leq \ell$ and $m$ defined by \eqref{eq:choicem}, we have $\bar   \alpha_n^{km} = \alpha^{km} ( 1+ o(1))$. We deduce that the bound \eqref{prop:normDelta} on $\|\Delta\|$ continues to hold for the stochastic block model. Similarly, by the same adaptation of \eqref{eq:expAunder2}-\eqref{eq:expA}, we find that bounds \eqref{prop:normR} and \eqref{prop:normB2} on $\|R_t^{(\ell)}\|$ and $\|B^{(t)}\|$ continue to hold for the stochastic block model.

The second difference lies in the definition of the matrix $L = K^{(2)} - \bar W$. For the stochastic block model, from \eqref{eq:Wspec}, the entry $L_{ef}$ is zero unless $e = f$, $e \to f$, $f^{-1} \to e$ or $e \to f^{-1}$. Moreover, the non-zero entries are bounded by $a$. Then the argument of the proof of bound \eqref{prop:normS} carries over easily. 

With bounds \eqref{prop:normDelta}-\eqref{prop:normR}-\eqref{prop:normB2}-\eqref{prop:normS} available for the stochastic block model, the remainder of the proof of Proposition \ref{prop:Bellx2} repeats the argument of subsection \ref{subsec:proofBellx}.

\section{Stochastic Block Model : proof of Theorem \ref{th:vecteurs}}
\label{sec:vecteurs}

The strategy of proof is based on the following lemma which asserts that the existence of a Boolean function non constant over the classes ensures the existence of an estimation with  asymptotically  positive overlap. 

\begin{lemma}\label{le:ncov}
Assume that $\pi(i) \equiv 1/r$ and there exists a function $F : V \to \{0,1\}$ of the graph $G$ such that in probability, for any $i \in [r]$,
$$
 \lim_{n\to \infty} \frac{1}{n} \sum_{v = 1}^n  \IND_{\BRA{\sigma(v) = i} }  F(v) = \frac{ f(i)}{r}, 
$$
where $f : [r] \to [0,1]$ is not  a constant function (there exists $(i,j)$ such that $f(i) \ne f(j)$). Let $(I^+,I^-)$ be a partition of $[r]$, such that $0<  |I^+| < r$ and
\begin{equation}\label{eq:balanceIpm}
\frac{1}{|I^+|} \sum_{i \in I^+}  f(i) > \frac{1}{|I^-|} \sum_{i \in I^-}  f(i).
\end{equation}
Then the following estimation procedure yields asymptotically positive overlap with permutation $p$ in \eqref{eq:defoverlap} equal to the identity : assign to each vertex $v$ a label $\hat \sigma(v)$ picked uniformly at random from $I^+$ if $F(v) = 1$ and from $I^-$ if $F(v) = 0$.
\end{lemma}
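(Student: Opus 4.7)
The plan is to compute the expected fraction of vertices correctly labeled by $\hat\sigma$ conditional on the random graph $G$, show that it exceeds $\max_k \pi(k) = 1/r$ by a positive constant, and then use concentration in the independent random choices made by the estimator to transfer the bound from the conditional expectation to the random overlap itself.

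First, I would observe that conditionally on $G$ (hence on $F$), the labels $\hat\sigma(v)$, $v\in[n]$, are independent and, by a direct case analysis on whether $\sigma(v)\in I^+$ or $\sigma(v)\in I^-$,
$$
\PP\big(\hat\sigma(v) = \sigma(v)\bigm|G\big) = \IND_{\sigma(v)\in I^+}\frac{F(v)}{|I^+|} + \IND_{\sigma(v)\in I^-}\frac{1-F(v)}{|I^-|}.
$$
Summing over $v$, using $\pi(i)\equiv 1/r$ to ensure $\frac{1}{n}|\{v:\sigma(v)=i\}|\to 1/r$, together with the hypothesis $\frac{1}{n}\sum_v \IND_{\sigma(v)=i}F(v) \to f(i)/r$ in probability, one gets
$$
\frac{1}{n}\sum_v \PP\big(\hat\sigma(v)=\sigma(v)\bigm|G\big) \;\xrightarrow{\PP}\; \frac{1}{r}\sum_{i\in I^+}\frac{f(i)}{|I^+|} + \frac{1}{r}\sum_{i\in I^-}\frac{1-f(i)}{|I^-|} = \frac{1}{r} + \frac{A-B}{r},
$$
where $A=\frac{1}{|I^+|}\sum_{i\in I^+}f(i)$ and $B=\frac{1}{|I^-|}\sum_{i\in I^-}f(i)$. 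By the assumption~\eqref{eq:balanceIpm}, the quantity $\delta := (A-B)/r$ is strictly positive.

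Finally, I would invoke Hoeffding's inequality applied conditionally on $G$ to the sum of the $n$ independent Bernoulli variables $\IND_{\hat\sigma(v)=\sigma(v)}$: deviations of order $\delta/2$ have probability at most $2\exp(-c n \delta^2)=o(1)$. Combining with the previous display yields
$$
\frac{1}{n}\sum_v \IND_{\hat\sigma(v)=\sigma(v)} \;\geq\; \frac{1}{r} + \frac{\delta}{2}
$$
with probability tending to $1$, so that taking $p$ equal to the identity in~\eqref{eq:defoverlap} shows $\operatorname{ov}(\hat\sigma,\sigma)\geq \delta/2>0$ \whp.

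There is no genuine obstacle here beyond bookkeeping: the key point is simply that the hypothesis on $F$ forces a bias of size $A-B$ between the two sides of the partition, and this bias is transferred linearly into the overlap by the randomized assignment, while the $n$ independent coin flips concentrate by standard inequalities.
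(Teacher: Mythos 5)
Your proof is correct and follows essentially the same route as the paper's: you compute the conditional success probability $\PP(\hat\sigma(v)=\sigma(v)\mid G)$, use the hypothesis on $F$ to get its in-probability limit $\tfrac{1}{r}+\tfrac{A-B}{r}$, and then concentrate the independent randomized assignments; the paper does the same thing, invoking the law of large numbers class by class rather than splitting explicitly into a conditional-mean step plus a Hoeffding step, but the substance is identical.
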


Observe that the existence of a non trivial partition $(I^+,I^-)$ satisfying \eqref{eq:balanceIpm} is implied by the assumption that $f$ is not constant. 

\begin{proof}[Proof of Lemma \ref{le:ncov}]
Let $j\in I^+$ and $v \in V$ such that $\sigma( v) = j$, then, given the realization of the graph, the event $\hat \sigma (v) = \sigma(v)$ is 
equal to $
F(v) \veps_v, 
$
where $\veps_v$ is an independent Bernoulli $\{0,1\}$-random variable with $\dP ( \veps_v = 1) = 1/ |I^+|$. From the law of large numbers, we deduce that, in probability, 
$$
\frac 1 n \sum_{v = 1}^n \IND_{\si(v) = j}   \IND_{\hat \sigma (v) = \sigma(v) }  \to \frac{f(i)}{ r |I^+|} 
$$
Summing over all $j \in I^+$, in probability, 
$$
\frac 1 n \sum_{v = 1}^n \IND_{\si(v)  \in I^+ }   \IND_{\hat \sigma (v) = \sigma(v) }  \to   \frac { f_+}{r},
$$
where $f_+$ is  the left hand side of \eqref{eq:balanceIpm}. Similarly, if $f_-$ is  the right hand side of \eqref{eq:balanceIpm}, in probability, 
$$
\frac 1 n \sum_{v = 1}^n \IND_{\si(v)  \in I^- }   \IND_{\hat \sigma (v) = \sigma(v) }  \to \frac { 1 - f_-}{r}.
$$
Finally, in probability 
$$
\frac 1 n \sum_{v = 1}^n    \IND_{\hat \sigma (v) = \sigma(v) }  - \frac 1 r \to  \frac 1 r \PAR{ f_+ + 1 -  f_- - 1 } = \frac{1} r \PAR{  f_+ - f_-} > 0,
$$
where the strict inequality comes from  (\ref{eq:balanceIpm}). 
\end{proof}

Our aim is now to find a non constant functions over the classes which depends on the eigenvector $\xi_k$. To this end, we introduce a new random variable, for $v \in V$, 
$$
I_{k,\ell} (v) = \sum_{e \in \vec E : e_2 = v} P_{k,\ell} (e),
$$
where $P_{k,\ell}$ was defined by \eqref{eq:defPkl}. Our first lemma is an extension of Proposition \ref{th:locmart2}. 

\begin{lemma}\label{le:locmartv}
Let $\ell \sim   \kappa \log_\alpha n$ with $0 < \kappa < \gamma \wedge 1/2$, $k \in [r_0]$ and $i \in [r]$. There exists a  random variable $Y_{k,i}$ such that $\dE Y_{k,i} = 0$, $\dE |Y_{k,i}| < \infty$ and for any continuity point $t$ of the distribution of $|Y_{k,i}|$, in $L^2$, $$
\frac 1 {n} \sum_{v = 1}^n \IND_{\BRA{\sigma(v) = i} }  \IND_{\BRA{\ABS{ I_{k,\ell} (v) \mu_k ^{- 2\ell}  -  \alpha \mu_k\phi_k (i) / ( \mu_k ^2 / \alpha -1)  } \geq t }} \to  \pi (i)  \dP (  | Y_{k,i} | \geq t ).
$$
\end{lemma}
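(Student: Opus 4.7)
The plan is to reduce $I_{k,\ell}(v)$ to a branching-process functional in the tree case, compute its $L^2$-limit on the associated Galton-Watson tree, and transfer this limit to the empirical distribution on $G$ via Propositions~\ref{prop:varloc} and~\ref{prop:locmart0}. The first step is to verify the identity
$$I_{k,\ell}(v)=(\deg(v)-1)\,Q_{k,\ell}(v)+L^v_{k,\ell}$$
on the event that $(G,v)_\ell$ is a tree, where $Q_{k,\ell}(v)$ is the functional \eqref{eq:defQkl} rooted at $v$ and $L^v_{k,\ell}$ is its $t=0$ summand. Expanding $\sum_{u\sim v}P_{k,\ell}((u,v))$ and noticing that on the tree each $f\in\cY_t((u,v))$ is uniquely indexed by a vertex $u'\in Y^v_t$ lying in the subtree $T^u$ of $v$ obtained by deleting the arm through $u$, and that $L_k(f)=L^{u'}_{k,\ell}$, yields this identity after swapping the order of summation: for $t\ge 1$ each $u'\in Y^v_t$ belongs to $Y^{T^u}_t$ for $\deg(v)-1$ choices of $u$ (all neighbours except the unique depth-one ancestor of $u'$), whereas $u'=v$ contributes for all $\deg(v)$ neighbours.

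On the multi-type Galton-Watson tree $T$ with $Z_0=\delta_\iota$, $\iota\sim\pi$, I would next show that conditionally on $\sigma(o)=i$,
$$I_{k,\ell}(o)/\mu_k^{2\ell}\;\longrightarrow\;X_{k,i}:=(\deg(o)-1)\xi_{k,o}\quad\text{in }L^2,$$
with $\xi_{k,o}=\mu_k Y_k(\infty)/(\mu_k^2/\alpha-1)$ the $L^2$-limit of $Q_{k,\ell}(o)/\mu_k^{2\ell}$ furnished by Theorems~\ref{th:kestenstigum} and~\ref{th:growthQkl}. The term $L^o_{k,\ell}/\mu_k^{2\ell}$ is of order $(\alpha/\mu_k^2)^\ell=o(1)$ in $L^2$, while the product $(\deg(o)-1)Q_{k,\ell}(o)/\mu_k^{2\ell}$ is controlled using the recursion
$$Q_{k,\ell}(o)=L^o_{k,\ell}+\sum_{u\in Y^o_1}Q_{k,\ell-1}(u),$$
combined with conditioning on $Z_1$, independence of first-generation subtrees, the uniformity in root type of the $L^2$-rate in Theorem~\ref{th:growthQkl}, and finiteness of all moments of $\deg(o)$ under the Poisson offspring law. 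The mean of $X_{k,i}$ is then computed from $\xi_{k,o}=(\mu_k^2/\alpha-1)^{-1}\sum_{u\in Y^o_1}Y^u_k(\infty)$, the Poisson size-biasing identity $\dE[(\deg(o)-1)Z_1(j)\mid\sigma(o)=i]=\alpha M_{ji}$ and the eigenvector relation $\sum_j\phi_k(j)M_{ji}=\mu_k\phi_k(i)$, which together give
$$\dE X_{k,i}=\frac{\alpha\mu_k\phi_k(i)}{\mu_k^2/\alpha-1}.$$
Setting $Y_{k,i}:=X_{k,i}-\dE X_{k,i}$ produces a centred $L^2$ (hence $L^1$) random variable.

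For the third step, fix a continuity point $t$ of $|Y_{k,i}|$ and set
$$\tau(G,v):=\IND_{\{\sigma(v)=i\}}\IND_{\{|I_{k,\ell}(v)/\mu_k^{2\ell}-\dE X_{k,i}|\ge t\}},$$
an $\ell$-local function bounded by $\varphi\equiv 1$. Proposition~\ref{prop:varloc} then yields $\VAR(n^{-1}\sum_v\tau(G,v))=O(\alpha^{2\ell}/n)=O(n^{2\kappa-1})=o(1)$ since $\kappa<1/2$, while Proposition~\ref{prop:locmart0} gives $\dE|n^{-1}\sum_v\tau(G,v)-\dE\tau(T,o)|=O(n^{\kappa/2-\gamma/2}\sqrt{\log n})=o(1)$ since $\kappa<\gamma$. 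Finally $\dE\tau(T,o)=\pi(i)\dP(|I_{k,\ell}(o)/\mu_k^{2\ell}-\dE X_{k,i}|\ge t\mid\sigma(o)=i)$ converges to $\pi(i)\dP(|Y_{k,i}|\ge t)$ by the previous step and the continuity of $|Y_{k,i}|$ at $t$; the variance estimate upgrades the resulting $L^1$-convergence to $L^2$ since the integrand is bounded by $1$, giving the claim.

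The main technical obstacle lies in the $L^2$-control of $(\deg(o)-1)Q_{k,\ell}(o)/\mu_k^{2\ell}$, which does not follow directly from Theorem~\ref{th:growthQkl}. The recursive decomposition reduces it to estimating $\dE[(\deg-1)^2(\Sigma_\ell-\Sigma_\infty)^2]$ with $\Sigma_\ell:=\mu_k^{-2}\sum_{u\in Y^o_1}Q_{k,\ell-1}(u)/\mu_k^{2(\ell-1)}$; conditioning on $Z_1$ exploits the independence of the subtree contributions and a uniform-in-root-type rate $\epsilon_\ell\to 0$ from Theorem~\ref{th:growthQkl} to give $\dE[(\Sigma_\ell-\Sigma_\infty)^2\mid Z_1]\le\deg\cdot\epsilon_\ell$, whence $\dE[(\deg-1)^2(\Sigma_\ell-\Sigma_\infty)^2]\le\epsilon_\ell\,\dE[(\deg-1)^2\deg]\to 0$ since all Poisson moments of $\deg(o)$ are finite.
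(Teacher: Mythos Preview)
Your approach is essentially the one in the paper: both introduce the tree functional $J_{k,\ell}=\sum_{u\sim o}Q_{k,\ell}(T^u,o)$, relate it to $(\deg(o)-1)Q_{k,\ell}$ plus a negligible $t=0$ correction, invoke Theorem~\ref{th:growthQkl} for the limit, compute the mean via Poisson size-biasing, and transfer to $G$ through Proposition~\ref{prop:locmart0}.

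Two small points. First, the sign of the correction term is wrong: your claim that ``$u'=v$ contributes for all $\deg(v)$ neighbours'' overlooks that for $t=0$ the summand $L_k(f)$ depends on which arm $u$ has been removed. Carrying out the sum over $u$ of the $t=0$ contribution gives $(D-2)L^o_{k,\ell}$, so the correct identity is $J_{k,\ell}=(\deg(o)-1)Q_{k,\ell}-L^o_{k,\ell}$ (this is the formula the paper uses). This has no effect on the conclusion since $L^o_{k,\ell}/\mu_k^{2\ell}\to 0$ in $L^2$ either way. Second, your recursive $L^2$ argument for $(\deg(o)-1)Q_{k,\ell}/\mu_k^{2\ell}$ is more than is needed: $L^2$ convergence of $Q_{k,\ell}/\mu_k^{2\ell}$ already gives convergence in probability, hence so does multiplication by the fixed random variable $\deg(o)-1$, and weak convergence is all that is required to pass to the indicator at a continuity point. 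The paper proceeds exactly this way and avoids your final paragraph entirely. Finally, note that $I_{k,\ell}$ is $(2\ell-1)$-local rather than $\ell$-local (the term $\tilde Y_t(g)$ reaches depth $2t+1$); this only changes the exponent of $\alpha$ in the Proposition~\ref{prop:locmart0} bound and is harmless for the range of $\kappa$ actually used downstream.
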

\begin{proof}
Let $Z_t$, $t\geq 0$, be the Galton-Watson branching process defined in Section \ref{subsec:MTGW} started from $Z_0 = \delta_{\iota}$ and $\iota$ has distribution $(\pi(1), \ldots, \pi(r))$. We denote by $(T,o)$ the associated random rooted tree.  Let $D$ be the number of offspring of the root and for $1 \leq x \leq D$,  let $Q_{k,\ell}(x) $ be the random variable $Q_{k,\ell}$ defined on the tree $T^x$ where the subtree attached to $x$ is removed and set 
$$
J_{k,\ell} = \sum_{x=1}^D Q_{k,\ell} (x).
$$ 
We observe that $$
\dE J_{k,\ell} = \sum_{n=0}^\infty \frac{\alpha^n e^{-\alpha} }{ n !} n \dE [ Q_{k,\ell} | D = n-1] =\sum_{n=1}^\infty \frac{\alpha^n e^{-\alpha} }{ (n-1) !} \dE [ Q_{k,\ell} | D = n-1]  =  \alpha \dE  Q_{k,\ell}. 
$$
Also, by Theorem  \ref{th:growthQkl}, the variable $Q_{k,\ell}  \mu_k ^{-2\ell} -  \mu_k\phi_k (\iota)/(\mu_k ^2 / \alpha -1)$ converges in $L^2$ to a centered variable $X_{k}$ satisfying $\dE X_{k}^2 \leq C$. However, the variables $Q_{k,\ell}$ and $J_{k,\ell}$ are closely related, indeed,
$$
J_{k,\ell} = (D-2) L^o_{k,\ell} + (D-1) \sum_{t=1}^\ell \sum_{u \in Y^o_t} L^u _{k,\ell} = (D-1) Q_{k,\ell}  -  L^o_{k,\ell},
$$
where $L^{u}_{k,\ell}$ was defined above \eqref{eq:defQkl}. The inequality \eqref{eq:varLu} for $t =0$, shows that $\dE  | L^o_{k,\ell} |^2 = O ( \alpha^{2\ell})$. Hence,  $L^o_{k,\ell} / \mu_k ^{2\ell}$ converges in $L^2$ to $0$. From Theorem \ref{th:growthQkl}, we find that $J_{k,\ell}   \mu_k ^{-2\ell}  - \alpha  \mu_k\phi_k (\iota)/(\mu_k ^2 / \alpha -1)$ converges weakly to a centered variable $Y_{k}$ satisfying $\dE  |Y_{k}| \leq C$. 
In particular, if $t$ is a continuity point of $|Y_{k}|$, $\IND ( \sigma(o) = i ) \IND (|J_{k,\ell} \mu_k ^{- 2\ell}  -  \alpha \mu_k\phi_k ( i  )/(\mu_k ^2 / \alpha -1) | \geq t )$ converges weakly to $ \IND (\sigma(o) = i) \IND (| Y_{k} | \geq t)$.
It then remains to apply Proposition \ref{prop:locmart0}.
\end{proof}

Our second lemma checks that we may replace $P_{k,\ell}$ in the above statement by the 
eigenvector $\xi'_k$ properly renormalized to be asymptotically close to \eqref{eq:defpseudvec}. More precisely, we set 
$$
I_{k} (v) = \sum_{e : e_2 = v}  s \sqrt { n} \xi'_k (e),
$$
where $s = \sqrt{ \alpha \rho'_k}$ and $\rho'_k$ was defined in Proposition \ref{th:locmart2}. 

\begin{lemma}\label{le:locmartvv}
Let $k \in [r_0]$, $i \in [r]$ and $Y_{k,i}$ be as in Lemma \ref{le:locmartv}. For any continuity point $t$ of the distribution of $|Y_{k,i}|$, in $L^2$, $$
\frac 1 {n} \sum_{v = 1}^n \IND_{\BRA{\sigma(v) = i} }  \IND_{\BRA{\ABS{ I_{k} (v) -  \alpha \mu_k\phi_k (i) / ( \mu_k ^2 / \alpha -1)  } \geq t }} \to  \pi (i)  \dP (  | Y_{k,i} | \geq t ).
$$
\end{lemma}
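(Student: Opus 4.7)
The plan is to reduce Lemma \ref{le:locmartvv} to Lemma \ref{le:locmartv} by approximating $I_k(v)$ by $\tilde I_{k,\ell}(v):=I_{k,\ell}(v)/\mu_k^{2\ell}$ in an averaged sense over $v$. With $\tilde\zeta_k:=P_{k,\ell}/(s\sqrt n\,\mu_k^{2\ell})$, one has the identity $\tilde I_{k,\ell}(v)=\sum_{e:e_2=v}s\sqrt n\,\tilde\zeta_k(e)$, so the task reduces to controlling $\|\xi'_k-\tilde\zeta_k\|$, where $\xi'_k$ denotes $\pm\xi_k$ with the sign chosen so that $\xi'_k$ is aligned with $\zeta_k$ (consistent with the signing $\omega$ of Theorem \ref{th:vecteurs}).

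The key step is to show that $\|\xi'_k-\tilde\zeta_k\|=O(n^{-c_0})$ w.h.p.\ for some $c_0>0$, via the triangle inequality applied to the four pieces $\xi'_k\leadsto\bar\zeta_k\leadsto\zeta_k\leadsto P_{k,\ell}/\|P_{k,\ell}\|\leadsto\tilde\zeta_k$. First, Proposition \ref{prop:sing2eig2} applied as in the proof of Theorem \ref{th:main} (with $\|R\|=O((\log n)^c\alpha^{\ell/2})$ and $\vartheta\geq|\mu_{r_0}|$) gives $\|\xi'_k-\bar\zeta_k\|=O((\log n)^c(\sqrt\alpha/|\mu_{r_0}|)^\ell)$, which is polynomially small in $n$ since $|\mu_{r_0}|>\sqrt\alpha$. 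Second, \eqref{eq:barzeta} yields $\|\bar\zeta_k-\zeta_k\|=O((\log n)^c\alpha^{5\ell/2}n^{-\gamma/2})$, polynomially small since $\kappa<\gamma/6$. Third, $\NRM{\zeta_k - P_{k,\ell}/\|P_{k,\ell}\|}$ is controlled by combining Lemma \ref{le:Beta} (which bounds the norm difference between $B^{\ell}B^{*\ell}\check\chi_k$ and $P_{k,\ell}$) with the lower bound $\|B^{\ell}B^{*\ell}\check\chi_k\|\asymp\sqrt n\,\mu_k^{2\ell}$ of \eqref{eq:normeBBB}. Fourth, $|\|P_{k,\ell}\|/(s\sqrt n\,\mu_k^{2\ell})-1|$ is polynomially small in probability via Markov's inequality and the $L^1$ bound $O(\alpha^{5\ell/2}(\log n)^{5/2}n^{-\gamma/2})$ obtained in the proof of Proposition \ref{th:locmart2}\eqref{yy1}.

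Writing $\eta:=s\sqrt n\,(\xi'_k-\tilde\zeta_k)$, we have $\|\eta\|=O(n^{1/2-c_0})$, and Cauchy-Schwarz applied to $I_k(v)-\tilde I_{k,\ell}(v)=\sum_{e:e_2=v}\eta(e)$ yields
$$\sum_{v=1}^n|I_k(v)-\tilde I_{k,\ell}(v)|^2\leq\max_v\deg_G(v)\cdot\|\eta\|^2=O\bigl((\log n)\,n^{1-2c_0}\bigr)=o(n),$$
since $\max_v\deg_G(v)=O(\log n)$ w.h.p. Now fix a continuity point $t$ of the law of $|Y_{k,i}|$, set $c_i:=\alpha\mu_k\phi_k(i)/(\mu_k^2/\alpha-1)$, and let $\epsilon>0$. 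The indicator $\IND_{\{|I_k(v)-c_i|\geq t\}}$ differs from $\IND_{\{|\tilde I_{k,\ell}(v)-c_i|\geq t\}}$ only if either $|I_k(v)-\tilde I_{k,\ell}(v)|\geq\epsilon$, a set of size at most $\epsilon^{-2}\sum_v|I_k(v)-\tilde I_{k,\ell}(v)|^2=o(n/\epsilon^2)$ by Markov, or $|\tilde I_{k,\ell}(v)-c_i|\in(t-\epsilon,t+\epsilon)$, whose contribution converges to $\pi(i)\dP(|Y_{k,i}|\in(t-\epsilon,t+\epsilon))$ by Lemma \ref{le:locmartv}. Sending $\epsilon\to 0$ along continuity points yields convergence in probability, and $L^2$ convergence then follows from boundedness of the empirical frequency by $1$.

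The main obstacle is obtaining a polynomial rate $n^{-c_0}$ for $\|\xi'_k-\tilde\zeta_k\|$ rather than a mere $o(1)$: without it, the $\log n$ factor from the maximum degree in the Cauchy-Schwarz step would spoil the $o(n)$ bound on $\sum_v|I_k(v)-\tilde I_{k,\ell}(v)|^2$. This polynomial gain crucially relies on the spectral gap $|\mu_{r_0}|>\sqrt\alpha$, which turns the factor $\|R\|/|\mu_{r_0}|^\ell$ in Proposition \ref{prop:sing2eig2} into polynomial decay.
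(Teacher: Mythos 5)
Your proof is correct, but it takes a route that is somewhat heavier than the paper's. The paper's proof uses an $L^1$ argument: it bounds $\frac{1}{n}\sum_v |I_k(v)-I_{k,\ell}(v)/\mu_k^{2\ell}|$ by first observing that $|I_k(v)-I_{k,\ell}(v)/\mu_k^{2\ell}|\le s\sqrt n\sum_{e:e_2=v}|\xi'_k(e)-\tilde\xi_k(e)|$ and summing over $v$ to convert this into $s\sqrt n\sum_{e\in\vec E}|\ldots|$, then applies Cauchy--Schwarz once over $\vec E$ to get $\frac{s\sqrt{n|\vec E|}}{n}\|\tilde\xi_k-\xi'_k\|=O(1)\cdot\|\tilde\xi_k-\xi'_k\|$. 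This needs only $\|\tilde\xi_k-\xi'_k\|=o(1)$ and never sees the maximum degree. You instead apply Cauchy--Schwarz \emph{per vertex}, which introduces the $\max_v\deg_G(v)=O(\log n)$ factor, and therefore you have to upgrade the $o(1)$ bound on $\|\xi'_k-\tilde\zeta_k\|$ to a polynomial one. You correctly do this by tracing through the quantitative versions of Proposition \ref{prop:sing2eig2}, Lemma \ref{le:Beta}, and Proposition \ref{th:locmart2}\eqref{yy1}, so the argument closes. In fact, once one commits to the $L^2$ route, even $\|\xi'_k-\tilde\zeta_k\|=o((\log n)^{-1/2})$ would already give $o(n)$, so the polynomial rate is not strictly necessary; but your framing of it as ``the main obstacle'' is an artefact of your own Cauchy--Schwarz step rather than an intrinsic difficulty. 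Otherwise, your reduction to Lemma \ref{le:locmartv} via the continuity-point / $\epsilon$-interval argument and the boundedness of the empirical frequency matches what the paper leaves as ``routine.''
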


\begin{proof}
From Proposition \ref{th:locmart2}\eqref{yy1} we find that, in probability,
$$
\frac{1}{  n} \sum_{e \in \vec E} \frac{P_{k,\ell}^2}{\mu_{k}^{4\ell}} \to s^ 2. 
$$ 
We set $\tilde \xi_k(e) = P_{k,\ell} / ( \mu_{k}^{2\ell} s \sqrt n) $.  From Lemma \ref{le:Beta}, we have \whp
$$
\NRM{ \tilde \xi_k  - \zeta_k  } = O ( \alpha^{\ell} \mu_k^{-2\ell} ) = o (1).
$$
Also, from Theorem \ref{th:main}, \whp
$$
\NRM{ \zeta_k - \xi'_k } = o(1). 
$$ 
Hence, from the triangle inequality, \whp
$$
\NRM{ \tilde \xi_k  - \xi'_k  }  = o (1 ) . 
$$
We deduce from Cauchy-Schwartz inequality that \whp
$$
\frac{1} n \sum_{v=1}^n  \ABS{  I_k (v)   -  \frac{I_{k,\ell} (v)}{\mu_k ^ {2 \ell} }}  \leq \frac{s\sqrt n}{n}  \sum_{e \in \vec E}  \ABS{  \xi'_k (e)   -   \tilde \xi_k (e)  } \leq  \frac{s \sqrt { n  | \vec E |}}{n} \NRM{ \tilde \xi_k  - \xi'_k  }  = o(1).  
$$
Since $t$ is a continuity point of $|Y_{k,i}|$, it is then a routine to deduce Lemma \ref{le:locmartvv} from Lemma \ref{le:locmartv}. 
\end{proof}

All ingredients are now gathered to prove Theorem \ref{th:vecteurs}. We fix $k \in [r_0]$ as in Theorem \ref{th:vecteurs} and let $\xi'_k$ be as above.  We set 
$$J^+:=\{i\in[r]: \phi_k(i)>0\} \quad \hbox{ and } \quad J^-=[r]\setminus J^+.$$
From Lemma \ref{le:locmartvv}, there exist random variables $X_j$, $j\in [r]$ on $\dR$ such that $\dE X_j= \alpha \mu_k\phi_k (j) / ( \mu_k ^2 / \alpha -1)  $ and the following holds  for all $j\in[r]$. With $I_k$ as above, for all $t\in\dR$ that is a continuity point of the distribution of $X_j$, the following convergence in probability holds:
\begin{equation}\label{overlap0}
\lim_{n\to\infty}\frac{1}{ n}\sum_{v = 1}^n \IND_{\si(v) = j} \IND_{ I_k(v)>t}=\pi(j)\dP(X_j>t).
\end{equation}
Write, for $\veps  = \pm$, $\pi_\veps=\sum_{j\in I^\veps}\pi(j)$, $g_\veps=\sum_{j\in J^\veps} \pi(j)\phi_k(j)$.  Note that $g_+>0$ by definition of $J^+$. Also by the orthogonality relation (\ref{eq:phiONpi}) between $\phi_1=\IND$ and $\phi_k$ we obtain $g_+ + g_- =0$, so that $g_-<0$. For $ \veps = \pm$, we shall denote by $X_\veps$ the random variable obtained as a mixture of the $X_j$ for $j\in J^\veps$, with weights $\pi(j)/\pi_\veps$. Note that $X_\veps$ has mean $g_\veps /\pi_\veps$. 

We now establish the existence of $t_0 \in \dR$ that is a continuity point of the distribution of both $X_+$ and $X_-$, and such that
\begin{equation}\label{overlap1}
\dP(X_+> t_0 )>\dP(X_-> t_0 ).
\end{equation}
To this end, since $\dE X_+ > 0$, we write
$$
\int_0^{+\infty}\dP(X_+>t)dt > \int_0^{+\infty}\dP(-X_+ > t)dt  =   \int_0^{+\infty}\dP(-X_+\geq t)dt.
$$
The same argument yields 
$$
\int_0^{+\infty}\dP(X_- >t)dt<\int_0^{+\infty}\dP(-X_-\geq t)dt.
$$
Combined, these two inequalities imply
$$
\int_0^{+\infty}\left\{[\dP(X_+>t)- \dP(X_->t)]+[\dP(X_+>-t)- \dP(X_->-t)] \right\}dt>0.
$$
Thus there is a subset of $\dR_+$ of positive Lebesgue measure on which either $\dP(X_+>t)> \dP(X_->t)$ or $\dP(X_+>-t)> \dP(X_->-t)$. This implies the existence of a continuity point $t_0 \in\dR$ of both $X_+$, $X_-$, $-X_+$ and $-X_-$ such that~(\ref{overlap1}) holds.

We may now come back to the eigenvector $\xi_k$ in Theorem \ref{th:vecteurs}.  We set $ \tau   = s t_0$ in Theorem \ref{th:vecteurs}.  For some unknown sign $\omega \in \{-1,1\}$ we have 
\begin{equation}\label{eq:defsignom}
\xi_k = \omega \xi'_k. 
\end{equation}
\noindent\paragraph{Case 1: the sign can be estimated. }
We first assume that $\omega$ is known and $\xi'_k = \xi_k$.  We consider the function 
$$
F(v)  =  \IND_{\BRA{\sum_{e : e_2 = v} \xi'_k (e) > \tau / \sqrt n }}= \IND_{\BRA{I_k (v) > t_0}}. 
$$
From \eqref{overlap1}, \eqref{eq:balanceIpm} is satisfied with $I^\pm = J^\pm$ and we can apply Lemma \ref{le:ncov} to obtain an asymptotically positive overlap.   We note that the sign $\omega$ is easy to estimate consistently if the random variable $X$ which is the mixture of the $X_j$ with weights $\pi(j) = 1/r$ is not symmetric. Indeed, in this case, for some bounded continuous function $f$, 
$$
\dE f (X) = \sum_{j=1}^r \pi(j) \dE f(X_j) \ne \sum_{j=1}^r \pi(j) \dE f(-X_j) = \dE f (-X).  
$$ Then,  from \eqref{overlap0}, given $\omega$, in probability,
 \begin{equation*} 
\lim_{n\to\infty}\frac{1}{ n}\sum_{v = 1}^n   f( \omega I_k(v) ) = \dE f (\omega X)
\end{equation*}
takes a different value for $\omega = 1$ and $\omega = -1$.

\noindent\paragraph{Case 2: fully symmetric case. }
Another simple case is if $X$ defined above is symmetric and $|J^+| = |J^-|$. If this occurs, $X^+$ and $-X_-$ have the same distribution. We consider the function $F (v) = \IND ( \sum_{e : e_2 = v} \xi_k (e) > \tau / \sqrt n ) =  \IND (  \omega  I_k (v) > t_0 )$ and the estimation where a vertex such that $F(v) = 1$ receives a uniform label in $J^+$, and otherwise a label uniform in $J^-$. By Lemma \ref{le:ncov} applied to $F$, if $\omega = 1$, we obtain an positive overlap with $I^\pm = J^\pm$ and the permutation $p$ in \eqref{eq:defoverlap} equal to the identity. If $\omega = -1$, we obtain an positive overlap with $I^\pm = J^\mp$ and any permutation $p$ in \eqref{eq:defoverlap} such that $p(J^\pm) = J^\mp$,

\noindent\paragraph{Case 3: general case. }
In the general case, we may use the same idea : apply Lemma \ref{le:ncov} for a partition $(I^+,I^-)$ which may depend on $\omega$ but such that the cardinal of $I^\pm$ does not. First, from \eqref{overlap0}, the function $f_1(j)  = \dP ( X_j > t_0)$ is not constant on $[r]$ and there exists $j_1$ such that 
$$ f_1(j_1) > \frac{1}{r-1} \sum_{j \ne i_1} f_1(j).$$
We distinguish two subcases. The first case is when the function  $f_{-1} (j) = \dP(-X_j > t_0)$ is also non-constant. Then there exists $j_{-1}$ such that 
$$ f_{-1} (j_{-1}) > \frac{1}{r-1} \sum_{j \ne j_{-1}}  f_{-1}(j).$$
We consider the function $F  (v) =   \IND (    \sum_{e : e_2 = v} \xi_k (e) > \tau / \sqrt n ) =  \IND (  \omega  I_k (v) > t_0 )$ and the estimation where  $\hat \si(v) = 1$ if $F(v) =1$ and $\hat \si(v)$ uniform on $\{2, \ldots , r\}$ otherwise. We apply Lemma \ref{le:ncov} to $F$ and the partition $I^+ = \{ j_{\omega} \}$, $I^{-} =  [r] \backslash \{j_\omega\}$. We obtain an asymptotically positive overlap for any permutation $p$ in \eqref{eq:defoverlap} such that $p(j_\omega) = 1$.

In the other case, $f_{-1}$ is constant and equal to say $a$. We introduce extra random independent variables $\omega' (v) \in \{-1,1\}$ iid such that $\dP ( \omega'(v) = 1) = \dP ( \omega'(v) = -1) = 1/2$.  We consider the function $F(v) = \IND (  \omega'(v)  \sum_{e : e_2 = v} \xi_k (e) > \tau / \sqrt n )$. Then, by \eqref{overlap0}, in probability,
\begin{equation*}
\lim_{n\to\infty}\frac{1}{ n}\sum_{v = 1}^n \IND_{\si(v) = j} F(v) = \frac{\pi(j)}{2} ( \dP(X_j>t_0) + a).
\end{equation*} 
Hence, it follows from \eqref{overlap1} that \eqref{eq:balanceIpm} is satisfied with $I^\pm = J^\pm$. We can then apply Lemma \ref{le:ncov} to obtain an asymptotically positive overlap.

It concludes the proof of Theorem \ref{th:vecteurs}.

\bibliographystyle{abbrv}
\bibliography{BibCommunityDetection,bib}

\end{document}